

\documentclass[11pt]{amsart}




\usepackage{amsfonts,epsfig}
\usepackage{latexsym}
\usepackage{amssymb}
\usepackage{amsmath}
\usepackage{amsthm}
\usepackage{graphics}
\usepackage[all]{xy}
\usepackage[T2A]{fontenc}
\usepackage{multirow}
\usepackage{hhline}
\usepackage{array, booktabs, ctable}
\usepackage{hyperref}
\usepackage{enumerate}

\newcommand{\eclabel}[1]{\href{https://www.lmfdb.org/EllipticCurve/Q/#1}{\texttt{#1}}}
\newcommand{\ecnflabel}[2]{\href{https://www.lmfdb.org/EllipticCurve/#1-#2}{\texttt{#1-#2}}}

\usepackage{longtable}

\addtolength{\textheight}{1cm} 
\addtolength{\textwidth}{4cm} \addtolength{\hoffset}{-2cm}
\addtolength{\marginparwidth}{-2cm} 

\newtheorem{defn}{Definition}[section]

\newtheorem{corollary}[defn]{Corollary}
\newtheorem{lemma}[defn]{Lemma}

\newtheorem{thm}[defn]{Theorem}
\newtheorem{theorem}[defn]{Theorem}

\newtheorem{prop}[defn]{Proposition}
\newtheorem{proposition}[defn]{Proposition}

\theoremstyle{definition}

\newtheorem*{ack}{Acknowledgements}
\newtheorem{remark}[defn]{Remark}

\newtheorem{example}[defn]{Example}

\newenvironment{romanenum}{\hfill \begin{enumerate} }{\end{enumerate}}


\newcommand{\Q}{\mathbb Q}
\newcommand{\QQ}{\mathbb Q}
\newcommand{\Z}{\mathbb Z}

\newcommand{\F}{\mathbb F}
\newcommand{\FF}{\mathbb F}

\newcommand{\Qbar}{\overline{\mathbb{Q}}}
\newcommand\legendre[2]{{#1\overwithdelims () #2}}

\newcommand{\OK}{\mathcal{O}_K}
\newcommand{\OF}{\mathcal{O}_F}
\newcommand{\Of}{\mathcal{O}_{K,f}}
\newcommand{\cC}{\mathcal{C}}
\newcommand{\OO}{\mathcal{O}}

\newcommand{\Gal}{\operatorname{Gal}}
\newcommand{\Aut}{\operatorname{Aut}}

\newcommand{\GL}{\operatorname{GL}}



\newcommand{\id}{\operatorname{Id}}

\begin{document}


\bibliographystyle{plain}
\title[Models of CM elliptic curves]{Models of CM elliptic curves with a prescribed $\ell$-adic Galois image}

\begin{abstract}
    For each prime number $\ell$ and for each imaginary quadratic order of class number one or two, we determine all the possible $\ell$-adic Galois representations that occur for any elliptic curve with complex multiplication by such an order over its minimal field of definition, and then we determine all the isomorphism classes of elliptic curves that have a prescribed $\ell$-adic Galois representation.
\end{abstract}

\author{Enrique Gonz\'alez--Jim\'enez}
\address{Universidad Aut{\'o}noma de Madrid, Departamento de Matem{\'a}ticas, Madrid, Spain}
\email{enrique.gonzalez.jimenez@uam.es}
\urladdr{http://www.uam.es/enrique.gonzalez.jimenez}
\author{\'Alvaro Lozano-Robledo}
\address{University of Connecticut, Department of Mathematics, Storrs, CT 06269, USA}
\email{alvaro.lozano-robledo@uconn.edu} 
\urladdr{http://alozano.clas.uconn.edu}

\author{Benjamin York}
\address{University of Connecticut, Department of Mathematics, Storrs, CT 06269, USA}
\email{benjamin.york@uconn.edu} 
\urladdr{https://benjamin-york.github.io/}

\thanks{The first author is supported by Grant PID2022-138916NB-I00 funded by MCIN/AEI/ 10.13039/501100011033 and by ERDF A way of making Europe.}

\subjclass{Primary: 11G05, Secondary: 11G15, 14H52, 14K22.}

\maketitle

\section{Introduction}

Let $F$ be a number field, let $E/F$ be an elliptic curve, and let $\ell$ be a prime number. Let $T_\ell(E)=\varprojlim E[\ell^n]$ be the $\ell$-adic Tate module of $E/F$. Then, $T_\ell(E)$ is a $\Gal(\overline{F}/F)$-module, and the associated Galois representation
$$\rho_{E,\ell^{\infty}}\colon \Gal(\overline{F}/F)\to \Aut(T_\ell(E))$$
is an object of great interest in modern algebraic number theory (see, for example, \cite{elladic} and \cite{zywina}). In particular,
there is a vast amount of literature on the problem of classifying the possible images (up to conjugation as a subgroup of $\GL(2,\Z_\ell)$) of such Galois representations, which is usually referred to as Mazur's ``Program B'' after \cite{mazurprogramb}. 

When $E$ is an elliptic curve with complex multiplication by an order $\mathcal{O}_{K,f}$ of an imaginary quadratic field $K$, and $F=\Q(j(E))$ is the minimal field of definition of an elliptic curve with $j$-invariant $j(E)$, the second author completed a classification of $\ell$-adic Galois representations attached to $E/F$ in \cite{lozano-galoiscm}. However, an inverse problem remains: given a possible CM image $G_{E,\ell^\infty}\subseteq \GL(2,\Z_\ell)$, what are the isomorphism classes of elliptic curves with precisely $G_{E,\ell^\infty}$ for their $\ell$-adic Galois image (up to conjugation)?

In this paper we accomplish the following, for each order $\mathcal{O}=\mathcal{O}_{K,f}$ of class number $1$ or $2$:
\begin{enumerate}
    \item We give a Weierstrass model for an elliptic curve $E=E_{\mathcal{O}}$ defined over $\Q(j_{K,f})$, such that $E$ has CM by $\mathcal{O}$, which is minimal in a certain sense (see Section \ref{sec-choiceofmodels}). The models are provided in Table \ref{tab-modelsclassnumber2}.
    \item For each prime $\ell$, we give a description of all the possible $\ell$-adic images of Galois attached to an elliptic curve $E$ with CM by an order $\mathcal{O}$ as above and, for each possible image $G\subseteq \GL(2,\Z_\ell)$, we give a precise description of the twists of $E_{\mathcal{O}}$ that have $G$ as their $\ell$-adic Galois image, up to conjugation. The possibilities are recorded in Tables \ref{cm2big}, \ref{tab-ellnotdividing}, 
 and \ref{tab-cmodddividing}.
    \item As a by-product of the proofs, we provide explicit descriptions of the $2$nd, $4$th, and $8$th division fields of elliptic curves with $j=1728$, $8000$, and $287496$ (see Cor. \ref{cor-8divfieldj1728}, Prop. \ref{prop-8divfieldm8} and Prop. \ref{prop-8divfieldm16}), and the $3$rd and $9$th division fields of curves with $j=0$ (see Lemma \ref{lem-3and9divfielddegreej0}).
\end{enumerate}

In order to clarify our results and their usage, we provide three examples.

\begin{example}
For instance, let $K=\Q(\sqrt{-3}$) and $\mathcal{O}=\Z[(1+\sqrt{-3})/2]$, with discriminant $\Delta(\mathcal{O})=-3$, so that $\Delta_K=-3$ and $\mathcal{O}$ is an order of $K$ of conductor $f=1$, and class number $1$. By Table \ref{tab-modelsclassnumber2}, the chosen model is $E_\mathcal{O}: y^2=x^3+16$, and $j_{K,f}=j(E)=0$. Let $\ell$ be a prime. 
\begin{itemize}
    \item If $\ell=2$, then Table \ref{cm2big} shows that there are precisely two distinct possibilities for the $2$-adic image of an elliptic curve $E/\Q$ with CM by $\mathcal{O}$. Namely, if $E$ is isomorphic to a model of the form $y^2=x^3+16t$, with $t\in \Q^\ast$ but $t\not\in 4(\Q^\ast)^3$, then the image is as large as possible given the constraints imposed by the order, namely the image is conjugate to the normalizer $\mathcal{N}_{-1,1}(2^\infty)$, which is defined in Section \ref{sec-preliminaries}. Otherwise, if $E$ is isomorphic to a model of the form $y^2=x^3+64t^3$ for any $t\in \Q^\ast$, then the image is of index $3$ in the normalizer and it is of the form $\langle \mathcal{C}_{-1,1}(2^\infty)^3,c_1'\rangle$, which is again a group explicitly described in Section \ref{sec-preliminaries}.

 \item If $\ell=3$, then Table \ref{cmodddividing} shows that there are $12$ possibilities for the $3$-adic image of an elliptic curve $E$ with $j(E)=0$, and the table lists the twists of $E_\mathcal{O}$ that afford each of the possible Galois images.

\item Finally, if $\ell>3$, then Table \ref{cmoddnotdividing} shows that there are an additional $6$ possible $\ell$-adic Galois images, according to the class of $\ell \bmod 9$, and the table lists the conditions on the twists of $E_\mathcal{O}$ that afford each possibility.

   \end{itemize} 
\end{example}

\begin{example}
    Now let $K=\Q(i)$ and let $\mathcal{O}=\Z[4i]$, with discriminant $\Delta(\mathcal{O})=-4\cdot 4^2=-64$, so that $\Delta_K=-4$ and $\mathcal{O}$ is an order of $K$ of conductor $f=4$, and class number $2$. By Table \ref{tab-modelsclassnumber2}, the model $E_\mathcal{O}$ is given by
    $$y^2 + axy = x^3 + x^2 + (15a - 22)x + 46a - 69$$
    where $a$ is a root of $x^2-2=0$, i.e., $a=\sqrt{2}$ (note that if we replace $a$ by a conjugate, then we get another elliptic curve with the same CM properties, see Remark \ref{rem-conjugates}). Moreover, $j_{K,f}=j(E_\mathcal{O})= 41113158120-29071392966 \sqrt{2}$. Let $\ell$ be a prime.
    \begin{itemize}
        \item If $\ell=2$, then Theorem \ref{thm-m8and16alvaro} shows that there are at most $9$ possibilities for the $2$-adic image, and  Table \ref{cm2big} shows that all such $9$ possibilities do occur, for the $2$-adic image of an elliptic curve $E/\Q(\sqrt{2})$ with CM by $\mathcal{O}$, and the table provides the twists of $E_\mathcal{O}$ which realize each kind of image.
        \item If $\ell>2$, then Theorem \ref{thm-goodredn} shows that there is only one possibility for the image, namely the full image $\mathcal{N}_{-16,0}(\ell^\infty)$.
    \end{itemize}
\end{example}

\begin{example}
    Now let $K=\Q(\sqrt{-22})$ and let $\mathcal{O}=\Z[\sqrt{-22}]$, with discriminant $\Delta(\mathcal{O})=-88$, so that $\Delta_K=-88$ and $\mathcal{O}$ is an order of $K$ of conductor $f=1$, and class number $2$. By Table \ref{tab-modelsclassnumber2}, the model $E_\mathcal{O}$ is given by
    $$y^2 + axy + y = x^3 + x^2 + (-193a - 453)x + 2233a + 4008$$
    where $a$ is a root of $x^2-2=0$, i.e., $a=\sqrt{2}$. Moreover, $j_{K,f}=j(E_\mathcal{O})=  3147421320000+2225561184000\sqrt{2}$. Let $\ell$ be a prime.
    \begin{itemize}
        \item If $\ell=2$, and since $\Delta(\mathcal{O})\not\equiv 0 \bmod 16$, Theorem \ref{thm-m8and16alvaro} shows that there are at most $5$ possibilities for the $2$-adic image of an elliptic curve $E/\Q(\sqrt{2})$ with CM by $\mathcal{O}$. However, Table \ref{cm2big} shows that only one possibility does occur, namely the maximal normalizer image $\mathcal{N}_{-22,0}(2^\infty)$.
        \item If $\ell=11$, then Theorem \ref{thm-oddprimedividingdisc} shows that there are at most three possibilities for the $\ell$-adic image, and Table \ref{cmodddividing} shows that all three possibilities do occur, and provides twists of $E_\mathcal{O}$ that afford each possibility.  
        \item If $\ell\ne 2,11$, then Theorem \ref{thm-goodredn} shows that the image must be the maximal normalizer $\mathcal{N}_{-22,0}(\ell^\infty).$
    \end{itemize}
\end{example}

\subsection{Structure of the article}\label{sec-structure} In Section \ref{sec-preliminaries} we provide a number of previous results that have appeared in the literature (e.g., \cite{lozano-galoiscm}, \cite{yelton}, \cite{zywina}) that we heavily use in our proofs, and we also describe the labels we shall use to denote the different types of $\ell$-adic images.  The classification of Weierstrass models of CM elliptic curves with a prescribed $\ell$-adic image is completed in Sections \ref{sec-classnumber1} and \ref{sec-classnumber2}. The tables containing the details about the models and the images appear at the end of this article, in Section \ref{sec-tables} and Tables \ref{tab-modelsclassnumber2}, \ref{tab-ell2}, \ref{tab-ellnotdividing}, and \ref{tab-cmodddividing}.

In this work, we will only consider elliptic curves with CM by an order $\Of$ of class number $1$ or $2$ (new challenges appear when the class number is greater or equal to $3$, see Remark \ref{rem-classnumber3}). Thus, the absolute value of the discriminant $\Delta(\Of)=\Delta_K f^2$ belongs to one of the sets $\Delta_k$ for $k=1$ or $2$ below, where $h(\Of)=k$.
\begin{align*} \Delta_1 &= \{ 3, 4, 7, 8, 11, 12, 16, 19, 27, 28, 43, 67, 163\},\\
\Delta_2 &= \{15, 20, 24, 32, 35, 36, 40, 48, 51, 52, 60, 64, 72, 75, 88, 91, 99, 100, 112, 115, 123, 147,\phantom{\}}\\
& \phantom{= \{} 148, 187, 232, 235, 267, 403, 427\}.
\end{align*}
We will distinguish several cases according first to the class number of $\Of$, and then according to the value of $\Delta_K f^2$ and the prime $\ell$. The following is a summary of the cases treated in each section and subsection of the article, according to $\Delta_K f^2$ and $\ell$:

\begin{enumerate}
    \item[(\ref{sec-classnumber1})] $|\Delta_K f^2| \in \Delta_1$,
\begin{enumerate}
    \item[(\ref{sec-notminus3ellmorethan2})] $\Delta_K f^2 \neq -3$ and $\ell\geq 3$,
    \begin{itemize}
        \item[(\ref{sec-ellnotdividedisc})] $\ell$ does not divide $\Delta_K f$,
        \item[(\ref{sec-elldividesdisc})] $\ell$ divides $\Delta_K f$.
    \end{itemize}
    \item[(\ref{sec-notminus3ellequals2})] $\Delta_K f^2 \neq -3$ and $\ell=2$,
    \begin{itemize}
        \item[(\ref{sec-not34816ell2})] $\Delta_K f^2 \neq -4,-8,-16$ and $\ell =2$,
        \item[(\ref{sec-4ell2})] $\Delta_K f^2 = -4$ and $\ell=2$,
        \item[(\ref{sec-8ell2})] $\Delta_K f^2 = -8$ and $\ell=2$,
        \item[(\ref{sec-16ell2})] $\Delta_K f^2 = -16$ and $\ell=2$.
    \end{itemize}
    \item[(\ref{sec-delta3})] $\Delta_K f^2 = -3$,
    \begin{itemize}
        \item[(\ref{sec-delta3ell2})] $\Delta_K f^2 = -3$ and $\ell=2$,
        \item[(\ref{sec-delta3ell3})] $\Delta_K f^2 = -3$ and $\ell=3$,
        \item[(\ref{sec-delta3ellmorethan3})] $\Delta_K f^2 = -3$ and $\ell>3$.
    \end{itemize}
\end{enumerate}
 \item[(\ref{sec-classnumber2})] $|\Delta_K f^2| \in \Delta_2$.
\end{enumerate}

\subsection{Code}

This work makes extensive use of the computer algebra system \verb|Magma| \cite{magma}. Code verifying the computational claims made in the paper can be found at GitHub repository \cite{githubrepo}
\begin{center}
\url{https://github.com/benjamin-york/CM-galois-images}
\end{center}

\begin{ack}
    We would like to thank Drew Sutherland for his help in devising the labeling convention of CM images. We would also like to thank the referees for their many detailed comments that have helped improve the paper.
\end{ack}

\section{Preliminaries}\label{sec-preliminaries}

In this section, we will set the notation that we will follow in the rest of the paper, define the matrix groups that are the maximal possible images in the CM case, and recall results from previous articles that we will use in later section. 

Let $K$ be an imaginary quadratic field, and let $\OK$ be the ring of integers of $K$ with discriminant $\Delta_K$. Let $f\geq 1$ be an integer and let $\OO_{K,f}$ be the order of $K$ of conductor $f$. Let $j_{K,f}$ be a CM $j$-invariant corresponding to the order $\OO_{K,f}$ (thus, every CM $j$-invariant with CM by $\OO_{K,f}$ is a Galois conjugate of $j_{K,f}$) and let $N\geq 2$. We define associated constants $\delta$ and $\phi$ as follows:

\begin{itemize}
    \item If $\Delta_Kf^2\equiv 0\bmod 4$ or $N$ is odd, let $\delta=\Delta_K f^2/4$, and $\phi=0$.
	\item If $\Delta_Kf^2\equiv 1 \bmod 4$ and $N$ is even, let $\delta=\frac{(\Delta_K-1)}{4}f^2$, let $\phi=f$.
\end{itemize}

We define matrices in $\GL(2,\Z/N\Z)$ by
$$c_\varepsilon=\left(\begin{array}{cc} \varepsilon & 0\\ \phi & -\varepsilon\\\end{array}\right),\quad c'_\varepsilon = \left(\begin{array}{cc} 0 & \varepsilon \\ \varepsilon & 0 \\ \end{array}\right), \quad \text{ and } \quad c_{\delta,\phi}(a,b)=\left(\begin{array}{cc}a+b\phi & b\\ \delta b & a\\ \end{array}\right)$$ where $\varepsilon \in \{\pm 1\}$ and $a,b \in \Z/N\Z$ such that $\det(c_{\delta,\phi}(a,b))\in (\Z/N\Z)^\times$. 
We define the Cartan subgroup $\cC_{\delta,\phi}(N)$ of $\GL(2,\Z/N\Z)$ by
$$\cC_{\delta,\phi}(N)=\left\{c_{\delta,\phi}(a,b): a,b\in\Z/N\Z,\  \det(c_{\delta,\phi}(a,b)) \in (\Z/N\Z)^\times \right\},$$
and $\mathcal{N}_{\delta,\phi}(N) = \left\langle \cC_{\delta,\phi}(N),c_1\right\rangle$. We will sometimes call $\mathcal{N}_{\delta,\phi}(N)$ the ``normalizer'' of $\cC_{\delta,\phi}(N)$ in $\GL(2,\Z/N\Z)$. In the case of a prime number $N$, then $\mathcal{N}_{\delta,\phi}(N)$ is the true group-theoretic normalizer, but this is often not the case for composite values of $N$ (see \cite[Section 5]{lozano-galoiscm}). Finally, we write $\mathcal{N}_{\delta,\phi} = \varprojlim \mathcal{N}_{\delta,\phi}(N)$ and regard it as a subgroup of $\GL(2,\widehat{\Z})$. If $\ell$ is a prime, we write  $\cC_{\delta,\phi}(\ell^{\infty}) = \varprojlim \cC_{\delta,\phi}(\ell^n)$ and $\mathcal{N}_{\delta,\phi}(\ell^{\infty}) = \varprojlim \mathcal{N}_{\delta,\phi}(\ell^n)$.

Let $E/\Q(j_{K,f})$ be an elliptic curve with CM by $\OO_{K,f}$, let $N\geq 3$, and let $\rho_{E,N}$ be the Galois representation $\Gal(\overline{\Q(j_{K,f})}/\Q(j_{K,f})) \to \Aut(E[N])\cong \GL(2,\Z/N\Z)$. Similarly, for a prime $\ell$, we will write $\rho_{E,\ell^\infty}$ for the Galois representation attached to the action of Galois on the $\ell$-adic Tate module $T_\ell(E)$. We will denote the $\ell$-adic image by $G_{E,\ell^\infty} = \rho_{E,\ell^{\infty}}(G_{\Q(j_{K,f})})$, with an appropriate choice of basis such that $G_{E,\ell^\infty} \subseteq \mathcal{N}_{\delta,\phi}(\ell^{\infty})$, where $G_{\Q(j_{K,f})}=\Gal(\overline{\Q(j_{K,f})}/\Q(j_{K,f}))$ is the absolute Galois group of the field of definition $\Q(j_{K,f})$ (see Theorem \ref{thm-firstofsec1}, cited from \cite{lozano-galoiscm}, which shows that such a choice of basis exists).

\subsection{Labels} \label{sec-labels} In Theorems \ref{thm-firstofsec1} through \ref{thm-lastofsec1} below, we describe certain explicit subgroups of $\GL(2,\Z_\ell)$ that correspond to the possible images (up to conjugation) of $\rho_{E,\ell^{\infty}}$. We use the following conventions:
$$G_{\delta,\phi}^{i,t}=G_{\delta,\phi}^{i,t}(\ell^\infty)$$
is a subgroup of the Cartan $\mathcal{C}_{\delta,\phi}(\ell^{\infty})$ of index $i$, that is, $[\mathcal{C}_{\delta,\phi}(\ell^{\infty}) : G_{\delta,\phi}^{i,t}]=i$. When there are several possible subgroups of $\mathcal{C}_{\delta,\phi}(\ell^{\infty})$ with index $i$, we use a parameter $t \in \{1,2,3,4\}$ to differentiate between subgroups (the choice of numbering, though, was arbitrary).

Alternatively, in the tables, we will refer to the subgroups of $\mathcal{N}_{\delta, \phi}(\ell^{\infty})$ generated by the subgroups $G_{\delta,\phi}^{i,t} \subseteq \mathcal{C}_{\delta, \phi}(\ell^{\infty})$ and by the matrices $c_{\varepsilon}$ or $c'_{\varepsilon}$ using a labeling system similar to the labels that are used in the LMFDB (\cite{lmfdb}) for subgroups of $\GL(2,\Z_\ell)$. The subgroups will be denoted by a label of the form
$$\ell\texttt{.}\nu\texttt{.c-n.i.t}$$
where
\begin{itemize}
    \item $\ell$ is the prime we are considering,
    \item $\nu$ is the $\ell$-adic valuation of $\Delta(\mathcal{O})=\Delta_K\cdot f^2$,
    \item \texttt{c} is the square class of $u=\Delta_K\cdot f^2\cdot \ell^{-\nu}$ in $\Z_\ell^\times/(\Z_\ell^\times)^2$. If $\ell>2$, then \texttt{c} is \texttt{s} or \texttt{ns} according to whether $u$ is a square or a non-square, respectively. If $\ell=2$, then $\texttt{c}$ is \texttt{s}, \texttt{ns3}, \texttt{ns5}, or \texttt{ns7} according to whether $u\equiv 1,3,5$ or $7\bmod 8$, respectively,
    \item \texttt{n} is the level of definition of the subgroup. The level of definition of a subgroup $G\subseteq \mathcal{N}_{\delta,\phi}(\ell^n)$ is $n\geq 1$ is smallest with the property that $\pi_n^{-1}(\pi_n(G))=G$, where $\pi_n\colon \mathcal{N}_{\delta,\phi}(\ell^{\infty}) \to \mathcal{N}_{\delta,\phi}(\ell^n)$,
    \item \texttt{i} is the index of the subgroup in $\mathcal{N}_{\delta,\phi}(\ell^{\infty})$, and
    \item \texttt{t} is a tie-breaker among subgroups of $\mathcal{N}_{\delta,\phi}(\ell^{\infty})$ that share $\ell$, $\nu$, \texttt{c}, \texttt{n}, and \texttt{i}. The tie-breaker is defined in \cite[Section 2.4]{elladic}.
\end{itemize} 
We would like to thank Drew Sutherland for his help in devising the above labeling method.

\subsection{Weierstrass models of elliptic curves with CM by orders of class number 1 and 2} \label{sec-choiceofmodels}

In Table \ref{tab-modelsclassnumber2}, for each order $\Of$ of class number $1$ or $2$, we have given an example of an elliptic curve $E_{\Of}$ defined over $\Q(j_{K,f})$ with CM by $\Of$. We chose these models as follows. 
\begin{itemize} 
\item For orders of class number $1$, our choice of $E_{\Of}$ coincides with Zywina's choice in \cite{zywina}. In other words, $E_{\Of}$ is equal to Zywina's $E_{D,f}$, where $D=-\Delta_K$. 

\item For orders $\Of$ of class number $2$, we proceeded as follows. 
    \begin{itemize}
        \item  If the LMFDB contained data on elliptic curves with CM by $\Of$ defined over $\Q(j_{K,f})$, our choice of curve $E_{\Of}$ coincided with the first curve listed in the LMFDB with CM by $\Of$, according to the LMFDB's labeling system. 
        \item  If data was not available in the LMFDB, we first computed $j_{K,f}$ and an arbitrary model $E'/\Q(j_{K,f})$ with $j(E')=j_{K,f}$, using the usual formula (see \cite[Ch. III, Prop. 1.4]{silverman1}). We then found a list of twists of $E'$ with smallest conductor-norm, and whose minimal model had smallest discriminant-norm. If this list did not contain a unique elliptic curve, we choose $E_{\Of}$ to be a curve in the list whose minimal model could be written with the fewest characters.
        \item When the class number of $\Of$ is $2$, there are two $j$-invariants with CM by $\Of$, namely $j_{K,f}$ and its non-trivial Galois conjugate. However, we only list one of them in the tables: see Remark \ref{rem-conjugates}.
    \end{itemize}
\end{itemize}

\label{sec-zywinasresults}
\subsection{Classification of mod-$\ell$ Galois representation of elliptic curve with CM over $\QQ$}
\label{sec-zywinasresults}
In this section we summarize Zywina's results from \cite{zywina} as it pertains to elliptic curves with CM defined over $\Q$. We have edited the statements using our notation, otherwise the results are just as they appear in \cite{zywina}. Up to isomorphism over $\Qbar$, there are thirteen $\Qbar$-isomorphic classes of elliptic curves with complex multiplication that are defined over $\QQ$, one for each order $\Of$ of class number one.  In Table \ref{tab-modelsclassnumber2} below, the reader can find an elliptic curve $E_{\Of}/\QQ$ for each of these classes. 

Let  $\ell$ be an odd prime. Let  $C_s(\ell)$ (resp. $C_{ns}(\ell)$) be the usual split (resp. non-split) Cartan subgroup in $\GL_2(\FF_\ell)$. Let $N_s(\ell)$ and $N_{ns}(\ell)$ be the normalizers of $C_{s}(\ell)$ and $C_{ns}(\ell)$, respectively, in $\GL_2(\FF_\ell)$.   Let $B(\ell)$ be the Borel (upper triangular) subgroup in $\GL_2(\FF_\ell)$.

We first describe the group $\rho_{E,\ell}(G_\Q)$ up to conjugacy when $E$ is a CM elliptic curve with non-zero $j$-invariant and $\ell$ odd, where $G_\Q=\Gal(\overline{\Q}/\Q)$ and we note that in this case $j_{K,f}\in \Q$ so that $\Q(j_{K,f})=\Q$.

\begin{prop}[\cite{zywina}, Prop. 1.14]  \label{prop-zywina1}
Let $E$ be a CM elliptic curve defined over $\QQ$ with $j_E\neq 0$.  The ring of endomorphisms of $E/{\Qbar}$ is an order $\Of$ of conductor $f$ in the ring of integers of an imaginary quadratic field of discriminant $\Delta_K$. Take any odd prime $\ell$.
\begin{romanenum}
\item \label{P:CM main a}
If $\legendre{\Delta_K}{\ell}=1$, then $\rho_{E,\ell}(G_\Q)$ is conjugate in $\GL_2(\FF_\ell)$ to $N_s(\ell)$.
\item \label{P:CM main b}
If $\legendre{\Delta_K}{\ell}=-1$, then $\rho_{E,\ell}(G_\Q)$ is conjugate in $\GL_2(\FF_\ell)$ to $N_{ns}(\ell)$.

\item \label{P:CM main c}
Suppose that $\ell$ divides $\Delta_K$ and hence $\Delta_K = -\ell$.  Define the groups 
\[
G=\{ \left(\begin{smallmatrix} a & b \\0 & \pm a \end{smallmatrix}\right) : a\in \FF_\ell^\times, b\in \FF_\ell\},
\]
\[
H_1 = \{ \left(\begin{smallmatrix} a & b \\0 & \pm a \end{smallmatrix}\right) : a\in (\FF_\ell^\times)^2, b\in \FF_\ell\}, \quad \text{ and }\quad H_2 = \{ \left(\begin{smallmatrix} \pm a & b \\0 &  a \end{smallmatrix}\right) : a\in (\FF_\ell^\times)^2, b\in \FF_\ell\} 
\]

\noindent 
If $E$ is isomorphic to $E_{\Of}$, then $\rho_{E,\ell}(G_\Q)$ is conjugate in $\GL_2(\FF_\ell)$ to $H_1$.

\noindent
If $E$ is isomorphic to the quadratic twist of $E_{\Of}$ by $-\ell$, then $\rho_{E,\ell}(G_\Q)$ is conjugate in $\GL_2(\FF_\ell)$ to $H_2$.

\noindent
If $E$ is not isomorphic to $E_{\Of}$ or its quadratic twist by $-\ell$, then $\rho_{E,\ell}(G_\Q)$ is conjugate in $\GL_2(\FF_\ell)$ to $G$.

\end{romanenum}
\end{prop}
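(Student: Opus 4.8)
The plan is to use the complex multiplication of $E$ to force the image of $\rho_{E,\ell}$ into the normalizer of a Cartan subgroup, and then to compute it exactly by class field theory over $K$, exploiting $h(\Of)=1$. Since $h(\Of)=1$ we have $j_E\in\Q$, all endomorphisms of $E$ are defined over $K$, and $E[\ell]$ is free of rank one over $\Of/\ell$; hence $\rho_{E,\ell}(G_K)$ lies in the centralizer of $\Of/\ell$ in $\GL(2,\F_\ell)$, which is the Cartan subgroup $C:=(\Of/\ell)^\times$, while $\rho_{E,\ell}(G_\Q)$ lies in its normalizer $N$, with any complex conjugation $c$ mapping into $N\setminus C$ (it acts $\Of$-semilinearly, and conjugation is nontrivial on $\Of/\ell$ for odd $\ell$). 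When $\ell\nmid D$ one checks $\ell\nmid f$, so $\Of/\ell=\OK/\ell$ is $\F_\ell\times\F_\ell$ or $\F_{\ell^2}$ according to whether $\legendre{\Delta_K}{\ell}=1$ or $-1$, and $C$ is the split or non-split Cartan; when $\ell\mid D$ (which includes the case $\ell\mid f$ occurring for $\Delta_K f^2=-27$) one has $\Of/\ell\cong\F_\ell[\varepsilon]/(\varepsilon^2)$ and $C$ sits inside a Borel subgroup.

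For parts (i) and (ii): for $\ell$ unramified, an element of $G_\Q\setminus G_K$ acts $\Of$-semilinearly with nontrivial twist and so does not lie in $C$, whence $\rho_{E,\ell}^{-1}(C)=G_K$ and (using $\rho_{E,\ell}(c)\in N\setminus C$) $[\rho_{E,\ell}(G_\Q):\rho_{E,\ell}(G_K)]=2$. It therefore suffices to show $\rho_{E,\ell}(G_K)=C$, equivalently $[K(E[\ell]):K]=\#(\OK/\ell)^\times$, for then $\rho_{E,\ell}(G_\Q)=\langle C,\rho_{E,\ell}(c)\rangle=N$ equals $N_s(\ell)$ or $N_{ns}(\ell)$. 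This surjectivity follows from the main theorem of complex multiplication: $K(E[\ell])$ is a small extension of the ray class field $K_{(\ell)}$ of conductor $\ell$, and since $h_K=1$ one has $[K_{(\ell)}:K]=\#(\OK/\ell)^\times/\#\OK^\times$ while $[K(E[\ell]):K_{(\ell)}]=\#\OK^\times$ (the degree contributed by the Weber function); multiplying gives the claim. (Alternatively, invoke the known surjectivity of the mod-$\ell$ representation of a CM elliptic curve over $K$ when $\ell$ is unramified in $K$.)

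For part (iii): here $D=\ell$, so $K=\Q(\sqrt{-\ell})$ with $\ell\equiv 3\pmod 4$ and $\ell\OK=\mathfrak l^2$. Since $\overline{\mathfrak l}=\mathfrak l$, the subgroup $E[\mathfrak l]\subseteq E[\ell]$ of order $\ell$ is $G_\Q$-stable, so $\rho_{E,\ell}(G_\Q)$ lies in a Borel, acting on $E[\mathfrak l]$ by a character $\lambda$ and on $E[\ell]/E[\mathfrak l]$ by $\mu=\chi_{\mathrm{cyc}}\lambda^{-1}$. On $G_K$ both $\lambda$ and $\mu$ equal the reduction modulo $\mathfrak l$ of the Hecke character of $E$, so $\mu/\lambda$ is trivial on $G_K$ and hence valued in $\{\pm1\}$; this already places $\rho_{E,\ell}(G_\Q)$ in the group $G$. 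Moreover $\lambda$ is unramified outside $\ell$ (as $E$ has good reduction there), so it is a Dirichlet character of $\ell$-power conductor, and a direct computation with the distinguished model $E_{\Of}$ — via its explicit Hecke character, or by evaluating $\lambda$ at a handful of Frobenius elements — shows that $\chi_\ell\circ\lambda$ is trivial, i.e. $\rho_{E_{\Of},\ell}(G_\Q)\subseteq H_1$. Twisting $E_{\Of}$ by a quadratic character $\chi_d$ multiplies both $\lambda$ and $\mu$ by $\chi_d$; since $(\ell-1)/2$ is odd we get $\chi_\ell\circ(\lambda\chi_d)=(\chi_\ell\circ\lambda)\cdot\chi_d$, and $G$ has exactly three index-two subgroups, namely the kernels of $\chi_\ell\circ\lambda$, $\chi_\ell\circ\mu$, and $\chi_\ell\circ\chi_{\mathrm{cyc}}=\chi_{-\ell}$. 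Tracking the twist through these shows the image is $H_1$ precisely when $\chi_d$ is trivial (i.e. $E\cong E_{\Of}$), is $H_2$ precisely when $\chi_d=\chi_{-\ell}$ (i.e. $E$ is the twist of $E_{\Of}$ by $-\ell$), and is all of $G$ otherwise, the remaining character $\chi_{-\ell}$ never being trivial on the image because $\det\rho_{E,\ell}=\chi_{\mathrm{cyc}}$ is surjective.

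The step I expect to be the main obstacle is the explicit identification of $\lambda$ for the base curve $E_{\Of}$ in part (iii) — verifying that $E_{\Of}$ itself, rather than one of its quadratic twists, lands in $H_1$ — since this is the only point at which the particular Weierstrass model enters; the remainder is formal from CM theory and $h_K=1$. A secondary nuisance is the bookkeeping for orders of conductor $f>1$ and for the exceptional prime $\ell\mid f$.
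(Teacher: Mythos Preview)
The paper does not prove this proposition: it is quoted from \cite{zywina} as background input, with no argument supplied beyond the citation, so there is no in-paper proof to compare against.

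Your sketch follows the standard CM route and is sound in outline. For (i)--(ii) the key surjectivity $\rho_{E,\ell}(G_K)=(\Of/\ell)^\times$ does follow from the class-field computation you describe once $h(\Of)=1$; just take care that for $f>1$ one should work with ring class fields rather than ray class fields, though on the class-number-one list with $\ell$ odd this only bites at $\ell=3$, $\Delta_Kf^2=-27$, which falls under (iii) anyway. For (iii) your character-tracking under quadratic twist is correct (the identity $\chi_\ell\circ\chi_d=\chi_d$ via $\chi_\ell(-1)=-1$ is exactly what makes the calculus clean), and you have rightly isolated the model-dependent step---that $E_{\Of}$ itself, rather than a twist, lands in $H_1$---as the crux; this genuinely requires a computation with the specific Weierstrass equation and cannot be argued away abstractly. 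One point to tighten: you argue containment of the image in $H_1$, $H_2$, or $G$, but equality also needs the lower bound $|\rho_{E,\ell}(G_K)|\geq \tfrac{1}{2}\#(\Of/\ell)^\times$ from the same class-field-theory input, together with surjectivity of the determinant.
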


The following deals with the prime $\ell=2$, which was excluded in the previous proposition.

\begin{prop}[\cite{zywina}, Prop. 1.15] \label{prop-zywina2}
Let $E/\QQ$ be a CM elliptic curve and let $j_E=j_{K,f}$.  Define the subgroup $G_2=\{ I, \left(\begin{smallmatrix}1 & 1 \\0 & 1 \end{smallmatrix}\right)\}$ of $\GL_2(\FF_2)$.
\begin{romanenum}
\item \label{P: prime 2 i}
If  $j_E \in \{2^4 3^3 5^3,\, 2^3 3^3 11^3,\, -3^3 5^3,\, 3^3 5^3 17^3,\, 2^6 5^3\},$
then $\rho_{E,2}(G_\Q)$ is conjugate to $G_2$.
\item
If $j_E \in \{ -2^{15} 3 \cdot 5^3,\, -2^{15},\, -2^{15} 3^3,\, -2^{18} 3^3 5^3,\, -2^{15} 3^3 5^3 11^3,\, -2^{18} 3^3 5^3 23^3 29^3\},$
then $\rho_{E,2}(G_\Q)=\GL_2(\FF_2)$.

\item
Suppose that $j_E=1728$.  The curve can be given by a Weierstrass equation $y^2=x^3-dx$ for some $d\in \QQ^\times$.

\noindent
If $d$ is a square, then $\rho_{E,2}(G_\Q)=\{I\}$.  

\noindent 
If $d$ is not a square, then the group $\rho_{E,2}(G_\Q)$ is conjugate to $G_2$.  

\item 
Suppose that $j_E=0$.  The curve $E$ can be given by a Weierstrass equation $y^2=x^3+d$ for some $d\in \QQ^\times$. 

\noindent 
If $d$ is a cube, then $\rho_{E,2}(G_\Q)$ is conjugate in $\GL_2(\FF_2)$ to the group $G_2$.

\noindent 
If $d$ is not a cube, then $\rho_{E,2}(G_\Q)=\GL_2(\FF_2)$. 
\end{romanenum}
\end{prop}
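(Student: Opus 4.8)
The starting point is the identification $\GL_2(\F_2)\cong S_3$ via the action on $E[2]\setminus\{O\}$: putting $E$ in short Weierstrass form $y^2=x^3+Ax+B$, the three nonzero $2$-torsion points have $x$-coordinates equal to the roots of $g(x)=x^3+Ax+B$, so $\rho_{E,2}(G_\Q)$ is, up to conjugacy in $S_3$, the Galois group of the splitting field of $g$ acting by permutation of its roots. Since $\operatorname{disc}(g)=-(4A^3+27B^2)$ differs from $\Delta_E$ by the square factor $16$, the four conjugacy classes of subgroups of $S_3$ correspond to: $g$ splits over $\Q$ $\Leftrightarrow$ image $\{I\}$; $g=(\text{linear})(\text{irreducible quadratic})$ $\Leftrightarrow$ image $\cong \Z/2\Z$, conjugate to $G_2$; $g$ irreducible and $\Delta_E\in(\Q^\times)^2$ $\Leftrightarrow$ image $\cong\Z/3\Z$; $g$ irreducible and $\Delta_E\notin(\Q^\times)^2$ $\Leftrightarrow$ image $=S_3=\GL_2(\F_2)$.

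For (i) and (ii) I would first reduce to a statement about the $j$-invariant. When $j_E\neq 0,1728$ every elliptic curve over $\Q$ with that $j$-invariant is a quadratic twist of $E_{\Of}$, and the twist of $y^2=x^3+Ax+B$ by $d\in\Q^\times$ is $y^2=x^3+d^2Ax+d^3B$, whose $2$-division polynomial has roots $d\cdot\{\text{roots of }g\}$ — so the $2$-division field, hence $\rho_{E,2}(G_\Q)$, depends only on $j_E$. It then suffices to work with the single model $E_{\Of}$ of Table \ref{tab-modelsclassnumber2} for each of the eleven orders of class number one with $j\neq 0,1728$. Here I would also use the structural constraint (recalled in Section \ref{sec-preliminaries}) that $\rho_{E,2}(G_\Q)$ is contained in a conjugate of $\mathcal{N}_{\delta,\phi}(2)$: a direct computation of this group shows it is $\cong \Z/2\Z$ when $2$ is not inert in $\Of$ (equivalently $\Delta_Kf^2\not\equiv 5\bmod 8$) and equals all of $\GL_2(\F_2)$ when $2$ is inert ($\Delta_Kf^2\equiv 5\bmod 8$). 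For the five orders of discriminant $-7,-8,-12,-16,-28$ (those with $\Delta_Kf^2\not\equiv 5\bmod 8$) one only has to check that $g$ does not split completely over $\Q$, which forces the image to be conjugate to $G_2$; these are the $j$-invariants of (i). For the six orders of discriminant $-11,-19,-27,-43,-67,-163$ (those with $\Delta_Kf^2\equiv 5\bmod 8$) one checks that $g$ is irreducible with non-square discriminant, which excludes the remaining proper subgroups of $S_3$ and forces the image to be $\GL_2(\F_2)$; these are the $j$-invariants of (ii).

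Parts (iii) and (iv) must be treated separately because the $2$-adic image is no longer constant on the $j$-invariant (the twists are quartic, resp. sextic), but the relevant families are completely explicit. If $j_E=1728$, write $E\colon y^2=x^3-dx=x(x^2-d)$; the nonzero $2$-torsion has $x$-coordinates $0,\pm\sqrt d$, so $\Q(E[2])=\Q(\sqrt d)$, giving image $\{I\}$ when $d\in(\Q^\times)^2$ and image conjugate to $G_2$ otherwise. If $j_E=0$, write $E\colon y^2=x^3+d$; if $d=c^3$ then $x^3+d=(x+c)(x^2-cx+c^2)$ and the quadratic has discriminant $-3c^2\notin(\Q^\times)^2$, so $\Q(E[2])=\Q(\sqrt{-3})$ and the image is conjugate to $G_2$; if $d$ is not a cube then $x^3+d$ has no rational root (its unique real root $-\sqrt[3]{d}$ is irrational) hence is irreducible, and $\operatorname{disc}(x^3+d)=-27d^2=-3\,(3d)^2\notin(\Q^\times)^2$, so the image is $\GL_2(\F_2)$.

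The step I expect to be the real work is not any single argument but the finite verification underlying (i)--(ii): one must pass from the (not necessarily short) Weierstrass equations of Table \ref{tab-modelsclassnumber2} to the eleven cubics $g$, and then correctly read off, for each, its factorization type over $\Q$ and the square class of its discriminant. The one nontrivial ingredient beyond elementary algebra is the containment $\rho_{E,2}(G_\Q)\subseteq \mathcal{N}_{\delta,\phi}(2)$ up to conjugacy; with it, each case reduces to checking at most a factorization and a square class, and the potential appearance of $\Z/3\Z$ as the image is excluded a priori whenever $2$ does not remain inert in $\Of$.
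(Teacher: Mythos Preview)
The paper does not prove this proposition: it is quoted verbatim from \cite{zywina} in the preliminaries section, with no argument given. So there is no ``paper's own proof'' to compare against here.

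That said, your argument is correct and is essentially how one would prove it directly. The reduction to the factorization type of the $2$-division cubic and the square class of its discriminant is the right framework, and your observation that for $j\neq 0,1728$ the mod-$2$ image is a quadratic-twist invariant cleanly reduces (i) and (ii) to a single model per $j$-invariant. Your use of the containment $\rho_{E,2}(G_\Q)\subseteq \mathcal{N}_{\delta,\phi}(2)$, together with the computation that this normalizer has order~$2$ exactly when $\Delta_Kf^2\not\equiv 5\bmod 8$ and order~$6$ otherwise, is a nice structural shortcut: it means that in the five cases of (i) you only need to exclude the trivial image (one rational-root check each), and in the six cases of (ii) you need only exclude the cyclic-$3$ image (one discriminant-square check each, or alternatively note that the index of the image in $\mathcal{N}_{\delta,\phi}$ divides $|\Of^\times|=2$, which already rules out index~$2$ in a group of order~$6$ landing in $\Z/3\Z$ --- wait, that does allow order~$3$, so the discriminant check is indeed needed). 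Parts (iii) and (iv) are handled completely by your explicit factorizations.

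One small phrasing point: ``$2$ is not inert in $\Of$'' should be read as ``$\Of/2\Of$ is not a field'', since for $\Delta_Kf^2=-12$ the prime $2$ is inert in $\OO_K$ but ramified in the order $\Z[\sqrt{-3}]$; your parenthetical $\Delta_Kf^2\not\equiv 5\bmod 8$ is the unambiguous condition.
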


It remains to consider the situation where $\ell$ is an odd prime and $E/\QQ$ is an elliptic curve with $j_E=0$.   That such curves have cubic twists make the classification more involved. 

\begin{prop}[\cite{zywina}, Prop. 1.16]  \label{prop-zywina3}
Let $E$ be an elliptic curve over $\QQ$ with $j_E=0$.  Take any odd prime $\ell$.
\begin{romanenum}
\item  \label{P:j=0 situation i}
If $\ell \equiv 1 \pmod{9}$, then $\rho_{E,\ell}(G_\Q)$ is conjugate to $N_{s}(\ell)$ in $\GL_2(\FF_\ell)$.
\item \label{P:j=0 situation ii}
If $\ell \equiv 8 \pmod{9}$, then $\rho_{E,\ell}(G_\Q)$ is conjugate to $N_{ns}(\ell)$ in $\GL_2(\FF_\ell)$.
\item \label{P:j=0 situation iii}
Suppose that $\ell$ is congruent to $4$ or $7$ modulo $9$.  Let $E'/\QQ$ be the elliptic curve over $\QQ$ defined by $y^2=x^3+16 \ell^e$, where $e\in \{1,2\}$ satisfies $ \frac{\ell-1}{3} \equiv e \pmod{3}$.   

\noindent
If $E$ is not isomorphic to a quadratic twist of $E'$, then $\rho_{E,\ell}(G_\Q)$ is conjugate to $N_{s}(\ell)$ in $\GL_2(\FF_\ell)$.

\noindent 
If $E$ is isomorphic to a quadratic twist of $E'$, then $\rho_{E,\ell}(G_\Q)$ is conjugate in $\GL_2(\FF_\ell)$ to the subgroup $G$ of $N_s(\ell)$ consisting of the matrices of the form  $\left(\begin{smallmatrix} a & 0 \\0 & b\end{smallmatrix}\right)$ or $\left(\begin{smallmatrix} 0 & a \\b & 0 \end{smallmatrix}\right)$ with $a/b \in (\FF_\ell^\times)^3$.

\item 
\label{P:j=0 situation iv}
Suppose that $\ell$ is congruent to $2$ or $5$ modulo $9$.  Let $E'/\QQ$ be the elliptic curve over $\QQ$ defined by $y^2=x^3+16 \ell^e$, where $e\in \{1,2\}$ satisfies $ \frac{\ell+1}{3} \equiv -e \pmod{3}$.   

\noindent
If $E$ is not isomorphic to a quadratic twist of $E'$, then $\rho_{E,\ell}(G_\Q)$ is conjugate to $N_{ns}(\ell)$ in $\GL_2(\FF_\ell)$.

\noindent 
If $E$ is isomorphic to a quadratic twist of $E'$, then $\rho_{E,\ell}(G_\Q)$ is conjugate in $\GL_2(\FF_\ell)$ to the subgroup $G$ of $N_{ns}(\ell)$ generated by the unique index $3$ subgroup of $C_{ns}(\ell)$ and by $\left(\begin{smallmatrix} 1 & 0 \\0 & -1\end{smallmatrix}\right)$.

\item  \label{P:j=0 situation v}
Suppose that $\ell=3$.  The curve $E$ can be given by a Weierstrass equation $y^2=x^3+d$ for some $d\in \QQ^\times$.   Fix notation as above.

\noindent
If $d$ or $-3d$ is a square and $-4d$ is a cube, then $\rho_{E,3}(G_\Q)$ is conjugate to
\[
H_{1,1}=\{ \left(\begin{smallmatrix} 1 & 0 \\0 & a \end{smallmatrix}\right) : a\in \FF_\ell^\times\}.
\]
\noindent
If $d$ and $-3d$ are not squares and $-4d$ is a cube, then $\rho_{E,3}(G_\Q)$ is conjugate to $C_s(3)$.

\noindent
If $d$ is a square and $-4d$ is not a cube, then $\rho_{E,3}(G_\Q)$ is conjugate to
\[
H_{3,1}=\{ \left(\begin{smallmatrix} 1 & b \\0 & a \end{smallmatrix}\right) : a\in \FF_\ell^\times\}.
\]
\noindent
If $-3d$ is a square and $-4d$ is not a cube, then $\rho_{E,3}(G_\Q)$ is conjugate to 
\[
H_{3,2}=\{ \left(\begin{smallmatrix} a & b \\0 & 1 \end{smallmatrix}\right) : a\in \FF_\ell^\times\}.
\]
\noindent
If $d$ and $-3d$ are not squares and $-4d$ is not a cube, then $\rho_{E,3}(G_\Q)$ is conjugate to $B(3)$.

\end{romanenum}
\end{prop}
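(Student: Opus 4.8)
The plan is to exploit the complex multiplication. Write $E=E_d\colon y^2=x^3+d$ with $d\in\Q^\times$; every $E/\Q$ with $j_E=0$ has this shape, these curves are the sextic twists of one another, $\Aut(E_{\overline\Q})\cong\mu_6$, and $\operatorname{End}(E_{\overline\Q})=\Z[\omega]=\OK$ with $K=\Q(\sqrt{-3})$ of class number $1$. The primes $\ell\neq 3$ and $\ell=3$ behave differently, since for $\ell=3$ the field $K$ is ramified and sits inside $\Q(\zeta_3)$, so I treat them separately.

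\emph{The case $\ell\neq 3$.} Since $E_d[\ell]$ is free of rank one over $\OK/\ell$, the group $G_K=\Gal(\overline\Q/K)$ acts through a character $\psi_d\colon G_K\to(\OK/\ell)^\times$, and $(\OK/\ell)^\times$ is the Cartan subgroup $C$ of $\GL_2(\F_\ell)$ — conjugate to $C_s(\ell)$ when $\ell$ splits in $K$ (i.e. $\ell\equiv 1\bmod 3$) and to $C_{ns}(\ell)$ when $\ell$ is inert ($\ell\equiv 2\bmod 3$). Complex conjugation acts on $(\OK/\ell)^\times$ through the nontrivial automorphism of $K$, hence lands in $N_\bullet(\ell)\setminus C$; since $[G_\Q:G_K]=2$ one gets $\rho_{E,\ell}(G_\Q)\subseteq N_\bullet(\ell)$ with $[N_\bullet(\ell):\rho_{E,\ell}(G_\Q)]=[C:\psi_d(G_K)]$, so the whole problem is to compute this index. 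For a fixed base curve $E_0$ (e.g. $y^2=x^3+1$) one computes $\psi_0$ as the mod-$\ell$ reduction of the CM Hecke character of $E_0$, whose conductor over $K$ is supported above $6$; using $h_K=1$ and a short calculation with the unit group $\mu_6=\OK^\times$ one finds that $\psi_0(G_K)$ is the full Cartan $C$ for every $\ell\neq 3$. Passing from $E_0$ to $E_d$ is the sextic twist by the cocycle $\sigma\mapsto\sigma(\sqrt[6]{d})/\sqrt[6]{d}\in\mu_6\subseteq\OK^\times$, so $\psi_d=\chi_d\cdot\psi_0$ with $\chi_d\colon G_K\to\mu_6$ the character of $K(\sqrt[6]{d})/K$; hence $\psi_d(G_K)$ is proper in $C$ precisely when the cubic part of $\chi_d$ cancels (up to inversion) the cubic part of $\psi_0$, in which case the index is exactly $3$. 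Inspecting ramification: the cubic subfield of the field cut out by $\psi_0$ is ramified at $2$ and $3$ exactly when the order-$3$ inertia there escapes the index-$3$ subgroup of the relevant cyclic group, which happens precisely for $\ell\equiv 4,7\bmod 9$ in the split case and $\ell\equiv 2,5\bmod 9$ in the non-split case, but not for $\ell\equiv 1$ or $8\bmod 9$. Matching this cubic field to $K(\sqrt[3]{d})$ forces $d$ to agree up to cubes with $16\ell^e$, with the exponent $e\in\{1,2\}$ pinned by the direction of ramification at $\ell$ (this is the congruence $\tfrac{\ell\mp 1}{3}\equiv\pm e\bmod 3$), and identifies the index-$3$ image as the diagonal-and-antidiagonal matrices with cube ratio in $N_s(\ell)$ (resp. the stated index-$3$ subgroup of $N_{ns}(\ell)$). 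This yields (i)--(iv).

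\emph{The case $\ell=3$.} Here I would argue directly from the $3$-division polynomial of $y^2=x^3+d$, which is $3x(x^3+4d)$: the $x$-coordinates of the $3$-torsion lie in $\Q(\sqrt[3]{-4d})$, the point $(0,\sqrt d)$ has order $3$, and the Weil pairing forces $\zeta_3$ (equivalently $\sqrt{-3}$) into $\Q(E[3])$, so $\Q(E_d[3])=\Q(\sqrt{-3},\sqrt d,\sqrt[3]{-4d})$. The subgroup $\langle(0,\sqrt d)\rangle$ is Galois-stable (its $x$-line is rational), so $\rho_{E,3}(G_\Q)\subseteq B(3)$; recording which of ``$d$ a square'', ``$-3d$ a square'', ``$-4d$ a cube'' hold (noting that $d$ and $-3d$ cannot both be squares) then pins the image down to $H_{1,1}$, $C_s(3)$, $H_{3,1}$, $H_{3,2}$, or $B(3)$, giving (v).

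\emph{Main difficulty.} The real content is in the case $\ell\neq 3$: computing $\psi_0(G_K)$ and, especially, tracking the behaviour of the sextic twisting character $\chi_d$ — bookkeeping the contribution of the unit group $\mu_6$ and the way the presence or absence of $\mu_9$ in $\F_\ell$ interacts with cube roots of $d$. This is exactly what produces the trichotomy among residues mod $9$ and singles out the curve $y^2=x^3+16\ell^e$ with its precise exponent; the CM reduction to $N_s(\ell)$ or $N_{ns}(\ell)$ and the direct $\ell=3$ analysis are comparatively routine.
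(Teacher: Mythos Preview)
The paper does not supply its own proof of this proposition: it is quoted verbatim from Zywina's preprint (the attribution \texttt{[\textbackslash cite\{zywina\}, Prop.~1.16]} in the statement header, and the introductory sentence ``In this section we summarize Zywina's results\ldots'') makes clear that Propositions~\ref{prop-zywina1}--\ref{prop-zywina3} are imported results, not re-proved here. So there is no in-paper argument to compare against; you have produced a proof sketch where the paper simply cites.

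That said, your outline is the right one and matches the standard CM argument that Zywina uses. Your treatment of part~(v) is essentially complete: the $3$-division polynomial $3x(x^3+4d)$, the rational point $(0,\sqrt d)$ giving a Galois-stable line (hence Borel image), and the trichotomy on $\sqrt d$, $\sqrt{-3d}$, $\sqrt[3]{-4d}$ pin down the five subgroups exactly as stated. For $\ell\neq 3$ your strategy is correct but two steps are genuinely underspecified rather than routine: first, the claim that $\psi_0(G_K)=C$ for the base curve requires an explicit surjectivity argument (class field theory plus tracking where the units $\mu_6$ go), and second, the identification of the exceptional twist class with $16\ell^e$ and the determination of the exponent $e$ via ``direction of ramification at $\ell$'' is exactly the delicate bookkeeping you flag in your final paragraph --- it is not hard, but it is the entire content of parts~(iii)--(iv), and your sketch does not actually carry it out. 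If you intend this as a self-contained proof rather than a plausibility argument, those two points need to be written down.
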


\subsection{Classification of $\ell$-adic images} \label{sec-alvaroCMresults}

For a prime $\ell$, the second author has given a complete classification of the possible $\ell$-adic images $G_{E,\ell^\infty} = \rho_{E,\ell^{\infty}}(G_{\Q(j_{K,f})})$ attached to elliptic curves $E/\Q(j_{K,f})$ with CM by $\OO_{K,f}$.  Here we summarize the  results from \cite{lozano-galoiscm}. 

\begin{theorem}[\cite{lozano-galoiscm}, Theorems 1.1 and 1.2]\label{thm-cmrep-intro-alvaro}\label{thm-firstofsec1} Let $E/\Q(j_{K,f})$ be an elliptic curve with CM by $\OO_{K,f}$, and let $N\geq 2$.
	 Then:
	\begin{enumerate}
            \item There is a $\Z/N\Z$-basis of $E[N]$ such that the image of $\rho_{E,N}$
	is contained in $\mathcal{N}_{\delta,\phi}(N)$, and	 the index of the image of $\rho_{E,N}$ in $\mathcal{N}_{\delta,\phi}(N)$ is a divisor of the order of $\Of^\times/\mathcal{O}_{K,f,N}^\times$, where $\mathcal{O}_{K,f,N}^\times=\{u\in\Of^\times: u\equiv 1 \bmod N\Of\}$.
		\item There is a compatible system of bases of $E[N]$ such that the image of $\rho_E$ is contained in $\mathcal{N}_{\delta,\phi}$, and the index of the image of $\rho_{E}$ in $\mathcal{N}_{\delta,\phi}$ is a divisor of the order $\Of^\times$. In particular, the index is a divisor of $4$ or $6$. 
	\end{enumerate} 
\end{theorem}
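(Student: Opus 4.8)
\emph{Setup and strategy.} Write $\mathcal{O}=\mathcal{O}_{K,f}$, let $F=\Q(j_{K,f})$ be the base field, and set $L=K(j_{K,f})$; recall that $L$ is the ring class field $H_{\mathcal{O}}$, so $[L:F]=2$ (equivalently $K\not\subseteq F$: complex conjugation fixes $j_{K,f}$, the $j$-invariant of a lattice stable under conjugation, but not $K$), and that by the standard theory of complex multiplication every $\overline{\Q}$-endomorphism of $E$ is defined over $L$, giving a ring isomorphism $\iota\colon\mathcal{O}\xrightarrow{\ \sim\ }\mathrm{End}_L(E)$. Put $G_F=\Gal(\overline{F}/F)$ and $G_L=\Gal(\overline{F}/L)$, of index $2$, so $G_F=G_L\sqcup\sigma_0G_L$ for any $\sigma_0\in G_F\setminus G_L$. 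The plan is: first pin $\rho_{E,N}(G_L)$ inside a Cartan subgroup, then extend to $G_F$ to land in the normalizer, then control the index by the main theorem of complex multiplication, and finally pass to the limit for the profinite statement.

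\emph{Cartan and normalizer containment.} The $N$-torsion $E[N]$ is a free $\mathcal{O}/N\mathcal{O}$-module of rank $1$; fix a generator $P$ and a generator $\tau$ of $\mathcal{O}$ over $\Z$ chosen so that $\iota(\tau)$ has matrix $c_{\delta,\phi}(0,1)$ in the $\Z/N\Z$-basis $\{\iota(\tau)P,P\}$ of $E[N]$ — this is exactly the content of the $\delta,\phi$ dichotomy in Section~\ref{sec-preliminaries} (one may take $\tau$ of trace $\phi=0$ when $N$ is odd, but must keep the generator of trace $\phi=f$ when $N$ is even and $\Delta_Kf^2\equiv1\bmod 4$). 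Since every element of $\mathrm{End}_L(E)$ commutes with the $G_L$-action on torsion, $\rho_{E,N}(G_L)$ lies in the centralizer of $c_{\delta,\phi}(0,1)$ in $\GL(2,\Z/N\Z)$, which a direct computation identifies with $\mathcal{C}_{\delta,\phi}(N)$ (the image of $(\mathcal{O}/N\mathcal{O})^\times$, its own centralizer). Next, $\sigma_0$ restricts to complex conjugation on $K$, so conjugation by $\rho_{E,N}(\sigma_0)$ sends the matrix of $\iota(\alpha)$ to that of $\iota(\overline\alpha)$ for all $\alpha\in\mathcal{O}$, i.e.\ it induces on $\mathcal{C}_{\delta,\phi}(N)$ the same Frobenius involution as the matrix $c_1$; since moreover $c_1^2\in\mathcal{C}_{\delta,\phi}(N)$, the element $c_1^{-1}\rho_{E,N}(\sigma_0)$ centralizes $\mathcal{C}_{\delta,\phi}(N)$, hence lies in it, so $\rho_{E,N}(\sigma_0)\in c_1\mathcal{C}_{\delta,\phi}(N)$ and $\rho_{E,N}(G_F)\subseteq\mathcal{C}_{\delta,\phi}(N)\cup c_1\mathcal{C}_{\delta,\phi}(N)=\mathcal{N}_{\delta,\phi}(N)$. (For composite even $N$, $\mathcal{C}_{\delta,\phi}(N)$ may acquire extra normalizing elements, so $\mathcal{N}_{\delta,\phi}(N)$ is not the full group-theoretic normalizer, but this does not affect the containment.)

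\emph{The index bound, and part (2).} Since $\rho_{E,N}(G_F)\not\subseteq\mathcal{C}_{\delta,\phi}(N)$ (for $N\ge 3$), one checks $\rho_{E,N}(G_L)=\rho_{E,N}(G_F)\cap\mathcal{C}_{\delta,\phi}(N)$, whence $[\mathcal{N}_{\delta,\phi}(N):\rho_{E,N}(G_F)]$ divides $[\mathcal{C}_{\delta,\phi}(N):\rho_{E,N}(G_L)]$; so it suffices to bound the index of $\Gal(L(E[N])/L)=\rho_{E,N}(G_L)$ in $\mathcal{C}_{\delta,\phi}(N)\cong(\mathcal{O}/N\mathcal{O})^\times$. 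This is the classical statement that, after applying a Weber function $h$ to kill automorphisms, $L(h(E[N]))/L$ is a ray class field of $K$ for a modulus built from $N$ and $f$, so the Artin map is onto with image the full $(\mathcal{O}/N\mathcal{O})^\times$ modulo the global units, while $[L(E[N]):L(h(E[N]))]$ divides $|\mathcal{O}^\times|$; combining, $[\mathcal{C}_{\delta,\phi}(N):\rho_{E,N}(G_L)]$ divides $|\mathcal{O}^\times/(\mathcal{O}^\times\cap(1+N\mathcal{O}))|=|\mathcal{O}^\times/\mathcal{O}_{K,f,N}^\times|$, which proves (1). Part (2) follows by choosing compatible bases of $\varprojlim_N E[N]$ (resp.\ $T_\ell(E)$) — legitimate because that module is free of rank $1$ over $\widehat{\mathcal{O}}$ (resp.\ over $\mathcal{O}\otimes\Z_\ell$), even when $\ell\mid f$ — and letting $N\to\infty$: each finite-level index divides $|\mathcal{O}^\times/\mathcal{O}_{K,f,N}^\times|$, hence divides $|\mathcal{O}^\times|\in\{2,4,6\}$ ($6$ only for $\Delta(\mathcal{O})=-3$, $4$ only for $\Delta(\mathcal{O})=-4$), so the limiting index in $\mathcal{N}_{\delta,\phi}(\ell^\infty)$ (resp.\ $\mathcal{N}_{\delta,\phi}$) divides $|\mathcal{O}^\times|$.

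\emph{Main obstacle.} The crux is the index bound: turning the slogan "$L(E[N])$ is a ray class field up to units'' into a proof requires the precise main theorem of complex multiplication, and in particular its version for \emph{non-maximal} orders — the case $\ell\mid f$, where $\mathcal{O}\otimes\Z_\ell$ is not maximal — together with careful tracking of the conductor. Everything else is essentially formal once the right basis is fixed, which is the origin of the $\delta,\phi$ case split in Section~\ref{sec-preliminaries}; a secondary (mild) technical point is verifying that the concrete matrix $c_1$ there really induces the Frobenius involution on $\mathcal{C}_{\delta,\phi}(N)$ for all $N$.
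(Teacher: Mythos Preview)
The paper does not give its own proof of this theorem: it is quoted as a preliminary result from \cite{lozano-galoiscm} (Theorems~1.1 and~1.2 there), with no argument supplied in the present paper. So there is no in-paper proof to compare against; your sketch should be measured against the argument in \cite{lozano-galoiscm}, and it does follow the same standard strategy used there: Cartan containment over $L=K(j_{K,f})$ via commutation with the CM action on $E[N]$, extension to $\mathcal{N}_{\delta,\phi}(N)$ by showing complex conjugation lands in the nontrivial $c_1$-coset, and the index bound via the main theorem of complex multiplication identifying $L(h(E[N]))$ with a ring/ray class field attached to the (possibly non-maximal) order $\Of$.

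Two small points worth tightening. First, the step ``$\rho_{E,N}(G_F)\not\subseteq\cC_{\delta,\phi}(N)$ for $N\ge 3$'' is equivalent to $K\subseteq F(E[N])$ and deserves a line of justification; in \cite{lozano-galoiscm} this comes out of the same CM input you cite, but as written it is asserted rather than argued, and the $N=2$ case (where the statement in part~(1) still applies) should be handled separately since your index inequality uses that nontrivial coset. Second, you correctly flag the main obstacle: the precise form of the main theorem of CM for non-maximal orders (covering $\ell\mid f$) is exactly what \cite{lozano-galoiscm} spends effort on, and your appeal to it is at the level of a slogan rather than a proof --- which is appropriate for a sketch, but is the place where a referee would ask for the exact reference.
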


The following theorem shows that, for a prime $\ell>3$, the image of $\rho_{E,\ell^\infty}$ is, in fact, defined modulo $\ell$. Moreover, if the prime $\ell$ does not divide $2\Delta_K f$, then the $\ell$-adic image is as large as possible.

\begin{thm}[\cite{lozano-galoiscm}, Theorem 1.2]\label{thm-goodredn}
Let $E/\Q(j_{K,f})$ be an elliptic curve with CM by $\OO_{K,f}$, and let $\ell$ be prime. If $\ell > 3$, or if $\ell > 2$ and $j_{K,f} \neq 0$, then $G_{E,\ell^\infty}$ is the full inverse image via the natural reduction mod $\ell$ map $\mathcal{N}_{\delta,\phi}(\ell^\infty)\to \mathcal{N}_{\delta,\phi}(\ell)$ of the image $G_{E,\ell}$ of $\rho_{E,\ell}\equiv \rho_{E,\ell^\infty}\bmod \ell$. Further, if $\ell \nmid 2 \Delta_K f$, then $G_{E,\ell} = \mathcal{N}_{\delta, \phi}(\ell)$, and so $G_{E,\ell^{\infty}} = \mathcal{N}_{\delta, \phi}(\ell^{\infty})$.
\end{thm}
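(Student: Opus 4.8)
The plan is to treat the two assertions separately, the first being essentially formal and the second requiring input from the theory of complex multiplication. For the first assertion I would start from Theorem~\ref{thm-firstofsec1}: after a suitable choice of basis, $G_{E,\ell^\infty}\subseteq\mathcal{N}_{\delta,\phi}(\ell^\infty)$, and projecting the global index statement to the $\ell$-adic component, $[\mathcal{N}_{\delta,\phi}(\ell^\infty):G_{E,\ell^\infty}]$ divides $\#\Of^\times\in\{2,4,6\}$. The hypothesis on $(\ell,j_{K,f})$ is exactly what forces $\gcd(\ell,\#\Of^\times)=1$: one has $\#\Of^\times>2$ only for $\Of=\Z[i]$ (where $j_{K,f}=1728\neq0$) and $\Of=\Z[\zeta_3]$ (where $j_{K,f}=0$), so if $j_{K,f}\neq0$ then $\#\Of^\times\in\{2,4\}$ is prime to $3$, while $\ell\geq5$ is prime to every admissible value. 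Next, since $\ell$ is odd, $\cC_{\delta,\phi}(\ell^n)$ has index $2$ in $\mathcal{N}_{\delta,\phi}(\ell^n)$ for all $n\geq1$ (the element $c_1$ is never a Cartan element when $\ell$ is odd), so the kernel $K_1$ of the reduction $\pi_1\colon\mathcal{N}_{\delta,\phi}(\ell^\infty)\to\mathcal{N}_{\delta,\phi}(\ell)$ equals $\ker\bigl(\cC_{\delta,\phi}(\ell^\infty)\to\cC_{\delta,\phi}(\ell)\bigr)$, which is of the form $1+\ell R$ for the appropriate $\Z_\ell$-order $R$ and hence is a pro-$\ell$ group.

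To conclude the first assertion I would use a short group-theoretic argument: since $K_1\trianglelefteq\mathcal{N}_{\delta,\phi}(\ell^\infty)$, the set $K_1 G_{E,\ell^\infty}$ is a subgroup lying between $G_{E,\ell^\infty}$ and $\mathcal{N}_{\delta,\phi}(\ell^\infty)$, so $[K_1:K_1\cap G_{E,\ell^\infty}]=[K_1 G_{E,\ell^\infty}:G_{E,\ell^\infty}]$ divides $[\mathcal{N}_{\delta,\phi}(\ell^\infty):G_{E,\ell^\infty}]$, hence divides $\#\Of^\times$; but $K_1\cap G_{E,\ell^\infty}$ has finite index in the pro-$\ell$ group $K_1$, so that index is a power of $\ell$, and being coprime to $\#\Of^\times$ it equals $1$. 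Thus $K_1\subseteq G_{E,\ell^\infty}$, which says precisely that $G_{E,\ell^\infty}=\pi_1^{-1}(G_{E,\ell})$.

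For the second assertion, assume $\ell\nmid2\Delta_Kf$; by the first part it suffices to show $G_{E,\ell}=\mathcal{N}_{\delta,\phi}(\ell)$. Let $H=K(j_{K,f})$ be the ring class field of $\Of$, over which all endomorphisms of $E$ are defined; note $K\not\subseteq\Q(j_{K,f})$ (otherwise $\Q(j_{K,f})=K$, since $[\Q(j_{K,f}):\Q]=h(\Of)\leq2$, forcing $H=K$, which contradicts $[H:\Q]=2h(\Of)$ when $h(\Of)=2$ and is absurd when $h(\Of)=1$), so $[H:\Q(j_{K,f})]=2$ and $\Gal(H/\Q(j_{K,f}))\cong\Gal(K/\Q)$ via restriction. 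Now for $\sigma\in G_{\Q(j_{K,f})}$: if $\sigma|_H=\mathrm{id}$ then $\sigma$ fixes $\mathrm{End}(E)$, so $\rho_{E,\ell}(\sigma)$ centralizes the $\Of$-action on $E[\ell]$ and therefore lies in $\cC_{\delta,\phi}(\ell)$ (here one uses $\ell\nmid f$, so that $\Of/\ell\Of\cong\OK/\ell\OK$ is a maximal étale $\F_\ell$-subalgebra of $M_2(\F_\ell)$, equal to its own commutant); if $\sigma|_H\neq\mathrm{id}$ then $\sigma$ acts on $\Of\subseteq K$ by complex conjugation, so $\rho_{E,\ell}(\sigma)$ conjugates the $\Of/\ell\Of$-action by the nontrivial automorphism (nontrivial precisely because $\ell\nmid2\Delta_Kf$) and hence lies in $\mathcal{N}_{\delta,\phi}(\ell)\setminus\cC_{\delta,\phi}(\ell)$. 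Therefore $\rho_{E,\ell}$ followed by $\mathcal{N}_{\delta,\phi}(\ell)\twoheadrightarrow\mathcal{N}_{\delta,\phi}(\ell)/\cC_{\delta,\phi}(\ell)\cong\Z/2\Z$ is the surjection $G_{\Q(j_{K,f})}\twoheadrightarrow\Gal(H/\Q(j_{K,f}))$, so $G_{E,\ell}\cap\cC_{\delta,\phi}(\ell)=\rho_{E,\ell}(G_H)$ and $[\mathcal{N}_{\delta,\phi}(\ell):G_{E,\ell}]=[\cC_{\delta,\phi}(\ell):\rho_{E,\ell}(G_H)]$, so everything reduces to showing $\rho_{E,\ell}(G_H)=\cC_{\delta,\phi}(\ell)$.

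This final point is where the theory of complex multiplication enters, and I expect it to be the main obstacle. Since $\ell\nmid f$, the module $T_\ell(E)$ is free of rank one over $\Of\otimes\Z_\ell=\OK\otimes\Z_\ell$, so $\rho_{E,\ell^\infty}|_{G_H}$ is a character $G_H\to(\OK\otimes\Z_\ell)^\times$ with $\rho_{E,\ell}(G_H)\subseteq(\OK/\ell\OK)^\times=\cC_{\delta,\phi}(\ell)$. By the main theorem of complex multiplication this character corresponds, via Artin reciprocity, to the $\ell$-adic avatar of a Hecke character $\psi_{E/H}$ of $H$ of infinity type $1$, whose conductor is supported on the primes dividing $\Delta_Kf$ and the primes of bad reduction of $E/H$; one then argues that, because $\ell\nmid2\Delta_Kf$ (so $\ell$ is unramified in $K$ and in $H$, whence $\det\rho_{E,\ell}$, the mod-$\ell$ cyclotomic character, is already surjective on $G_H$, and the away-from-$\ell$ ramification of $\psi_{E/H}$ contributes nothing to the $\ell$-part of the image), the reduction of $\psi_{E/H}$ modulo $\ell$ is surjective onto $(\OK/\ell\OK)^\times$, i.e.\ $\rho_{E,\ell}(G_H)=\cC_{\delta,\phi}(\ell)$; together with the previous paragraph this yields $G_{E,\ell}=\mathcal{N}_{\delta,\phi}(\ell)$. (When $h(\Of)=1$, so $\Q(j_{K,f})=\Q$ and $j_{K,f}\neq0$, this last step is exactly parts~\ref{P:CM main a}--\ref{P:CM main b} of Proposition~\ref{prop-zywina1}, which may be quoted directly.) The delicate ingredient is the behaviour of $\psi_{E/H}$ at the primes above $\ell$, where $E$ may have additive reduction even though $\ell\nmid2\Delta_Kf$; an alternative that avoids Hecke characters is to compute $H(E[\ell])$ and its intersection with the quadratic subextensions of $H(E[\ell^\infty])/H$ directly, using that this tower is ramified only above $\ell$ and at the bad primes of $E$.
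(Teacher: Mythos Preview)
The paper does not give its own proof of this statement: Theorem~\ref{thm-goodredn} is quoted verbatim from \cite{lozano-galoiscm} as a preliminary result in Section~\ref{sec-alvaroCMresults}, with no argument supplied beyond the citation. There is therefore nothing in the present paper to compare your proposal against.

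On its own merits: your argument for the first assertion is correct and clean --- the global index bound from Theorem~\ref{thm-firstofsec1} gives $[\mathcal{N}_{\delta,\phi}(\ell^\infty):G_{E,\ell^\infty}]\mid\#\Of^\times$, the reduction kernel $K_1$ is pro-$\ell$, and the hypothesis forces $\gcd(\ell,\#\Of^\times)=1$, so $K_1\subseteq G_{E,\ell^\infty}$ as you say. For the second assertion you correctly reduce to the surjectivity of $\rho_{E,\ell}|_{G_H}$ onto $\cC_{\delta,\phi}(\ell)$ and correctly identify the main theorem of complex multiplication as the engine; but you stop short of actually executing it, and you yourself flag the delicate point (behaviour of $\psi_{E/H}$ at primes above $\ell$, where $E$ may have additive reduction over $\Q(j_{K,f})$ despite $\ell\nmid 2\Delta_Kf$). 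That is exactly the substance of the argument in \cite{lozano-galoiscm}, so your sketch points in the right direction but is not yet a proof.
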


The following result describes the $\ell$-adic image in the case of $j=0$ and $\ell \neq 3$. Before we state this result, we introduce the following terminology: we say that the $\ell$-adic image $G_{E,\ell^\infty}$ of $\rho_{E,\ell}$ is defined modulo $\ell^n$ if $\pi_{\ell^n}^{-1}(\pi_{\ell^n}(G_{E,\ell^\infty}))=G_{E,\ell^\infty}$, where $\pi_{\ell^n}\colon \mathcal{N}_{\delta,\phi}(\ell^{\infty})\to \mathcal{N}_{\delta,\phi}(\Z/\ell^n\Z)$ is the natural reduction mod-$\ell^n$ map.  In other words, we say that the $\ell$-adic image is defined modulo $\ell^n$ if the $\ell$-adic image can be retrieved from the mod-$\ell^n$ image as the full inverse image under the reduction map $\pi_{\ell^n}$.

\begin{thm}[\cite{lozano-galoiscm}, Theorems 1.4 and 1.8]\label{thm-jzero-goodredn} Let $E/\Q$ be an elliptic curve with $j(E)=0$, and let $\ell > 3$, so that the $\ell$-adic image of $E$ is defined mod $\ell$. Then one of the following holds:
	\begin{enumerate}
	\item If $\ell\equiv \pm 1\bmod 9$, then $G_{E,\ell^{\infty}}=\mathcal{N}_{\delta,0}(\ell^{\infty})$,
	\item If $\ell\equiv 2$ or $5\bmod 9$, then $G_{E,\ell^{\infty}}=\mathcal{N}_{\delta,0}(\ell^{\infty})$ or $[\mathcal{N}_{\delta,0}(\ell^{\infty}) : G_{E,\ell^{\infty}}] = 3$ and  $G_{E,\ell^{\infty}}=\langle \cC_{\delta,0}(\ell^{\infty})^3, c_\varepsilon \rangle $,
	\item If $\ell\equiv 4$ or $7\bmod 9$, then $G_{E,\ell^{\infty}} = \mathcal{N}_{\delta,0}(\ell^{\infty})$ or $[\mathcal{N}_{\delta,0}(\ell^{\infty}) : G_{E,\ell^{\infty}}] = 3$ and $G_{E,\ell^\infty}$ is generated by $c_{-\varepsilon}$ and 
 \[ G^{\, 3,1}_{-3/4,0} = \left\{  \left(\begin{array}{cc} \nu\cdot (a + b) & a - b \\ \delta\cdot (a - b) & \nu\cdot (a + b)\\ \end{array}\right) : a/b \in (\Z_\ell^\times)^3\right\},  \]
 where $\delta \equiv \nu^2 \bmod \ell$.
\end{enumerate}
If instead $\ell = 2$, the $2$-adic image of $E$ is defined mod $2$, and $G_{E,2^\infty} = \mathcal{N}_{-1,1}(2^\infty)$, or $[\mathcal{N}_{-1,1}(2^\infty):G_{E,2^\infty}]=3$ and $G_{E,2^\infty} =\left\langle \cC_{-1,1}(2^\infty)^3, c_{\varepsilon}' \right\rangle$.
\end{thm}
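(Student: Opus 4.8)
The plan is to treat the cases $\ell>3$ and $\ell=2$ separately, the former by reduction to Zywina's mod-$\ell$ classification and the latter by a complex-multiplication argument over $K=\Q(\sqrt{-3})$.

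\emph{The case $\ell>3$.} By Theorem~\ref{thm-goodredn}, $G_{E,\ell^\infty}$ is the full preimage of the mod-$\ell$ image $G_{E,\ell}$ under the reduction map $\mathcal N_{\delta,0}(\ell^\infty)\to\mathcal N_{\delta,0}(\ell)$, so it suffices to identify $G_{E,\ell}$ up to conjugacy and then transport the answer back along reduction. The possibilities for $G_{E,\ell}$ are exactly those in Zywina's Proposition~\ref{prop-zywina3}; I would first record that, with $\delta=-3/4$, the group $\mathcal N_{\delta,0}(\ell)$ is $\GL(2,\F_\ell)$-conjugate to the split Cartan normalizer $N_s(\ell)$ precisely when $-3$ is a square mod $\ell$, that is, when $\ell\equiv 1\bmod 3$, and to the non-split normalizer $N_{ns}(\ell)$ otherwise; reconciling this congruence mod $3$ with Zywina's trichotomy mod $9$ produces precisely cases (1)--(3). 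Case (1), $\ell\equiv\pm1\bmod 9$, is the maximal case, and the preimage of $N_s(\ell)$ (resp.\ $N_{ns}(\ell)$) is $\mathcal N_{\delta,0}(\ell^\infty)$ by definition. In cases (2) and (3) the only other possibility for $G_{E,\ell}$ is the index-$3$ subgroup appearing in Zywina's statement; a short matrix computation---diagonalizing the Cartan when $\delta$ is a square---identifies this subgroup with $\langle\cC_{\delta,0}(\ell)^3,c_\varepsilon\rangle$ when $\ell\equiv 2,5\bmod 9$ and with the reduction of $\langle c_\varepsilon, G^{3,1}_{-3/4,0}\rangle$ when $\ell\equiv 4,7\bmod 9$, using that the unique index-$3$ subgroup of $\cC_{\delta,0}(\ell)$ is its group of cubes. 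Finally, since $\ell\neq 3$, cubing is a bijection on the pro-$\ell$ kernel of $\cC_{\delta,0}(\ell^\infty)\to\cC_{\delta,0}(\ell)$, so the preimage of the cubes in $\cC_{\delta,0}(\ell)$ is exactly $\cC_{\delta,0}(\ell^\infty)^3$ (and similarly for the ``ratio-a-cube'' subgroup $G^{3,1}_{-3/4,0}$), which yields the $\ell$-adic groups listed in (2) and (3).

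\emph{The case $\ell=2$.} Here Theorem~\ref{thm-goodredn} does not apply. By Zywina's Proposition~\ref{prop-zywina2}(iv) there are at most two possibilities for $G_{E,2}$, namely $\GL(2,\F_2)=\mathcal N_{-1,1}(2)$ when $d$ is not a cube and the order-$2$ group $G_2$ when $d$ is a cube, and by Theorem~\ref{thm-firstofsec1}(1) the index $[\mathcal N_{-1,1}(2^n):G_{E,2^n}]$ divides $|\mathcal O_K^\times/\mathcal O_{K,1,2^n}^\times|$, hence $[\mathcal N_{-1,1}(2^\infty):G_{E,2^\infty}]$ divides $6$. The crux is to show that $G_{E,2^\infty}$ is already defined mod $2$, i.e.\ that it contains $\ker(\mathcal N_{-1,1}(2^\infty)\to\mathcal N_{-1,1}(2))$. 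I would establish this via complex multiplication over $K=\Q(\sqrt{-3})$, in which $2$ is inert: writing $\mathfrak p=(2)$ and identifying $\cC_{-1,1}(2^\infty)$ with $(\mathcal O_K\otimes\Z_2)^\times=\mathcal O_{K_\mathfrak p}^\times$, the model $E_{\mathcal O}\colon y^2=x^3+16$ (equivalently $y^2+y=x^3$) has conductor $27$ and thus good---in fact good supersingular---reduction at $\mathfrak p$, so its formal group at $\mathfrak p$ is the Lubin--Tate formal group for $\mathcal O_{K_\mathfrak p}$ and $\rho_{E_{\mathcal O}/K,2^\infty}$ restricted to inertia $I_\mathfrak p$ already surjects onto $\mathcal O_{K_\mathfrak p}^\times=\cC_{-1,1}(2^\infty)$; for an arbitrary sextic twist $E\colon y^2=x^3+d$ one writes $E$ as the twist of $E_{\mathcal O}$ by the cocycle $\chi\colon G_{\Q}\to\mu_6=\mathcal O_K^\times$ attached to $d/16$, so that $\rho_{E,2^\infty}=\chi\cdot\rho_{E_{\mathcal O},2^\infty}$, and compares the field cut out by $\chi$ with the $2$-division field $\Q(E_{\mathcal O}[2])=\Q(\sqrt[3]{2},\zeta_3)$ to see that $\rho_{E_{\mathcal O},2^\infty}$ restricted to $\ker\chi$ still surjects onto $\ker(\mathcal N_{-1,1}(2^\infty)\to\mathcal N_{-1,1}(2))$. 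Granting this, $G_{E,2^\infty}$ is the full preimage of one of the two mod-$2$ groups above: the preimage of $\GL(2,\F_2)=\mathcal N_{-1,1}(2)$ is $\mathcal N_{-1,1}(2^\infty)$, occurring exactly when $d$ is not a cube, and the preimage of $G_2$ is---up to conjugacy, as a short computation with the images of $c_1$, $c'_1$ and $c_{-1,1}(1,1)$ modulo $2$ shows---the index-$3$ subgroup $\langle\cC_{-1,1}(2^\infty)^3,c'_\varepsilon\rangle$, occurring exactly when $d$ is a cube; the index bound from Theorem~\ref{thm-firstofsec1}(1) rules out any further possibility.

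\emph{Main obstacle.} For $\ell>3$ the work is essentially bookkeeping---translating Zywina's subgroups into Cartan-normalizer language and lifting the mod-$\ell$ answer to the $\ell$-adic one---so the genuine difficulty is concentrated entirely at $\ell=2$, in the assertion that the $2$-adic image is defined mod $2$. One must control $G_{E/K,2^\infty}$ inside $\cC_{-1,1}(2^\infty)\cong(\Z_2[\zeta_3])^\times$ uniformly over all sextic twists of $E_{\mathcal O}$, showing that no twist---not even one ramified at $2$, which a priori could deepen the image to level $4$ or $8$---shrinks it below the full preimage of its reduction mod $2$; making this precise requires either the Lubin--Tate input above together with a careful comparison of the twisting cocycle with the $2$-division field of $E_{\mathcal O}$, or a direct computation of the relevant $2$-, $4$- and $8$-division fields, and one must also handle with care the fact that the groups $\mathcal N_{-1,1}(2^n)$ for $n\ge 2$ are not the literal normalizers of $\cC_{-1,1}(2^n)$. (By contrast, when $\ell=3$, where $3$ ramifies in $K$, the analogous ``defined mod $3$'' statement fails and the image is only defined mod $9$, which is why that case is treated separately.)
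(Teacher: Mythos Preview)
Your route differs from the paper's in an important way: the paper treats this theorem as a \emph{citation} of \cite[Thms.~1.4, 1.8]{lozano-galoiscm}, and its proof consists only of the change-of-basis computation for case~(3), converting the split-Cartan description $\{\operatorname{diag}(a,b):a/b\in(\Z_\ell^\times)^3\}$ together with $\left(\begin{smallmatrix}0&\varepsilon\\\varepsilon&0\end{smallmatrix}\right)$ into the $\cC_{\delta,0}$-form via conjugation by $\left(\begin{smallmatrix}1&\nu\\1&-\nu\end{smallmatrix}\right)$. You instead attempt a self-contained argument. For $\ell>3$ your plan is correct and is essentially what lies behind \cite[Thm.~1.4]{lozano-galoiscm}: Theorem~\ref{thm-goodredn} reduces to the mod-$\ell$ image, Zywina's Proposition~\ref{prop-zywina3} gives the list, and your lifting step (cubing is bijective on the pro-$\ell$ kernel when $\ell\neq 3$, so the preimage of $\cC_{\delta,0}(\ell)^3$ is $\cC_{\delta,0}(\ell^\infty)^3$) is sound. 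Your change of basis in the split case is the same as the paper's.

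The gap is at $\ell=2$, and you correctly flag it. Your Lubin--Tate input gives $\rho_{E_{\mathcal O}/K,2^\infty}(G_K)=\mathcal O_{K_{\mathfrak p}}^\times$, and writing $\rho_{E/K}=\psi\cdot\rho_{E_{\mathcal O}/K}$ with $\psi\colon G_K\to\mu_6$ you want to conclude that $\rho_{E/K}(G_K)\supseteq 1+2\mathcal O_{K_{\mathfrak p}}$. Restricting to $H=\ker\psi$ gives $\rho_{E_{\mathcal O}}(H)$ of index dividing~$6$ in $\mathcal O_{K_{\mathfrak p}}^\times$, hence its intersection with the pro-$2$ group $1+2\mathcal O_{K_{\mathfrak p}}$ has index dividing~$2$; but you still need to rule out index~$2$, and the ``comparison with $\Q(E_{\mathcal O}[2])=\Q(\sqrt[3]{2},\zeta_3)$'' does not by itself do this, since a sextic twist ramified at~$2$ could in principle cut the image by a factor of~$2$ inside the principal units. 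Closing this requires either an explicit check that the field cut out by $\psi$ never contains the quadratic subextension of $K(E_{\mathcal O}[4])/K(E_{\mathcal O}[2])$, or the argument actually carried out in \cite[Thm.~1.8]{lozano-galoiscm}. The paper sidesteps all of this by citing that reference directly.
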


\begin{proof} The results follow directly from \cite[Theorems 1.4, 1.8]{lozano-galoiscm}, except for (3) so we prove this case. \cite[Thm. 1.4]{lozano-galoiscm} says that the possible index 3 subgroup in case (3) is
conjugate to the subgroup \[ \left\langle \left\{\left(\begin{array}{cc} a & 0\\ 0 & b\\ \end{array}\right) : a/b \in (\Z_\ell^\times)^3\right\}, \left(\begin{array}{cc} 0 & \varepsilon\\ \varepsilon & 0\\ \end{array}\right)\right\rangle. \]
With a choice of $\nu$ such that $\delta \equiv \nu^2 \bmod \ell$, the change of basis matrix $\displaystyle \left(\begin{array}{cc} -\nu & -1 \\ \nu & -1 \\ \end{array}\right)$
sends 
\[ \left(\begin{array}{cc} a & 0\\ 0 & b\\ \end{array}\right) 
\mapsto \frac{1}{2 \nu}
\left(\begin{array}{cc} \nu\cdot (a + b) & a - b \\ \delta\cdot (a - b) & \nu\cdot (a + b)\\ \end{array}\right) \qquad
\text{and} 
\qquad
\left( \begin{array}{cc} 0 & \varepsilon\\ \varepsilon & 0\\ \end{array}\right) 
\mapsto 
\left( \begin{array}{cc} -\varepsilon & 0 \\ 0 &\varepsilon \\ \end{array}\right), \]
from which the result follows.
\end{proof}

The following result describes the $\ell$-adic image in the case of a prime $\ell>2$ that divides $\Delta_K f$.

\begin{thm}[\cite{lozano-galoiscm}, Theorem 1.5]\label{thm-j0ell3alvaro}\label{thm-oddprimedividingdisc}
	Let $E/\Q(j_{K,f})$ be an elliptic curve with CM by $\Of$, and let $\ell$ be an odd prime dividing $f\Delta_K$ (so, in particular, $j_{K,f} \neq 1728$). Then $G_{E,\ell^\infty}$ is precisely one of the following groups:
	\begin{enumerate}
		\item[(a)] If $j_{K,f}\neq 0,1728$, then either $G_{E,\ell^\infty}=\mathcal{N}_{\delta,0}(\ell^\infty)$, or $G_{E,\ell^\infty}$ is generated by $c_{\varepsilon}$ and the group
		 \[ G^{\, 2, 1}_{\delta, 0} = \left\{ \left(\begin{array}{cc} a^2 & b\\ \delta b & a^2 \\ \end{array}\right): a \in \Z_\ell^\times, b \in \Z_\ell \right\}.\] 
		\item[(b)] If $j_{K,f}=0$, then $\ell=3$, and there are twelve possibilities for $G_{E,3^\infty}$. More concretely, either $G_{E,3^\infty}=\mathcal{N}_{-3/4,0}(3^\infty)$, or $[\mathcal{N}_{-3/4,0}(3^\infty):G_{E,3^\infty}]=2,3,$ or $6$, and one of the following holds:
		\begin{enumerate} \item[(i)] If $[\mathcal{N}_{-3/4,0}(3^\infty):G_{E,3^\infty}]=2$, then $G_{E,3^\infty}$ is generated by $c_{\varepsilon}$ and 
		\[ G^{\, 2,1}_{-3/4, 0}=\left\{ \left(\begin{array}{cc}  a & b\\ -3b/4 & a\\ \end{array}\right): a,b\in  \Z_3,\ a\equiv 1 \bmod 3\right\}. \]
		
		\item[(ii)]  If $[\mathcal{N}_{-3/4,0}(3^\infty):G_{E,3^\infty}]=3$, then $G_{E,3^\infty}$ is generated by $c_{\varepsilon}$ and 
		\[ G^{\, 3,1}_{-3/4, 0} = \left\{ \left(\begin{array}{cc} a & b\\ -3b/4 & a\\ \end{array}\right): a\in \Z_3^\times,\ b\equiv 0 \bmod 3 \right\}, \]
		\[ \text{or }\  G^{\, 3,2}_{-3/4, 0}=\left\langle \left(\begin{array}{cc}  2 & 0\\ 0 & 2\\ \end{array}\right) ,\left(\begin{array}{cc}  1 & 1\\ -3/4 & 1\\ \end{array}\right)\right\rangle,\ \text{ or } \ G^{\, 3,3}_{-3/4, 0} =\left\langle \left(\begin{array}{cc}  2 & 0\\ 0 & 2\\ \end{array}\right) ,\left(\begin{array}{cc}  -5/4 & 1/2\\ -3/8 & -5/4\\ \end{array}\right)\right\rangle. \] 
		\item[(iii)]  If $[\mathcal{N}_{-3/4,0}(3^\infty):G_{E,3^\infty}] = 6$, then $G_{E,3^\infty}$ is generated by $c_{\varepsilon}$ and one of
		\[ \ G^{\, 6,1}_{-3/4, 0}=\left\{ \left(\begin{array}{cc}  a & b\\ -3b/4 & a\\ \end{array}\right) : a\equiv 1,\ b\equiv 0 \bmod 3\Z_3 \right\}, \]
		\[ \text{or }\ G^{\, 6,2}_{-3/4, 0}=\left\langle \left(\begin{array}{cc}  4 & 0\\ 0 & 4\\ \end{array}\right) ,\left(\begin{array}{cc}  1 & 1\\ -3/4 & 1\\ \end{array}\right)\right\rangle,\ \text{ or } \  G^{\, 6,3}_{-3/4, 0}=\left\langle \left(\begin{array}{cc}  4 & 0\\ 0 & 4\\ \end{array}\right) ,\left(\begin{array}{cc}  -5/4 & 1/2\\ -3/8 & -5/4\\ \end{array}\right)\right\rangle. \]
		\end{enumerate}
	\end{enumerate}
\end{thm}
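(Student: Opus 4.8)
The plan is to reduce, in case~(a), to a statement about the mod-$\ell$ image via Theorem~\ref{thm-goodredn}, and then to read off the answer from the subgroup lattice of $\mathcal{N}_{\delta,0}(\ell)$; case~(b), where no such reduction is available, instead requires an explicit computation with sextic twists. Throughout, note that since $\ell$ is odd and $\ell\mid\Delta_Kf$ we have $\ell\mid\delta=\Delta_Kf^2/4$ (as $4\in\Z_\ell^\times$), so the reduction $\cC_{\delta,0}(\ell)$ of the Cartan modulo $\ell$ is the non-semisimple group of matrices $\left(\begin{smallmatrix} a & b\\ 0 & a\end{smallmatrix}\right)$ with $a\in\F_\ell^\times$, $b\in\F_\ell$, and $\mathcal{N}_{\delta,0}(\ell)=\langle\cC_{\delta,0}(\ell),c_1\rangle$ with $c_1=\left(\begin{smallmatrix} 1 & 0\\ 0 & -1\end{smallmatrix}\right)$ (here $\phi=0$). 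By Theorem~\ref{thm-firstofsec1}, $G_{E,\ell^\infty}\subseteq\mathcal{N}_{\delta,0}(\ell^\infty)$ with index dividing $|\Of^\times|$, which is $2$ when $j_{K,f}\neq 0,1728$ and $6$ when $j_{K,f}=0$; moreover, if $j_{K,f}=0$ then $\Delta_Kf=-3$, forcing $\ell=3$. This already yields the divisibility of the index asserted in (a) and in (b) (and, in (b), that $\ell=3$), so it remains to identify the possible subgroups.

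For case~(a): since $j_{K,f}\neq 0$, Theorem~\ref{thm-goodredn} shows that $G_{E,\ell^\infty}$ is the full preimage of the mod-$\ell$ image $G_{E,\ell}\subseteq\mathcal{N}_{\delta,0}(\ell)$, so we need only determine $G_{E,\ell}$, which has index $1$ or $2$. Index $1$ gives the first alternative. If the index is $2$, I would compute the abelianization: $\cC_{\delta,0}(\ell)$ is abelian and commutators with $c_1$ exhaust the unipotent subgroup (using $\ell$ odd), so $\mathcal{N}_{\delta,0}(\ell)^{\mathrm{ab}}\cong\F_\ell^\times\times\{\pm1\}$ has exactly three subgroups of index~$2$. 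One of these is $\cC_{\delta,0}(\ell)$ itself, which cannot be the image because $\Q(j_{K,f})$ has a real place, and hence the image of complex conjugation is an order-$2$ element of determinant $-1$, necessarily of the form $\left(\begin{smallmatrix} \pm1 & \ast\\ 0 & \mp1\end{smallmatrix}\right)$ and so not in $\cC_{\delta,0}(\ell)$. The index-$2$ image must therefore be one of the other two subgroups and must contain a matrix of that shape; a short check shows this forces it to be the reduction modulo $\ell$ of $\langle c_\varepsilon, G^{2,1}_{\delta,0}\rangle$ for some $\varepsilon\in\{\pm1\}$ (for $\ell\equiv 3\bmod 4$ the two choices of $\varepsilon$ yield distinct groups, which is why the statement leaves $\varepsilon$ free). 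Taking the $\ell$-adic preimage and using $1+\ell\Z_\ell\subseteq(\Z_\ell^\times)^2$ then identifies it with $\langle c_\varepsilon, G^{2,1}_{\delta,0}\rangle$ as stated. That both alternatives genuinely occur follows by exhibiting $E_{\Of}$ together with a well-chosen quadratic twist: twisting by $d$ replaces $\rho_{E,\ell}$ by $\rho_{E,\ell}\otimes\chi_d$, with $\chi_d$ the quadratic character of $\Q(\sqrt d)$, and controlling whether $d$ is a unit times a square at $\ell$ toggles the index between $1$ and $2$.

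Case~(b) is the real obstacle, precisely because $j_{K,f}=0$, $\ell=3$ is the situation excluded from Theorem~\ref{thm-goodredn}: for $j=0$ curves the $3$-adic image is not determined modulo~$3$ (several mod-$3$ images occur, cf.\ Proposition~\ref{prop-zywina3}(v)). The plan here is threefold. First, every elliptic curve over $\Q$ with $j=0$ is a sextic twist $E_d\colon y^2=x^3+16d$ of $E_{\Of}\colon y^2=x^3+16$ with $d\in\Q^\times/(\Q^\times)^6$, and $\rho_{E_d,3^\infty}$ is obtained from $\rho_{E_{\Of},3^\infty}$ by multiplying it, pointwise on $G_\Q$, by the sextic-twist class of $d$, which lies in $\Aut_{\Qbar}(E_{\Of})=\Of^\times\subseteq\cC_{-3/4,0}(3^\infty)$ and is cut out by $\Q(\sqrt[6]{d})$ (equivalently, over $\Q(\sqrt{-3})$ the associated Hecke character is multiplied by the Kummer character of $\Q(\sqrt{-3},\sqrt[6]{d})/\Q(\sqrt{-3})$, now a genuine character since $\mu_6\subseteq\Q(\sqrt{-3})$). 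Second, compute $\rho_{E_{\Of},3^\infty}(G_\Q)$ explicitly, using the $3$rd and $9$th division fields of $y^2=x^3+16$ (Lemma~\ref{lem-3and9divfielddegreej0}) to pin down the image and to verify it is already defined modulo~$9$. Third, run over the finitely many classes $d\in\Q^\times/(\Q^\times)^6$, organized by the $3$-adic valuation and square class of $d$, and record the twisted image in each case; since the index in $\mathcal{N}_{-3/4,0}(3^\infty)$ divides $6$, the resulting list is exactly the twelve groups displayed --- $\mathcal{N}_{-3/4,0}(3^\infty)$ at index $1$, one at index $2$, three at index $3$, and three at index $6$. The genuinely delicate step is the third one: the three subgroups at each of indices $3$ and $6$ differ according to whether the sextic twist is ``cubic-dominant'' or ``quadratic-dominant'' and according to square classes at $3$, so the bookkeeping must track the behaviour of $d$ at the prime above $3$ very carefully; this is the step that benefits most from the computer verification in the accompanying repository.
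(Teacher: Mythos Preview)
The paper does not prove this statement: it is quoted verbatim as Theorem~1.5 of \cite{lozano-galoiscm} in the preliminaries section and used as a black box thereafter. There is therefore no ``paper's own proof'' to compare your sketch against.

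That said, your outline is a sensible reconstruction of how such a result is established. For part~(a), reducing to the mod-$\ell$ image via Theorem~\ref{thm-goodredn}, computing the abelianization of $\mathcal{N}_{\delta,0}(\ell)$, and using complex conjugation at a real place of $\Q(j_{K,f})$ to rule out $\cC_{\delta,0}(\ell)$ is exactly the sort of argument that appears in \cite{lozano-galoiscm}. Your subgroup count is correct: the three index-$2$ subgroups are the Cartan and the two groups $\langle G^{2,1}_{\delta,0}\bmod\ell,\,c_{\pm 1}\rangle$, and only the latter two contain an involution of determinant~$-1$.

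For part~(b), note that your plan --- parametrize by sextic twists, compute the $9$th division field of a base curve, and then track how twisting moves the image inside $\mathcal{N}_{-3/4,0}(3^\infty)$ --- is not what the present paper uses to justify Theorem~\ref{thm-j0ell3alvaro} (it doesn't justify it at all), but it is essentially what the paper does later in Section~\ref{sec-delta3ell3} to prove the finer Theorem~\ref{j0ell3-ver}, which matches each of the twelve images to a specific family of twists. So while your sketch is not a comparison point for this theorem's proof, it does anticipate the paper's own method for the downstream classification.
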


The next result describes the $2$-adic image in the case of $j_{K,f}\neq 0,1728$.

\begin{thm}[\cite{lozano-galoiscm}, Theorem 1.6]\label{thm-m8and16alvaro}
	Let $E/\Q(j_{K,f})$ be an elliptic curve with CM by $\OO_{K,f}$ with $j_{K,f}\neq 0, 1728$. Then, either $G_{E,2^{\infty}} = \mathcal{N}_{\delta, \phi}(2^{\infty})$, or $[\mathcal{N}_{\delta, \phi}(2^{\infty}) :  G_{E,2^{\infty}}] = 2$ and one of the following two possibilities hold:
	\begin{enumerate} 
		\item $\Delta_K f^2 \equiv 0 \bmod 16$, and in particular
        \begin{itemize}
         \item $\Delta_K \equiv 1 \bmod 4$ and $f \equiv 0 \bmod 4$ or 
         \item $\Delta_K \equiv 0 \bmod 4$ and $f \equiv 0 \bmod 2$. \end{itemize}
      In this case, $G_{E,2^{\infty}}$ is generated by $c_{\varepsilon}$ and one of the groups
			$$G^{\, 2,1}_{\delta, \phi}=\left\langle \left(\begin{array}{cc} 5 & 0\\ 0 & 5\\\end{array}\right),\left(\begin{array}{cc} 1 & 1\\ \delta & 1\\\end{array}\right)\right\rangle \text{ or } G^{\, 2,2}_{\delta, \phi}=\left\langle \left(\begin{array}{cc} 5 & 0\\ 0 & 5\\\end{array}\right),\left(\begin{array}{cc} -1 & -1\\ -\delta & -1\\\end{array}\right)\right\rangle.$$
		\item $\Delta_K\equiv 0 \bmod 8$, or $\Delta_K\equiv 4 \bmod 8$ and $f\equiv 0 \bmod 4$, or $\Delta_K\equiv 1 \bmod 4$ and $f\equiv 0 \bmod 8$, and in this case $G_{E,2^{\infty}}$ is generated by $c_{\varepsilon}$ and one of the groups
			$$G^{\, 2,3}_{\delta, \phi} =\left\langle \left(\begin{array}{cc} 3 & 0\\ 0 & 3\\\end{array}\right),\left(\begin{array}{cc} 1 & 1\\ \delta & 1\\\end{array}\right)\right\rangle \text{ or } G^{\, 2,4}_{\delta, \phi}=\left\langle \left(\begin{array}{cc} 3 & 0\\ 0 & 3\\\end{array}\right),\left(\begin{array}{cc} -1 & -1\\ -\delta & -1\\\end{array}\right)\right\rangle.$$
	\end{enumerate}
\end{thm}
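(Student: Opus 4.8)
The plan is to reduce the statement to a classification of the index-$2$ subgroups of $\mathcal{N}_{\delta,\phi}(2^{\infty})$ that can occur as $2$-adic Galois images, and then to read off that list from the $2$-adic ramification data of $\Of$. First, since $j_{K,f}\neq 0,1728$ forces $\Of^{\times}=\{\pm 1\}$, Theorem \ref{thm-firstofsec1}(2) gives that $[\mathcal{N}_{\delta,\phi}(2^{\infty}):G_{E,2^{\infty}}]$ divides $|\Of^{\times}|=2$, which already yields the dichotomy. So assume the index is $2$. The field $\Q(j_{K,f})$ is totally real (for class number $1$ or $2$ it is $\Q$ or a real quadratic field), so complex conjugation maps under $\rho_{E,2^{\infty}}$ to an element of $\mathcal{N}_{\delta,\phi}(2^{\infty})\setminus \cC_{\delta,\phi}(2^{\infty})$ of order $2$ and determinant $-1$; adjusting the basis within $\mathcal{N}_{\delta,\phi}(2^{\infty})$ we may take it to be $c_{\varepsilon}$ for a suitable $\varepsilon\in\{\pm 1\}$. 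Hence $G_{E,2^{\infty}}=\langle H, c_{\varepsilon}\rangle$ with $H:=G_{E,2^{\infty}}\cap \cC_{\delta,\phi}(2^{\infty})$ an open subgroup of $\cC_{\delta,\phi}(2^{\infty})$; since $c_{\varepsilon}\notin\cC_{\delta,\phi}(2^{\infty})$, the group $H$ has index exactly $2$ in $\cC_{\delta,\phi}(2^{\infty})$ and is normalized by $c_{\varepsilon}$. So it suffices to (a) enumerate all index-$2$ subgroups $H$ of $\cC_{\delta,\phi}(2^{\infty})$ stable under conjugation by $c_{\varepsilon}$ for which $\det(\langle H,c_{\varepsilon}\rangle)=\Z_2^{\times}$ (surjectivity of the determinant being forced by the Weil pairing together with $\Q(j_{K,f})\cap\Q(\zeta_{2^{\infty}})=\Q$), and (b) identify the resulting groups $\langle H,c_{\varepsilon}\rangle$ with the four groups in the statement, tracking which congruence conditions on $\Delta_K,f$ permit which $H$.

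For step (a) I would identify $\cC_{\delta,\phi}(2^{\infty})$ with $(\Of\otimes\Z_2)^{\times}$ via the CM action on $T_2(E)$, under which $\det$ corresponds to the norm $N\colon (\Of\otimes\Z_2)^{\times}\to\Z_2^{\times}$. Via class field theory, passing from the maximal image $\mathcal{N}_{\delta,\phi}(2^{\infty})$ to an index-$2$ subgroup amounts to replacing $E$ by a quadratic twist whose twisting character is unramified outside $2$ (and $\infty$), i.e.\ cut out by $\sqrt{d}$ with $d\in\{-1,2,-2\}$; correspondingly $H$ is the kernel of an order-$2$ character $\psi$ of $(\Of\otimes\Z_2)^{\times}$ that does \emph{not} factor through $N$. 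Enumerating these characters requires the group structure of $(\Of\otimes\Z_2)^{\times}$, equivalently its structure modulo $4$ and modulo $8$, which depends only on $\delta\bmod 8$ and hence only on $\Delta_Kf^{2}\bmod 16$. This produces exactly two regimes: if $\Delta_Kf^{2}\equiv 0\bmod 16$ then $\delta\equiv 0\bmod 4$ and the admissible $H$ have level of definition $4$ (this is part (1)); in the complementary $2$-adic cases of part (2) one has $\delta\not\equiv 0\bmod 4$ and the admissible $H$ have level of definition $8$.

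For step (b) I would carry out the explicit matrix computation. In the level-$4$ regime one finds that $\langle H,c_{\varepsilon}\rangle$ is generated by $c_{\varepsilon}$, the scalar $\left(\begin{smallmatrix}5&0\\0&5\end{smallmatrix}\right)$ (whose powers topologically generate the scalar subgroup $1+4\Z_2$, accounting for level $4$), and one of the two Cartan elements $\left(\begin{smallmatrix}1&1\\\delta&1\end{smallmatrix}\right)$ or $\left(\begin{smallmatrix}-1&-1\\-\delta&-1\end{smallmatrix}\right)$, according to which of the two admissible quadratic twists is in play; these are precisely $G^{\,2,1}_{\delta,\phi}$ and $G^{\,2,2}_{\delta,\phi}$. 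In the remaining $2$-adic cases the same analysis yields $\left(\begin{smallmatrix}3&0\\0&3\end{smallmatrix}\right)$ in place of $\left(\begin{smallmatrix}5&0\\0&5\end{smallmatrix}\right)$ (its powers generate an index-$2$, level-$8$ subgroup of the scalars), giving $G^{\,2,3}_{\delta,\phi}$ and $G^{\,2,4}_{\delta,\phi}$. One then checks directly that each of these four groups is normalized by $c_{\varepsilon}$, has index $2$ in $\mathcal{N}_{\delta,\phi}(2^{\infty})$, and has $\det(\langle G^{\,2,i}_{\delta,\phi},c_{\varepsilon}\rangle)=\Z_2^{\times}$, so that the list is complete; since the theorem is an upper bound ("one of the following holds"), no further argument about occurrence is needed here.

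The main obstacle is step (a): keeping careful track of the $2$-adic unit group $(\Of\otimes\Z_2)^{\times}$ and its order-$2$ characters across all the congruence conditions on $\Delta_K$ and $f$ in parts (1) and (2), and proving that the only quadratic twists that can shrink the image are those by $d\in\{-1,2,-2\}$ and that, among them, exactly the ones producing the four listed groups actually do. Everything downstream of that — verifying normalization by $c_{\varepsilon}$, the index, the level of definition, and surjectivity of the determinant for each $G^{\,2,i}_{\delta,\phi}$ — is a finite, if somewhat tedious, computation with $2\times 2$ matrices over $\Z/2^{k}\Z$ for $k\le 3$.
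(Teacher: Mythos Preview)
The paper does not prove this theorem. Theorem~\ref{thm-m8and16alvaro} is quoted verbatim from \cite{lozano-galoiscm} (Theorem~1.6) in Section~\ref{sec-alvaroCMresults}, which is explicitly a summary of prior results (``Here we summarize the results from \cite{lozano-galoiscm}''). No argument is supplied here; the statement is used as a black box throughout Sections~\ref{sec-classnumber1} and~\ref{sec-classnumber2}.

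That said, your outline is broadly the right shape for how the result is proved in \cite{lozano-galoiscm}: the index bound comes exactly from $\Of^{\times}=\{\pm 1\}$ via Theorem~\ref{thm-firstofsec1}, complex conjugation lands on a $c_{\varepsilon}$, and the problem reduces to classifying index-$2$ subgroups $H\le \cC_{\delta,\phi}(2^{\infty})\cong (\Of\otimes\Z_2)^{\times}$ that are $c_{\varepsilon}$-stable and have surjective norm. One caution: your sentence ``passing from the maximal image to an index-$2$ subgroup amounts to replacing $E$ by a quadratic twist whose twisting character is unramified outside $2$'' silently uses that some curve over $\Q(j_{K,f})$ with CM by $\Of$ actually has maximal $2$-adic image; this is true but is itself one of the nontrivial inputs in \cite{lozano-galoiscm}. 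The cleaner route, and the one taken there, is purely group-theoretic: enumerate the index-$2$ subgroups of $(\Of\otimes\Z_2)^{\times}$ directly from the structure of that $2$-adic unit group (which, as you say, depends only on $\Delta_Kf^{2}\bmod 32$), discard those on which the norm fails to be surjective, and match the survivors with the listed generators. Your step~(b) is then exactly the finite matrix check you describe.
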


The following result describes the $2$-adic image when $j_{K,f}=1728$.

\begin{thm}[\cite{lozano-galoiscm}, Theorem 1.7]\label{thm-j1728alvaro}\label{thm-lastofsec1}
	Let $E/\Q$ be an elliptic curve with $j(E)=1728$, and let $\Gamma = \{ c_{1}, c_{-1}, c'_{1}, c'_{-1} \}$. Then $G_{E,2^\infty} = \mathcal{N}_{-1,0}(2^\infty)$, or $[\mathcal{N}_{-1,0}(2^\infty) : G_{E,2^\infty}] = 2$ or $4$ and $G_{E,2^\infty}$ is one of the following groups:
	\begin{itemize} 
		\item If $[\mathcal{N}_{-1,0}(2^\infty):G_{E,2^\infty}]=2$, then $G_{E,2^\infty}$ is generated by some $\gamma\in \Gamma$ and one of the groups
		\[ G^{\, 2,1}_{-1, 0}=\left\langle -\operatorname{Id}, 3\cdot \operatorname{Id},\left(\begin{array}{cc} 1 & 2\\ -2 & 1\\\end{array}\right) \right\rangle, \text{ or } G^{\, 2,2}_{-1, 0}=\left\langle -\operatorname{Id}, 3\cdot \operatorname{Id},\left(\begin{array}{cc} 2 & 1\\ -1 & 2\\\end{array}\right) \right\rangle. \]
		\item If $[\mathcal{N}_{-1,0}(2^\infty) : G_{E,2^\infty}]=4$, then $G_{E,2^\infty}$ is generated by some $\gamma \in \Gamma$ and one of the groups
		\[ G^{\, 4,1}_{-1, 0}=\left\langle 5\cdot \operatorname{Id},\left(\begin{array}{cc} 1 & 2\\ -2 & 1\\\end{array}\right) \right\rangle, \text{ or } G^{\, 4,2}_{-1, 0}=\left\langle 5\cdot \operatorname{Id},\left(\begin{array}{cc} -1 & -2\\ 2 & -1\\\end{array}\right) \right\rangle, \text{ or } \]
		\[ G^{\, 4,3}_{-1, 0}= \left\langle -3\cdot \operatorname{Id},\left(\begin{array}{cc} 2 & -1\\ 1 & 2\\\end{array}\right) \right\rangle, \text{ or } G^{\, 4,4}_{-1, 0}=\left\langle -3\cdot \operatorname{Id},\left(\begin{array}{cc} -2 & 1\\ -1 & -2\\\end{array}\right) \right\rangle. \]
\end{itemize}
\end{thm}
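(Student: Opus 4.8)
The plan is to combine the structural constraints of Section~\ref{sec-alvaroCMresults} with an explicit analysis of a family of quartic twists. Every elliptic curve over $\Q$ with $j$-invariant $1728$ admits a model $E_d\colon y^2 = x^3 - dx$ for some $d\in\Q^\times$, with $E_d\cong E_{d'}$ over $\Q$ exactly when $d/d'\in(\Q^\times)^4$, and all such curves have CM by $\Z[i]=\mathcal{O}_{K,1}$ with $K=\Q(i)$ and $\Delta_K=-4$, so that $\delta=-1$ and $\phi=0$ in the notation of Section~\ref{sec-preliminaries}. By Theorem~\ref{thm-firstofsec1}, after a suitable choice of compatible bases $G_{E_d,2^\infty}\subseteq\mathcal{N}_{-1,0}(2^\infty)$ with $[\mathcal{N}_{-1,0}(2^\infty):G_{E_d,2^\infty}]$ dividing $\#\Z[i]^\times=4$. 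Throughout I would identify the Cartan $\cC_{-1,0}(2^\infty)$ with $\Z_2[i]^\times$ (sending $c_{-1,0}(a,b)$ to $a+bi$) and write $\mathcal{N}_{-1,0}(2^\infty)=\Z_2[i]^\times\rtimes\langle c\rangle$, where $c$ acts by complex conjugation.

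First, an enumeration of candidates. Set $H_d:=G_{E_d,2^\infty}\cap\cC_{-1,0}(2^\infty)$. Since $E_d$ is defined over $\Q$, a complex conjugation lands in $G_{E_d,2^\infty}\setminus\cC_{-1,0}(2^\infty)$ as an element $\gamma$ of order $2$; hence $G_{E_d,2^\infty}=\langle H_d,\gamma\rangle$, the subgroup $H_d\subseteq\Z_2[i]^\times$ is stable under complex conjugation, and $[\mathcal{N}_{-1,0}(2^\infty):G_{E_d,2^\infty}]=[\Z_2[i]^\times:H_d]\in\{1,2,4\}$. A short computation shows that any order-$2$ element of $\mathcal{N}_{-1,0}(2^\infty)\setminus\cC_{-1,0}(2^\infty)$ is conjugate in $\GL(2,\Z_2)$ to one of $c_1,c_{-1},c_1',c_{-1}'$. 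It then remains to identify the conjugation-stable subgroups $H\subseteq\Z_2[i]^\times$ of index $1,2$, or $4$ for which $\det\langle H,\gamma\rangle=\Z_2^\times$ (the determinant of $\rho_{E_d,2^\infty}$ being the cyclotomic character) that actually occur. I would show that every such $H$, and hence every such $\langle H,\gamma\rangle$, equals the full preimage of its reduction modulo $8$ --- a finite check on the unit filtration of $\Z_2[i]^\times$ and the action of $c$ --- so that $G_{E_d,2^\infty}$ is determined modulo $8$. Enumerating the admissible $H$ modulo $8$ then produces exactly the groups $\mathcal{N}_{-1,0}(2^\infty)$, $\langle\gamma,G^{\,2,t}_{-1,0}\rangle$ for $t\in\{1,2\}$, and $\langle\gamma,G^{\,4,t}_{-1,0}\rangle$ for $t\in\{1,2,3,4\}$ of the statement; the split into two index-$2$ groups and two pairs of index-$4$ groups reflects the two conjugation-stable subgroups of $\Z_2[i]^\times$ of each of the indices $2$ and $4$ with surjective norm.

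It then remains to realise each of these seven groups by some $E_d$. Here I would use that $E_d$ is the quartic twist of the base curve $E_1\colon y^2=x^3-x$ by $d$, so that over $K$ one has $\rho_{E_d,2^\infty}|_{G_K}=\chi_d\cdot\rho_{E_1,2^\infty}|_{G_K}$, where $\chi_d\colon G_K\to\mu_4=\Z[i]^\times\hookrightarrow\cC_{-1,0}(2^\infty)$ is the Kummer character of $K(\sqrt[4]{d})/K$; thus $H_d$ is the image of $\sigma\mapsto\chi_d(\sigma)\rho_{E_1,2^\infty}(\sigma)$ on $G_K$, a subgroup of $\chi_d(G_K)\cdot H_1$. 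I would pin down $H_1$ by computing the division fields $\Q(E_1[2])$, $\Q(E_1[4])$, and $\Q(E_1[8])$ explicitly --- which also yields the assertion on $8$th division fields of curves with $j=1728$ mentioned in the introduction --- and then let $d$ run over a finite set of representatives, the mod-$8$ behaviour of $\chi_d$ depending only on $d$ modulo a finite quotient of $\Q^\times$, to check that the resulting $H_d$ sweep out all the admissible subgroups and that the accompanying $\gamma$ realises each of $c_1,c_{-1},c_1',c_{-1}'$. Combined with the enumeration above, this proves the theorem.

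The step I expect to be the main obstacle is proving that the image is already determined modulo $8$: unlike the case $\ell>2$ in Theorem~\ref{thm-goodredn} there is no soft reason for this, and one must analyse $\Z_2[i]^\times$ --- its unit filtration, the norm map, and the action of $c$ --- carefully enough to rule out genuinely level-$16$ subgroups of index dividing $4$ with full determinant, in particular to separate the ``$\langle 5\cdot\operatorname{Id},\dots\rangle$-type'' index-$4$ groups from the ``$\langle -3\cdot\operatorname{Id},\dots\rangle$-type'' ones. The attendant division-field and twisting computations are elementary but intricate, and are most safely certified with \texttt{Magma}.
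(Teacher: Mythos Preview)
This theorem is not proved in the present paper. It is quoted verbatim from \cite{lozano-galoiscm} (Theorem~1.7 there) as part of the preliminaries in Section~\ref{sec-alvaroCMresults}, and the paper uses it as a black box: the actual new work for $j=1728$ is Theorem~\ref{thm-j1728}, which takes the list of groups in Theorem~\ref{thm-j1728alvaro} as given and determines which quartic twist $y^2=x^3+dx$ realises each one. So there is no ``paper's own proof'' of this statement to compare your proposal against.

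That said, your outline is a reasonable sketch of how the cited result is established, and it dovetails with what the present paper does do: your computation of $\Q(E_1[8])$ is exactly Corollary~\ref{cor-8divfieldj1728}, your observation that the image is determined mod $8$ is Lemma~\ref{lem-j1728index} (which the paper verifies by a direct Magma check on the finitely many candidate groups rather than by your filtration argument), and your twisting argument is the content of Lemma~\ref{lem-twistimage} and the proof of Theorem~\ref{thm-j1728}. The one place where you go beyond the paper is the enumeration of conjugation-stable, full-determinant subgroups of $\Z_2[i]^\times$ of index dividing $4$; that is precisely the content of \cite{lozano-galoiscm}, and your identification of the level-$8$ reduction step as the crux is accurate.
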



\subsection{The $4$- and $8$-division fields}

We conclude this section citing a theorem from \cite{yelton} which describes the $4$- and $8$-division fields of an elliptic curve.

\begin{theorem}[\cite{yelton}]\label{thm-8divisionfield}
\label{8-division-thm} Let $K$ be a field of characteristic not $2$, and let $E/K$ be an elliptic curve. Let $E/K$ have a model of the form $y^2 = (x - \alpha_1)(x - \alpha_2)(x - \alpha_3)$,
where $\alpha_1,\alpha_2,\alpha_3 \in \overline{K}$. Also, fix $\zeta_8 \in \overline{K}$ a primitive eighth root of unity, and $\zeta_4 = \zeta_8^2$ a primitive fourth root of unity. For $j \in \Z/3\Z$, choose an element $A_j \in \overline{K}$ whose square is $\alpha_{j + 1} - \alpha_{j + 2}$, and choose an element $B_j \in \overline{K}$ whose square is $A_j(A_{j + 1} + \zeta_4 A_{j + 2})$. Then $K(E[4]) = K(E[2], \zeta_4, A_1, A_2, A_3)$ and $K(E[8]) = K(E[4], \zeta_8, B_1, B_2, B_3)$.
\end{theorem}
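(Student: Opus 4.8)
\emph{Approach.} The plan is to deduce both equalities from the classical halving (``division by $2$'') formula on $E$, together with the single relation $A_1^2+A_2^2+A_3^2=(\alpha_1-\alpha_2)+(\alpha_2-\alpha_3)+(\alpha_3-\alpha_1)=0$. Two preliminary observations: by the Weil pairing $\zeta_4\in K(E[4])$ and $\zeta_8\in K(E[8])$, so these roots of unity may be adjoined for free; and $\Gal(K(E[2^{n+1}])/K(E[2^n]))$ embeds via $\rho_{E,2^{n+1}}$ into the elementary abelian $2$-group $\ker(\GL(2,\Z/2^{n+1}\Z)\to\GL(2,\Z/2^{n}\Z))$, so each layer is generated by square roots and the only question is which ones. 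The basic tool: for $P=(x_P,y_P)$, a point $Q=(x_Q,y_Q)$ satisfies $2Q=P$ precisely when the tangent to $E$ at $Q$ passes through $-P$; expanding $f(x)-(\text{tangent at }Q)^2=(x-x_Q)^2(x-x_P)$ and evaluating at $x=\alpha_i$ gives $x_P-\alpha_i=\beta_i^2$ with $\beta_i=\tfrac{y_Q}{\alpha_i-x_Q}+m$ and $m=\tfrac{f'(x_Q)}{2y_Q}$; summing and multiplying these over $i$ yields the closed forms $m=\beta_1+\beta_2+\beta_3$, $y_Q=(\beta_1+\beta_2)(\beta_1+\beta_3)(\beta_2+\beta_3)$, $x_Q=\alpha_i+\prod_{l\neq i}(\beta_i+\beta_l)$, and $\beta_1\beta_2\beta_3=\pm y_P$. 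In particular $K(E[2])(x_Q,y_Q)=K(E[2])(\beta_1,\beta_2,\beta_3)$, and once $y_P$ is known one of the three $\beta_i$ is redundant.

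\emph{The field $K(E[4])$.} Choose $Q_k$ with $2Q_k=(\alpha_k,0)$ for $k=1,2$; then $\{Q_1,Q_2\}$ is a $\Z/4\Z$-basis of $E[4]$, so $K(E[4])=K(E[2])(x(Q_1),y(Q_1),x(Q_2),y(Q_2))$. Feeding $P=(\alpha_1,0)$ into the halving formula (so $\beta_1=0$) and using $\alpha_1-\alpha_2=A_3^2$, $\alpha_1-\alpha_3=-A_2^2$ gives $x(Q_1)=\alpha_1+\zeta_4A_2A_3$ and $y(Q_1)=\zeta_4A_2A_3(A_3+\zeta_4A_2)$, both in $K(E[2],\zeta_4,A_2,A_3)$; symmetrically for $Q_2$, giving ``$\subseteq$''. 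For ``$\supseteq$'': $\zeta_4\in K(E[4])$, so $A_2A_3=\zeta_4^{-1}(x(Q_1)-\alpha_1)\in K(E[4])$, then $A_3+\zeta_4A_2=y(Q_1)(\zeta_4A_2A_3)^{-1}\in K(E[4])$; since $(A_3+\zeta_4A_2)(A_3-\zeta_4A_2)=A_2^2+A_3^2=-A_1^2\in K(E[2])$ with $A_1\neq 0$, also $A_3-\zeta_4A_2\in K(E[4])$, hence $A_3$ and $A_2$ lie in $K(E[4])$; the analogous data from $Q_2$ gives $A_1A_3\in K(E[4])$, hence $A_1\in K(E[4])$.

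\emph{The field $K(E[8])$.} Choose $\widetilde Q_k$ with $2\widetilde Q_k=Q_k$; then $\{\widetilde Q_1,\widetilde Q_2\}$ is a $\Z/8\Z$-basis of $E[8]$, so $K(E[8])=K(E[4],\zeta_8,\beta^{(1)}_\bullet,\beta^{(2)}_\bullet)$, where $\beta^{(k)}_\bullet$ are the halving parameters of $2\widetilde Q_k=Q_k$. From $x(Q_1)-\alpha_i$ one reads off $(\beta^{(1)}_1)^2=\zeta_4A_2A_3$, $(\beta^{(1)}_2)^2=A_3(A_3+\zeta_4A_2)$, $(\beta^{(1)}_3)^2=A_2(\zeta_4A_3-A_2)$. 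The crux is the pair of identities, each equivalent to $A_1^2+A_2^2+A_3^2=0$,
\[(A_3+\zeta_4A_2)(A_1+\zeta_4A_2)=\tfrac12(A_1+A_3+\zeta_4A_2)^2,\qquad (\zeta_4A_3-A_2)(A_3+\zeta_4A_1)=-\tfrac{\zeta_4}{2}(A_1+A_2-\zeta_4A_3)^2,\]
together with the fact that $\tfrac12,-\tfrac{\zeta_4}{2}\in(K(\zeta_8)^\times)^2$ (e.g. $2=(\zeta_8+\zeta_8^{-1})^2$ and $-\zeta_4=\zeta_8^6$). These make $(\beta^{(1)}_2)^2/B_3^2=(A_3+\zeta_4A_2)/(A_1+\zeta_4A_2)$ and $(\beta^{(1)}_3)^2/B_2^2=(\zeta_4A_3-A_2)/(A_3+\zeta_4A_1)$ squares in $K(E[4],\zeta_8)$, so $K(E[4],\zeta_8,\beta^{(1)}_2)=K(E[4],\zeta_8,B_3)$ and $K(E[4],\zeta_8,\beta^{(1)}_3)=K(E[4],\zeta_8,B_2)$; since $\beta^{(1)}_1=\pm y(Q_1)(\beta^{(1)}_2\beta^{(1)}_3)^{-1}$ with $y(Q_1)\in K(E[4])$, we conclude $K(E[4],\zeta_8,\beta^{(1)}_\bullet)=K(E[4],\zeta_8,B_2,B_3)$. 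Running the identical computation for $Q_2$ (the roles of the branch points cyclically permuted, so $A_j\mapsto A_{j+1}$ and $B_j\mapsto B_{j+1}$) gives $K(E[4],\zeta_8,\beta^{(2)}_\bullet)=K(E[4],\zeta_8,B_3,B_1)$; taking the compositum yields $K(E[8])=K(E[4],\zeta_8,B_1,B_2,B_3)$.

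\emph{Main obstacle.} I expect the real difficulty to be the $E[8]$ step: a blind halving outputs the parameters $\beta^{(k)}_i$ as unstructured square roots bearing no visible relation to the symmetric expressions $B_j^2=A_j(A_{j+1}+\zeta_4A_{j+2})$, and the content is to discover the two quadratic identities above — which hold exactly because $A_1^2+A_2^2+A_3^2=0$ — and to notice that the stray scalars $\tfrac12,-\tfrac{\zeta_4}{2}$ become squares once $\zeta_8$ is present; together these convert the $\beta^{(k)}_i$ into the $B_j$. A secondary point requiring care is the bookkeeping: selecting $Q_k,\widetilde Q_k$ among the many preimages so that they give genuine bases, tracking the sign ambiguities in the $\beta_i$, and using $\beta_1\beta_2\beta_3=\pm y_P$ to see that the three square roots $B_1,B_2,B_3$ together with $\zeta_8$ are all that is needed.
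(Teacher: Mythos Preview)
The paper does not prove this theorem: it is quoted verbatim from Yelton's work (reference \cite{yelton}) and used as a black box in the subsequent computations of division fields. So there is no ``paper's own proof'' to compare against.

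That said, your argument is a correct self-contained proof. The halving formula yields $x_P-\alpha_i=\beta_i^2$ and $\beta_1\beta_2\beta_3=\pm y_P$, giving $K(E[2])(Q)=K(E[2])(\beta_\bullet)$; your computation of $x(Q_k)$ and the back-and-forth using $(A_3+\zeta_4A_2)(A_3-\zeta_4A_2)=-A_1^2$ cleanly establishes the $E[4]$ equality. For $E[8]$, your two quadratic identities (both equivalent to $A_1^2+A_2^2+A_3^2=0$) are correct, and the observation that $\tfrac12=(\zeta_8+\zeta_8^{-1})^{-2}$ and $-\zeta_4=\zeta_8^6$ are squares in $K(\zeta_8)$ is exactly what is needed to convert the raw $\beta^{(k)}_i$ into the $B_j$. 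The cyclic bookkeeping (giving $\{B_2,B_3\}$ from $Q_1$ and $\{B_1,B_3\}$ from $Q_2$, whose compositum is all three) is also correct. One small caveat worth stating explicitly: the denominators $A_{j+1}+\zeta_4 A_{j+2}$ appearing in your ratios are nonzero, since $A_{j+1}=-\zeta_4 A_{j+2}$ would force $A_j^2=-A_{j+1}^2-A_{j+2}^2=0$, contradicting distinctness of the $\alpha_i$.
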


\section{Orders of class number $1$}\label{sec-classnumber1}

We are ready to begin our classification of Weierstrass models of CM elliptic curves according to their $\ell$-adic images. See Section \ref{sec-structure} for a summary of the structure of the proofs. In this section we discuss the case of an order of class number $1$, i.e., elliptic curves with CM defined over $\Q$.

\subsection{The case of $\Delta_K f^2\neq -3$ and $\ell\geq 3$}\label{sec-notminus3ellmorethan2}

 Suppose that $E/\Q(j_{K,f})$ is an elliptic curve with CM by $\Of$ of class number one ($|\Delta_K f^2|\in \Delta_1$) and $\Delta_K f^2\neq -3$ (i.e., $j_{K,f}\in \Q^\ast$), and let $\ell\geq 3$ be prime. 
 
 \subsubsection{$\ell\geq 3$ does not divide $\Delta_K f$}\label{sec-ellnotdividedisc} We first consider the case where $\ell$  is not a  divisor of $\Delta(\mathcal{O})=\Delta_K f^2$ (since $\ell>2$, the prime $\ell$ is not a divisor of $2\Delta_K f$). In this case, by Theorem \ref{thm-goodredn}, two things are true: (1) the image of $\rho_{E,\ell^\infty}$ is  the full normalizer $\mathcal{N}_{\delta,\phi}(\ell^\infty)$, and (2) the conjugacy class of the image is determined by the image mod $\ell$. Since in the case we are discussing there are only two possibilities for the  mod $\ell$ image (by Prop. \ref{prop-zywina1}), namely $N_s(\ell)$ or $N_{\text{ns}}(\ell)$, the $\ell$-adic image corresponds to one of two possible labels, namely $\ell\texttt{.}0\texttt{.s-1.1.1}$ or $\ell\texttt{.}0\texttt{.ns-1.1.1}$ (where the labels were defined in Section \ref{sec-labels}). In particular, Prop. \ref{prop-zywina1} implies the following:
\begin{romanenum}
\item \label{P:CM main a}
If $\legendre{\Delta_K}{\ell}=1$, then $\rho_{E,\ell}(G_\QQ)$ is conjugate in $\GL_2(\FF_\ell)$ to $N_s(\ell)$. Thus, the label of the $\ell$-adic image  is $\ell\texttt{.}0\texttt{.s-1.1.1}$.
\item \label{P:CM main b}
If $\legendre{\Delta_K}{\ell}=-1$, then $\rho_{E,\ell}(G_\QQ)$ is conjugate in $\GL_2(\FF_\ell)$ to $N_{ns}(\ell)$. Thus, the label of the image (with labels as defined in Section \ref{sec-labels}) is $\ell\texttt{.}0\texttt{.ns-1.1.1}$.
\end{romanenum}

 \subsubsection{$\ell\geq 3$ divides $\Delta_K f$}\label{sec-elldividesdisc} Now suppose that $\ell\geq 3$ divides $\Delta_K f^2$. Since we are in the case of $|\Delta_K f^2|\in \Delta_1$ and $\Delta_K f^2\neq -3$, then $\Delta_K=-\ell$ as Zywina points out (note that this is no longer true in the class number $2$ case where we can have $\ell=3$ and $\Delta_K f^2=(-4)\cdot 3^2$, for example). By Prop. \ref{prop-zywina1} (iii), there are three possibilities (using the notation of subgroups $G$, $H_1$, and $H_2$ of  $\GL(2,\F_\ell)$ as in  the statement): 

\begin{enumerate}
\item If $E$ is isomorphic to $E_{\Of}$ over $\Q$, then $\rho_{E,\ell}(G_\Q)$ is conjugate in $\GL_2(\FF_\ell)$ to $H_1$, which corresponds to the group $\langle G^{2,1}_{\delta,0}(\Z/\ell\Z), c_1\rangle$ of Theorem \ref{thm-oddprimedividingdisc} (a). Therefore the $\ell$-adic image is $\langle G^{2,1}_{\delta,0}, c_1\rangle$, which in turn corresponds to the label $\ell\texttt{.}1\texttt{.ns-}\ell\texttt{.2.1}$ if $\Delta_K f^2\neq -3\cdot 3^2=-27$, and to $3\texttt{.}3\texttt{.ns-}\texttt{3.2.1}$ if $\Delta_K f^2 = -27$ (note that $\Delta_K f^2/\ell = -f^2$ and $-1$ is a quadratic non-residue mod $\ell$ because every $\ell\geq 3$ dividing $\delta\in \Delta_1$ is congruent to $3 \bmod 4$).

\item If $E$ is isomorphic to the quadratic twist of $E_{\Of}$ by $-\ell$ over $\Q$, then $\rho_{E,\ell}(G_\Q)$ is conjugate in $\GL_2(\FF_\ell)$ to $H_2$. As in the previous case, the $\ell$-adic image is $\langle G^{2,1}_{\delta,0}, c_{-1}\rangle$, which in turn corresponds to the label $\ell\texttt{.}1\texttt{.ns-}\ell\texttt{.2.2}$ if $\Delta_K f^2\neq -3\cdot 3^2=-27$, and to $3\texttt{.}3\texttt{.ns-}\texttt{3.2.2}$ if $\Delta_K f^2 = -27.$

\item If $E$ is not isomorphic to $E_{\Of}$ or its quadratic twist by $-\ell$ over $\Q$, then $\rho_{E,\ell}(G_\Q)$ is conjugate in $\GL_2(\FF_\ell)$ to $G$. Hence, the $\ell$-adic image is all of $\mathcal{N}_{\delta,\phi}(\ell^\infty)$ and it corresponds to the label $\ell\texttt{.}1\texttt{.ns-1.1.1}$ if $\Delta_K f^2\neq -3\cdot 3^2=-27$, and to $3\texttt{.}3\texttt{.ns-1.1.1}$ if $\Delta_K f^2 = -27.$

\end{enumerate}

\subsection{The case of $\Delta_K f^2\neq -3$ and $\ell=2$}\label{sec-notminus3ellequals2} We continue the case when $E/\Q(j_{K,f})$ is an elliptic curve with CM by $\Of$ of class number one ($|\Delta_K f^2|\in \Delta_1$) and $\Delta_K f^2\neq -3$ (i.e., $j_{K,f}\in \Q^\ast$), and now let $\ell=2$. 

\subsubsection{$\Delta_K f^2 \neq -3,-4,-8,-16$ and $\ell =2$}\label{sec-not34816ell2} Suppose first that $\ell=2$ and $-\Delta_K f^2\in \Delta_1\smallsetminus\{-3,-4\}$. Then, Theorem \ref{thm-m8and16alvaro} says that unless $\Delta_K f^2 = -8$ or $-16$, then the $2$-adic image must be all of $\mathcal{N}_{\delta,\phi}(2^\infty)$. It remains to examine the square class of $\Delta_K f^2/2^\nu$, where $\nu=\nu_2(\Delta_K f^2)$ and $\nu_2$ denotes the $2$-adic valuation. One verifies that $-7,-28$ are split while $-12,-27,-11,-19,-43,-67$ and $-163$ are in the non-square class of $5$. Thus, for $\Delta_K f^2=-7$ we have a label \texttt{2.0.s-1.1.1} and for $-28$ we have \texttt{2.2.s-1.1.1}, for $-12$ we get \texttt{2.2.ns5-1.1.1}, and for $-27,-11,-19,-43,-67,-163$ we get \texttt{2.0.ns5-1.1.1}, as it appears in Table \ref{tab-ell2}.

\subsubsection{$\Delta_Kf^2=-4$ ($j = 1728$), $\ell = 2$}\label{sec-4ell2} By our work in Section \ref{sec-not34816ell2}, now it remains to analyze the cases of $\Delta_K f^2 = -4,-8,-16$ and $\ell=2$. Here we discuss the case of $\Delta_K f^2=-4$ (i.e., $j_{K,f}=1728$).

\begin{figure}[ht]
 \scalebox{.68}{
$$
\xymatrix@R=1mm@C=1mm{ 
& & & & & & \fbox{1.1.1}\ar@/_1pc/@{-}[dlllll]_2 \ar@{-}[dll]_2\ar@{-}[drr]^2 \ar@/^1pc/@{-}[drrrrr]^2 \\
& \fbox{$\begin{array}{c}4.2.2\\ -t^2\end{array}$}
\ar@{-}[dl]_2\ar@{-}[dr]^2
& & &
 \fbox{$\begin{array}{c}4.2.1\\ t^2\end{array}$} 
 \ar@{-}[dl]_2\ar@{-}[dr]^2
 & &
 & &
 \fbox{$\begin{array}{c}8.2.1\\ 2t^2\end{array}$} 
 \ar@{-}[dl]_2
 \ar@{-}[dr]^2
 & & & 
 \fbox{$\begin{array}{c}8.2.2\\ -2t^2\end{array}$} 
 \ar@{-}[dl]_2\ar@{-}[dr]^2
 &  
\\
\fbox{$\begin{array}{c} 4.4.3\\ -4\end{array}$}  & 
  & 
\fbox{$\begin{array}{c}4.4.4\\ -1\end{array}$}  & 
\fbox{$\begin{array}{c}4.4.2\\ 1\end{array}$}  & 
 & 
\fbox{$\begin{array}{c}4.4.1\\ 4\end{array}$}  & 
&
\fbox{$\begin{array}{c}16.4.1\\ 2\end{array}$}  & 
& 
\fbox{$\begin{array}{c}16.4.2\\ 8\end{array}$}  & 
\fbox{$\begin{array}{c}16.4.3\\ -8\end{array}$}  & 
& 
\fbox{$\begin{array}{c}16.4.4\\ -2\end{array}$}  & 
}
$$
}
\caption{The labels for the $2$-adic images of curves $y^2=x^3+dx$. For each value of $d$ indicated in the second row of each box, the $2$-adic label starts with \texttt{2.2.ns7-} and ends with the digits indicated in the first row of the box.}\label{tab-1728}
\end{figure}
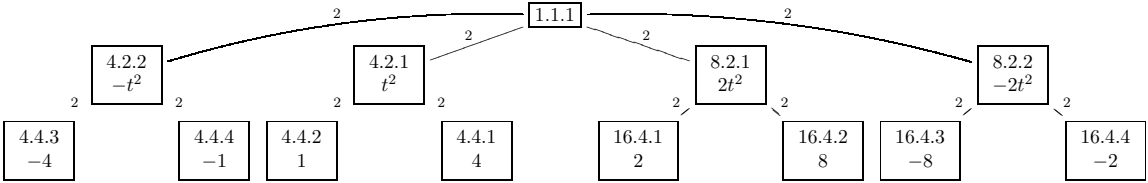

Our goal is to prove the classification $2$-adic images of twists of $E_{\Z[i]}$ which is summarized in Figure \ref{tab-1728} and also detailed in Table \ref{tab-ell2}. Our first preliminary result describes the $4$- and $8$-division fields of an elliptic curve with $j=1728$, as a corollary of Theorem \ref{thm-8divisionfield}.

\begin{corollary}\label{cor-8divfieldj1728}
Let $E/\Q$ be an elliptic curve with $j(E) = 1728$, and suppose that $E : y^2 = x^3 -N x$, where  $N>0$. If $\zeta_{16}$ is a primitive $16$th root of unity, and $\zeta_8 = \zeta_{16}^2$ is a primitive $8$th root of unity, then
\begin{itemize}
	\item $\Q(E[4]) = \Q(\zeta_8, \sqrt[4]{N})$, and
	\item $\Q(E[8]) = \Q(\zeta_{16}, \sqrt{-1+\sqrt{2}}, \sqrt[4]{N})$
\end{itemize}
\end{corollary}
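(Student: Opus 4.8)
The plan is to apply Theorem~\ref{thm-8divisionfield} directly to the model $E : y^2 = x^3 - Nx = x(x-\sqrt{N})(x+\sqrt{N})$, so that we may take $\alpha_1 = 0$, $\alpha_2 = \sqrt{N}$, $\alpha_3 = -\sqrt{N}$ (the labeling of the roots is immaterial up to relabeling). First I would compute the differences $\alpha_{j+1} - \alpha_{j+2}$ for $j \in \Z/3\Z$: these are $\alpha_2 - \alpha_3 = 2\sqrt{N}$, $\alpha_3 - \alpha_1 = -\sqrt{N}$, and $\alpha_1 - \alpha_2 = -\sqrt{N}$. Taking square roots $A_j$, we get $A_1 = \sqrt{2}\,\sqrt[4]{N}$, and $A_2, A_3$ each a fourth root of $N$ times a fourth root of $-1$; since $\zeta_8 = \zeta_{16}^2$ is available once we adjoin $\zeta_4 = \zeta_8^2$, and since $\Q(E[2]) = \Q$ here (the two-torsion is rational as $N > 0$ gives $\sqrt N$... wait, $\sqrt N$ need not be rational — but $E[2] = \{O, (0,0), (\pm\sqrt N, 0)\}$, so $\Q(E[2]) = \Q(\sqrt N)$), I would check that $\Q(E[2], \zeta_4, A_1, A_2, A_3)$ simplifies: $\zeta_4 \in$ the field forces $\sqrt 2 = \zeta_8 + \zeta_8^{-1}$ to appear once $\zeta_8$ does, and one verifies $\Q(E[2],\zeta_4,A_1,A_2,A_3) = \Q(\zeta_8, \sqrt[4]{N})$ (note $\sqrt N \in \Q(\sqrt[4]N)$ and $\sqrt 2, i \in \Q(\zeta_8)$). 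This gives the first bullet.

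For the $8$-division field, I would then compute $B_j^2 = A_j(A_{j+1} + \zeta_4 A_{j+2})$ for each $j$, working inside $L := \Q(E[4]) = \Q(\zeta_8, \sqrt[4]{N})$. Using the explicit $A_j$ from above, each product $A_j(A_{j+1} + \zeta_4 A_{j+2})$ is (a unit in $\Z[\zeta_8]$) times $\sqrt N$, so after extracting the known part $\sqrt[4]{N} \in L$ one is left with needing a square root of an element of $\Z[\zeta_8]$. The key computation is to identify which element: I expect, after simplification, that the three $B_j$ generate $L(\sqrt{-1+\sqrt 2})$ over $L$, i.e. the new quantity is (up to squares in $L$) $\sqrt{-1 + \sqrt 2}$, and also that $\zeta_{16}$ appears (Theorem~\ref{thm-8divisionfield} explicitly adjoins $\zeta_8$ to pass from $E[4]$ to $E[8]$, and here $\zeta_8 \in L$ already, but the primitive $16$th root $\zeta_{16} = \sqrt{\zeta_8}$ is genuinely new). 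So $\Q(E[8]) = L(\zeta_{16}, B_1, B_2, B_3) = \Q(\zeta_{16}, \sqrt{-1+\sqrt 2}, \sqrt[4]{N})$. I would double-check the claim that $\zeta_{16} \in \Q(E[8])$ against the known Weil-pairing constraint that $\Q(\zeta_8) \subseteq \Q(E[8])$ — in fact for $j = 1728$ the image computations (e.g. via $\mathcal{N}_{-1,0}(2^\infty)$) force $\Q(\zeta_{16}) \subseteq \Q(E[8])$, which serves as a consistency check.

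The main obstacle I anticipate is the bookkeeping in the second bullet: the expressions $A_j(A_{j+1} + \zeta_4 A_{j+2})$ involve sums of two fourth-roots-of-unity multiples of $\sqrt[4]N$, and simplifying $B_j^2$ modulo squares in $L = \Q(\zeta_8,\sqrt[4]N)$ requires care, because whether a given element of $\Z[\zeta_8]$ is a square in $L$ (as opposed to in $\Q(\zeta_8)$) depends on the interaction with $\sqrt[4]{N}$. I would handle this by first treating the case $N$ not a square times an eighth power (the generic case), reducing $B_j^2$ to an element of $\Q(\zeta_8)^\times$ times $\sqrt{N}$, and then arguing that the square class of that element in $\Q(\zeta_8)^\times/(\Q(\zeta_8)^\times)^2$ is represented by $-1 + \sqrt 2$; this last identification is a finite check, and it is exactly the place where the somewhat mysterious radical $\sqrt{-1+\sqrt 2}$ enters. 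The degenerate cases (various $N$ making extra collapses) can be absorbed because the right-hand side $\Q(\zeta_{16}, \sqrt{-1+\sqrt2}, \sqrt[4]N)$ is stated as a compositum and the identity of fields is what is claimed, not the degree.
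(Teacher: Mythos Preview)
Your approach is essentially the same as the paper's: both apply Theorem~\ref{thm-8divisionfield} with $\alpha_1=0$, $\alpha_2=\sqrt{N}$, $\alpha_3=-\sqrt{N}$, take $A_1=\sqrt{2}\,\sqrt[4]{N}$ and $A_2=A_3=i\sqrt[4]{N}$, and read off $\Q(E[4])=\Q(\zeta_8,\sqrt[4]{N})$ exactly as you do.

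The one place where the paper is cleaner is the $8$-division step. Rather than reasoning abstractly about square classes and worrying about interaction with $\sqrt[4]{N}$, the paper simply computes the three $B_j$'s explicitly: $B_1=\sqrt{2}\,\zeta_{16}^3\sqrt[4]{N}$, $B_2=i\sqrt{1+\sqrt{2}}\,\sqrt[4]{N}$, $B_3=\zeta_8\sqrt{-1+\sqrt{2}}\,\sqrt[4]{N}$. Each $B_j$ factors as (an element of $\Q(\zeta_{16},\sqrt{-1+\sqrt{2}})$) times $\sqrt[4]{N}$, so the $\sqrt[4]{N}$ separates uniformly and no case analysis on $N$ is needed. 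In particular, $\zeta_{16}$ enters through $B_1$ (since $B_1^2=2\zeta_8^3\sqrt{N}$), answering your question about where it comes from without appealing to the image computations; and the identity $(\sqrt{2}+1)\sqrt{-1+\sqrt{2}}=\sqrt{1+\sqrt{2}}$ shows $B_2$ contributes nothing new beyond $B_3$. Your anticipated obstacle about squares in $L$ versus $\Q(\zeta_8)$ does not actually materialize.
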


\begin{proof}
Let $\zeta_{16} := e^{\pi i/8}$, so that $\zeta_8 = \zeta_{16}^2 = (1 + i)/\sqrt{2}$ and $\zeta_4 = \zeta_8^2 = i$. The Weierstrass model for $E$ can be expressed as $y^2 = x(x - \sqrt{ N})(x + \sqrt{N})$, so we let $\alpha_1 = 0$, $\alpha_2 = \sqrt{ N}$, and $\alpha_3 = - \sqrt{N}$. Then, let $A_1 = \sqrt{2} \sqrt[4]{N}$ and $A_2 = A_3 = i \sqrt[4]{N}$ as in Theorem \ref{8-division-thm}. Thus,
\begin{align*}
\Q(E[4]) &= \Q(E[2], i, A_1, A_2, A_3) = \Q( \sqrt{N}, i, \sqrt{2}, \sqrt[4]{N}) = \Q(i, \sqrt{2}, \sqrt[4]{N}),
\end{align*}
and since $\Q(\zeta_8) = \Q(i, \sqrt{2})$, we have 
$\Q(E[4]) = \Q(\zeta_8, \sqrt[4]{N}),$ as stated. Next, let
\begin{align*}
B_1 &= \sqrt{ \sqrt{2} \sqrt[4]{N} \left(i \sqrt[4]{N} - \sqrt[4]{N} ) \right)} = \sqrt{ \sqrt{2} (i - 1) \sqrt{N} } = \sqrt{2} \, \zeta_{16}^3  \sqrt[4]{N}, \\
B_2 &= \sqrt{ i \sqrt[4]{N} \left( i \sqrt[4]{N} + \sqrt{2} i \sqrt[4]{N} \right) } = \sqrt{ (-\sqrt{2} - 1) \sqrt{N} } = i \sqrt{ \sqrt{2} + 1} \sqrt[4]{N},  \\
B_3 &= \sqrt{ i \sqrt[4]{N} \left( \sqrt{2} \sqrt[4]{N} -  \sqrt[4]{N} \right) } = \sqrt{ i (\sqrt{2} - 1) \sqrt{ N}} = \zeta_8 \sqrt{ \sqrt{2} - 1} \sqrt[4]{N}.
\end{align*}

Since $(\sqrt{2} + 1)(\sqrt{-1+\sqrt{2}}) = \sqrt{1+\sqrt{2}}$, we have that 
\begin{align*}
\Q(E[8]) 
&= \Q(E[4], \zeta_8, B_1, B_2, B_3) \\
&= \Q\left(\sqrt[4]{N}, \zeta_8, \sqrt{2} \, \zeta_{16}^3  \sqrt[4]{N}, i \sqrt{ 1+\sqrt{2}} \sqrt[4]{N},  \zeta_8 \sqrt{ -1+\sqrt{2}} \sqrt[4]{N}\right) \\
&= \Q\left(\zeta_{16}, \sqrt{-1+\sqrt{2}}, \sqrt[4]{N}\right),
\end{align*}
as desired.
\end{proof}

\begin{lemma}\label{lem-j1728index}
    Let $E/\Q$ be an elliptic curve with $j(E)=1728$. Let $G_{E,8}$ be the image of $\rho_{E,8}\colon G_\Q\to\mathcal{N}_{-1,0}(8)$, and let $d_{E,8}=[\mathcal{N}_{-1,0}(8):G_{E,8}]$. Then:
    \begin{enumerate}
        \item $d_{E,8}=1$, $2$, or $4$, and
        \item $d_{E,8}$ coincides with the index of the image $G_{E,2^\infty}$ of $\rho_{E,2^\infty}$ in $\mathcal{N}_{-1,0}(2^{\infty})$.
    \end{enumerate}
\end{lemma}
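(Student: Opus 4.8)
The plan splits along the two assertions. For (1), I would apply Theorem~\ref{thm-firstofsec1}(1) with $N=8$. Since $j(E)=1728$ forces $\Of=\Z[i]$ (so $\delta=-1$, $\phi=0$, and the ambient group is $\mathcal{N}_{-1,0}(8)$), and since none of the nontrivial units $-1,\pm i$ of $\Z[i]$ is congruent to $1$ modulo $8$, the subgroup $\mathcal{O}_{K,f,8}^\times$ is trivial. Hence $d_{E,8}$ divides $|\Z[i]^\times|=4$, i.e.\ $d_{E,8}\in\{1,2,4\}$.

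For (2), write $r\colon \mathcal{N}_{-1,0}(2^\infty)\to\mathcal{N}_{-1,0}(8)$ for reduction modulo $8$, so that $\rho_{E,8}=r\circ\rho_{E,2^\infty}$ and hence $G_{E,2^\infty}\subseteq r^{-1}(G_{E,8})$. This gives $[\mathcal{N}_{-1,0}(2^\infty):G_{E,2^\infty}]\ge[\mathcal{N}_{-1,0}(8):G_{E,8}]=d_{E,8}$, with equality exactly when $\ker r\subseteq G_{E,2^\infty}$, i.e.\ when the level of definition of $G_{E,2^\infty}$ divides $8$. So the whole content of (2) is this level bound.

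I would prove the level bound as follows. Reduction modulo $8$ carries the non-Cartan coset of $\mathcal{N}_{-1,0}(2^\infty)$ into the non-Cartan coset of $\mathcal{N}_{-1,0}(8)$, which does not contain the identity; therefore $\ker r$ is contained in the Cartan subgroup $C:=\cC_{-1,0}(2^\infty)\cong\Z_2[i]^\times$, and under this identification $\ker r=1+8\Z_2[i]=1+\mathfrak{p}^6$, where $\mathfrak{p}=(1+i)$ and $2\Z_2[i]=\mathfrak{p}^2$. Since $\log$ gives an isomorphism $1+\mathfrak{p}^3\to\mathfrak{p}^3$ under which raising to the $k$-th power becomes multiplication by $k$, and $v_{\mathfrak{p}}(2)=2$, we get $(1+\mathfrak{p}^3)^2=1+\mathfrak{p}^5$ and $(1+\mathfrak{p}^3)^4=1+\mathfrak{p}^7$; in particular $(\Z_2[i]^\times)^2\supseteq 1+\mathfrak{p}^6$ and $(\Z_2[i]^\times)^4\supseteq 1+\mathfrak{p}^7$. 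By Theorem~\ref{thm-firstofsec1}(1) the index $[\mathcal{N}_{-1,0}(2^\infty):G_{E,2^\infty}]$ divides $4$, hence so does $[\Z_2[i]^\times:G_{E,2^\infty}\cap C]$ (as $[\mathcal{N}_{-1,0}(2^\infty):C]=2$). If $[\mathcal{N}_{-1,0}(2^\infty):G_{E,2^\infty}]\le 2$, then $G_{E,2^\infty}\cap C$ has index at most $2$ in $\Z_2[i]^\times$, so it contains $(\Z_2[i]^\times)^2\supseteq 1+\mathfrak{p}^6=\ker r$. If $[\mathcal{N}_{-1,0}(2^\infty):G_{E,2^\infty}]=4$, then by Theorem~\ref{thm-j1728alvaro} the group $G_{E,2^\infty}$ is generated by some $\gamma\in\{c_1,c_{-1},c'_1,c'_{-1}\}$ together with one of $G^{\,4,1}_{-1,0},\dots,G^{\,4,4}_{-1,0}$; each of these four subgroups lies in $C$ and contains a scalar $\zeta\operatorname{Id}$ with $\zeta\in\{5,-3\}$, and since $\zeta^2-1\in\{24,8\}$ has $\mathfrak{p}$-valuation exactly $6$ we have $\zeta^2\operatorname{Id}\in(1+\mathfrak{p}^6)\setminus(1+\mathfrak{p}^7)$, so that $G_{E,2^\infty}\cap C\supseteq\langle 1+\mathfrak{p}^7,\,\zeta^2\operatorname{Id}\rangle=1+\mathfrak{p}^6=\ker r$. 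In every case $\ker r\subseteq G_{E,2^\infty}$, hence $G_{E,2^\infty}=r^{-1}(G_{E,8})$ and $[\mathcal{N}_{-1,0}(2^\infty):G_{E,2^\infty}]=d_{E,8}$, completing (2).

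The one step that is not essentially formal is the index-$4$ case: the bare inclusion $G_{E,2^\infty}\cap C\supseteq(\Z_2[i]^\times)^4$ only forces the level to divide $16$, so one genuinely needs the explicit list of index-$4$ images in Theorem~\ref{thm-j1728alvaro} together with the elementary observation that each contains one of the scalars $5\operatorname{Id}$, $-3\operatorname{Id}$. These finite group-theoretic verifications are also confirmed by the \texttt{Magma} code of \cite{githubrepo}. Once this level bound is available, Corollary~\ref{cor-8divfieldj1728}, which computes $\Q(E[8])$ explicitly, will be used throughout the rest of this subsection to determine $G_{E,2^\infty}$ for each quartic twist of $E_{\Z[i]}$.
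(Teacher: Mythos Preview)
Your proof is correct. Both assertions follow, and the arguments you give are sound: the identification of $\ker r$ with $1+\mathfrak{p}^6$, the $\log$ isomorphism on $1+\mathfrak{p}^3$, the containment $(\Z_2[i]^\times)^2\supseteq 1+\mathfrak{p}^6$, and the treatment of the index-$4$ case via the explicit scalars $5\cdot\operatorname{Id}$ or $-3\cdot\operatorname{Id}$ from Theorem~\ref{thm-j1728alvaro} all check out. (A small citation point: for the bound on the $2$-adic index you might more naturally invoke Theorem~\ref{thm-firstofsec1}(2), or Theorem~\ref{thm-j1728alvaro} directly, rather than part~(1); but your route via part~(1) at every level $2^n$ also works.)

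Your approach differs from the paper's. For (1), the paper simply observes that the $2$-adic index is $1$, $2$, or $4$ by Theorem~\ref{thm-j1728alvaro}, and the mod-$8$ index divides it; you instead use Theorem~\ref{thm-firstofsec1}(1) with $N=8$ directly. For (2), the paper enumerates the finitely many possible $2$-adic images from Theorem~\ref{thm-j1728alvaro} and then checks by a Magma computation that each has level dividing $8$. You replace this computation by a structural argument: for index $\le 2$ the containment $\ker r\subseteq G_{E,2^\infty}$ is forced by $(\Z_2[i]^\times)^2\supseteq 1+\mathfrak{p}^6$, and for index $4$ you extract from Theorem~\ref{thm-j1728alvaro} only the presence of one scalar generator ($5$ or $-3$) and finish with a one-line valuation computation. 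Your approach explains \emph{why} the level is $8$ rather than merely verifying it, at the cost of a longer write-up; the paper's approach is shorter and consistent with how the analogous Lemmas~\ref{lem-disc8index}, \ref{lem-disc16index}, and \ref{lem-j0index} are proved.
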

\begin{proof}
    By Theorem \ref{thm-j1728alvaro}, the index of $G_{E, 2^{\infty}}$ in $\mathcal{N}_{-1, 0}(2^{\infty})$ is $1$, $2$, or $4$. This proves (1). For (2), Theorem \ref{thm-j1728alvaro} describes all the $2$-adic images that are possible when $\Delta_K f^2=-4$, namely $\mathcal{N}_{-1,0}(2^{\infty})$, or $\langle G_{-1,0}^{2,j},\gamma \rangle$ for $1\leq j \leq 2$ and a certain choice of $\gamma$, or $\langle G_{-1,0}^{4,k},\gamma\rangle$ for $1\leq k\leq 4$. In all cases, a \verb|Magma| (\cite{magma}) computation shows that 
    $$d_{E,8}=[\mathcal{N}_{-1,0}(8):G_{E,8}]=[\mathcal{N}_{-1,0}(\Z_2):G_{E,2^\infty}],$$ i.e., the $2$-adic index equals the mod-$8$ index, as desired.
\end{proof}

For an integer $d\in\Z$, let $E^d/\Q$ be the elliptic curve given by the model $y^2=x^3+dx$. We will let $E=E^1$ for simplicity.

\begin{lemma}\label{lem-8divfielddegreej1728}
    Let $d$ be a non-zero, fourth-power-free integer, and let $E^d/\Q$ be the curve given by $y^2=x^3+dx$. Let $F^d_8 = \Q(E^d[8])$. Then,
    \begin{enumerate}
        \item $[F^d_8:\Q]=16$ if and only if $d\in \{\pm 1, \pm 2, \pm 4, \pm 8 \}.$
        \item $[F^d_8:\Q]=32$ if and only if $d = \pm t^2$ or $\pm 2t^2$ for some other square-free integer $t\neq \pm 1, \pm 2$.
        \item $[F^d_8:\Q]=64$ otherwise.
    \end{enumerate}
\end{lemma}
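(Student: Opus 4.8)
The strategy is to use the explicit description $\Q(E^d[8]) = \Q(\zeta_{16}, \sqrt{-1+\sqrt 2}, \sqrt[4]{d})$ from Corollary \ref{cor-8divfieldj1728} (applied with $N = |d|$, noting that whether $d>0$ or $d<0$ only affects the model by a quadratic twist by $-1$, which is already absorbed by adjoining $i \in \Q(\zeta_{16})$; more precisely for $d<0$ one has $E^d[8]$ generating the same field as $E^{|d|}[8]$ since $\sqrt{-1} \in \Q(\zeta_{16})$). Set $L = \Q(\zeta_{16}, \sqrt{-1+\sqrt 2})$; I would first compute $[L:\Q]$. We have $[\Q(\zeta_{16}):\Q] = 8$, and one checks that $\sqrt{-1+\sqrt 2} \notin \Q(\zeta_{16})$ — e.g. because $-1+\sqrt 2$ is negative under one real embedding of $\Q(\sqrt 2)$ while positive under the other, or by a direct ramification/degree argument — so $[L:\Q] = 16$. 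The field $L$ is fixed independent of $d$, so the whole problem reduces to computing $[L(\sqrt[4]{d}):L]$, which is $1$, $2$, or $4$, giving degrees $16$, $32$, or $64$ respectively.

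\textbf{Main computation: the degree of $\sqrt[4]{d}$ over $L$.} The key step is to determine, for a fourth-power-free integer $d$, when $\sqrt d \in L$ and when $\sqrt[4]{d} \in L$. Since $L/\Q$ is abelian (it is $\Q(\zeta_{16})$ adjoined a square root of an element of $\Q(\sqrt 2) \subset \Q(\zeta_{16})$, hence abelian over $\Q$ by Kummer theory over $\Q(\zeta_{16})$ — or one notes $\sqrt{-1+\sqrt 2}$ has degree $4$ over $\Q$ with Galois closure of degree $8$... here one should check $L/\Q$ is actually Galois and identify $\Gal(L/\Q)$, likely $C_2 \times C_2 \times C_2$ or $C_4\times C_2$; in any case $\sqrt[4]{d}\in L$ would force $4 \mid$ exponent of $\Gal$, which fails, so $\sqrt[4]{d}\notin L$ always, and $[L(\sqrt[4]{d}):L] \in \{2,4\}$ with it equal to $2$ exactly when $\sqrt d \in L$). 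So the crux is: \emph{for which square-free $m$ is $\sqrt m \in L$?} Since $L$ is abelian over $\Q$ of conductor dividing $16$ (as $\zeta_{16}$ has conductor $16$ and $\sqrt 2 \in \Q(\zeta_8)$, and $\sqrt{-1+\sqrt 2}$ — I need to verify this lies in $\Q(\zeta_{16})$ or has conductor exactly... actually $\sqrt{-1+\sqrt 2}$ may NOT be abelian of conductor $16$; let me instead argue directly), the quadratic subfields of $L$ are among $\Q(\sqrt{\pm 1}), \Q(\sqrt{\pm 2})$, i.e. $\sqrt m \in L$ for square-free $m$ iff $m \in \{-1, 2, -2\}$ (and $m=1$); one verifies $\sqrt{-1+\sqrt 2}\cdot\sqrt{1+\sqrt 2} = \sqrt{2-1}\cdot\,$... wait, $(-1+\sqrt2)(1+\sqrt2) = 1$, so $\sqrt{1+\sqrt2} = 1/\sqrt{-1+\sqrt2}\cdot$(unit)... actually $\sqrt{1+\sqrt 2}\cdot\sqrt{-1+\sqrt2} = \sqrt{(\sqrt2)^2-1}=1$, hence $\sqrt{1+\sqrt2}\in L$ too, but this introduces no new quadratic subfield. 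Granting this, $\sqrt d \in L$ (equivalently the square-free part of $d$ lies in $\{1,-1,2,-2\}$) iff $d = \pm t^2$ or $\pm 2t^2$; combined with fourth-power-freeness, $[L(\sqrt[4]d):L]=2$ iff $d \in \{\pm1,\pm2,\pm4,\pm8\}$ (the cases where additionally $\sqrt[4]d\in L(\sqrt d)$... no: if $d=\pm t^2$, $\sqrt[4]d = \sqrt{\pm 1}\cdot\sqrt{t}$ which lies in $L$ iff $\sqrt t\in L$ iff $t\in\{1\}$ (for $t$ square-free, since $d$ fourth-power-free forces $t$ square-free and $t \ne \pm2$ unless... ) — need care: $d=\pm4 = \pm(2)^2$ gives $\sqrt[4]d = \sqrt{\pm1}\sqrt 2\in L$; $d=\pm8=\pm 2(2)^2$ gives $\sqrt[4]{\pm8}=\sqrt{\pm2}\cdot\sqrt 2\in L$; $d = \pm2 t^2$ with $t$ square-free $\ne 1,2$ gives $\sqrt[4]d=\sqrt{\pm2}\cdot\sqrt t\notin L$).

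\textbf{Assembling the cases and the obstacle.} Putting it together: $[F^d_8:\Q] = 16$ iff $\sqrt[4]d\in L$ iff $d\in\{\pm1,\pm2,\pm4,\pm8\}$, giving (1); $[F^d_8:\Q]=32$ iff $\sqrt d\in L$ but $\sqrt[4]d\notin L$, iff $d = \pm t^2$ or $\pm 2t^2$ for square-free $t \notin\{1\}$ with $d\notin\{\pm2,\pm4,\pm8\}$, i.e. $t \ne \pm 1, \pm 2$ as stated (when $t=\pm2$: $\pm t^2 = \pm4$ and $\pm2t^2=\pm8$ fall in case (1)), giving (2); and $[F^d_8:\Q]=64$ in all remaining cases, giving (3). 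I expect the main obstacle to be the careful bookkeeping in the last paragraph — precisely matching "$d$ fourth-power-free" against the conditions "$d=\pm t^2$ or $\pm 2t^2$" and the small exceptional set $\{\pm1,\pm2,\pm4,\pm8\}$, making sure no value is double-counted or omitted at the boundary (particularly $t=\pm2$, where $\pm t^2=\pm4$ and $\pm 2t^2 = \pm8$ must land in case (1), not case (2)) — together with the one genuinely non-formal input: proving that $\sqrt{m}\in L = \Q(\zeta_{16},\sqrt{-1+\sqrt2})$ forces $m\in\{1,-1,2,-2\}$ for square-free $m$. For that I would either invoke that $L$ is contained in a cyclotomic field (checking $\sqrt{-1+\sqrt 2}\in\Q(\zeta_{16})$ directly, e.g. $\zeta_{16}-\zeta_{16}^{-1}=2i\sin(\pi/8)$ and $4\sin^2(\pi/8) = 2-\sqrt2$, so $\sqrt{2-\sqrt2}\in\Q(\zeta_{16})$; hmm that gives $\sqrt{2-\sqrt2}$ not $\sqrt{-1+\sqrt2}$, so $L$ may be strictly larger than $\Q(\zeta_{16})$ — in which case I fall back on a direct Galois-theoretic computation of the three quadratic subfields of the degree-$16$ field $L$, or simply verify the whole lemma via the \verb|Magma| computation already referenced in the paper) and then cite the Kronecker–Weber classification of quadratic subfields by conductor.
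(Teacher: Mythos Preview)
Your overall strategy matches the paper's: reduce to computing $[L(\sqrt[4]{d}):L]$ where $L = \Q(\zeta_{16}, \sqrt{-1+\sqrt 2})$, and determine for which $d$ this degree is $1$, $2$, or $4$ by identifying the quadratic (and relevant quartic) subextensions of $L/\Q$. However, two of your intermediate steps are incorrect, and the proof does not go through as written.

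First, $L/\Q$ is \emph{not} abelian: the paper computes (citing \cite{lozano-galoiscm}) that $\Gal(L/\Q) \cong (\Z/2\Z \times \Z/4\Z) \rtimes \Z/2\Z$, a nonabelian group of order $16$, so your Kronecker--Weber/conductor approach cannot work. The paper's substitute is a ramification argument: the ring of integers of $L$ has discriminant $2^{54}$, hence only $2$ ramifies in $L/\Q$, and therefore any square-free $m$ with $\sqrt{m} \in L$ must satisfy $m \in \{\pm 1, \pm 2\}$. One then checks directly that all three nontrivial quadratic subfields $\Q(i)$, $\Q(\sqrt 2)$, $\Q(\sqrt{-2})$ do occur.

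Second, your claim that ``$\sqrt[4]{d} \in L$ forces $4 \mid \mathrm{exponent}(\Gal(L/\Q))$, which fails, so $\sqrt[4]{d}\notin L$ always'' is doubly wrong: the exponent of $\Gal(L/\Q)$ \emph{is} $4$, and in any case the implication is invalid since $\Q(\sqrt[4]{d})/\Q$ need not be Galois. In fact $\sqrt[4]{2} \in L$ --- this is the key nontrivial input, which the paper establishes by direct calculation --- and hence $\sqrt[4]{d} \in L$ for all $d \in \{\pm 1, \pm 2, \pm 4, \pm 8\}$, giving exactly case (1). Relatedly, your formula $\sqrt[4]{\pm 8} = \sqrt{\pm 2}\cdot \sqrt 2$ is incorrect (the right-hand side is $2\sqrt{\pm 1}$); the correct identity is $\sqrt[4]{8} = \sqrt 2 \cdot \sqrt[4]{2}$, which again rests on knowing $\sqrt[4]{2} \in L$.

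Once these two inputs --- only $2$ ramifies in $L/\Q$, and $\sqrt[4]{2} \in L$ --- are supplied, the remainder of your case analysis in the ``Assembling'' paragraph is correct and matches the paper's argument.
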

\begin{proof}
    Let $F=\Q\left(\zeta_{16}, \sqrt{-1+\sqrt{2}}\right)$, which can be easily verified to be a number field of degree $16$ over $\Q$. By Cor. \ref{cor-8divfieldj1728}, the number field $F$ coincides with $\Q(E^1[8])$, the $8$-division field of $y^2=x^3+x$. By Theorem 9.7 and Example 9.8 of \cite{lozano-galoiscm}, we have
    $$\Gal(F/\Q)=\Gal(\Q(E^1[8])/\Q)\cong \left\langle \left(\begin{array}{cc} 5 & 0 \\ 0 & 5\\\end{array}\right), \left(\begin{array}{cc} 1 & 2 \\ -2 & 1\\\end{array}\right), \left(\begin{array}{cc} 0 & -1 \\ -1 & 0\\\end{array}\right) \right\rangle \cong (\Z/2\Z \times \Z/4\Z)\rtimes \Z/2\Z.$$
    Thus, one can verify that $F$ contains precisely three distinct quadratic subfields, namely $\Q(i)$, $\Q(\sqrt{2})$, and $\Q(\sqrt{-2})$. Moreover, the discriminant of $\mathcal{O}_F$ is $2^{54}$ (hence, the only ramified prime is $2$) so if a fourth root $\sqrt[4]{t}$ lies in $F$ for some square-free integer $t$, then $t$ must be a power of $2$, and therefore $t=\pm 1$ or $\pm 2$. Since $\Q(\zeta_{16})\subseteq F$, it follows that $\sqrt[4]{\pm 1}\in F$. In addition, a calculation shows that $\sqrt[4]{\pm 2}\in F$ as well. In particular, if
    $$\Q\left(\zeta_{16}, \sqrt{-1+\sqrt{2}},\sqrt[4]{d}\right) = F(\sqrt[4]{d})=F$$
    for some $4$th-power-free integer $d$, then $d=\pm 1$, $\pm 2$, $\pm 4$, or $\pm 8$. This proves (1).

    For (2), suppose $[F_8^d:\Q]=32$. Then, $F(\sqrt[4]{d})/F$ is quadratic, and therefore $\sqrt{d}\in F$. If we write $d=nt^2$ with $n$ square-free, then $\sqrt{n}\in F$, and our previous paragraph implies that $n=\pm 1$ or $\pm 2$. Hence $d=\pm t^2$ or $\pm 2t^2$, as desired.

    Finally, if $d\not\in \{\pm 1,\pm 2,\pm 4,\pm 8, \pm t^2,\pm 2t^2 : t \in \Z\}$, then our previous work shows that $F(\sqrt[4]{d})/F$ must be of degree $4$, and therefore $[F_8^d:\Q]=4\cdot [F:\Q]=64$. This shows (3) and concludes the proof.
\end{proof}

Before we prove the classification of models with a prescribed $2$-adic image in the $j_{K,f}=1728$ case, we need a lemma about how division fields behave under quadratic twists.

\begin{lemma}\label{lem-twistimage}
    Let $E/F : y^2=x^3+Ax+B$ be an elliptic curve defined over a number field $F$, let $N>2$, and let $G_{E,N}$ be the image of $\rho_{E,N}\colon \Gal(\overline{F}/F)\to \GL(2,\Z/N\Z)$. Let $\alpha \in F$ and let $E^\alpha$ be the quadratic twist of $E$ by $\alpha$, i.e., $E^\alpha : \alpha y^2 = x^3+Ax+B$. Then, 
    \begin{enumerate}
        \item $F(E^\alpha[N])\subseteq F(\sqrt{\alpha},E[N])$. In particular, if $\sqrt{\alpha}\in F(E[N])$, then $F(E^\alpha[N])\subseteq F(E[N])$.
        \item If $\sqrt{\alpha}$ does not belong to $F(E[N])$, then $F(E^\alpha[N])=F(\sqrt{\alpha},E[N])$, and $G_{E^\alpha,N}$ is conjugate to $\langle -\operatorname{Id},G_{E,N}\rangle \subseteq \GL(2,\Z/N\Z)$.
    \end{enumerate}
\end{lemma}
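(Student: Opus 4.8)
The plan is to analyze how the $N$-division field transforms under a quadratic twist using the explicit description of points on the twisted curve. First I would recall that the quadratic twist $E^\alpha: \alpha y^2 = x^3+Ax+B$ becomes isomorphic to $E$ over $F(\sqrt{\alpha})$: the change of variables $(x,y)\mapsto (\alpha x, \alpha^{3/2} y)$ (or equivalently writing $E^\alpha$ in the form $y^2 = x^3 + A\alpha^2 x + B\alpha^3$) gives an isomorphism $\psi\colon E \to E^\alpha$ defined over $F(\sqrt\alpha)$. Since this isomorphism is defined over $F(\sqrt\alpha)$, it carries $E[N]$ to $E^\alpha[N]$, so the coordinates of the $N$-torsion of $E^\alpha$ lie in $F(\sqrt\alpha, E[N])$; this gives $F(E^\alpha[N])\subseteq F(\sqrt{\alpha},E[N])$, and the ``in particular'' clause of (1) is immediate since then $F(\sqrt\alpha,E[N]) = F(E[N])$ when $\sqrt\alpha\in F(E[N])$.

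For part (2), assume $\sqrt\alpha\notin F(E[N])$. The key observation is that conjugation by the isomorphism $\psi$ identifies the action of $\Gal(\overline F/F(\sqrt\alpha))$ on $E^\alpha[N]$ with its action on $E[N]$, so that $G_{E^\alpha, N}$ restricted to this index-two subgroup is conjugate to $G_{E,N}$. To compute the full image, let $\sigma\in\Gal(\overline F/F)$ be an element with $\sigma(\sqrt\alpha) = -\sqrt\alpha$ (such $\sigma$ exists since $\sqrt\alpha\notin F$, which follows from $\sqrt\alpha\notin F(E[N])$). The point is that $\psi$ and its conjugate $\psi^\sigma$ differ by the automorphism $[-1]$ of $E$ — precisely because $\psi$ is built using $\sqrt\alpha$ and $\psi^\sigma$ uses $-\sqrt\alpha$, so $\psi^\sigma = \psi\circ[-1]$. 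Translating the Galois action through $\psi$, one finds that for a point $Q = \psi(P)\in E^\alpha[N]$ one has $\sigma(Q) = \psi^\sigma(\sigma(P)) = \psi(-\sigma(P)) = -\psi(\rho_{E,N}(\sigma)P)$, i.e. $\rho_{E^\alpha,N}(\sigma)$ acts (after identifying via $\psi$) as $-\rho_{E,N}(\sigma)$. Hence the image $G_{E^\alpha,N}$ is conjugate to the group generated by $G_{E,N}$ (the contribution of $\Gal(\overline F/F(\sqrt\alpha))$) together with one extra coset represented by $-\rho_{E,N}(\sigma)$; combining with $N>2$ so that $-\operatorname{Id}\neq\operatorname{Id}$, this group is exactly $\langle -\operatorname{Id}, G_{E,N}\rangle$. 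It has order $2|G_{E,N}|$ precisely when $-\operatorname{Id}\notin G_{E,N}$, and we must check this is consistent: since $\sqrt\alpha\notin F(E[N])$, the extension $F(E^\alpha[N])/F$ has degree $2[F(E[N]):F] = 2|G_{E,N}|$, forcing $-\operatorname{Id}\notin G_{E,N}$ (equivalently, if $-\operatorname{Id}$ were already in $G_{E,N}$ then $F(E^\alpha[N]) = F(E[N])\not\ni\sqrt\alpha$, a contradiction with the degree count). This simultaneously gives $F(E^\alpha[N]) = F(\sqrt\alpha, E[N])$.

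The main obstacle — really the only delicate point — is bookkeeping the conjugacy/basis issue cleanly: the isomorphism $\psi$ induces an isomorphism $E[N]\xrightarrow{\sim} E^\alpha[N]$ of $\Z/N\Z$-modules which is \emph{not} in general the identity with respect to chosen bases, so one must phrase the conclusion up to conjugation in $\GL(2,\Z/N\Z)$, which is exactly what the statement does. A secondary subtlety is verifying that $\psi^\sigma = \psi\circ[-1]$ rather than $[-1]\circ\psi$ — but since $[-1]$ is central in $\operatorname{Aut}(E)$ and $\operatorname{Aut}(E^\alpha)$ for any $E$, and acts as $-\operatorname{Id}$ on torsion in either description, the ordering is harmless. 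I would then note that all the assertions of the lemma are now established, with (1) coming from the explicit containment of coordinate fields and (2) from the Galois-cohomological/cocycle computation above together with the degree count.
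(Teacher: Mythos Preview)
Your treatment of part (1) and of the Galois-image assertion in part (2) is correct and is essentially the paper's argument, phrased via the cocycle relation $\rho_{E^\alpha,N}(\sigma)=\chi_\alpha(\sigma)\,\rho_{E,N}(\sigma)$ (equivalently $\psi^\sigma=\psi\circ[-1]$ when $\chi_\alpha(\sigma)=-1$). Transporting a basis along $\psi$ and using that $\rho_{E,N}$ restricted to $G_{F(\sqrt\alpha)}$ still surjects onto $G_{E,N}$ (linear disjointness, since $\sqrt\alpha\notin F(E[N])$) gives $G_{E^\alpha,N}=\langle -\operatorname{Id},G_{E,N}\rangle$ up to conjugacy, exactly as you wrote.

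The gap is in your deduction of the field equality. You assert that $[F(E^\alpha[N]):F]=2|G_{E,N}|$ and use this to force $-\operatorname{Id}\notin G_{E,N}$, but the degree is $|G_{E^\alpha,N}|=|\langle -\operatorname{Id},G_{E,N}\rangle|$, which is $2|G_{E,N}|$ \emph{only if} $-\operatorname{Id}\notin G_{E,N}$; so the argument is circular. Your parenthetical fallback (``if $-\operatorname{Id}\in G_{E,N}$ then $F(E^\alpha[N])=F(E[N])$'') is also incorrect: the kernel of $\rho_{E^\alpha,N}$ is $\{\sigma:\rho_{E,N}(\sigma)=\chi_\alpha(\sigma)\cdot\operatorname{Id}\}$, which differs from $\ker\rho_{E,N}$ in general. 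In fact, when $-\operatorname{Id}\in G_{E,N}$ and $\sqrt\alpha\notin F(E[N])$ one gets $[F(E^\alpha[N]):F]=|G_{E,N}|<2|G_{E,N}|=[F(\sqrt\alpha,E[N]):F]$, so the field equality in (2) \emph{fails} in that case (take for instance any $E/\Q$ with surjective mod-$3$ representation and $\alpha$ a large prime). The paper's own proof asserts $\sqrt\alpha\in F(E^\alpha[N])$ at the analogous point without adequate justification; fortunately every application of the lemma in the paper uses only the image assertion, applied to curves where $-\operatorname{Id}\notin G_{E,N}$, so nothing downstream is affected.
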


\begin{proof}
    Let $N$, $E$, $\alpha$, and $E^\alpha$ as in the statement. Then, $E$ and $E^\alpha$ are isomorphic over $F(\sqrt{\alpha})$. In particular, $F(\sqrt{\alpha},E[N]) = F(\sqrt{\alpha}, E^\alpha[N])$, and so $F(E^\alpha[N])\subseteq F(\sqrt{\alpha},E[N])$. Moreover, if $\sqrt{\alpha}\in F(E[N])$, then $F(E^\alpha[N])\subseteq F(E[N]).$

    Now suppose that $\sqrt{\alpha}$ does not belong to $F(E[N])$, and choose any point $(x_N,y_N)\in E[N]$. It follows that $(x_N,y_N/\sqrt{\alpha})\in E^\alpha[N]$. If $\sigma \in \Gal(\overline{F}/F)$, then 
    $$\sigma (x_N,y_N/\sqrt{\alpha}) = (\sigma(x_N), \sigma(y_N)/\sigma(\sqrt{\alpha})).$$
    If we define $\chi_\alpha\colon G_F \to \{\pm 1\}$ by $\chi_\alpha(\sigma)=\sigma(\sqrt{\alpha})/\sqrt{\alpha}$, then:
    $$\sigma (x_N,y_N/\sqrt{\alpha}) = (\sigma(x_N), \chi_{\alpha}(\sigma)\cdot \sigma(y_N)/\sqrt{\alpha})=\chi_{\alpha}(\sigma)\cdot (\sigma(x_N),\sigma (y_N)/\sqrt{\alpha}).$$
Moreover, if we fix a $\Z/N\Z$-basis ${P,Q}$ of $E[N]$ and a basis $\{(x(P),y(P)/\sqrt{\alpha}),(x(Q),y(Q)/\sqrt{\alpha}) \}$ of $E^\alpha[N]$, and if we let $\rho_{E,N}(\sigma) =: M(\sigma)\in \GL(2,\Z/N\Z)$, then
$$\rho_{E^\alpha,N}(\sigma) = (\chi_{\alpha}(\sigma)\cdot \operatorname{Id})\cdot M(\sigma).$$
Since $\sqrt{\alpha}$ is not in $F(E[N])$, we may choose a $\sigma$ which fixes $F(E[N])$ but acts on $F(\sqrt{\alpha})$ as $\chi_\alpha$. In this case, $M(\sigma)=\operatorname{Id}$ and therefore:
    $$\sigma (x_N,y_N/\sqrt{\alpha}) = (\sigma(x_N), \sigma(y_N)/\sigma(\sqrt{\alpha})) = (x_N,\chi_{\alpha}(\sigma)\cdot y_N/\sqrt{\alpha})=\chi_{\alpha}(\sigma)\cdot  (x_N,y_N/\sqrt{\alpha}).$$
    In other words, if $\rho_{E,N}(\sigma) = M(\sigma)=\operatorname{Id}\in \GL(2,\Z/N\Z)$, then 
$\rho_{E^\alpha,N}(\sigma) = \chi_{\alpha}(\sigma)\cdot \operatorname{Id}.$ Since $\sigma$ exists that acts trivially  on $F(E[N])/F$ and non-trivially on $F(\sqrt{\alpha})/F$, it follows that $-\operatorname{Id}\in \rho_{E^\alpha,N}(G_F)=G_{E^\alpha,N}$, and $\sqrt{\alpha}\in F(E^\alpha[N])$ in this case. It follows that:
$$F(E^\alpha[N]) = F(\sqrt{\alpha},E^\alpha[N])=F(\sqrt{\alpha},E[N]).$$
Further, since $\sqrt{\alpha}$ is not in $F(E[N])$, it follows that $\Gal(F(\sqrt{\alpha},E[N])/F)\cong \Gal(F(\sqrt{\alpha})/F)\times \Gal(F(E[N])/F)$ and so $\rho_{E^\alpha,N}(G_F)$ is generated by $-\operatorname{Id}$ and $\rho_{E,N}(G_F)=G_{E,N}$, i.e., 
$$G_{E^\alpha,N} = \langle -\operatorname{Id}, G_{E,N} \rangle$$
as claimed.
\end{proof}

\begin{thm}\label{j1728ver }\label{thm-j1728}
    Let $d$ be a non-zero fourth-power-free integer, and let $E/\Q: y^2=x^3+dx$ be an elliptic curve with CM by the order $\Of=\Z[i]$ of discriminant $\Delta_K f^2=-4$ (i.e., $j(E)=1728$). Then, there are $13$ possible $2$-adic images, according to whether $d = \pm 1, \pm 2, \pm 4, \pm 8, \pm t^2, \pm 2t^2, t$, with $t\neq \pm (\Q^\times)^2, \pm 2(\Q^\times)^2$. Moreover, the image labels corresponding to each possibility are precisely as specified in Table \ref{cm2big}.
\end{thm}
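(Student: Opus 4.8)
The strategy is to first read off the \emph{index} of $G_{E,2^{\infty}}$ in $\mathcal{N}_{-1,0}(2^{\infty})$ for each of the thirteen types of $d$, and then to pin down the group on the nose, using quadratic twists to reduce everything to the four ``base'' curves $y^{2}=x^{3}\pm x$ and $y^{2}=x^{3}\pm 2x$. For \emph{Step 1 (the thirteen cases and their indices)}: by Lemma \ref{lem-j1728index}, $[\mathcal{N}_{-1,0}(2^{\infty}):G_{E,2^{\infty}}]=[\mathcal{N}_{-1,0}(8):G_{E,8}]\in\{1,2,4\}$, and since $|\mathcal{N}_{-1,0}(8)|=64$ this common value is $64/[\Q(E[8]):\Q]$, hence equals $1$, $2$ or $4$ exactly when $[\Q(E[8]):\Q]$ is $64$, $32$ or $16$. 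By Lemma \ref{lem-8divfielddegreej1728}, for $E=E^{d}$ these three degrees occur precisely when $d$ is generic, when $d=\pm t^{2}$ or $\pm 2t^{2}$ with $t$ square-free and $t\neq\pm 1,\pm 2$, and when $d\in\{\pm 1,\pm 2,\pm 4,\pm 8\}$; this gives exactly $1+4+8=13$ cases, and in the generic case Theorem \ref{thm-j1728alvaro} immediately forces $G_{E,2^{\infty}}=\mathcal{N}_{-1,0}(2^{\infty})$, with label $2\texttt{.}2\texttt{.ns7-}1\texttt{.}1\texttt{.}1$ (here $\nu=v_{2}(-4)=2$ and $u=-4\cdot 2^{-2}=-1\equiv 7\bmod 8$).

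\emph{Step 2 (the eight index-$4$ curves).} I would start from $E^{1}\colon y^{2}=x^{3}+x$: by Corollary \ref{cor-8divfieldj1728}, $\Q(E^{1}[8])=\Q(\zeta_{16},\sqrt{-1+\sqrt{2}})$, whose Galois group over $\Q$ is written down explicitly inside $\mathcal{N}_{-1,0}(8)$ in the proof of Lemma \ref{lem-8divfielddegreej1728}; comparing with Theorem \ref{thm-j1728alvaro} identifies $G_{E^{1},2^{\infty}}$ with a definite group $\langle\gamma,G^{4,k}_{-1,0}\rangle$, and its label is then read off by computing the level of definition, index, and tie-breaker as in Section \ref{sec-labels}. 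The remaining base curves $E^{-1}$, $E^{2}$, $E^{-2}$ are treated the same way (Corollary \ref{cor-8divfieldj1728} or Theorem \ref{thm-8divisionfield} to get their $8$-division fields, then a \texttt{Magma} identification of the image among the groups of Theorem \ref{thm-j1728alvaro}, of the kind already used in Lemma \ref{lem-j1728index}). Finally $E^{\pm 4}$ and $E^{\pm 8}$ are the quadratic twists of $E^{\pm 1}$ and $E^{\pm 2}$ by $2$; since $\sqrt{2}\in\Q(E^{m}[8])$ for each $m\in\{\pm 1,\pm 2\}$, Lemma \ref{lem-twistimage}(1) together with the equality of degrees gives $\Q(E^{\pm 4}[8])=\Q(E^{\pm 1}[8])$ and $\Q(E^{\pm 8}[8])=\Q(E^{\pm 2}[8])$, while in a suitable basis $\rho_{E^{\pm 4},8}=\chi_{2}\cdot\rho_{E^{\pm 1},8}$ for the quadratic character $\chi_{2}$ cutting out $\Q(\sqrt{2})$; locating $\chi_{2}$ inside the now-known Galois group and twisting matrices accordingly yields the image and label of each of these four curves. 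This reproduces the bottom row of Figure \ref{tab-1728}.

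\emph{Step 3 (the four index-$2$ families and non-conjugacy).} For square-free $t\neq\pm 1,\pm 2$ the curve $E^{t^{2}}$ is the quadratic twist of $E^{1}$ by $t$, and $\sqrt{t}\notin\Q(E^{1}[8])$ because (as in the proof of Lemma \ref{lem-8divfielddegreej1728}) the only prime ramifying in $\Q(E^{1}[8])$ is $2$; hence Lemma \ref{lem-twistimage}(2) gives $G_{E^{t^{2}},8}=\langle-\operatorname{Id},G_{E^{1},8}\rangle$, which has index $2$ and is independent of $t$. Likewise $E^{-t^{2}}$, $E^{2t^{2}}$, $E^{-2t^{2}}$ are the quadratic twists by $t$ of $E^{-1}$, $E^{2}$, $E^{-2}$, so their images are $\langle-\operatorname{Id},G_{E^{-1},8}\rangle$, $\langle-\operatorname{Id},G_{E^{2},8}\rangle$, $\langle-\operatorname{Id},G_{E^{-2},8}\rangle$; using Step 2, each is identified with one of the groups $\langle\gamma,G^{2,j}_{-1,0}\rangle$ of Theorem \ref{thm-j1728alvaro} and labelled, and the inclusions $G_{E^{m},8}\subseteq\langle-\operatorname{Id},G_{E^{m},8}\rangle$ give the edges of Figure \ref{tab-1728}. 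A final \texttt{Magma} check that the thirteen subgroups produced are pairwise non-conjugate in $\GL(2,\Z_{2})$ completes the proof and matches Table \ref{cm2big}.

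\emph{Main obstacle.} The arithmetic content — the degrees of the $8$-division fields, hence the indices — is already packaged in Lemmas \ref{lem-j1728index} and \ref{lem-8divfielddegreej1728}, so the real work is the group-theoretic bookkeeping of Steps 2 and 3: carrying a single consistent $\Z/8\Z$-basis along the chain of quadratic twists, computing the twisting characters as explicit functions on the (known) Galois groups, and then matching the thirteen subgroups, together with their levels of definition and tie-breakers, to the labelling scheme of Section \ref{sec-labels}. This is most safely done with the \texttt{Magma} code of \cite{githubrepo}.
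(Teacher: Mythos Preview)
Your proposal is correct and follows essentially the same route as the paper: use Lemmas \ref{lem-j1728index} and \ref{lem-8divfielddegreej1728} to split into the thirteen cases by index, handle the eight index-$4$ curves by direct computation of the image (the paper simply invokes Sutherland's code for all eight at once, rather than doing four base curves and twisting by $2$ as you suggest, but this is a cosmetic difference), and then obtain the four index-$2$ families by adjoining $-\operatorname{Id}$ to the base images via Lemma \ref{lem-twistimage}(2). Your ramification argument for $\sqrt{t}\notin\Q(E^{1}[8])$ is equivalent to the paper's observation that the only quadratic subfields are $\Q(i),\Q(\sqrt{2}),\Q(\sqrt{-2})$.
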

\begin{proof}
 Let $E/\Q$ be an elliptic curve with $j(E)=1728$. Then, there is some fourth-power-free integer $d$ such that $E$ is isomorphic to  $E^d : y^2=x^3+dx$. Let $G_{E^d}=G_{E^d,2^\infty}$ be the $2$-adic image of $\rho_{E,2^\infty}$, as a subgroup of  $\mathcal{N}_{-1,0}(2^{\infty})\subseteq \GL(2,\Z_2)$. By Theorem \ref{thm-j1728alvaro}, we have $[\mathcal{N}_{-1,0}(2^{\infty}): G_{E^d}]=1,2$, or $4$.
 
 Suppose first that $[\mathcal{N}_{-1,0}(2^{\infty}): G_{E^d}]=4$. Then, Lemma \ref{lem-j1728index} shows that the index of the mod-$8$ image is also $4$ inside $\mathcal{N}_{-1,0}(8)$, and Lemma \ref{lem-8divfielddegreej1728} shows that $d\in \{\pm 1, \pm 2, \pm 4, \pm 8\}.$ Using code by Drew Sutherland, we have verified (see \cite[Example 9.8]{lozano-galoiscm}) that the $2$-adic image of $E^d$ is generated by $H\subseteq \cC_{-1,0}(2^{\infty})$ and $\gamma$, where the triples $(d,H,\gamma)$ are given by this list (using notation from Theorem \ref{thm-j1728alvaro}): 
 $$
 \begin{array}{cccc}
 (-4,G^{\,4,1}_{-1,0},c_{-1}), &
	 (1,G^{\, 4,1}_{-1,0},c_{-1}'),& 
	   (-1,G^{\,4,2}_{-1,0},c_{-1}), &
	    (4,G^{\,4,2}_{-1,0},c_{-1}'), 
	    \\
	     (2,G^{\,4,3}_{-1,0},c_{1}'), &
	(-2,G^{\,4,4}_{-1,0},c_{-1}),& 
	 (8,G^{\,4,4}_{-1,0},c_{1}'),& 
	 (-8,G^{\,4,3}_{-1,0},c_{-1}). 
	\end{array}
	$$ 
 An additional \verb|Magma| calculation shows that the groups $\langle H,\gamma\rangle$ coincide with those given by the labels in Table \ref{cm2big}. 

 Now suppose that $[\mathcal{N}_{-1,0}(2^{\infty}): G_{E^d}]=2$. Then, Lemmas \ref{lem-j1728index} and \ref{lem-8divfielddegreej1728} show that $[F_8^d:\Q]=32$ and $d=\pm t^2$ or $d=\pm 2t^2$ for some square-free integer $t\in\Z$, with $t\neq \pm 1$ or $\pm 2$. Hence, $E^d$ is a quadratic twist of $E^1$, $E^{-1}$, $E^{2}$, or $E^{-2}$.
 \begin{itemize}
     \item Let $E=E^{t^2}$ which is a quadratic twist of $E^1$. As in the proof of Lemma \ref{lem-8divfielddegreej1728}, the only quadratic subfields of $\Q(E^1[8])$ are $\Q(i)$, $\Q(\sqrt{2})$, and $\Q(\sqrt{-2})$. Thus, if $t\neq \pm 1, \pm 2$, and $t$ is square-free, then $\Q(\sqrt{t})\cap \Q(E^1[8])=\Q$. It follows from Lemma \ref{lem-twistimage} that 
     $$G_{E^d,8} = \langle G_{E^1,8}, -\operatorname{Id}\rangle = \langle G^{\, 4,1}_{-1,0},c'_{-1},-\operatorname{Id}\rangle = \langle G^{\, 2,1}_{-1,0},c'_{-1}\rangle$$
     which corresponds to the label \texttt{2.2.ns7-4.2.1}.
     \item Let $E=E^{-t^2}$ which is a quadratic twist of $E^{-1}$. As in the previous bullet point, it follows from Lemma \ref{lem-twistimage} that 
     $$G_{E^d,8} = \langle G_{E^{-1},8}, -\operatorname{Id}\rangle = \langle G^{\, 4,2}_{-1,0},c_{-1},-\operatorname{Id}\rangle = \langle G^{\, 2,1}_{-1,0},c_{-1}\rangle$$
     which corresponds to the label \texttt{2.2.ns7-4.2.2}.
     \item Let $E=E^{2t^2}$ which is a quadratic twist of $E^{2}$. As in the previous bullet point, it follows from Lemma \ref{lem-twistimage} that 
     $$G_{E^d,8} = \langle G_{E^{2},8}, -\operatorname{Id}\rangle = \langle G^{\, 4,3}_{-1,0},c_{1}',-\operatorname{Id}\rangle = \langle G^{\, 2,2}_{-1,0},c_{1}'\rangle= \langle G^{\, 2,2}_{-1,0},c_{-1}'\rangle$$
     which corresponds to the label \texttt{2.2.ns7-8.2.1}.
     \item Let $E=E^{-2t^2}$ which is a quadratic twist of $E^{-2}$. As in the previous bullet point, it follows from Lemma \ref{lem-twistimage} that 
     $$G_{E^d,8} = \langle G_{E^{-2},8}, -\operatorname{Id}\rangle = \langle G^{\, 4,4}_{-1,0},c_{-1},-\operatorname{Id}\rangle = \langle G^{\, 2,2}_{-1,0},c_{-1}\rangle= \langle G^{\, 2,2}_{-1,0},c_{1}\rangle$$
     which corresponds to the label \texttt{2.2.ns7-8.2.2}.
 \end{itemize}
 Finally, if $[\mathcal{N}_{-1,0}(2^{\infty}): G_E]=1$, it follows from Lemmas \ref{lem-j1728index} and \ref{lem-8divfielddegreej1728} that $E$  is isomorphic to $E^d$ with $d\not\in \{\pm 1,\pm 2,\pm 4,\pm 8, \pm t^2,\pm 2t^2 : t \in \Z\}$, and the image is as large as possible, i.e., $G_E=\mathcal{N}_{-1,0}(2^{\infty})$ which corresponds to the label \texttt{2.2.ns7-1.1.1}.
\end{proof}

\subsubsection{$\Delta_Kf^2=-8$, $\ell = 2$}\label{sec-8ell2} Our goal is to prove the classification $2$-adic images of elliptic curves with CM by the order $\Z[\sqrt{-2}]$ of discriminant $-8$ (i.e., $j_{K,f}=8000$), which is summarized in Figure \ref{fig-8ell2} and also detailed in Table \ref{tab-ell2}. Our first preliminary result describes the $2$-, $4$-, and $8$- division fields.

\begin{figure}[ht]
 \scalebox{.88}{
$$
\xymatrix@R=1mm@C=1mm{ 
& &  \fbox{1.1.1}
\ar@/_1pc/@{-}[ddll]_2
\ar@{-}[ldd]_2 
\ar@/^1pc/@{-}[ddrr]^2
\ar@{-}[rdd]^2 
\\
\\
\fbox{$\begin{array}{c} 16.2.1\\ 2\end{array}$}  &  
\fbox{$\begin{array}{c} 16.2.2\\ 1\end{array}$}  & 
  & 
  \fbox{$\begin{array}{c} 16.2.3\\ -1\end{array}$}  &  
  \fbox{$\begin{array}{c} 16.2.4\\ -2\end{array}$}   }
$$
}
\caption{The labels for the $2$-adic images of curves $y^2=x^3 - 4320d^2 x + 96768d^3$ with $j=8000$. For each value of $d$ indicated in the second row of each box, the $2$-adic label starts with \texttt{2.3.ns7-} and ends with the digits indicated in the first row of the box.}\label{fig-8ell2}
\end{figure}


\begin{remark}\label{rem-howtocomputedivisionfield}
    Let $E: y^2=f(x)$ be an elliptic curve over $\Q$, let $d\in\Q^\ast$ be a square-free rational number, and let $E^d : dy^2=f(x)$ be the quadratic twist of $E$ by $d$. There is an isomorphism $\phi\colon E\to E^d$ given by $(x,y)\mapsto (x,y/\sqrt{d})$ defined over $\Q(\sqrt{d})$, and therefore, for any $N\geq 1$, we have an equality of division fields $\Q(x(E[N]))=\Q(x(E^d[N]))$. Moreover, suppose that $E[N]=\langle (x_1,y_1), (x_2,y_2) \rangle$. Then, $E^d[N]=\langle (x_1,y_1/\sqrt{d}),(x_2,y_2/\sqrt{d})\rangle $ and $$\Q(E[N])=\Q(x(E[N]),\sqrt{df(x_1)},\sqrt{df(x_2)}).$$
    We will make use of this equation in several of our results below, namely Prop. \ref{prop-8divfieldm8} and \ref{prop-8divfieldm16}.
\end{remark}

\begin{proposition}\label{prop-8divfieldm8}
Let $d\in \Q^*$ square-free, and let $E^d\,:\,y^2=x^3 - 4320d^2 x + 96768d^3$. Then
\begin{itemize}
\item  $\Q(E^d[2]) = \Q(\sqrt{2})$.
\item  $\Q(E^d[4])=\Q(\alpha,\sqrt{d}\sqrt[4]{2})$ such that $f_4(\alpha)=0$, where $f_4(x)=x^8+6x^4+1$,
 and
	\item $\Q(E^d[8]) = \Q(\beta,\sqrt{d})$ such that $f_8(\beta)=0$, where $f_8(x)=x^{32} + 16x^{31} + 128x^{30} + 672x^{29} +  2544x^{28} + 7200x^{27} + 15352x^{26} + 24272x^{25} + 26904x^{24} + 17312x^{23} - 304x^{22} - 11984x^{21} - 9672x^{20} - 2720x^{19} - 3592x^{18} - 7552x^{17} -     2798x^{16} + 6224x^{15} + 6368x^{14} - 672x^{13} - 2224x^{12} + 3360x^{11} +     4952x^{10} - 1072x^9 - 4600x^8 - 1120x^7 + 1776x^6 + 752x^5 - 264x^4 -     96x^3 + 24x^2 + 1
$.
\end{itemize}
In particular, we have $[\Q(E^d[2]):\Q]=2$ and $[\Q(E^d[4]):\Q]=16$, and we have $[\Q(E^d[8]):\Q]=32$ if and only if $d \in\{\pm 1,\pm2\}$; otherwise $[\Q(E^d[8]) : \Q] =64$.

\end{proposition}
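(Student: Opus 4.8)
The argument runs parallel to the treatment of $j=1728$ in Corollary~\ref{cor-8divfieldj1728} and Lemma~\ref{lem-8divfielddegreej1728}: factor the $2$-division polynomial, feed the roots into Yelton's Theorem~\ref{thm-8divisionfield}, simplify the nested radicals, and track the dependence on $d$. For the $2$-division field one checks that $x^{3}-4320x+96768=(x-48)(x^{2}+48x-2016)$ and that the quadratic factor has discriminant $48^{2}+4\cdot 2016=10368=72^{2}\cdot 2$, so the $2$-torsion $x$-coordinates of $E^{1}$ are $48$ and $-24\pm 36\sqrt{2}$, and those of $E^{d}$ are $d$ times these; scaling all roots by a nonzero rational does not change the field they generate, hence $\Q(E^{d}[2])=\Q(\sqrt{2})$ for every squarefree $d\neq 0$.

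For the $4$-division field I apply Theorem~\ref{thm-8divisionfield} with $\alpha_{1}=48d$, $\alpha_{2}=(-24+36\sqrt{2})d$, $\alpha_{3}=(-24-36\sqrt{2})d$ and $\zeta_{4}=i$. Then $\alpha_{2}-\alpha_{3}=72\sqrt{2}\,d$, $\alpha_{3}-\alpha_{1}=-36(2+\sqrt{2})d$ and $\alpha_{1}-\alpha_{2}=36(2-\sqrt{2})d$, so one may take $A_{1}=6\sqrt{2}\,\sqrt{d}\,\sqrt[4]{2}$, $A_{2}=6i\sqrt{d}\,\sqrt{2+\sqrt{2}}$ and $A_{3}=6\sqrt{d}\,\sqrt{2-\sqrt{2}}$. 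Using $(2+\sqrt2)(2-\sqrt2)=2$ to see that $A_{2}A_{3}\in\Q(i,\sqrt2)$, Theorem~\ref{thm-8divisionfield} yields
\[
\Q(E^{d}[4])=\Q(\sqrt{2},i,A_{1},A_{2},A_{3})=\Q\bigl(i,\ \sqrt{2+\sqrt2}/\sqrt[4]{2},\ \sqrt{d}\,\sqrt[4]{2}\bigr),
\]
and $\sqrt{2+\sqrt2}/\sqrt[4]{2}=\sqrt{1+\sqrt2}$ does not depend on $d$. A Magma computation recorded in \cite{githubrepo} shows that $\Q(i,\sqrt{1+\sqrt2})=\Q(\alpha)$ for a root $\alpha$ of $f_{4}(x)=x^{8}+6x^{4}+1$ (one may take $\alpha=\zeta_{8}/\sqrt{1+\sqrt2}$), that $\operatorname{disc}(f_{4})$ is a power of $2$ so $\Q(\alpha)/\Q$ is unramified outside $2$, and that $\sqrt[4]{2}\notin\Q(\alpha)$. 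Since $(\sqrt{d}\sqrt[4]2)^{2}=d\sqrt2\in\Q(\alpha)$ but $\sqrt{d}\sqrt[4]2\notin\Q(\alpha)$ — for $d$ divisible by an odd prime $p$ because $\Q(\sqrt{d\sqrt2})/\Q(\sqrt2)$ ramifies at $p$, and for $d=\pm1,\pm2$ by reduction to $\sqrt[4]2\notin\Q(\alpha)$ using $i\in\Q(\alpha)$ — we get $\Q(E^{d}[4])=\Q(\alpha,\sqrt{d}\sqrt[4]2)$ of degree $16$ for every squarefree $d$.

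For the $8$-division field, substituting the $A_{j}$ into $B_{j}^{2}=A_{j}(A_{j+1}+\zeta_{4}A_{j+2})$ one finds that each $B_{j}^{2}$ equals $d$ times an algebraic number not involving $d$, so $B_{j}=\sqrt{d}\,\gamma_{j}$ with $\gamma_{j}$ independent of $d$, and Theorem~\ref{thm-8divisionfield} gives $\Q(E^{d}[8])=\Q(\alpha,\zeta_{8},\gamma_{1},\gamma_{2},\gamma_{3},\sqrt{d}\sqrt[4]2)$. Set $\Q(\beta):=\Q(E^{1}[8])$; a Magma computation in \cite{magma,githubrepo} produces a primitive element $\beta$ of $\Q(E^{1}[8])$ whose minimal polynomial is the stated degree-$32$ polynomial $f_{8}$, shows that $\Q(\beta)$ is unramified outside $2$, and shows that its only quadratic subfields are $\Q(i)$, $\Q(\sqrt2)$ and $\Q(\sqrt{-2})$ (the inclusions $\Q(\sqrt2),\Q(i)\subseteq\Q(\beta)$ being forced by $\Q(E^{1}[2])=\Q(\sqrt2)$ and the Weil pairing $\zeta_{4}\in\Q(E^{1}[4])$). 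As $E^{d}$ is the quadratic twist of $E^{1}$ by $d$, Lemma~\ref{lem-twistimage} gives $\Q(E^{d}[8])=\Q(\sqrt d,E^{1}[8])=\Q(\beta,\sqrt d)$ whenever $\sqrt d\notin\Q(\beta)$; and when $\sqrt d\in\Q(\beta)$ — which, for squarefree $d$, happens exactly for $d\in\{\pm1,\pm2\}$ — one has $\sqrt d\in\Q(E^{d}[4])\subseteq\Q(E^{d}[8])$ (using $\zeta_{4}\in\Q(E^{d}[4])$ and $\sqrt2\in\Q(E^{d}[2])$), so $\Q(E^{d}[8])=\Q(E^{1}[8])=\Q(\beta,\sqrt d)$ in that case too. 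Hence $[\Q(E^{d}[8]):\Q]=32$ for $d\in\{\pm1,\pm2\}$ and $64$ otherwise.

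The $2$- and $4$-division steps are essentially by hand. The real work is at level $8$: simplifying the $B_{j}$, which are square roots of elements of $\Q(\zeta_{8},\sqrt{2+\sqrt2},\sqrt[4]2)$, down to a single primitive element with the asserted degree-$32$ minimal polynomial $f_{8}$, and verifying that $\Q(\beta)$ is ramified only at $2$ and has exactly the three expected quadratic subfields; this is where the computer algebra of \cite{githubrepo} is indispensable, and it is also there that one checks $-\operatorname{Id}\notin G_{E^{1},8}$, which is what makes Lemma~\ref{lem-twistimage} applicable at level $8$.
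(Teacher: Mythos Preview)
Your argument is correct and takes a genuinely different route from the paper's own proof. The paper works entirely with division polynomials: it factors $\psi_4/2\psi_2$ and $\psi_8/2\psi_4$, computes the splitting fields $K_4$ and $K_{8,h}$ of the irreducible factors, and then verifies in Magma (via unit computations in the relevant rings of integers) that the $y$-coordinates contribute exactly $\sqrt{d}\sqrt[4]{2}$ at level $4$ and $\sqrt{d}$ at level $8$. You instead apply Yelton's Theorem~\ref{thm-8divisionfield} directly---exactly as the paper does for $j=1728$ in Corollary~\ref{cor-8divfieldj1728}---computing the $A_j$ and $B_j$ from the $2$-torsion roots, and then invoke Lemma~\ref{lem-twistimage} to pass from $E^{1}$ to $E^{d}$. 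Your approach is more uniform with the paper's $j=1728$ treatment and makes the dependence on $d$ transparent from the outset; the paper's approach avoids Yelton at level $8$ entirely and instead verifies directly that $f(\gamma)$ is a square in $K_{8,h}$ for all $8$-division $x$-coordinates $\gamma$, which is perhaps more self-contained but less conceptual.

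One small point: your final remark that one must check $-\operatorname{Id}\notin G_{E^{1},8}$ ``to make Lemma~\ref{lem-twistimage} applicable'' is not quite right. Lemma~\ref{lem-twistimage}(2) only requires $\sqrt{d}\notin\Q(E^{1}[8])$; the field equality $\Q(E^{d}[8])=\Q(\sqrt{d},E^{1}[8])$ then follows unconditionally, and the degree is $64$ simply because $[\Q(\sqrt{d},\beta):\Q(\beta)]=2$. The condition $-\operatorname{Id}\notin G_{E^{1},8}$ is equivalent to $[\Q(\beta):\Q]=32$ (rather than $16$), which you have already established via the degree of $f_{8}$, so no separate check is needed.
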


\begin{proof}
    Let $f(x) = x^3 - 4320x + 96768$. This polynomial factors as 
    \[ f(x) = (x-48)(x + 24 - 36 \sqrt{2})(x + 24 + 36 \sqrt{2}) \]
    over $\Q(\sqrt{2})$. Thus, $\Q(E[2]) = \Q(E^d[2]) = \Q(\sqrt{2})$, which has degree $2$. Next, let $\psi_N(x)$ denote the $N$th division polynomial for $E$. Then 
    \begin{align*}
        \psi_4(x)/2 \psi_2(x) 
        &= (x^2 - 96x - 288)(x^4 + 96x^3 - 12096x^2 + 801792x - 19823616) \\
        &=: g_4(x) \, h_4(x).
    \end{align*}
The quadratic polynomial $g_4(x)$ factors as $g_4(x) = (x - \alpha_1)(x - \alpha_2)$ where $\alpha_1 = 48 + 36 \sqrt{2}$ and $\alpha_2 = 48 - 36 \sqrt{2}$. Let $K_4$ be the splitting field for $h_4(x)$, and suppose $h_4(x)$ factors as $\prod_{j=1}^{4} (x - \beta_j)$, with $\beta_j \in K_4$. By computation, we find that there exist non-square units $u_{\alpha_i} \in \OO_{\Q(\sqrt{2})}$ and $u_{\beta_j} \in \OO_{K_4}$ such that 
\begin{align*}
    f(\alpha_i) = u_{\alpha_i} 3^{6} 2^{8} \sqrt{2} \\
   f(\beta_j) = u_{\beta_j} 3^6 2^8 \gamma^4,
\end{align*}
where $2 \OO_{K_4} = \langle \gamma \rangle^8$.
Again, by computation, we find that
\begin{itemize}
    \item $\sqrt{2} \in K_4$,
    \item $u_{\alpha_i}$ is a square in $\OO_{K_4}$ for $i \in \{1, 2\}$,
    \item $u_{\beta_j}/\sqrt{2}$ is a square in $\OO_{K_4}$ for $1 \leq j \leq 4$.
\end{itemize}
Thus, Remark \ref{rem-howtocomputedivisionfield} shows that  $\Q(x(E^d[4])) = K_4$ and
\[ \Q(E^d[4]) = K_4 \left(\sqrt{d \sqrt{2}}, \sqrt{d u_{\beta_j}} \right) = K_4\left(\sqrt{d} \sqrt[4]{2}\right). \]
Next, we compute the $8$-division field. We find that $\psi_8(x)/2 \psi_4(x) = g_8(x) h_8(x)$, where $g_8(x)$ and $h_8(x)$ are irreducible polynomials of degree $8$ and $16$, respectively. Let $K_{8,g}$ and $K_{8,h}$ be the splitting fields of $g_8(x)$ and $h_8(x)$, which are of degrees $16$ and $32$, respectively.

We find that $K_{8,g}$ is a subfield of $K_{8,h}$, and that for each $\gamma \in K_{8,h}$ where $g_{8}(\gamma) = 0$ or $h_8(\gamma) = 0$, $f(\gamma)$ is a square in $K_{8,h}$. We also note that $\sqrt[4]{2} \in K_{8,h}$, and that the only quadratic fields contained in $K_{8, h}$ are $\Q(\sqrt{2}), \Q(\sqrt{-2})$, and $\Q(\sqrt{-1})$, so
\[ \Q(E^{d}[8]) = 
\begin{cases}
K_{8,h}(\sqrt{d}) & \text{ if } d \neq \pm 1, \pm 2 \\
K_{8,h} & \text{ if } d = \pm 1, \pm 2
\end{cases}, \qquad \text{and} \qquad [\Q(E^{d}[8]) : \Q] = 
\begin{cases}
64 & \text{ if } d \neq \pm 1, \pm 2, \\
32 & \text{ if } d = \pm 1, \pm 2,
\end{cases} \]
as we wanted to prove.
\end{proof}

\begin{lemma}\label{lem-disc8index}
    Let $E/\Q$ be an elliptic curve with $j(E)=8000$, and so CM discriminant $\Delta_K f^2 = -8$. Let $G_{E,8}$ be the image of $\rho_{E,8}$, and let $d_{E,8}=[\mathcal{N}_{-2,0}(8):G_{E,8}]$. Then:
    \begin{enumerate}
        \item $d_{E,8}=1$ or $2$, and
        \item $d_{E,8}$ coincides with the index of the image $G_{E,2^\infty}$ of $\rho_{E,2^\infty}$ in $\mathcal{N}_{-2,0}(2^{\infty})$.
    \end{enumerate}
\end{lemma}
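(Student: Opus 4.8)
The plan is to follow the template of Lemma~\ref{lem-j1728index}. Since $j(E)=8000=j_{K,f}$ is neither $0$ nor $1728$, and $\Delta_Kf^2=-8$ forces $\delta=-2$, $\phi=0$, Theorem~\ref{thm-m8and16alvaro} applies. As $-8\equiv 0\bmod 8$ but $-8\not\equiv 0\bmod 16$, only case (2) of that theorem can occur: either $G_{E,2^\infty}=\mathcal N_{-2,0}(2^\infty)$, or $[\mathcal N_{-2,0}(2^\infty):G_{E,2^\infty}]=2$ and $G_{E,2^\infty}=\langle c_\varepsilon,G^{2,3}_{-2,0}\rangle$ or $\langle c_\varepsilon,G^{2,4}_{-2,0}\rangle$ for $\varepsilon\in\{\pm1\}$. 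In particular the $2$-adic index lies in $\{1,2\}$, which yields part (1) once (2) is established.

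For part (2), one inequality is automatic: the reduction map $\pi_8\colon\mathcal N_{-2,0}(2^\infty)\to\mathcal N_{-2,0}(8)$ is surjective and sends $G_{E,2^\infty}$ onto $G_{E,8}$, so $d_{E,8}\le[\mathcal N_{-2,0}(2^\infty):G_{E,2^\infty}]$. For the reverse inequality, I would show that each of the (at most three) candidate $2$-adic images from Theorem~\ref{thm-m8and16alvaro} has level of definition dividing $8$, i.e.\ contains $\ker\pi_8$. The quickest route is a finite computation: feed the explicit generators (with $\delta=-2$, $\phi=0$) into \verb|Magma| \cite{magma}, reduce mod $8$, and verify that the index in $\mathcal N_{-2,0}(8)$ of the reduction of each index-$2$ group is again $2$. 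A conceptual alternative is also available: $\cC_{-2,0}(2^\infty)$ is isomorphic to $\Z_2[\sqrt{-2}]^\times$, the unit group of the ramified quadratic field $\Q_2(\sqrt{-2})$, which is $\cong\Z/2\Z\times\Z_2^2$; every index-$2$ subgroup contains the squares, hence contains $(1+\mathfrak m)^2\supseteq 1+\mathfrak m^6=1+8\,\Z_2[\sqrt{-2}]$, and the latter is precisely $\ker\pi_8$, noting that $\ker\pi_8\subseteq\cC_{-2,0}(2^\infty)$ since no element of $\mathcal N_{-2,0}(2^\infty)\setminus\cC_{-2,0}(2^\infty)$ reduces to the identity mod $8$. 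Thus $\langle c_\varepsilon,G^{2,t}_{-2,0}\rangle$ is the full $\pi_8$-preimage of its reduction, the two indices agree, and combining the inequalities gives $d_{E,8}=[\mathcal N_{-2,0}(2^\infty):G_{E,2^\infty}]\in\{1,2\}$.

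The only real obstacle is making sure the level of definition of the index-$2$ images does not exceed $8$ --- if it were $16$ the lemma would fail --- but since Theorem~\ref{thm-m8and16alvaro} provides these images by explicit generators, this is a routine verification, delegated to \verb|Magma| with the unit-filtration argument as an independent check, and no difficulty arises beyond the $j=1728$ case already treated in Lemma~\ref{lem-j1728index}.
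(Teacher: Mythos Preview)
Your proposal is correct and follows essentially the same approach as the paper: invoke Theorem~\ref{thm-m8and16alvaro} to see that the $2$-adic index is $1$ or $2$ with the index-$2$ images given explicitly, then verify via a \texttt{Magma} computation that each of these candidate images has level of definition dividing $8$, so that the mod-$8$ and $2$-adic indices coincide. Your additional conceptual argument via the unit filtration of $\Z_2[\sqrt{-2}]^\times$ (showing that any index-$2$ subgroup of the Cartan contains $1+8\,\Z_2[\sqrt{-2}]=\ker\pi_8$) is a nice independent check that the paper does not include, but the core strategy is identical.
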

\begin{proof}
    Theorem \ref{thm-m8and16alvaro} says that the index of $G_{E, 2^{\infty}}$ in $\mathcal{N}_{-2,0}(2^{\infty})$ is either $1$ or $2$. This proves (1). For (2), Theorem \ref{thm-m8and16alvaro} describes all the $2$-adic images that are possible when $\Delta_K f^2=-8$, namely $\mathcal{N}_{-2,0}(2^{\infty})$, or $\langle G_{-2,0}^{2,j},c_\varepsilon \rangle$ for $3\leq j \leq 4$ and $\varepsilon=\pm 1$. In all cases, a \verb|Magma| computation shows that 
    $$d_{E,8}=[\mathcal{N}_{-2,0}(8):G_{E,8}]=[\mathcal{N}_{-2,0}(2^{\infty}):G_{E,2^\infty}],$$ i.e., the $2$-adic index equals the mod-$8$ index, as desired.
\end{proof}

\begin{theorem}\label{disc8ell2-ver} Let $d$ be a non-zero square-free integer, and let $E/\Q: y^2=x^3 - 4320d^2 x + 96768d^3$ be an elliptic curve with CM by the order $\Of=\Z[\sqrt{-2}]$ of discriminant $\Delta_K f^2=-8$ (i.e., $j(E)=8000$). Then, there are $5$ possible $2$-adic images, according to whether $d = \pm 1, \pm 2, t$, with $t \notin \pm (\Q^\times)^2, \pm 2(\Q^\times)^2$. Moreover, the image labels corresponding to each possibility are precisely as specified in Table \ref{cm2big}.
\end{theorem}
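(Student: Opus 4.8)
The strategy mirrors the $j=1728$ case (Theorem \ref{thm-j1728}), so the plan is to combine three ingredients: the abstract classification of possible $2$-adic images from Theorem \ref{thm-m8and16alvaro}, the division-field computations of Proposition \ref{prop-8divfieldm8}, the index comparison of Lemma \ref{lem-disc8index}, and the twisting behavior from Lemma \ref{lem-twistimage}. First I would recall from Theorem \ref{thm-m8and16alvaro} (case (2), applicable since $\Delta_K f^2 = -8 \equiv 0 \bmod 8$) that for any $E/\Q$ with $j(E)=8000$, either $G_{E,2^\infty}=\mathcal{N}_{-2,0}(2^\infty)$, or $[\mathcal{N}_{-2,0}(2^\infty):G_{E,2^\infty}]=2$ and $G_{E,2^\infty}$ is generated by some $c_\varepsilon$ and one of $G^{2,3}_{-2,0}$ or $G^{2,4}_{-2,0}$; and by Lemma \ref{lem-disc8index} this index is detected already at level $8$, i.e.\ it equals $[\mathcal{N}_{-2,0}(8):G_{E,8}]$.

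Next I would observe that every elliptic curve over $\Q$ with $j=8000$ is isomorphic to some $E^d\colon y^2 = x^3 - 4320 d^2 x + 96768 d^3$ with $d$ a non-zero square-free integer, and that $E^d$ is the quadratic twist of $E^1$ by $d$. The index-$1$ vs.\ index-$2$ dichotomy is governed by $[\Q(E^d[8]):\Q]$: by Proposition \ref{prop-8divfieldm8}, this degree is $32$ precisely when $d \in \{\pm 1, \pm 2\}$ and $64$ otherwise, while $|\mathcal{N}_{-2,0}(8)| = 64$. Hence for $d \notin \{\pm 1, \pm 2\}$ (equivalently $d$ not in $\pm(\Q^\times)^2 \cup \pm 2(\Q^\times)^2$ after absorbing squares) the mod-$8$ image is everything, so by Lemma \ref{lem-disc8index} the full $2$-adic image is $\mathcal{N}_{-2,0}(2^\infty)$, giving the label \texttt{2.3.ns7-1.1.1}. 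For $d \in \{\pm 1, \pm 2\}$ the index is $2$, and these are the four distinct index-$2$ possibilities; a direct \verb|Magma| computation (as in the repository \cite{githubrepo}) identifies, for each $d \in \{1,-1,2,-2\}$, which of the four groups $\langle G^{2,3}_{-2,0},c_\varepsilon\rangle$, $\langle G^{2,4}_{-2,0},c_\varepsilon\rangle$ arises, and matches it with the labels \texttt{2.3.ns7-16.2.}$t$ for $t=1,2,3,4$ as displayed in Figure \ref{fig-8ell2} and Table \ref{cm2big}.

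Finally, to see that an arbitrary square-free $d$ with $d \notin \{\pm 1, \pm 2\}$ really gives the full image (rather than merely having the correct mod-$8$ degree), and to keep the argument parallel to the $j=1728$ case, I would note via Proposition \ref{prop-8divfieldm8} that the only quadratic subfields of $\Q(E^1[8])$ are $\Q(\sqrt 2)$, $\Q(\sqrt{-2})$, $\Q(\sqrt{-1})$; hence for $d$ square-free and $d \neq \pm 1, \pm 2$ we have $\Q(\sqrt d) \cap \Q(E^1[8]) = \Q$, and Lemma \ref{lem-twistimage}(2) gives $G_{E^d,8} = \langle -\operatorname{Id}, G_{E^1,8}\rangle$; since $-\operatorname{Id} \notin G_{E^1,8}$ (as $[\mathcal{N}_{-2,0}(8):G_{E^1,8}]=2$ and one checks $-\operatorname{Id}$ lies outside), this doubles the image to all of $\mathcal{N}_{-2,0}(8)$, confirming consistency. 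The main obstacle is purely computational: correctly producing the division polynomials and splitting fields in Proposition \ref{prop-8divfieldm8} and then running the \verb|Magma| group-matching to pin down which index-$2$ subgroup corresponds to each of $d = \pm 1, \pm 2$; once those are in hand the logical skeleton is immediate. There is no serious theoretical difficulty beyond organizing the four sign cases.
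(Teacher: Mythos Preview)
Your proposal is correct and follows essentially the same approach as the paper's proof: reduce to the index dichotomy via Theorem \ref{thm-m8and16alvaro} and Lemma \ref{lem-disc8index}, use Proposition \ref{prop-8divfieldm8} to see that index $2$ forces $d\in\{\pm 1,\pm 2\}$, identify those four images by a \texttt{Magma} computation, and conclude that all other $d$ give the full normalizer. Your final paragraph invoking Lemma \ref{lem-twistimage} is a harmless consistency check but is not needed (and the paper omits it here): once the index is known to be $1$ or $2$ and the degree $[\Q(E^d[8]):\Q]=64$ for $d\notin\{\pm 1,\pm 2\}$, the index must be $1$.
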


\begin{proof}
    Let $E/\Q$ be an elliptic curve with $j(E) = 8000$, equivalently an elliptic curve with CM by an order of discriminant $-8$. Then, there is a square-free integer $d$ such that $E$ is isomorphic to $E^d/\Q : y^2=x^3 - 4320d^2 x + 96768d^3$. Let $G_{E^d} = G_{E^d,2^\infty}$ be the $2$-adic image of $\rho_{E, 2^{\infty}}$ in $\mathcal{N}_{-2,0}(2^{\infty})$.

    Suppose first that $[\mathcal{N}_{-2,0}(2^{\infty}) : G_{E^d} ] = 2$. Then, Lemma \ref{lem-disc8index}(2) shows that the index of the mod 8 image is also $2$ inside $\mathcal{N}_{-2,0}(8)$, and Proposition \ref{prop-8divfieldm8} shows that $d \in \{ \pm 1, \pm 2 \}$. We have verified using \verb|Magma| that the $2$-adic images of $E^d$ are generated by $H \subseteq \mathcal{C}_{-2, 0}(2^{\infty})$ and image of complex conjugation $\gamma$, where the tuples $(d, H, \gamma)$ are of the form $(2, G^{2,3}_{-2,0}(2^{\infty}), c_1)$, $(1, G^{2,4}_{-2,0}(2^{\infty}), c_1)$, $(-1, G^{2,3}_{-2,0}(2^{\infty}), c_{-1})$, $(-2, G^{2,4}_{-2,0}(2^{\infty}), c_{-1})$. We also verify computationally that the labels corresponding to these images in Table \ref{cm2big} are correct.

    If instead $[N_{-2, 0}(2^{\infty}) : G_{E^d}] = 1$, then by Proposition \ref{prop-8divfieldm8} we have that $d \notin \{ \pm 1, \pm 2 \}$, and $G_E = \mathcal{N}_{-2, 0}(2^{\infty})$ with label 2.3.ns7-1.1.1. 
\end{proof}

\subsubsection{$\Delta_Kf^2=-16$, $\ell = 2$}\label{sec-16ell2} Here we prove the classification $2$-adic images of elliptic curves with CM by the order $\Z[2i]$ of discriminant $-16$ (i.e., $j_{K,f}=287496$), which is summarized in Figure \ref{fig-16ell2} and also detailed in Table \ref{tab-ell2}. Our first preliminary result describes the $2$-, $4$-, and $8$- division fields.

\begin{figure}[ht]
 \scalebox{.88}{
$$
\xymatrix@R=1mm@C=1mm{ 
& &  \fbox{1.1.1}
\ar@/_1pc/@{-}[ddll]_2
\ar@{-}[ldd]_2 
\ar@/^1pc/@{-}[ddrr]^2
\ar@{-}[rdd]^2 
\\
\\
\fbox{$\begin{array}{c} 8.2.1\\ 2\end{array}$}  &  
\fbox{$\begin{array}{c} 8.2.2\\ 1\end{array}$}  & 
  & 
  \fbox{$\begin{array}{c} 8.2.3\\ -1\end{array}$}  &  
  \fbox{$\begin{array}{c} 8.2.4\\ -2\end{array}$}   }
$$
}
\caption{The labels for the $2$-adic images of curves $y^2=x^3-11d^2x+14d^3$ with $j=287496$. For each value of $d$ indicated in the second row of each box, the $2$-adic label starts with \texttt{2.4.ns7-} and ends with the digits indicated in the first row of the box.}\label{fig-16ell2}

\end{figure}
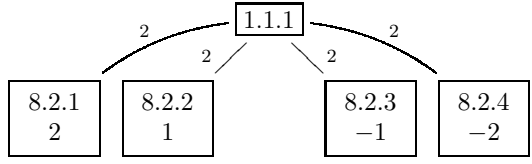
\begin{proposition}\label{prop-8divfieldm16}
Let $d\in \Q^*$ be square-free, and let $E^d\,:\,y^2=x^3 - 11 d^2x + 14d^3$. Then
\begin{itemize}
\item  $\Q(E^d[2]) = \Q(\sqrt{2})$,
\item  $\Q(E^d[4])=\Q(\alpha,\sqrt{d})$ such that $f_4(\alpha)=0$, where $f_4(x)=x^8-4x^6+8x^4-4x^2+1$,
 and
	\item $\Q(E^d[8]) = \Q(\beta,\sqrt{d})$ such that $f_8(\beta)=0$, where $f_8(x)=x^{32} + 16x^{31} + 120x^{30} + 560x^{29} + 1848x^{28} + 4784x^{27} + 11000x^{26} + 25344x^{25} + 59844x^{24} + 133856x^{23} +  260768x^{22} + 419392x^{21} + 534920x^{20} + 513536x^{19} + 332032x^{18} + 93856x^{17} - 43548x^{16} - 22112x^{15} + 61056x^{14} + 77728x^{13} + 20768x^{12} - 18304x^{11} + 320x^{10} + 21440x^{9} + 8240x^{8} - 8256x^{7} - 1888x^{6} + 3584x^{5} + 800x^{4} - 1216x^{3} + 320x^{2} + 8
$.
\end{itemize}
In particular, we have $[\Q(E^d[2]):\Q]=2$ and $[\Q(E^d[4]):\Q]=8$, and  we have $[\Q(E^d[8]):\Q]=32$ if and only if $d \in\{\pm 1,\pm2\}$; otherwise $[\Q(E^d[4]):\Q]=16$ and $[\Q(E^d[8]) : \Q] =64$.
\end{proposition}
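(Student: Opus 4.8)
The plan is to follow the strategy of Proposition~\ref{prop-8divfieldm8}, streamlined by applying Yelton's Theorem~\ref{thm-8divisionfield} directly, since the $2$-torsion of $E^d$ splits especially nicely. First observe that $E^d$ and $E^1$ become isomorphic over $\Q(\sqrt d)$ (apply $x\mapsto dx$, $y\mapsto d^2y$ to $dy^2=x^3-11x+14$), so Lemma~\ref{lem-twistimage} gives $\Q(E^d[N])\subseteq\Q(\sqrt d,E^1[N])$ for every $N>2$, with equality unless $\sqrt d\in\Q(E^1[N])$; it will therefore suffice to understand $\Q(E^1[4])$ and $\Q(E^1[8])$ together with their quadratic subfields. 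For the $2$-division field, the cubic $x^3-11d^2x+14d^3$ has the rational root $x=2d$ and factors as $(x-2d)(x^2+2dx-7d^2)$, whose quadratic factor has discriminant $32d^2$; hence its three roots $2d$, $(-1+2\sqrt2)d$ and $(-1-2\sqrt2)d$ all lie in $\Q(\sqrt2)$, so $\Q(E^d[2])=\Q(\sqrt2)$.

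Writing the model as $y^2=(x-2d)(x-(-1+2\sqrt2)d)(x-(-1-2\sqrt2)d)$ and applying Theorem~\ref{thm-8divisionfield} with $\zeta_4=i$, one computes that $A_1^2=4\sqrt2\,d$, $A_2^2=-(1+\sqrt2)^2d$ and $A_3^2=(\sqrt2-1)^2d$, so one may take $A_3=(\sqrt2-1)\sqrt d$, $A_2=i(1+\sqrt2)\sqrt d$ and $A_1=2\sqrt[4]2\,\sqrt d$. Since $\sqrt2-1\in\Q(\sqrt2)$ is nonzero, $A_3$ forces $\sqrt d\in\Q(E^d[4])$, and then $A_1$ forces $\sqrt[4]2\in\Q(E^d[4])$; conversely $i$, $\sqrt[4]2$ and $\sqrt d$ generate $\sqrt2$ and all of $A_1,A_2,A_3$, so $\Q(E^d[4])=\Q(i,\sqrt[4]2,\sqrt d)$. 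In particular $\Q(E^1[4])=\Q(i,\sqrt[4]2)$ is the splitting field of $x^4-2$: Galois over $\Q$ of degree $8$ with group $D_4$, ramified only at $2$, and hence with exactly three quadratic subfields, which must be $\Q(i)$, $\Q(\sqrt2)$ and $\Q(\sqrt{-2})$. Consequently, for square-free $d$ one has $\sqrt d\in\Q(E^1[4])$ precisely when $d\in\{\pm1,\pm2\}$, giving $[\Q(E^d[4]):\Q]=8$ in those cases and $16$ otherwise.

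For the $8$-division field, note that $\sqrt d\in\Q(E^d[4])\subseteq\Q(E^d[8])$ for every square-free $d$, by the previous paragraph. Since $E^d$ and $E^1$ are isomorphic over $\Q(\sqrt d)$, this yields $\Q(E^d[8])=\Q(\sqrt d,E^d[8])=\Q(\sqrt d,E^1[8])=\Q(E^1[8])(\sqrt d)$, which equals $\Q(E^1[8])$ exactly when $\sqrt d\in\Q(E^1[8])$ and is a quadratic extension of it otherwise. It thus remains to determine $\Q(E^1[8])$: Theorem~\ref{thm-8divisionfield} (with $\zeta_8=(1+i)/\sqrt2\in\Q(E^1[4])$) writes it as $\Q(i,\sqrt[4]2,B_1,B_2,B_3)$ with $B_j^2=A_j(A_{j+1}+iA_{j+2})$, and a \texttt{Magma} computation shows that this field has degree $32$ over $\Q$ (so that the $2$-adic index of $E^1$ attains the maximal value $2$ permitted by Theorem~\ref{thm-m8and16alvaro}), is ramified only at $2$, and, from the structure of its Galois group, has exactly the quadratic subfields $\Q(i)$, $\Q(\sqrt2)$ and $\Q(\sqrt{-2})$. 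Since $\sqrt[4]2$ (hence $\sqrt2$ and $\sqrt{-2}$) and $i$ already lie in $\Q(E^1[8])$, we conclude that $\sqrt d\in\Q(E^1[8])$ exactly for $d\in\{\pm1,\pm2\}$, so $[\Q(E^d[8]):\Q]$ equals $32$ for those $d$ and $64$ otherwise.

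Finally, to produce the explicit defining polynomials in the statement we factor the division polynomials $\psi_4(x)/(2\psi_2(x))$ and $\psi_8(x)/(2\psi_4(x))$ of $E^1$, isolate the factors whose splitting fields are $\Q(E^1[4])$ and $\Q(E^1[8])$, and check in \texttt{Magma} that $f_4$ and $f_8$ are minimal polynomials of primitive elements of these fields; the presentations $\Q(E^d[4])=\Q(\alpha,\sqrt d)$ and $\Q(E^d[8])=\Q(\beta,\sqrt d)$ then follow from the twist computations above. I expect the main obstacle to be the $8$-division field: pushing the Yelton formulas through and, above all, verifying that the degree-$32$ field $\Q(E^1[8])$ has no quadratic subfield besides $\Q(i)$, $\Q(\sqrt2)$ and $\Q(\sqrt{-2})$; this is handled by the explicit description of $\Gal(\Q(E^1[8])/\Q)$ coming from the methods of \cite{lozano-galoiscm} together with the code in the repository \cite{githubrepo}.
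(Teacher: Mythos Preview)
Your proof is correct and reaches the same conclusions, but it takes a somewhat different route than the paper for the $4$-division field. The paper proceeds as in Proposition~\ref{prop-8divfieldm8}: it factors $\psi_4(x)/(2\psi_2(x))=(x-1)(x-3)g_4(x)$ with $g_4$ irreducible of degree $4$, takes $K_4$ to be its splitting field (degree $8$), and then verifies computationally that $f(\alpha_i)$ is a square in $\mathcal{O}_{K_4}$ for each root $\alpha_i$ and that the only quadratic subfields of $K_4$ are $\Q(i),\Q(\sqrt2),\Q(\sqrt{-2})$. You instead apply Theorem~\ref{thm-8divisionfield} directly, exploiting the pleasant factorizations $\alpha_3-\alpha_1=-(1+\sqrt2)^2d$ and $\alpha_1-\alpha_2=(\sqrt2-1)^2d$ to obtain the closed-form description $\Q(E^d[4])=\Q(i,\sqrt[4]{2},\sqrt d)$; this is the approach the paper uses for $j=1728$ (Corollary~\ref{cor-8divfieldj1728}) but not here. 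Your version is arguably more transparent: the quadratic subfields of $\Q(i,\sqrt[4]{2})$ are immediate from its identification as the splitting field of $x^4-2$, with no need for a square-checking computation.

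For the $8$-division field the two arguments converge: both compute $\Q(E^1[8])$ (you via the Yelton generators $B_j$, the paper via the degree-$16$ factor $j_8(x)$ of $\psi_8/(2\psi_4)$), both find it has degree $32$ with only the three expected quadratic subfields, and both then adjoin $\sqrt d$. Your observation that $\sqrt d\in\Q(E^d[4])$ already, so $\Q(E^d[8])=\Q(\sqrt d,E^1[8])$ automatically, is a clean way to reduce the general twist to $d=1$. The final paragraph, in which you recover the explicit polynomials $f_4,f_8$ by a \texttt{Magma} check, is adequate but could be tightened by simply noting that $f_4$ and $f_8$ are minimal polynomials of generators of the degree-$8$ and degree-$32$ fields you have already identified.
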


\begin{proof}
Let $f(x) := x^3 - 11x + 14$. This polynomial factors as
\[ (x - 2)(x + 1 - 2\sqrt{2})(x + 1 + 2 \sqrt{2}) \]
over $\Q(\sqrt{2})$, so $\Q(E[2]) = \Q(E^{d}[2]) = \Q(\sqrt{2})$. Next, define $\psi_N(x)$ to be the $N$th division polynomial of $E$. We find that 
\begin{align*}
    \frac{\psi_4(x)}{2\psi_2(x)} 
    &=: (x-1)(x-3)g_4(x),
\end{align*}
where $g_4(x)$ is an irreducible degree 4 polynomial. Let $K_4$ be the splitting field of $g_4(x)$, which we verify through computation is a degree $8$ extension of $\Q$, and let $\alpha_1, \dotsc, \alpha_4$ be the roots of $g_4(x)$. We find that $f(\alpha_i)$ is a square in $\OO_{K_4}$ for all $i$, and that $\Q(\sqrt{2}), \Q(\sqrt{-2}),$ and $\Q(\sqrt{-1})$ are subfields of $K_4$, thus Remark \ref{rem-howtocomputedivisionfield} shows that 
\[ \Q(E^{d}[4]) = 
\begin{cases}
K_{4}(\sqrt{d}) & \text{ if } d \neq \pm 1, \pm 2 \\
K_{4} & \text{ if } d = \pm 1, \pm 2
\end{cases}, \qquad \text{and} \qquad [\Q(E^{d}[4]) : \Q] = 
\begin{cases}
16 & \text{ if } d \neq \pm 1, \pm 2, \\
8 & \text{ if } d = \pm 1, \pm 2.
\end{cases} \]
Next, we find that 
\[ \frac{\psi_8(x)}{2 \psi_4(x)} =: g_8(x) h_8(x) j_8(x),\]
where $g_8(x), h_8(x),$ and $j_8(x)$ are irreducible polynomials of degree $4, 4$, and $16$, respectively. Let $K_8$ denote the splitting field of $j_8(x)$, a field of degree $32$. We find that $g_8(x), h_8(x)$ and $j_8(x)$ split in $K_8$, and that for every $\gamma \in K_8$ that is a root of $g_8(x), h_8(x),$ or $j_8(x)$, we also have that $f(\gamma)$ is a square in $\OO_{K_8}$. In addition, the only quadratic subfields of $K_8$ are $\Q(\sqrt{-1}), \Q(\sqrt{2}),$ and $ \Q(\sqrt{-2})$. Thus,
\[ \Q(E^{d}[8]) = 
\begin{cases}
K_{8}(\sqrt{d}) & \text{ if } d \neq \pm 1, \pm 2 \\
K_{8} & \text{ if } d = \pm 1, \pm 2
\end{cases}, \qquad \text{and} \qquad [\Q(E^{d}[8]) : \Q] = 
\begin{cases}
64 & \text{ if } d \neq \pm 1, \pm 2, \\
32 & \text{ if } d = \pm 1, \pm 2,
\end{cases} \]
as desired.
\end{proof}

\begin{lemma}\label{lem-disc16index}
    Let $E/\Q$ be an elliptic curve with CM by an order of discriminant  $\Delta_K f^2 = -16$ and, therefore, $j(E)=287496$. Let $G_{E,8}$ be the image of $\rho_{E,8}$, and let $d_{E,8}=[\mathcal{N}_{-4,0}(8):G_{E,8}]$. Then:
    \begin{enumerate}
        \item $d_{E,8}=1$ or $2$, and
        \item $d_{E,8}$ coincides with the index of the image $G_{E,2^\infty}$ of $\rho_{E,2^\infty}$ in $\mathcal{N}_{-4,0}(2^{\infty})$.
    \end{enumerate}
\end{lemma}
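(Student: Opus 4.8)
The plan is to mirror the proofs of Lemmas \ref{lem-j1728index} and \ref{lem-disc8index}, now with $\delta=-4$ and $\phi=0$ (since $\Delta_Kf^2=-16\equiv 0\bmod 4$ forces $\delta=\Delta_Kf^2/4=-4$ and $\phi=0$).

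For part (1), I would invoke Theorem \ref{thm-m8and16alvaro}: the curve has $j(E)=287496\notin\{0,1728\}$, so the theorem applies verbatim and shows that $G_{E,2^\infty}$ is either all of $\mathcal{N}_{-4,0}(2^\infty)$ or has index $2$ in it. Since $-16\equiv 0\bmod 16$, we are moreover in case (1) of that theorem, so in the index-$2$ case $G_{E,2^\infty}$ is conjugate to $\langle G^{2,1}_{-4,0},c_\varepsilon\rangle$ or $\langle G^{2,2}_{-4,0},c_\varepsilon\rangle$ for some $\varepsilon\in\{\pm1\}$. Because reduction modulo $8$ maps $\mathcal{N}_{-4,0}(2^\infty)$ onto $\mathcal{N}_{-4,0}(8)$ and carries $G_{E,2^\infty}$ onto $G_{E,8}$, the index $d_{E,8}$ always divides $[\mathcal{N}_{-4,0}(2^\infty):G_{E,2^\infty}]$; hence $d_{E,8}\in\{1,2\}$, which proves (1).

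For part (2), the remaining point is that reduction modulo $8$ loses no index, i.e.\ that the level of definition of each group on the (finite) list above divides $8$. Concretely, for each $G\in\{\mathcal{N}_{-4,0}(2^\infty),\ \langle G^{2,1}_{-4,0},c_\varepsilon\rangle,\ \langle G^{2,2}_{-4,0},c_\varepsilon\rangle : \varepsilon=\pm1\}$ I would check with a \verb|Magma| computation that $[\mathcal{N}_{-4,0}(8):\pi_8(G)]=[\mathcal{N}_{-4,0}(2^\infty):G]$, where $\pi_8\colon\mathcal{N}_{-4,0}(2^\infty)\to\mathcal{N}_{-4,0}(8)$ is the natural projection — exactly the verification used in the two preceding lemmas. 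Combined with part (1) and the fact that $G_{E,2^\infty}$ is, up to conjugacy, one of these groups, it follows that $d_{E,8}=[\mathcal{N}_{-4,0}(8):G_{E,8}]=[\mathcal{N}_{-4,0}(2^\infty):G_{E,2^\infty}]$, as claimed.

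The only non-formal ingredient is this finite \verb|Magma| check that every candidate index-$2$ subgroup has level of definition at most $8$; everything else follows immediately from Theorem \ref{thm-m8and16alvaro} together with the elementary fact that applying a reduction homomorphism can only decrease a (profinite) index. As an independent sanity check, Proposition \ref{prop-8divfieldm16} already determines the mod-$8$ division field of every quadratic twist $E^d$ of the chosen model, which is consistent with the mod-$8$ index equalling $2$ exactly when $d\in\{\pm1,\pm2\}$.
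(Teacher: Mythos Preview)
Your proposal is correct and follows essentially the same approach as the paper: invoke Theorem \ref{thm-m8and16alvaro} to bound the $2$-adic index by $2$, enumerate the finitely many candidate images, and verify by a \verb|Magma| computation that each has level of definition dividing $8$. In fact, your enumeration of candidate index-$2$ groups ($j=1,2$) is sharper than the paper's, which lists $1\le j\le 4$; for $\Delta_K=-4$, $f=2$ only case (1) of Theorem \ref{thm-m8and16alvaro} applies, so your restriction is the correct one (the paper's extra cases are harmless but superfluous).
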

\begin{proof}
  Theorem \ref{thm-m8and16alvaro} says that the index of $G_{E, 2^{\infty}}$ in $\mathcal{N}_{-4,0}(2^{\infty})$ is either $1$ or $2$. This proves (1). For (2), Theorem \ref{thm-m8and16alvaro} describes all the $2$-adic images that are possible when $\Delta_K f^2=-16$, namely $\mathcal{N}_{-4,0}(2^{\infty})$, or $\langle G_{-4,0}^{2,j},c_\varepsilon \rangle$ for $1\leq j \leq 4$ and $\varepsilon=\pm 1$. In all cases, a \verb|Magma| computation shows that 
    $$d_{E,8}=[\mathcal{N}_{-4,0}(8):G_{E,8}]=[\mathcal{N}_{-2,0}(2^{\infty}):G_{E,2^\infty}],$$ i.e., the $2$-adic index equals the mod-$8$ index, as desired.
\end{proof}

\begin{theorem}\label{disc8ell2-ver}\label{thm-16ell2} Let $d\in \Z$ be non-zero square-free integer, and let $E^d:y^2=x^3 - 11 d^2x + 14d^3$ be an elliptic curve with CM by the order $\Of=\Z[2i]$ of discriminant $\Delta_K f^2=-16$ (i.e., $j(E)=287496$). Then, there are $5$ possible $2$-adic images, according to whether $d = \pm 1, \pm 2, t$, with $t \notin \pm (\Q^\times)^2, \pm 2(\Q^\times)^2$. Moreover, the image labels corresponding to each possibility are precisely as specified in Table \ref{cm2big}.
\end{theorem}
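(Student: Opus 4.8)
The plan is to follow the same template used above for the discriminant $-8$ case. Every elliptic curve $E/\Q$ with $j(E)=287496$ — equivalently, with CM by the order of discriminant $-16$ — is isomorphic over $\Q$ to $E^d : y^2 = x^3 - 11d^2 x + 14 d^3$ for a unique square-free integer $d$, since the curves $E^d$ are exactly the quadratic twists of $E^1 : y^2 = x^3 - 11x + 14$. Hence it suffices to determine the $2$-adic image $G_{E^d} = G_{E^d,2^\infty} \subseteq \mathcal{N}_{-4,0}(2^\infty)$ for each square-free $d$. Since $\Delta_K f^2 = -16 \equiv 0 \bmod 16$, Theorem~\ref{thm-m8and16alvaro} gives $[\mathcal{N}_{-4,0}(2^\infty) : G_{E^d}] \in \{1,2\}$, and in the index-$2$ case $G_{E^d}$ is generated by $c_\varepsilon$ (for some $\varepsilon \in \{\pm 1\}$) together with one of the groups $G^{2,j}_{-4,0}$, $1 \le j \le 4$, all of which are defined modulo $8$.

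First I would treat the index-$2$ case. By Lemma~\ref{lem-disc16index} this index coincides with $[\mathcal{N}_{-4,0}(8) : G_{E^d,8}]$, so its being equal to $2$ is equivalent to $[\Q(E^d[8]):\Q]=32$, which by Proposition~\ref{prop-8divfieldm16} happens precisely for $d \in \{\pm 1, \pm 2\}$. For each of these four values of $d$ I would compute the mod-$8$ image $G_{E^d,8}$ in Magma, write it as $\langle H, \gamma \rangle$ with $H \subseteq \mathcal{C}_{-4,0}(2^\infty)$ and $\gamma$ the image of complex conjugation, lift it to $G_{E^d} \subseteq \mathcal{N}_{-4,0}(2^\infty)$ (legitimate since the relevant groups are defined mod $8$), and check that the four resulting index-$2$ groups are exactly those carrying the labels $\texttt{2.4.ns7-8.2.1}$, $\texttt{2.4.ns7-8.2.2}$, $\texttt{2.4.ns7-8.2.3}$, $\texttt{2.4.ns7-8.2.4}$ in Table~\ref{cm2big}, attached respectively to $d = 2, 1, -1, -2$ as displayed in Figure~\ref{fig-16ell2}.

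It remains to treat the index-$1$ case: by the dichotomy above together with Proposition~\ref{prop-8divfieldm16} and Lemma~\ref{lem-disc16index}, the index is $1$ precisely for square-free $d \notin \{\pm 1, \pm 2\}$, and then $G_{E^d} = \mathcal{N}_{-4,0}(2^\infty)$, the group with label $\texttt{2.4.ns7-1.1.1}$. Combining this with the index-$2$ analysis yields exactly the five asserted $2$-adic images, one for each of $d = 1, -1, 2, -2$ and the generic square class. The only substantive input is the explicit computation of the $2$-, $4$-, and $8$-division fields of $E^d$, which is already furnished by Proposition~\ref{prop-8divfieldm16}; everything else reduces to a finite Magma verification of four explicit mod-$8$ images, so the main point requiring care is matching these groups correctly to their LMFDB-style labels following the conventions of Section~\ref{sec-labels}.
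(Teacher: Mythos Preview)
Your proposal is correct and follows essentially the same route as the paper: reduce to mod-$8$ via Lemma~\ref{lem-disc16index}, use Proposition~\ref{prop-8divfieldm16} to pin down the index-$2$ twists as $d\in\{\pm1,\pm2\}$, verify the four mod-$8$ images by Magma, and conclude that all other square-free $d$ give the full normalizer. The only minor refinement is that for $\Delta_K f^2=-16$ (with $\Delta_K=-4$, $f=2$) only case~(1) of Theorem~\ref{thm-m8and16alvaro} applies, so a priori only $G^{2,1}_{-4,0}$ and $G^{2,2}_{-4,0}$ can occur, not all four $G^{2,j}_{-4,0}$; this is consistent with the Magma output you describe.
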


\begin{proof}
    Let $E/\Q$ be an elliptic curve with CM by an order of discriminant $-16$, i.e., with $j(E) = 287496$. Then, there is a non-zero square-free integer $d$ such that $E$ is isomorphic to $E^d/\Q : y^2=x^3 - 11 d^2x + 14d^3$. Let $G_{E^d} = G_{E^d,2^\infty}$ be the $2$-adic image of $\rho_{E, 2^{\infty}}$ in $\mathcal{N}_{-4,0}(2^{\infty})$.

    Suppose first that $[\mathcal{N}_{-4,0}(2^{\infty}) : G_{E^d} ] = 2$. Then, Lemma \ref{lem-disc16index} shows that the index of the mod $8$ image is also $2$ inside $\mathcal{N}_{-4,0}(8)$, and Proposition \ref{prop-8divfieldm16} shows that $d \in \{ \pm 1, \pm 2 \}$. We have verified using \verb|Magma| that the $2$-adic images for $E^d$ are generated by $H \subseteq \mathcal{C}_{-4, 0}(2^{\infty})$ and image of complex conjugation $\gamma$, where the tuples $(d, H, \gamma)$ are of the form $(2, G^{2,1}_{-4,0}(2^{\infty}), c_1)$, $(1, G^{2,2}_{-4,0}(2^{\infty}), c_1)$, $(-1, G^{2,2}_{-4,0}(2^{\infty}), c_{-1})$, $(-2, G^{2,1}_{-4,0}(2^{\infty}), c_{-1})$. We also verify computationally that the labels corresponding to these images in Table \ref{cm2big} are correct.

    If instead $[N_{-2, 0}(2^{\infty}) : G_{E^d}] = 1$, then $G_{E^d}$ is as large as possible, i.e., $G_{E^d} = \mathcal{N}_{-4,0}(2^{\infty})$ with label \texttt{2.4.ns7-1.1.1} and $d \notin \{ \pm 1, \pm 2 \}$ modulo squares, as claimed.
\end{proof}

\subsection{The case of $\Delta_K f^2= -3$}\label{sec-delta3}

 Next we deal with the case when $E/\Q(j_{K,f})$ is an elliptic curve with CM by $\Of=\Z[(-1+\sqrt{-3})/2]$, i.e., $\Delta_K f^2 = -3$ and $j(E)=0$.

\subsubsection{$\Delta_K f^2 = -3$ and $\ell=2$}\label{sec-delta3ell2} By Theorem \ref{thm-jzero-goodredn}, there are two possibilities for the $2$-adic image of an elliptic curve $E/\Q$ with $j(E)=0$. Moreover, the $2$-adic image of $E$ is defined mod $2$, and $G_{E,2^\infty} = \mathcal{N}_{-1,1}(2^\infty)$ which corresponds to the label \texttt{2.0.ns5-1.1.1}, or $[\mathcal{N}_{-1,1}(2^\infty):G_{E,2^\infty}]=3$ and $G_{E,2^\infty} =\left\langle \cC_{-1,1}(2^\infty)^3, c_{\varepsilon}' \right\rangle$ which corresponds to the label \texttt{2.0.ns5-2.3.1}. Finally, by Prop. \ref{prop-zywina2}(iii), the index is $3$ if $E$ is given by $y^2=x^3+t^3$ for some non-zero square-free integer $t$ and the index is $1$ if $E$ is given by $y^2=x^3+t$ for some integer $t$ that is not a cube. Since all of our results are phrased in terms of twists of a chosen curve $E_\mathcal{O}$, we note that in this case $E_\mathcal{O}:y^2=x^3+16$ so, accordingly, the index-$3$ image  \texttt{2.0.ns5-2.3.1} occurs for curves $y^2=x^3+16\cdot (4t^3)$ and the index-$1$ image \texttt{2.0.ns5-1.1.1} occurs for $y^2=x^3+16\cdot t$ with $t\notin 4(\Q^\times)^3$, as we write in Table \ref{cm2big} and Figure \ref{fig-3ell2}.

\begin{figure}[ht]
 \scalebox{.88}{
$$
\xymatrix@R=1mm@C=1mm{ 
 \fbox{1.1.1}
\ar@{-}[ddd]^3
\\
\\
\\
\fbox{$\begin{array}{c} 2.3.1\\ 4 t^3\end{array}$} 
}
$$
}
\caption{The labels for the $2$-adic images of curves $y^2=x^3+16d$ with $j=0$. For each value of $d$ indicated in the second row of each box, the $2$-adic label starts with \texttt{2.0.ns5-} and ends with the digits indicated in the first row of the box.}\label{fig-3ell2}
\end{figure}
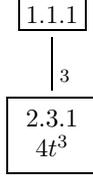

\subsubsection{$\Delta_Kf^2=-3$, $\ell = 3$}\label{sec-delta3ell3}

\begin{figure}[ht]
{\tiny
$$
\xymatrix@R=1mm@C=1mm{ 
&       & &       &  & \fbox{1.1.1}\ar@{-}[dd]_3\ar@{-}[dll]_2\ar@{-}[drr]^2 \ar@/_2pc/@{-}[ddllll]_3 \ar@/^2pc/@{-}[ddrrrr]^3   & &       & & & \\
&       & & \fbox{$\begin{array}{c}9.2.1\\ t^2\end{array}$}\ar@/_4pc/@{-}[ddlll]_3\ar@{-}[ddr]_3\ar@/^3pc/@{-}[ddrrrrr]^3  &  &        & & \fbox{$\begin{array}{c} 9.2.2\\ -3 t^2\end{array}$}\ar@/^4pc/@{-}[ddrrr]^3\ar@{-}[ddl]_3\ar@/_3pc/@{-}[ddlllll]_3 & & & \\
&  \fbox{$\begin{array}{c}3.3.1\\ t^3\end{array}$}\ar@{-}[ld]_2\ar@{-}[rd]^2 & &       &  &  \fbox{$\begin{array}{c}27.3.1\\ 3t^3\end{array}$}\ar@{-}[ld]_2\ar@{-}[rd]^2 & &       & &  \fbox{$\begin{array}{c}27.3.2\\ 9t^3\end{array}$}\ar@{-}[ld]_2\ar@{-}[rd]^2 & \\
 \fbox{$\begin{array}{c}9.6.1\\ 1\end{array}$} & &  \fbox{$\begin{array}{c}9.6.2\\ -27\end{array}$} & &  \fbox{$\begin{array}{c}27.6.1\\ 81\end{array}$} & &  \fbox{$\begin{array}{c}27.6.3\\ -3\end{array}$} & &  \fbox{$\begin{array}{c}27.6.2\\ 9\end{array}$} & &  \fbox{$\begin{array}{c}27.6.4 \\ -243\end{array}$} 
}
$$
}
\caption{The labels for the $3$-adic images of curves $y^2=x^3+16d$ with $j=0$. For each value of $d$ indicated in the second row of each box, the $3$-adic label starts with \texttt{3.1.ns5-} and ends with the digits indicated in the first row of the box.}\label{fig-3ell3}
\end{figure}

As in previous sections, our first step is to analyze the appropriate division fields, in this case the $3$rd and $9$th division fields.

\begin{proposition}
\label{lem-3and9divfielddegreej0}
Let $d\in \Q^*$ such that $16d$ is $6$th-power-free, and let $E^d\,:\,y^2=x^3+16 d$. Then
\begin{itemize}
\item  $\Q(E^d[3]) = \Q(\zeta_3,\sqrt[6]{d})$, and
\item  $\Q(E^d[9]) = \Q(\zeta_3,\sqrt[6]{d},\sqrt[3]{3},\zeta_9+\zeta_9^{-1})$.
\end{itemize}
In particular,
    \begin{enumerate}
        \item $[\Q(E^d[3]):\Q]=2$ if and only if $d\in \{1, -27 \}.$
        \item $[\Q(E^d[3]):\Q]=4$ if and only if $d =  t^3$ for some $t \in \Q^*$, $t\ne 1,-3$,
        \item $[\Q(E^d[3]) : \Q]= 6$ if and only if $d=t^2, -3t^2$ for some $t \in \Q^*\smallsetminus\{\pm 1,\pm 3\}$, or
        \item $[\Q(E^d[3]) : \Q] = 12$ otherwise,
    \end{enumerate}
    and
    \begin{enumerate}\setcounter{enumi}{4}
        \item $[\Q(E^d[9]):\Q]=18$ if and only if $d\in \{1, -3, 9, -27, 81, -243 \}.$
        \item $[\Q(E^d[9]):\Q]=36$ if and only if $d = t^3$, or $9t^3$ for some $t \in \Q^*$, $t\ne 1,-3$ and $d = 3 t^3$ for some $t \in \Q^*$, $t\ne -1,3$,
        \item $[\Q(E^d[9]) : \Q]= 54$ if and only if $d = t^2$ or $-3t^2$ for $t \in \Q^* \smallsetminus \{\pm 1, \pm 3, \pm 9\}$ or
        \item $[\Q(E^d[9]) : \Q] = 108$ otherwise.
    \end{enumerate}
\end{proposition}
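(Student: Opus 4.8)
Here is a plan for proving Proposition \ref{lem-3and9divfielddegreej0}.

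\emph{Setup.} The key point is that $E^d\colon y^2=x^3+16d$ is the sextic twist of $E^1=E_{\mathcal{O}}\colon y^2=x^3+16$ by $d$: the substitution $(x,y)\mapsto(\sqrt[3]{d}\,x,\sqrt{d}\,y)$ defines an isomorphism $E^1\cong E^d$ over $\Q(\sqrt{d},\sqrt[3]{d})$. Hence $\Q(E^d[N])\subseteq\Q(\sqrt{d},\sqrt[3]{d},E^1[N])$ for every $N$. The first move is to read off from the $3$-division polynomial that $\sqrt{d}$ and $\sqrt[3]{d}$ already lie in $\Q(E^d[3])$, so that for $N\in\{3,9\}$ we actually have $\Q(E^d[N])=\Q(\sqrt{d},\sqrt[3]{d},E^1[N])$. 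This reduces the proposition to: (a) identify $\Q(E^1[3])$ and $\Q(E^1[9])$ exactly; and (b) a bookkeeping argument for the degrees, tracking the effect of adjoining $\sqrt{d}$ and $\sqrt[3]{d}$ and using the hypothesis that $16d$ is $6$th-power-free to turn the conditions into the stated ones.

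\emph{The $3$rd division field.} The $3$-division polynomial of $E^d$ is $\psi_3(x)=3x(x^3+64d)$, so the nonzero $3$-torsion has $x$-coordinates $0$ and $-4\zeta_3^k\sqrt[3]{d}$, $k=0,1,2$; substituting back gives $y=\pm4\sqrt{d}$ over the point with $x=0$ and $y^2=-48d$, i.e.\ $y=\pm4\sqrt{-3}\sqrt{d}$, over the others. Since $\sqrt{-3}$ is forced into $\Q(E^d[3])$ by the Weil pairing ($\zeta_3\in\mu_3$), this yields $\Q(E^d[3])=\Q(\sqrt{d},\sqrt{-3},\sqrt[3]{d})$, and in particular $\Q(E^1[3])=\Q(\sqrt{-3})$. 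For the degree I would use the tower $\Q\subseteq\Q(\sqrt[3]{d})\subseteq\Q(\sqrt[3]{d},\sqrt{-3})\subseteq\Q(\sqrt[3]{d},\sqrt{-3},\sqrt{d})$: the first step has degree $1$ or $3$ according as $d$ is or is not a cube in $\Q$; the second always has degree $2$ (choose $\sqrt[3]{d}\in\R$); and the third has degree $1$ or $2$, being trivial exactly when $\sqrt{d}\in\Q(\sqrt[3]{d},\sqrt{-3})$, which — as the unique quadratic subfield of that $S_3$-extension is $\Q(\sqrt{-3})$, the case $d$ a cube being treated directly — happens iff $d\in(\Q^\times)^2\cup(-3)(\Q^\times)^2$. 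Multiplying the three factors and invoking $6$th-power-freeness of $16d$ to pin down the overlaps (for instance, the only cubes lying in $(\Q^\times)^2\cup(-3)(\Q^\times)^2$ that satisfy the hypothesis are $1$ and $-27$) yields the four cases with degrees $2,4,6,12$.

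\emph{The $9$th division field.} The Weil pairing puts $\mu_9$, hence $\zeta_9$, into $\Q(E^d[9])$, which therefore contains $\sqrt{-3}$ and $\cos(2\pi/9)$ (these generate $\Q(\zeta_9)$ and its cubic real subfield). Combined with $\sqrt{d},\sqrt[3]{d}\in\Q(E^d[3])\subseteq\Q(E^d[9])$ and the sextic-twist relation, this gives $\Q(E^d[9])=\Q(\sqrt{d},\sqrt[3]{d},E^1[9])$, so it suffices to prove $\Q(E^1[9])=\Q(\zeta_9,\sqrt[3]{3})$. The inclusion $\supseteq$ requires $\sqrt[3]{3}\in\Q(E^1[9])$: I would verify this by factoring the $9$-division polynomial of $E^1$ and locating a $9$-torsion point over the rational $3$-torsion point $(0,4)$ — equivalently, by an isogeny descent along the rational $3$-isogeny with kernel generated by $(0,4)$ — which manufactures a cube root of $3$. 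For $\subseteq$ it then suffices to compare degrees: $[\Q(\zeta_9,\sqrt[3]{3}):\Q]=18$ (since $\Q(\sqrt[3]{3})$ is a field with no proper subfield and meets the abelian field $\Q(\zeta_9)$ only in $\Q$), while $[\Q(E^1[9]):\Q]=18$ by Theorem~\ref{thm-oddprimedividingdisc}(b): among the twelve possibilities there, the fact that $E^1$ has a rational point of order $3$ and $[\Q(E^1[3]):\Q]=2$ forces its $3$-adic image to be the index-$6$ group $\langle c_\varepsilon,G^{6,1}_{-3/4,0}\rangle$, and $|\mathcal{N}_{-3/4,0}(9)|=108$. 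Hence $\Q(E^d[9])=\Q(\sqrt{-3},\sqrt[3]{3},\cos(2\pi/9),\sqrt{d},\sqrt[3]{d})$.

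\emph{Degree of the $9$th division field, and the main obstacle.} Put $F_0=\Q(\zeta_9,\sqrt[3]{3})$ (degree $18$). Adjoining $\sqrt[3]{d}$ to $F_0$ has degree $3$ unless $d$ is a cube in $F_0$; a Kummer-theoretic computation over $\Q(\zeta_3)$ shows the rational cube classes in $F_0^\times/(F_0^\times)^3$ are exactly the powers of $3$, so the degree is $1$ iff $d\in 3^k(\Q^\times)^3$ for some $k\in\{0,1,2\}$. Adjoining $\sqrt{d}$ then has degree $2$ unless $\sqrt{d}\in F_0(\sqrt[3]{d})$, and since the only quadratic subfield of $F_0$ — hence of the odd-degree extension $F_0(\sqrt[3]{d})$ — is $\Q(\sqrt{-3})$, this fails only for $d\in(\Q^\times)^2\cup(-3)(\Q^\times)^2$. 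The four combinations give degrees $18,36,54,108$, and translating the conditions on $d$ through $6$th-power-freeness of $16d$ — which collapses the set of $d$ with both $\sqrt{d},\sqrt[3]{d}\in F_0$ to $\{1,-3,9,-27,81,-243\}$ and produces the stated exclusions in the families $d=t^3,\ 3t^3,\ 9t^3,\ t^2,\ -3t^2$ — yields the list of cases. The step I expect to be the main obstacle is the identification $\Q(E^1[9])=\Q(\zeta_9,\sqrt[3]{3})$, and specifically exhibiting $\sqrt[3]{3}$ inside the $9$-division field; by contrast the two Kummer-type degree computations and the elementary arithmetic with $6$th powers are routine if fiddly, the only real subtlety being the behavior at the ramified primes $2$ and $3$.
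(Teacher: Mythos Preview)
Your proposal is correct and takes a genuinely different route from the paper's proof, though the two share the same computational core.

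The paper works directly with the $9$-division polynomial of $E^d$: after the change of variables $x^3=64dz$ (which is the sextic-twist reduction in disguise), it factors the resulting degree-$12$ polynomial in $z$ as $f_9\cdot g_9$ of degrees $3$ and $9$, then verifies \emph{computationally} (in Magma) that the splitting field $K_9$ of $g_9$ has degree $18$, contains $\Q(\zeta_9)^+$ and $\Q(\sqrt[3]{3})$, and that the relevant values $4\gamma+1$ are squares and $\gamma$ are cubes in $K_9$; the final degree statements are then asserted ``by inspection.''

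Your approach is more structural: you reduce to $E^1$ by the sextic twist, use the Weil pairing for $\zeta_9$, and then obtain $[\Q(E^1[9]):\Q]=18$ not by direct computation but by invoking the classification in Theorem~\ref{thm-j0ell3alvaro} and observing that among the twelve possible $3$-adic images only $\langle G^{6,1}_{-3/4,0},c_\varepsilon\rangle$ has mod-$3$ reduction of order~$2$. (This step is legitimate---there is no circularity, since Theorem~\ref{thm-j0ell3alvaro} is an external input from \cite{lozano-galoiscm}, and the later Theorem~\ref{j0ell3-ver} is the downstream application.) Your Kummer-theoretic determination of which rational $d$ become cubes in $F_0=\Q(\zeta_9,\sqrt[3]{3})$, and of the unique quadratic subfield, is a cleaner derivation of the case split than the paper's ``by inspection.''

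What both approaches share is the single unavoidable computational fact that $\sqrt[3]{3}\in\Q(E^1[9])$; you correctly flag this as the main obstacle, and the paper handles it the same way (by explicit computation in the splitting field). Your isogeny-descent suggestion for producing $\sqrt[3]{3}$ is plausible but would need to be carried out; in practice one still factors the $9$-division polynomial over $\Q(\zeta_9)$ and reads off the cubic.
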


\begin{proof}
Let $\psi_{N}(x)$ denote the $N$th division polynomial for $E^d$. We find that $\psi_3(x) = 3x(x^3 + 64d)$. A point $P\in E^d$ with $x(P) = 0$ has $y(P) = \pm 4 \sqrt{d}$, so $\sqrt{d} \in \Q(E^d[3]).$

For $\zeta_6$ a primitive $6$th root of unity, $x^3 + 64d = (x-\alpha_1)(x - \alpha_2)(x - \alpha_3)$, where $\alpha_i = 4 \zeta_6^{2i - 1} \sqrt[3]{d}$ for $i = 1, 2, 3$.
Note that $\alpha_2 = 4 \zeta_6^{3} \sqrt[3]{d} = -4\sqrt[3]{d}$, thus $\sqrt[3]{d} \in \Q(E^d[3])$.
Further, $\alpha_2/\alpha_1 = \zeta_6^2 = \zeta_3$, so $\sqrt{-3} \in \Q(E^d[3])$.

Finally, if $x = \alpha_i$, then $y = \pm 4 \sqrt{-3} \sqrt{d}$, thus 
\[ \Q(E^d[3]) = \Q(\sqrt{d},\sqrt{-3},\sqrt[3]{d}) = \Q(\sqrt{-3}, \sqrt[6]{d}),\]
and the values of the field degree $[\Q(E^d[3]) : \Q]$ follow by inspection. Next, we define
\[ \frac{\psi_9(x)}{3 \psi_3(x)} = F_9(x),\]
where $F_9$ is a degree $36$ polynomial. We perform a change of variables to better find a closed form expression for $\Q(E^{d}[9])$. Let $x^3 := 2^6 d z$, so $x = 4 \zeta_3^k \sqrt[3]{dz}$ for $k = 0, 1, 2$, and $x^3 + 16d = 2^4 d (4z + 1)$.

Then $F_9(z) = 2^{72} d^{12} f_9(z) g_9(z)$, where $f_9(z)$ and $g_9(z)$ are irreducible polynomials of degree $3$ and $9$, respectively. We find that $f_9(z)$ has as its splitting field $\Q(\zeta_9)^{+}$, the maximal real subfield of $\Q(\zeta_9)$, which is generated as $\Q(\zeta_9)^{+} = \Q(\zeta_9 + \zeta_9^{-1})$.

Next, let $K_9' \cong K[z]/\langle g_9(z) \rangle$, and let $K_9$ denote the splitting field of $g_9(z)$, with field degrees $9$ and $18$, respectively. By computation, we find that $\Q(\zeta_9)^{+}$ and $\Q(\sqrt[3]{3})$ are index 3 subfields of $K_9'$. Further, for $\gamma \in K_9$ satisfying $f_9(\gamma) = 0$ or $g_9(\gamma) = 0$, we find that $4\gamma + 1$ is a square in $K_9$, and $\gamma$ is a cube in $K_9$. For such a $\gamma$, suppose $\gamma = \beta^3$ and $4 \gamma + 1 = \delta^2$ for some $\beta, \delta \in K_9$. Then $x = 4 \zeta_3^k \beta \sqrt[3]{d}$, and $16d(4\gamma + 1) = 16d\delta^2$, which means $y = \pm 4 \delta \sqrt{d}$.

Finally, we find that $K_9 = \Q(\zeta_9)^{+}(\sqrt[3]{3}, \sqrt{-3})$, so
\begin{align*}
\Q(E^d[9])
&= K_9(\sqrt{d}, \sqrt[3]{d}) \\
&= \Q(\sqrt{-3}, \sqrt[3]{3}, \sqrt{d}, \sqrt[3]{d}, \zeta_9 + \zeta_9^{-1}),\\
&= \Q(\zeta_3, \sqrt[3]{3}, \sqrt[6]{d}, \zeta_9 + \zeta_9^{-1})
\end{align*}
and again, the values of $[\Q(E^d[9]) : \Q]$ follow by inspection. 
\end{proof}

\begin{lemma}\label{lem-j0index}
    Let $E/\Q$ be an elliptic curve with $j(E)=0$. Let $G_{E,9}$ be the image of $\rho_{E,9}$, and let $d_{E,9}=[\mathcal{N}_{-3/4,0}(9):G_{E,9}]$. Then:
    \begin{enumerate}
        \item $d_{E,9}=1$, $2$, $3$, or $6$, and
        \item $d_{E,9}$ coincides with the index of the image $G_{E,3^\infty}$ of $\rho_{E,3^\infty}$ in $\mathcal{N}_{-3/4,0}(3^{\infty})$.
    \end{enumerate}
\end{lemma}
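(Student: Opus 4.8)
The plan is to mirror the arguments used for Lemmas \ref{lem-j1728index}, \ref{lem-disc8index} and \ref{lem-disc16index}: first invoke the classification of $3$-adic images of CM curves with $j=0$ from Theorem \ref{thm-oddprimedividingdisc} to reduce to a finite list of possible images $G_{E,3^\infty}$, and then verify, by a \verb|Magma| computation, that every group on that list is already determined by its reduction modulo $9$. Granting this, part (2) is immediate, and part (1) then follows by combining part (2) with the list of indices appearing in Theorem \ref{thm-oddprimedividingdisc}(b).

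In more detail, since $j(E)=0$ the curve $E$ has CM by the order $\Z[(1+\sqrt{-3})/2]$, so $\Delta_K f^2=-3$ and the prime $\ell=3$ divides $f\Delta_K$; thus Theorem \ref{thm-oddprimedividingdisc}(b) applies. It tells us both that $[\mathcal{N}_{-3/4,0}(3^\infty):G_{E,3^\infty}]\in\{1,2,3,6\}$ and that $G_{E,3^\infty}$ is one of the finitely many explicit groups listed there: $\mathcal{N}_{-3/4,0}(3^\infty)$ itself, the index-$2$ group $\langle G^{2,1}_{-3/4,0},c_\varepsilon\rangle$, the index-$3$ groups $\langle G^{3,t}_{-3/4,0},c_\varepsilon\rangle$ for $t=1,2,3$, or the index-$6$ groups $\langle G^{6,t}_{-3/4,0},c_\varepsilon\rangle$ for $t=1,2,3$, with $\varepsilon\in\{\pm1\}$. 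Since $G_{E,9}=\pi_9(G_{E,3^\infty})$ for the reduction map $\pi_9\colon\mathcal{N}_{-3/4,0}(3^\infty)\to\mathcal{N}_{-3/4,0}(9)$, the quantity $d_{E,9}=[\mathcal{N}_{-3/4,0}(9):\pi_9(G_{E,3^\infty})]$ equals the $3$-adic index $[\mathcal{N}_{-3/4,0}(3^\infty):G_{E,3^\infty}]$ precisely when $G_{E,3^\infty}$ has level of definition dividing $9$, i.e.\ when $\pi_9^{-1}(\pi_9(G_{E,3^\infty}))=G_{E,3^\infty}$. I would check this condition for each of the groups on the above list, by computing in \verb|Magma| the index of its mod-$9$ reduction in $\mathcal{N}_{-3/4,0}(9)$ and comparing with its $3$-adic index — the same kind of verification carried out in the proofs of Lemmas \ref{lem-j1728index}, \ref{lem-disc8index}, and \ref{lem-disc16index}. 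Once this is confirmed in all cases, $d_{E,9}$ always equals the $3$-adic index, which simultaneously gives (2) and, via the index list in Theorem \ref{thm-oddprimedividingdisc}(b), gives $d_{E,9}\in\{1,2,3,6\}$ in (1).

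The main obstacle is the computational check that every possible image is genuinely defined modulo $9$ rather than only modulo a higher power of $3$. Some of the candidate groups are presented through generating sets involving scalar matrices such as $2\cdot\operatorname{Id}$ and $4\cdot\operatorname{Id}$, whose closed subgroups generated inside $\GL(2,\Z_3)$ are infinite, so one must be careful to compute their images modulo $9$ (and the level of definition of the whole group) correctly; one should also confirm that the $\varepsilon\in\{\pm1\}$ ambiguity in the generating sets never produces a group with level of definition exceeding $9$. As in the analogous lemmas, the verification of these finitely many cases would be recorded in the accompanying \verb|Magma| code.
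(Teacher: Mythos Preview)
Your proposal is correct and follows essentially the same approach as the paper: invoke Theorem \ref{thm-oddprimedividingdisc}(b) to enumerate the finitely many possible $3$-adic images, and then verify by a \verb|Magma| computation that for each of them the mod-$9$ index equals the $3$-adic index. The only cosmetic difference is that the paper derives (1) directly from the index list in Theorem \ref{thm-oddprimedividingdisc}(b) (implicitly using that $d_{E,9}$ divides the $3$-adic index, hence also lies in $\{1,2,3,6\}$), whereas you deduce (1) from (2); both routes are fine.
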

\begin{proof}

      Theorem \ref{thm-j0ell3alvaro} says that the index of $G_{E, 3^{\infty}}$ in $\mathcal{N}_{-3/4,0}(3^{\infty})$ is either $1$, $2$, $3$, or $6$. This proves (1). For (2), Theorem \ref{thm-j0ell3alvaro} describes all the $3$-adic images that are possible when $\Delta_K f^2=-3$, namely $\mathcal{N}_{-3/4,0}(3^{\infty})$, or $\langle G_{-3/4,0}^{k,j},c_\varepsilon \rangle$ for $k=2,3,6$ and $1\leq j \leq 3$ and $\varepsilon=\pm 1$. In all cases, a \verb|Magma| computation shows that 
    $$d_{E,9}=[\mathcal{N}_{-3/4,0}(9):G_{E,9}]=[\mathcal{N}_{-3/4,0}(3^\infty):G_{E,3^\infty}],$$ i.e., the $3$-adic index equals the mod-$9$ index, as desired.
\end{proof}

Let $E/\Q : y^2 = x^3 + 16d$ be an elliptic curve with $j(E) = 0$. Following the reasoning used in \cite[Cor. X.5.4.1]{silverman1}, a (sextic) twist of $E$ by $t \in \Q^{\ast}$ corresponds to a $1$-cocycle $\xi_\tau \in H^1(G_{\Q}, \Aut(E)) \cong H^1(G_{\Q}, \mu_6)$ for some $\tau \in \overline{\Q}$, a sixth root of $t$, where $\xi_\tau(\sigma) = \sigma(\tau)/\tau$. If instead we take a cubic twist of $E$ by $t$, corresponding to a sextic twist of $E$ by $t^2$, then instead $\xi_\tau$ is a $1$-cocycle in $H^1(G_{\Q}, \mu_3)$. With this background, we prove the following result.

\begin{lemma}\label{cubic_twist}
Let $E/\Q$ be an elliptic curve with $j(E) = 0$, so $E$ has CM by $\OO_K$, the maximal order of $K = \Q(\sqrt{-3})$.
Let $t \in \Q^{\ast}$ be 3rd-power-free and for $\zeta_3$ a primitive 3rd root of unity, let $\tau = \zeta_3^j \sqrt[3]{t}$, where $j \in \{0, 1, 2\}$. Finally, let $\xi_{\tau} \in H^1(G_{\Q}, \mu_3)$ be a $1$-cocycle defined by $\xi_{\tau}(\sigma) = \sigma(\tau)/\tau$. Then

\begin{enumerate}
    \item For $G_K := \Gal(\overline{K}/K)$, the restriction $\xi_\tau$ to $G_K$ is a cubic character dependent only on $t$.
    \item Let $c \in G_\Q$ be a complex conjugation. Then $\xi_{\tau}(c) = \zeta_3^{k}$ for some $k \in \{0, 1, 2\}$.
\end{enumerate}

\end{lemma}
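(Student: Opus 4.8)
\emph{Proof plan.} Both assertions will reduce to the single structural fact that $\mu_3\subset K$, since $\zeta_3=(-1+\sqrt{-3})/2$ lies in $K=\Q(\sqrt{-3})$; in particular $G_K$ acts trivially on $\mu_3$.

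For (1), I would first observe that, because $G_K$ fixes $\mu_3$ pointwise, the defining cocycle relation for $\xi_\tau$ restricts on $G_K$ to the relation for an ordinary homomorphism, so $\xi_\tau|_{G_K}\colon G_K\to\mu_3$ is a genuine cubic character. I would then check that it is insensitive to the auxiliary data in $\tau=\zeta_3^j\sqrt[3]{t}$: every $\sigma\in G_K$ fixes $\zeta_3$, so $\xi_\tau(\sigma)=\sigma(\zeta_3^j\sqrt[3]{t})/(\zeta_3^j\sqrt[3]{t})=\sigma(\sqrt[3]{t})/\sqrt[3]{t}$, which is independent of $j$ and is unchanged if $\sqrt[3]{t}$ is replaced by another cube root of $t$ (two such differ by an element of $\mu_3\subset K$, hence are not distinguished by $\sigma$); thus $\xi_\tau|_{G_K}$ depends only on $t$. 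If a cleaner packaging is wanted, one can instead invoke Kummer theory: under $H^1(G_K,\mu_3)\cong K^\times/(K^\times)^3$ the class of $\xi_\tau|_{G_K}$ is the image of $t$, which manifestly depends only on $t$.

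For (2), since $t\in\Q^\times\subset\R$ one has $c(t)=t$, and $\tau^3=(\zeta_3^j\sqrt[3]{t})^3=t$, so
$$\bigl(\xi_\tau(c)\bigr)^3=\bigl(c(\tau)/\tau\bigr)^3=c(\tau^3)/\tau^3=c(t)/t=1,$$
whence $\xi_\tau(c)\in\mu_3=\{1,\zeta_3,\zeta_3^2\}$, i.e.\ $\xi_\tau(c)=\zeta_3^k$ for some $k\in\{0,1,2\}$. I would also record here, for use in the sequel, that since $[G_\Q:G_K]=2$ with $c$ a coset representative, the cubic character from (1) together with the root of unity $\xi_\tau(c)$ from (2) determine the cocycle $\xi_\tau$ completely.

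I do not anticipate a genuine obstacle: the lemma is essentially bookkeeping. The only points demanding care are making precise the claim that $\xi_\tau|_{G_K}$ is insensitive to $j$ and to the chosen cube root, and being explicit that the collapse of the cocycle condition to a homomorphism really uses $\mu_3\subset K$ — over $\Q$ the element $c$ inverts $\mu_3$, and $\xi_\tau$ is then only a crossed homomorphism, not a character.
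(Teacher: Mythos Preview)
Your proof is correct. Part (1) is handled exactly as in the paper: since $\sigma\in G_K$ fixes $\zeta_3$, one has $\xi_\tau(\sigma)=\sigma(\sqrt[3]{t})/\sqrt[3]{t}$, visibly a cubic character depending only on $t$. Your additional remark that changing the chosen cube root does not affect $\xi_\tau|_{G_K}$, and your Kummer-theoretic rephrasing, are nice touches absent from the paper.

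For part (2) you take a slightly different route. You observe directly that $(\xi_\tau(c))^3=c(\tau^3)/\tau^3=c(t)/t=1$, so $\xi_\tau(c)\in\mu_3$; this is the cleanest way to establish the statement as written. The paper instead computes $\xi_\tau(c)$ explicitly, splitting into the cases $t>0$ (where $\xi_\tau(c)=\zeta_3^{j}$) and $t<0$ (where, writing $\sqrt[3]{t}=\zeta_6^n\sqrt[3]{|t|}$ with $n\in\{1,3,5\}$, one gets $\xi_\tau(c)=\zeta_3^{j-n}$). Your argument is more conceptual and proves exactly what is claimed; the paper's argument yields the precise value of $k$, which is a bit more information than the lemma asserts but is not actually used later in the paper.
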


\begin{proof}
    Let $\sigma \in G_K$. Then, $\sigma$ fixes the elements of $K$, including $\zeta_3$, so
\[ 
    \xi_{\tau} (\sigma) 
   = \frac{\sigma(\zeta_3^{j}\sqrt[3]{t})}{\zeta_3^{j}\sqrt[3]{t}} 
   = \frac{\sigma(\sqrt[3]{t})}{\sqrt[3]{t}},
\]
which is a cubic character dependent on $t$.

Let $c \in G_{\Q}$ be a complex conjugation, and let $\zeta_6$ be a 6th root of unity such that $\zeta_6^2 = \zeta_3$. We may write $\tau = \zeta_6^n \sqrt[3]{|t|}$ for some integer $0 \leq n \leq 5$. 
We then find
\[  \xi_{\tau}(c) = \frac{c(\zeta_6^n \sqrt[3]{|t|})}{\zeta_6^n \sqrt[3]{|t|}} = \frac{\zeta_6^{-n} \sqrt[3]{|t|}}{\zeta_6^n \sqrt[3]{|t|}} = \zeta_6^{-2n}. \]
Finally, note that $\zeta_6^{-2n} = \zeta_3^{-n}$, so $\xi_{\tau}(c) = \zeta_3^{k}$ where $k \equiv - n \bmod 3$.
\end{proof}

\begin{lemma}\label{lem-zetamat}
    Let $K = \Q(\sqrt{-3})$, and let $\OO_K$ be its maximal order. Define 
    \[ Z := \left( \begin{array}{cc}
        -1/2 & 1 \\ 
        -3/4 & -1/2
    \end{array} \right) \in \mathcal{C}_{-3/4, \, 0}(3^{\infty}), \]
    and let $G^{\, 2,1}_{-3/4, \, 0}$ and $G^{\, 6,1}_{-3/4, \, 0}$ be as in the statement of Theorem \ref{thm-j0ell3alvaro}. Finally, let $\zeta_3 := \frac{-1 + \sqrt{-3}}{2}$ be a 3rd root of unity contained in $\OO_K$, and let $[\zeta_3]\in \mathcal{C}_{-3/4, 0}(3^n)$ be the image of the class of $\zeta_3$ under the isomorphism $(\OO_K/3^n \OO_K)^{\times} \cong \mathcal{C}_{-3/4, 0}(3^n)$. Then
    \begin{enumerate}
        \item $[\zeta_3]\in \mathcal{C}_{-3/4, \, 0}(3^{\infty})$ is given by $Z$, and
        \item $G^{\, 2,1}_{-3/4, \, 0} = 
        \langle G^{\, 6,1}_{-3/4, 0}, Z \rangle$.
    \end{enumerate}

\end{lemma}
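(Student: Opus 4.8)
For part (1), the plan is to simply unwind the definition of the Cartan subgroup $\cC_{\delta,\phi}$ from \cite{lozano-galoiscm}. Since $\Delta_K f^2 = -3$ with $f=1$ and we work $3$-adically (so $N = 3^n$ is odd), the relevant constants are $\delta = \Delta_K f^2/4 = -3/4$ and $\phi = 0$. Recall from \cite{lozano-galoiscm} that, after a suitable choice of $\Z/N\Z$-basis of $E[N]$, the CM action identifies $(\OK \otimes \Z/N\Z)^\times$ with $\cC_{-3/4,0}(N)$ through the ring isomorphism $\OK \otimes \Z/N\Z \cong (\Z/N\Z)[\theta]$ sending $a + b\theta \mapsto c_{-3/4,0}(a,b)$, where $\theta = c_{-3/4,0}(0,1)$ satisfies $\theta^2 = \phi\theta + \delta = -3/4$; concretely $\theta = \sqrt{-3}/2$, which generates $\OK \otimes \Z_3$ because $2 \in \Z_3^\times$. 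Writing $\zeta_3 = \frac{-1+\sqrt{-3}}{2} = -\tfrac12 + \theta$, its image is then $c_{-3/4,0}(-\tfrac12,1) = Z$. As consistency checks, $\det Z = \tfrac14 + \tfrac34 = 1 \in \Z_3^\times$, so indeed $Z \in \cC_{-3/4,0}(3^\infty)$, and a direct multiplication gives $Z^3 = I$, matching $\zeta_3^3 = 1$.

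For part (2), first note $Z \in G^{\,2,1}_{-3/4,0}$: indeed $-\tfrac12 \equiv 1 \bmod 3$, so $Z = c_{-3/4,0}(-\tfrac12, 1)$ has precisely the shape required in Theorem \ref{thm-j0ell3alvaro}(b)(i). Also $G^{\,6,1}_{-3/4,0} \subseteq G^{\,2,1}_{-3/4,0}$ directly from the defining conditions, and $G^{\,2,1}_{-3/4,0}$ is a subgroup of $\cC_{-3/4,0}(3^\infty)$ since it is the full preimage under $\pi_1 \colon \cC_{-3/4,0}(3^\infty) \to \cC_{-3/4,0}(3)$ of the order-$3$ subgroup $\{\, c_{-3/4,0}(a,b) \bmod 3 : a \equiv 1 \,\}$ (note $\delta \equiv 0 \bmod 3$). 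Hence $\langle G^{\,6,1}_{-3/4,0}, Z\rangle \subseteq G^{\,2,1}_{-3/4,0}$. For the reverse inclusion, the map $G^{\,2,1}_{-3/4,0} \to \Z/3\Z$, $c_{-3/4,0}(a,b) \mapsto b \bmod 3$, is a surjective homomorphism: using $\phi = 0$, the product $c_{-3/4,0}(a_1,b_1)\,c_{-3/4,0}(a_2,b_2)$ has $b$-entry $a_1 b_2 + a_2 b_1 \equiv b_1 + b_2 \bmod 3$ because $a_i \equiv 1 \bmod 3$. Its kernel is exactly $G^{\,6,1}_{-3/4,0}$, so $[G^{\,2,1}_{-3/4,0} : G^{\,6,1}_{-3/4,0}] = 3$, and $Z \mapsto 1 \neq 0$ shows $Z \notin G^{\,6,1}_{-3/4,0}$. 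Since the index is the prime $3$, any subgroup of the abelian group $G^{\,2,1}_{-3/4,0}$ strictly containing $G^{\,6,1}_{-3/4,0}$ must equal $G^{\,2,1}_{-3/4,0}$, whence $\langle G^{\,6,1}_{-3/4,0}, Z\rangle = G^{\,2,1}_{-3/4,0}$.

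The only real subtlety, and hence the point requiring the most care, is part (1): one must pin down exactly which normalization of the CM representation of \cite{lozano-galoiscm} is in force, so that it is $\theta = \sqrt{-3}/2$ (and not, say, $\sqrt{-3}$ or $\frac{1+\sqrt{-3}}{2}$) whose matrix is $c_{\delta,\phi}(0,1)$ under the convention $\theta^2 = \phi\theta + \delta$ with $\phi = 0$ and $\delta = -3/4$. Once that convention is fixed, part (1) reduces to a single matrix computation, and part (2) is routine finite group theory carried out inside the abelian Cartan group.
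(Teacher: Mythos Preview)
Your proof is correct and follows essentially the same approach as the paper. For (1) the paper cites the isomorphism $(\OO_K/3^n\OO_K)^\times \cong \cC_{-3/4,0}(3^n)$ with basis $\{\sqrt{-3}/2,1\}$ and reads off that $\zeta_3\mapsto Z$, exactly as you do; for (2) the paper argues from the three facts that $Z$ has order $3$, that $Z\in G^{\,2,1}_{-3/4,0}\setminus G^{\,6,1}_{-3/4,0}$, and that $[G^{\,2,1}_{-3/4,0}:G^{\,6,1}_{-3/4,0}]=3$, while you supply the same facts together with an explicit homomorphism onto $\Z/3\Z$ verifying the index.
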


\begin{proof}
We first prove (1). From \cite[Remark 2.6]{lozano-galoiscm}, we have the isomorphism $(\OO_K/3^n \OO_K)^{\times} \cong \mathcal{C}_{-3/4, 0}(3^n)$ which takes $\alpha$ in the unit group to the multiplication-by-$\alpha$ matrix $M_{\alpha}$, with choice of basis $\{ \sqrt{-3}/2, 1 \}$. Under this choice of basis, $\zeta_3 \mapsto Z$, which holds for all $n \geq 1$, and thus is true $3$-adically.

Result (2) follows from the fact that $Z$ is of order 3, that $Z \in G^{\, 2,1}_{-3/4, \, 0}$ but $Z \notin G^{\, 6,1}_{-3/4, \, 0}$, and  $G^{\, 6,1}_{-3/4, \, 0}$ is an index 3 subgroup of $G^{\, 2,1}_{-3/4, \, 0}$.

\end{proof}

\begin{theorem}\label{j0ell3-ver}
    Let $d\in \Z$ be a non-zero $6$th-power-free integer, and let $E^d\,:\,y^2=x^3 + 16d$ be an elliptic curve with CM by the order $\Of=\Z[(-1+\sqrt{-3})/2]$ of discriminant $\Delta_K f^2=-3$ (i.e., $j(E)=0$). Then, there are $12$ possible $3$-adic images, according to whether $d = 1$ , $-3$, $9$, $-27$, $81$, $-243$, $t^2$, $-3t^2$, $t^3$, $3t^3$, $9t^3$, or $t$, with $t\notin (\Q^\times)^2, -3(\Q^\times)^2, (\Q^\times)^3, 3(\Q^\times)^3, 9(\Q^\times)^3$. Moreover, the image labels corresponding to each possibility are precisely as specified in Table \ref{cmodddividing}.
\end{theorem}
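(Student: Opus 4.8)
The plan is to describe $G_{E^d,3^\infty}$ purely in terms of the unique $6$th-power-free integer $d$ with $E\cong E^d$; every $E/\Q$ with $j(E)=0$ is a sextic twist of $E^1:y^2=x^3+16$, hence of this form, by the classification of twists in Section X.5 of \cite{silverman1}. By Lemma \ref{lem-j0index} it suffices to work modulo $9$: the index $[\mathcal N_{-3/4,0}(3^\infty):G_{E^d,3^\infty}]$ equals $[\mathcal N_{-3/4,0}(9):G_{E^d,9}]$, and since $|\mathcal N_{-3/4,0}(9)|=108$ this index is $108/[\Q(E^d[9]):\Q]$. Feeding in Proposition \ref{lem-3and9divfielddegreej0}, the index is $6$, $3$, $2$, or $1$ according to the twelve mutually exclusive, exhaustive conditions on $d$ in the statement. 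By Theorem \ref{thm-j0ell3alvaro}(b) the candidate images are: $\mathcal N_{-3/4,0}(3^\infty)$ for index $1$; $\langle c_{\pm1},G^{2,1}_{-3/4,0}\rangle$ for index $2$ (two distinct groups, since $-\operatorname{Id}\notin G^{2,1}_{-3/4,0}$); $\langle c_\varepsilon,G^{3,j}_{-3/4,0}\rangle$, $j=1,2,3$, for index $3$ (three groups, with $\varepsilon$ immaterial since $-\operatorname{Id}\in G^{3,j}_{-3/4,0}$); and $\langle c_{\pm1},G^{6,j}_{-3/4,0}\rangle$, $j=1,2,3$, for index $6$ (six distinct groups). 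This already accounts for $1+2+3+6=12$ images, so the work is to attach each family of $d$'s to one of them and check the attribution against Table \ref{cmodddividing}.

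The index-$1$ case is immediate. The other eleven families I would handle by noting that each $E^d$ arises from one of the curves $E^1$ and $E^{-3}$ (which differ by a sextic twist) by a quadratic and/or cubic twist: for $c\in\{1,3,9\}$, $E^{ct^3}$ is the quadratic twist of $E^c$ by $t$; $E^{t^2}$ and $E^{-3t^2}$ are the cubic twists of $E^1$ and $E^{-3}$ by $t$; $E^9,E^{81}$ (resp. $E^{-27},E^{-243}$) are the cubic twists of $E^1$ (resp. $E^{-3}$) by $3$ and $9$; and $E^3$ is the cubic twist of $E^{27}$ (a quadratic twist of $E^1$) by $1/3$. I would first compute $G_{E^1,9}$ and $G_{E^{-3},9}$ directly in \texttt{Magma}, identifying them as two of the index-$6$ subgroups $\langle c_\varepsilon,G^{6,j}_{-3/4,0}\rangle$, and then propagate. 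For the index-$3$ families $d=t^3,3t^3,9t^3$ the restrictions on $t$ in the statement are precisely the conditions under which $\sqrt t$ does or does not lie in the relevant $9$-division field (whose quadratic subfields are read off Proposition \ref{lem-3and9divfielddegreej0}), so Lemma \ref{lem-twistimage}(2) gives $G_{E^{ct^3},9}=\langle-\operatorname{Id},G_{E^c,9}\rangle$ whenever $\sqrt t\notin\Q(E^c[9])$, while the excluded values of $t$ either reproduce an index-$6$ curve or, for $c=3$, change nothing because $-\operatorname{Id}$ already lies in $G_{E^3,9}$; since $-\operatorname{Id}$ belongs to each $G^{3,j}_{-3/4,0}$ but to no $G^{6,j}_{-3/4,0}$, adjoining it collapses the index-$6$ base image to the corresponding index-$3$ group — the same one for every admissible $t$ — and one checks the three families hit the three distinct $G^{3,j}_{-3/4,0}$.

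For the cubic-twist families $d=t^2,-3t^2$ I would use Lemmas \ref{cubic_twist} and \ref{lem-zetamat}: writing $\rho_{E^{t^2},9}(\sigma)=\xi_\tau(\sigma)\,\rho_{E^1,9}(\sigma)$ with $\xi_\tau$ valued in $\mu_3$, acting on $E[9]$ through powers of the matrix $Z$ of Lemma \ref{lem-zetamat}(1), the restriction of the image to $G_K$ is $\langle Z,G^{6,1}_{-3/4,0}\rangle=G^{2,1}_{-3/4,0}$ by Lemma \ref{lem-zetamat}(2) (using that $K(\sqrt[3]{t})\not\subseteq\Q(E^1[9])$ in the admissible range), whereas on complex conjugation $\rho_{E^{t^2},9}(c)=Z^{k}\,\rho_{E^1,9}(c)$ with $Z\in G^{2,1}_{-3/4,0}$, so $E^{t^2}$ has the same $c_\varepsilon$-component as $G_{E^1,9}$; running the same computation with $E^{-3}$ in place of $E^1$ and using that the sextic cocycle relating $E^1$ and $E^{-3}$ takes a value in $-\mu_3$ at complex conjugation (which, by a short calculation with $Z$ and $-\operatorname{Id}$, toggles $\varepsilon$) shows that $d=t^2$ and $d=-3t^2$ land on the two distinct groups $\langle c_1,G^{2,1}_{-3/4,0}\rangle$ and $\langle c_{-1},G^{2,1}_{-3/4,0}\rangle$. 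Finally, the six index-$6$ curves $E^d$, $d\in\{1,-3,9,-27,81,-243\}$, split into the cubic-twist orbits $\{E^1,E^9,E^{81}\}$ and $\{E^{-3},E^{-27},E^{-243}\}$; within an orbit consecutive curves differ by the cubic character of $K(\sqrt[3]{3})/K$, which factors through the (index-$6$) base image since $\sqrt[3]{3}\in\Q(E^1[9])$, so the six images are pairwise-distinct $Z$-twists of the two base images, and I would pin down which of the six $\langle c_{\pm1},G^{6,j}_{-3/4,0}\rangle$ each one is by a direct \texttt{Magma} computation. The main obstacle is organizational rather than conceptual: correctly matching, for every one of the twelve families, the base curve, the type of twist, and the sign carried at complex conjugation, and then translating the resulting subgroup into the precise tie-breaker $t\in\{1,2,3,4\}$ in the label of Table \ref{cmodddividing} — this is where the explicit generators of Theorem \ref{thm-j0ell3alvaro} and the matrix $Z$ of Lemma \ref{lem-zetamat} do the real work, and a final \texttt{Magma} check on a representative of each family is the safe way to conclude.
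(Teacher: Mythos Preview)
Your approach is essentially the same as the paper's: stratify by index via Lemma~\ref{lem-j0index} and Proposition~\ref{lem-3and9divfielddegreej0}, then propagate from a handful of base curves using Lemma~\ref{lem-twistimage} for quadratic twists and Lemmas~\ref{cubic_twist}--\ref{lem-zetamat} for cubic twists, with \texttt{Magma} pinning down the base images. Two points are worth flagging.

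First, the sentence ``by Lemma~\ref{lem-j0index} it suffices to work modulo $9$'' overstates what that lemma gives you. The lemma says the \emph{index} of $G_{E,3^\infty}$ equals the index of $G_{E,9}$; it does not say $G_{E,9}$ determines $G_{E,3^\infty}$. Several of the twelve images have level $27$ (look at the labels \texttt{3.1.ns-27.*} in Table~\ref{cmodddividing}), so the $3$-adic group is not the full preimage of its mod-$9$ reduction. It is in fact true that the twelve mod-$9$ reductions are pairwise distinct, so your strategy can be made to work, but this requires a separate verification you do not supply. The paper avoids the issue by computing the six index-$6$ images and carrying out the quadratic-twist propagation for $d=3t^3$ and $d=9t^3$ at level $27$, where the groups $G^{3,2}_{-3/4,0}$, $G^{3,3}_{-3/4,0}$, $G^{6,2}_{-3/4,0}$, $G^{6,3}_{-3/4,0}$ are naturally defined.

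Second, your choice of base curves for the index-$2$ families is slightly less convenient than the paper's. You take $E^1$ and $E^{-3}$ (Cartan parts $G^{6,1}$ and $G^{6,2}$), whereas the paper takes $E^1$ and $E^{-27}$, both with Cartan part $G^{6,1}$ but opposite signs on $c_\varepsilon$. The advantage is that Lemma~\ref{lem-zetamat}(2) is stated only for $G^{6,1}$, so the paper's identification $\langle G^{6,1},c_\varepsilon,Z\rangle=\langle G^{2,1},c_\varepsilon\rangle$ is immediate, while your route needs the extra (easy, but unstated) check that $\langle G^{6,2},Z\rangle=G^{2,1}$ as well. Similarly, for $d=3t^3$ the paper twists $E^{-3}$ by $-t$ rather than twisting $E^3$ by $t$; since $E^3$ already has index $3$, using it as a base curve means you must first compute $G_{E^3,3^\infty}$ directly, which is one more \texttt{Magma} call than necessary.
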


\begin{proof}
    Let $E/\Q$ be an elliptic curve with $j(E) = 0$. Then there is some $6$th-power-free integer $d$ such that $E$ is isomorphic to $E^d : y^2 = x^3 + 16d$. Let $G_{E^d} = G_{E^d,3^\infty}$ be the $3$-adic image of $\rho_{E^d, 3^{\infty}}$, as a subgroup of $\mathcal{N}_{-3/4, 0}(3^{\infty})$.

    Suppose first that $[\mathcal{N}_{-3/4, 0}(3^{\infty}) : G_{E^d}] = 6$. Then, Lemma \ref{lem-j0index} shows that the index of the mod-$9$ image is also $6$ inside $\mathcal{N}_{-3/4,0}(9)$, and Proposition \ref{lem-3and9divfielddegreej0} shows that $d \in \{1, -3, 9, -27, 81, -243 \}$. Via \verb|Magma| computation of mod-$27$ images, we have verified that the $3$-adic image of $E^d$ is generated by $H \subseteq \mathcal{C}_{-3/4, 0}(3^{\infty})$ and $\gamma$, where the triples $(d, H, \gamma)$ are given by the list: $(1, G^{\, 6, 1}_{-3/4, 0}, c_{1})$, $(-27, G^{\, 6, 1}_{-3/4, 0}, c_{-1})$, $(81, G^{\, 6, 2}_{-3/4, 0}, c_{1})$, $(9, G^{\, 6, 3}_{-3/4, 0}, c_{1})$, $(-3, G^{\, 6, 2}_{-3/4, 0}, c_{-1})$, and $(-243, G^{\, 6, 3}_{-3/4, 0}, c_{-1})$. An additional computation shows that the groups $\langle H, \gamma \rangle$ agree with the labels given in Table \ref{cmodddividing}.

    Now, suppose $[\mathcal{N}_{-3/4, 0}(3^{\infty}) : G_{E^d}] = 3$. Then Proposition \ref{lem-3and9divfielddegreej0} and Lemma \ref{lem-j0index} show that $[\Q(E^d[9]) : \Q] = 36$, and so $d = t^3$ or $9t^3$ with $t \neq 1, -3$, or $d = 3t^3$ with $t \neq -1, 3$, where in all cases $t \in \Q^{\ast}$ is a square-free integer. Hence, $E$ is a quadratic twist of $E^1$, $E^{-3}$, or $E^9$. We proceed by cases.

    \begin{itemize}
        \item If $E = E^{t^3}$, then $E$ is a quadratic twist of $E^1$ by $t$. By Proposition \ref{lem-3and9divfielddegreej0}, the only quadratic subfield of $\Q(E^1[9])$ is $\Q(\sqrt{-3})$, so if $t \neq 1, -3$ and $t$ is square-free, then $\Q(\sqrt{t}) \cap \Q(E^1[9]) = \Q$. It follows by Lemma \ref{lem-twistimage} that 
        \[ G_{E^d, 9} = \langle G_{E^1, 9}, -\operatorname{Id} \rangle
        = \langle G^{\, 6,1}_{-3/4, \, 0}, c_1, -\operatorname{Id} \rangle
        = \langle G^{\, 3,1}_{-3/4, \, 0}, c_1 \rangle, \]
        which corresponds to label \texttt{3.1.ns-3.3.1}.
    \item If $E = E^{3t^3}$, then $E$ is a quadratic twist of $E^{-3}$ by $-t$. We will study the image of this curve mod $27$.
    Note that the conductor of $E^{-3}$ is $243 = 3^5$. Thus, as a consequence of the criterion of N\'eron-Ogg-Shafarevich \cite{silverman1}, $\Q(E^{-3}[27])/\Q$ only ramifies at the prime $3$. Further, for $t \in \Q^{\ast}$ as defined above, $\Q(\sqrt{-t})$ will ramify at a prime other than $3$, so $\Q(\sqrt{-t}) \cap \Q(E^{-3}[27]) = \Q$. Hence, by Lemma \ref{lem-twistimage}, we have $\Q(E^d[27]) = \Q(\sqrt{-t},E^{-3}[27])$ and
    \[ G_{E^d, 27} = \langle G_{E^{-3}, 27}, -\operatorname{Id} \rangle = \langle G^{\, 6,2}_{-3/4, \, 0}, c_{-1}, -\operatorname{Id} \rangle
        = \langle G^{\, 3,2}_{-3/4, \, 0}, c_1 \rangle, \]
        which has label \texttt{3.1.ns-27.3.1}.
    \item If $E = E^{9t^3}$, then $E$ is a quadratic twist of $E^{9}$ by $t$. As before, we study the image of the curve mod $27$.
    Note that the conductor of $E^{9}$ is $243 = 3^5$. Thus, as a consequence of the criterion of N\'eron-Ogg-Shafarevich, $\Q(E^{9}[27])/\Q$ only ramifies at the prime $3$. Further, for $t \in \Q^{\ast}$ as defined above, $\Q(\sqrt{t})$ will ramify at a prime other than $3$, so $\Q(\sqrt{t}) \cap \Q(E^{9}[27]) = \Q$. Hence, by Lemma \ref{lem-twistimage}, we have $\Q(E^d[27]) = \Q(\sqrt{t},E^{9}[27])$ and 
        \[ G_{E^d, 27} = \langle G_{E^{9}, 27}, -\operatorname{Id} \rangle
        = \langle G^{\, 6,3}_{-3/4, \, 0}, c_{1}, -\operatorname{Id} \rangle
        = \langle G^{\, 3,3}_{-3/4, \, 0}, c_1 \rangle, \] 
        and this image corresponds to label \texttt{3.1.ns-27.3.2}.

    \end{itemize}

 Next, suppose $[\mathcal{N}_{-3/4, 0}(3^{\infty}) : G_{E^d}] = 2$. Then Proposition \ref{lem-3and9divfielddegreej0} and Lemma \ref{lem-j0index} show that $[\Q(E^d[9]) : \Q] = 54$, and so $d = t^2$ or $-3t^2$ with $t \neq \pm 1, \pm 3, \pm 9$, where $t \in \Q^{\ast}$ is 3rd-power-free in both cases. Hence, $E$ is a cubic twist of $E^1$ by $t$, or a cubic twist of $E^{-27}$ by $9t$. We proceed by cases.

 \begin{itemize}
     \item Since $E^{t^2}$ is a cubic twist of $E^1$ by $t$, it follows that 
     \[ \rho_{E^{t^2}, 9} = \xi_{\tau} \otimes \rho_{E^1, 9},\] 
     where $\tau \in \overline{\Q}$ is a cube root of $t$, and $\xi_\tau$ is a 1-cocycle defined as in Lemma \ref{cubic_twist}.
     
     Since $t \neq \pm 1, \pm 3,$ or $\pm 9$, and is 3rd-power-free, $\tau \in \Q(E^{t^2}[9])$, but $\Q(\tau) \cap \Q(E^1[9]) = \Q$, so by Lemma \ref{cubic_twist}, $\xi_\tau$ introduces a 3rd root of unity into the image of $E^{t^2}$, but no more than a 3rd root of unity. Thus, by Lemma \ref{lem-zetamat}, \[ G_{E^{t^2}, 9} = \langle G_{E^1, 9}, Z \rangle = \langle G^{\, 6,1}_{-3/4, \, 0}, c_1, Z \rangle = \langle G^{\, 2,1}_{-3/4, \, 0}, c_1 \rangle,\]
     which we verify has label \texttt{3.1.ns-9.2.1}.
     
     \item Since $E^{-3t^2}$ is a cubic twist of $E^{-27}$ by $9t$, it follows that 
     \[ \rho_{E^{-3t^2}, 9} = \xi_{\tau} \otimes \rho_{E^{-27}, 9},\] 
     where $\tau \in \overline{\Q}$ is a cube root of $9t$, and $\xi_\tau$ is a 1-cocycle defined as in Lemma \ref{cubic_twist}.
     
     Since $t \neq \pm 1, \pm 3,$ or $\pm 9$, and is 3rd-power-free, $\tau \in \Q(E^{-3t^2}[9])$, but $\Q(\tau) \cap \Q(E^{-27}[9]) = \Q$, so by Lemma \ref{cubic_twist}, $\xi_\tau$ introduces a 3rd root of unity into the image of $E^{-3t^2}$. Thus, by Lemma \ref{lem-zetamat}, \[ G_{E^{-3t^2}, 9} = \langle G_{E^{-27}, 9}, Z \rangle = \langle G^{\, 6,1}_{-3/4, \, 0}, c_{-1}, Z \rangle = \langle G^{\, 2,1}_{-3/4, \, 0}, c_{-1} \rangle,\]
     which we verify has label \texttt{3.1.ns-9.2.2}.
 \end{itemize}

 Finally, if $[\mathcal{N}_{-3/4,0}(3^{\infty}): G_{E^d}]=1$, it follows from our previous work that $E$ is isomorphic to $E^d$ with $d$ not in $\{1, -3, 9, -27, 81, -243 \}$, or $\{ t^3, 9t^3 : t \in \Q^{\ast}, t \neq 1, -3 \}$, or $\{ 3t^2 : t \in \Q^{\ast}, t \neq -1, 3\}$, or $\{t^2, -3t^2 : t \in \Q^{\ast}\}$, and the image is as large as possible, i.e., $G_{E^d}=\mathcal{N}_{-3/4,0}(3^{\infty})$ which corresponds to the label \texttt{3.1.ns-1.1.1}.
    \end{proof}

\subsubsection{$\Delta_Kf=-3$, $\ell > 3$}\label{sec-delta3ellmorethan3}

Let $E/\Q$ be an elliptic curve with CM by the maximal order of $K=\Q(\sqrt{-3})$, i.e., $j(E)=0$, and let $\ell>3$. By Theorem \ref{thm-jzero-goodredn} the $\ell$-adic image is defined mod-$\ell$, and Prop. \ref{prop-zywina3} determines the model of an elliptic curve over $\Q$ with $j(E)=0$ and a prescribed mod-$\ell$ image.  The possible images are thus \texttt{$\ell$.0.s-1.1.1}, or \texttt{$\ell$.0.ns-1.1.1}, or \texttt{$\ell$.0.s-$\ell$.3.1}, or \texttt{$\ell$.0.ns-$\ell$.3.1}, and the specific twists $y^2=x^3+16d$ with each image are as specified in Table \ref{cmoddnotdividing}.

\section{Orders of class number $2$}\label{sec-classnumber2}

Let $\Of$ be an imaginary quadratic order of class number $2$, and so the absolute value of $\Delta(\Of)=\Delta_K f^2$ belongs to $\Delta_2$, where
\begin{align*} 
\Delta_2 &= \{15, 20, 24, 32, 35, 36, 40, 48, 51, 52, 60, 64, 72, 75, 88, 91, 99, 100, 112, 115, 123, 147,\phantom{\}}\\
& \phantom{= \{} 148, 187, 232, 235, 267, 403, 427\}.
\end{align*}
Let $j_{K,f}=j(\Of)$ be a $j$-invariant attached to the lattice $\Of$. Since the class number of $\Of$ is $2$, we have $\Q(j_{K,f})/\Q$ is of degree $2$, and there are exactly two such $j$-invariants, namely $j_{K,f}$ and its non-trivial Galois conjugate. The field of definition of each $j_{K,f}$ is known and it has been recorded in the third column of Table \ref{tab-modelsclassnumber2}. Since $\Q(j_{K,f})$ is quadratic, it is equal to $F=\Q(\sqrt{d})$ for some non-zero square-free integer $d$. Let $\Z[a]$ be the ring of integers of $F$, where $a$ is a root of $x^2-d=0$ if $d\equiv 2,3\bmod 4$ or a root of $x^2-x-(d-1)/4=0$ if $d\equiv 1 \bmod 4$. In Table \ref{tab-modelsclassnumber2} we give an elliptic curve $E/\Q(j_{K,f})$ defined over $\Z[a]$ with CM by $\Of$. 

\begin{remark}\label{rem-conjugates}
    Let $F=\Q(j_{K,f})$ as above and let $\mathcal{O}_F = \Z[a]$. Then, $j_{K,f}=g(a)$ for some polynomial $g(x)\in \Z[x]$, and if $a'$ is the non-trivial Galois conjugate of $a$, then $j_{K,f}'=g(a')$ is another $j$-invariant with CM by $\Of$, so that the two $j$-invariants with such property are $j_{K,f}$ and $j_{K,f}'$. Similarly, if $E/\Q(j_{K,f})$ is given by a Weierstrass equation $w(x,y,a)=0$ for some polynomial $w\in \Z[x,y,z]$, then $E'/\Q(j_{K,f}')$ given by $w(x,y,a')=0$ is another elliptic curve over $\Q(j_{K,f})=\Q(j_{K,f}')$ with CM by $\Of$. In Table \ref{tab-modelsclassnumber2} we give one of the two conjugate elliptic curves and omit the other, but we emphasize here that both such curves have the same $\ell$-adic Galois representations, so if one model $E$ has a prescribed $\ell$-adic Galois representation, then $E'$ has the same property, and viceversa.

    For example, let $\Of$ be the order of discriminant $-15$ (here $\Delta_K=-15$ and $f=1$). Then, $F=\Q(j_{K,f})=\Q(\sqrt{5})$ and $\OF=\Z[a]$ where $a$ is a root of $x^2-x-1=0$. In this case $j_{K,f}=-85995a-52515$ and an elliptic curve with such $j$-invariant is 
    $$E: y^2+xy+ay=x^3-x^2-2ax+a.$$
    Note that since $a = (1\pm \sqrt{5})/2$ it turns out that its Galois conjugate is $1-a = (1\mp \sqrt{5})/2$. Hence, $j_{K,f}'=-85995(1-a)-52515 = 85995a - 138510$ and an elliptic curve with $j=j_{K,f}'$ is 
    $$E': y^2+xy+(1-a)y=x^3-x^2-2(1-a)x+(1-a).$$
    We note that $E'$ is also isomorphic over $\Q(\sqrt{5})$ to 
    $$E'': y^2+xy+(1+a)y=x^3-x^2+(a-2)x+(1-2a),$$
    which is the curve with LMFDB label \href{https://www.lmfdb.org/EllipticCurve/2.2.5.1/81.1/a/3}{\texttt{2.2.5.1-81.1-a3}}. In our tables we only list $E$ with the understanding that $E'\cong E''$ is the conjugate curve one easily obtains from $E$.
\end{remark}

Our method to determine what $\ell$-adic images are possible for $E/\Q(j_{K,f})$ and what models have each prescribed image is based on the following lemma. We will elaborate on the method in Section \ref{sec-method}.
 
\begin{lemma}\label{lem-whattwists}\label{lem-method}
    Let $F$ be a number field, let $E/F$ be an elliptic curve,  let $N> 2$, and let $G_{E,N}\subseteq \GL(2,\Z/N\Z)$ be the image of $\rho_{E,N}\colon G_F \to \GL(2,\Z/N\Z)$. Let $\alpha\in F$ such that $-\operatorname{Id}$ does not belong to $\operatorname{Im}(\rho_{E^\alpha,N})=G_{E^\alpha,N}$, where $E^\alpha$ is the corresponding quadratic twist of $E$. Then:
    \begin{enumerate}
        \item The element $\sqrt{\alpha}$ belongs to $F(E[N])$.
        \item The only prime ideals that can ramify in the extension $F(\sqrt{\alpha})/F$ are the primes dividing $N$ and the conductor $N_{E/F}$ of $E$.
        
        \item Let $M/F$ be the maximal abelian extension of $F$ of exponent $2$ that is unramified outside of the infinite places of $F$ and the primes dividing $N\cdot N_{E/F}$. Then, $F(\sqrt{\alpha})\subseteq M$.

        \item There are only finitely many possibilities for $\alpha$ up to squares in $(F^\times)^2$.

        \item Suppose that $F$ is a real quadratic field of class number $1$, and let $\wp_1,\ldots,\wp_n$ be all the prime ideals of $\OF$ dividing $N\cdot N_E$, where $N_E$ is the norm of the conductor of $E$. Let $\wp_i=(\pi_i)$ for $i=1,\ldots,n$ and some $\pi_i\in \OF$, and write $\OF^\times = \langle -1,u_F\rangle$, where $u_F$ is a fundamental unit. Then, $\alpha$ as above belongs to the set
        $$A_\alpha = \{\pm u_F^{k_0}\pi_1^{k_1}\cdots \pi_n^{k_n} : k_0,\ldots, k_n \in \{0,1\} \}.$$
    \end{enumerate}
\end{lemma}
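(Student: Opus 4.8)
The plan is to deduce the five assertions in order, using Lemma~\ref{lem-twistimage} for the first and then standard facts on ramification in division fields together with Kummer theory. For (1), I would argue by contraposition: if $\sqrt{\alpha}\notin F(E[N])$, then Lemma~\ref{lem-twistimage}(2) gives that $G_{E^\alpha,N}$ is conjugate to $\langle -\operatorname{Id},G_{E,N}\rangle$, which contains $-\operatorname{Id}$, contradicting the hypothesis; hence $\sqrt{\alpha}\in F(E[N])$.

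For (2) and (3), recall that by the criterion of N\'eron--Ogg--Shafarevich the extension $F(E[N])/F$ is unramified at every finite prime not dividing $N$ at which $E$ has good reduction, hence unramified outside the primes dividing $N$ and the primes dividing the conductor $N_{E/F}$. Since $\sqrt{\alpha}\in F(E[N])$ by (1), the subextension $F(\sqrt{\alpha})/F$ is unramified outside the same finite set of primes, which is (2); being moreover abelian of exponent dividing $2$, it is contained in the maximal such extension $M$, which is (3).

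For (4), it suffices to observe that $M/F$ is finite. Let $S$ be the finite set of places of $F$ consisting of the archimedean places and the primes dividing $N\cdot N_{E/F}$; by the Hermite--Minkowski finiteness theorem (equivalently, $\Gal(M/F)$ is a quotient of a ray class group of $F$), there are only finitely many abelian extensions of $F$ of exponent $2$ that are unramified outside $S$, so $M/F$ is finite. Then $\Gal(M/F)$ is a finite elementary abelian $2$-group, which has finitely many subgroups, so by (3) there are finitely many possibilities for $F(\sqrt{\alpha})$ and hence for the class of $\alpha$ in $F^\times/(F^\times)^2$.

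For (5), with $F$ real quadratic of class number $1$, I would identify the relevant extension explicitly by Kummer theory. By (1) and (2), $F(\sqrt{\alpha})/F$ is unramified outside $S=\{\text{archimedean places}\}\cup\{\wp_1,\dots,\wp_n\}$, and since a prime at which $\alpha$ has odd valuation necessarily ramifies in $F(\sqrt{\alpha})/F$ (regardless of its residue characteristic), we get $v_\wp(\alpha)\in 2\Z$ for every prime $\wp\notin\{\wp_1,\dots,\wp_n\}$. Hence $(\alpha)=\mathfrak{a}^2\,\wp_1^{k_1}\cdots\wp_n^{k_n}$ for some fractional ideal $\mathfrak{a}$ and $k_i\in\{0,1\}$, and since $h_F=1$ we may write $\mathfrak{a}=(\beta)$ with $\beta\in F^\times$; then $\alpha\,\beta^{-2}\,\pi_1^{-k_1}\cdots\pi_n^{-k_n}$ lies in $\OF^\times=\langle -1,u_F\rangle$, so it equals $\pm u_F^{m}$ for some $m\in\Z$, and reducing the exponent of $u_F$ modulo $2$ shows that $\alpha$ is congruent modulo $(F^\times)^2$ to an element $\pm u_F^{k_0}\pi_1^{k_1}\cdots\pi_n^{k_n}$ of $A_\alpha$. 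The point that needs care is precisely this last argument: one must use only the implication ``unramified at $\wp\Rightarrow v_\wp(\alpha)$ even'', which holds at all primes including those above $2$, rather than its converse, which is false there; together with this, the finiteness input invoked in (4) is the sole non-elementary ingredient.
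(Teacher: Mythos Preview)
Your proof is correct and follows essentially the same approach as the paper: part (1) via contraposition using Lemma~\ref{lem-twistimage}, parts (2)--(3) via N\'eron--Ogg--Shafarevich, part (4) via finiteness of $M/F$ (the paper cites \cite[Ch.~VIII, Prop.~1.6]{silverman1} rather than Hermite--Minkowski, but the content is the same), and part (5) via the implication that odd valuation forces ramification together with $h_F=1$. If anything, your treatment of (5) is slightly more careful than the paper's, which phrases the ramification step via the discriminant of $x^2-\alpha$ and is a bit informal about the direction of the implication; your explicit remark that one needs only ``unramified $\Rightarrow$ even valuation'' and not its converse is exactly the right point.
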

\begin{proof}
    Let $E$, $\alpha$, and $E^\alpha$ be as in the statement, and assume that $-\id$ does not belong to the image $G_{E^\alpha,N}$. If $\sqrt{\alpha}$ does not belong to $F(E[N])$, then Lemma \ref{lem-twistimage} implies that $G_{E^\alpha,N}$ is conjugate to $\langle -\id, G_{E,N} \rangle$, and therefore $G_{E^\alpha,N}$ would contain $-\id$, a contradiction. Hence $\sqrt{\alpha}$ must belong to $F(E[N])$. This proves (1).

    By (1), we have inclusions $F\subseteq F(\sqrt{\alpha})\subseteq F(E[N])$. Hence, the criterion of N\'eron--Ogg--Shafarevich implies that $F(\sqrt{\alpha})/F$ can only ramify at primes dividing $N\cdot N_{E/F}$. This proves (2).

Part (3) follows directly from (2), because the extension $F(\sqrt{\alpha})/F$ is a quadratic extension, thus abelian of exponent $2$, and the only prime ideals that can ramify are those dividing $N\cdot N_{E/F}$.

Finally, $M/F$ is necessarily finite (see \cite[Ch. VIII, Prop. 1.6]{silverman1}). Hence, the set of classes of $\alpha$ in $F^\times/(F^\times)^2$ such that $F(\sqrt{\alpha})\subseteq M$ must be finite. This shows (4).

For (5), consider the extension $F(\sqrt{\alpha})/F$ for $\alpha\in F$ as above. Without loss of generality, we may assume that $\alpha \in \OF$. Since the discriminant of the polynomial $x^2-\alpha$ is $4\alpha$, this extension may be ramified at primes above $2$ and any other prime $\wp$ of $\OF$ such that $\operatorname{ord}_\wp(\alpha)=1$. By (2), the only primes that can ramify in $F(\sqrt{\alpha})/F$ are the primes dividing $N\cdot N_E$, so the prime ideals in the factorization of $(\alpha)$ must be contained in $\{\wp_1,\ldots,\wp_n\}$ with notation as in the statement of the lemma. Since $F$ is assumed to be of class number $1$, then $\OF$ is a PID, and $\alpha$, up to squares, must be of the form $\pm u_F^{k_0}\pi_1^{k_1}\cdots \pi_n^{k_n}$ for some $k_0,\ldots,k_n\in\{0,1\}$, as desired.
\end{proof}

\subsection{The method for orders of class number $2$}\label{sec-method} Our method to determine what $\ell$-adic images are possible for $E/\Q(j_{K,f})$ and what models have each prescribed image is based on Lemma \ref{lem-whattwists} and works as follows:

\begin{enumerate}
    \item Choose an imaginary quadratic order $\Of\subseteq K$ of class number $2$. In particular, $\Delta(\Of)=\Delta_K f^2$ belongs to $\Delta_2$.
    \item Determine $j_{K,f}$, a $j$-invariant with CM by $\Of$, and its Galois conjugate $j_{K,f}'$. This is accomplished by computing the Hilbert class polynomial of discriminant $\Delta(\Of)$, as in \cite[Section 7.6.2]{cohen}. 
    \item Determine $F=\Q(j_{K,f})=\Q(\sqrt{d})$, for some non-zero square-free integer $d$, and $\OF = \Z[a]$, its ring of integers. The element $a$ is a root of $x^2-d=0$ if $d\equiv 2,3\bmod 4$ or a root of $x^2-x-(d-1)/4=0$ if $d\equiv 1 \bmod 4$. Note that since $\Of$ is of class number $2$, we have that $\Q(j_{K,f})$ must be a real quadratic field, so $d > 0$ (see \cite[Exercise 2.9]{silverman2}).
    \item Determine a model $E/\Q(j_{K,f})$ of an elliptic curve with CM by $\Of$ defined over $\OF=\Z[a]$. See Section \ref{sec-choiceofmodels} for a description of our choice of a model and the properties of the model. The curve $E$ will also be denoted by $E_{\Of}$ and the model is given in Table \ref{tab-modelsclassnumber2}.
    \item First, we determine what $\ell$-adic images may occur, as follows:
    \begin{enumerate} 
    \item Suppose first that $\ell=2$. Since $\Delta(\Of)\in \Delta_2$, then $j_{K,f}\neq 0,1728$. The possible $2$-adic images are the maximal image, and those described by Theorem \ref{thm-m8and16alvaro}:
    \begin{enumerate}
        \item If $\Delta_K f^2 \equiv 0 \bmod 16$, then there are four possible index-$2$ images. These correspond to labels ending with \texttt{2.$\nu$.c-8.2.$j$} and $j=1,2,3,4$, where as always $\nu=\nu_2(\Delta_K f^2)$ and \texttt{c}, an element of the set $\{\texttt{s,ns3,ns5,ns7}\}$, is the square class of $u=\Delta_K\cdot f^2\cdot 2^{-\nu}$ in $\Z_2^\times/(\Z_2^\times)^2$.
        \item If $\Delta_K\equiv 0 \bmod 8$, or $\Delta_K\equiv 4 \bmod 8$ and $f\equiv 0 \bmod 4$, or $\Delta_K\equiv 1 \bmod 4$ and $f\equiv 0 \bmod 8$, then there are four possible index-$2$ images. These correspond to labels ending with \texttt{2.$\nu$.c-16.2.$k$} and $k=1,2,3,4$. Note that in some cases the images in (a) and (b) can both occur and all eight index-$2$ possibilities may happen (for example: see $\Delta_K f^2=-4\cdot 4^2$ or $-8\cdot 2^2$, where all possibilities do occur).
        \item Otherwise, if $\Delta(\Of)$ does not satisfy the conditions in (a) or (b), there is a unique $2$-adic image, namely the index-$1$ maximal normalizer image determined by the order, with label \texttt{2.$\nu$.c-1.1.1}. 
    \end{enumerate}

    \item If $\ell>2$ and it does not divide $\Delta_K f^2$ (and since we know that $j_{K,f}\neq 0,1728$ in this case), then Theorem \ref{thm-goodredn} shows that there is only one possible $\ell$-adic image, namely the index-$1$ maximal normalizer image determined by the order, with label \texttt{$\ell$.0.c-1.1.1}.

    \item If $\ell>2$ is a divisor of $\Delta_K f^2$, then Theorem \ref{thm-oddprimedividingdisc}, shows that there are three possible $\ell$-adic images, namely \texttt{$\ell$.$\nu$.c-1.1.1}, \texttt{$\ell$.$\nu$.c-$\ell$.2.1}, and \texttt{$\ell$.$\nu$.c-$\ell$.2.2}. We note that not all three need to occur, see Example \ref{ex-method2} for such a case.
    \end{enumerate} 
    \item Once we have determined in the previous step what images are possible, we need to find models with such an image, if any exist. Let $E=E_{\Of}$ be the curve given in Table \ref{tab-modelsclassnumber2}. Suppose $\ell=2$ or $\ell$ divides $\Delta_K f^2$, and suppose $G$ is an $\ell$-adic image that is strictly contained in $\mathcal{N}_{\delta,\phi}(\ell^\infty)$ such that there is an elliptic curve $E'/\Q(j_{K,f})$ with CM by $\Of$ and $j(E')=j_{K,f}$ such that the image of $\rho_{E',\ell^\infty}$ is conjugate to $G$. Without loss of generality, let us assume that $G=G_{E',\ell^\infty}$. Since $E$ and $E'$ are defined over $\Q(j_{K,f})$, and since $j(E)=j(E')=j_{K,f}\neq 0,1728$, $E'$ must be a quadratic twist of $E$. Namely, there is $\alpha \in \OF$ such that $E'=E^\alpha$. Since $j_{K,f} \neq 0, 1728$ and $G = G_{E^{\alpha}, \ell}$ is not maximal, Theorems \ref{thm-oddprimedividingdisc} and \ref{thm-m8and16alvaro} show that the index $[\mathcal{N}_{\delta,\phi}(\ell^\infty):G]$ is $2$. Further, inspection of the non-maximal $\ell$-adic images in Theorems \ref{thm-oddprimedividingdisc}(a) and \ref{thm-m8and16alvaro} shows that $-\operatorname{Id} \notin G$. By Lemma \ref{lem-method}, we have that $\alpha$ can be chosen to be in the finite set $A_\alpha$. Note that the value of $N$ in Lemma \ref{lem-method} can be chosen to be $N=8$ if $\ell=2$ and $N=\ell$ if $\ell>2$, and in that case $\sqrt{\alpha}\in F(E[N])$.
    \item Compute the set $A_\alpha$, and compute all the possible twists $E^\alpha$, for $\alpha\in A_\alpha$.
    \item Compute the $\ell$-adic images $G_{E^\alpha,\ell^\infty}$ for each $\alpha \in A_\alpha$ using the algorithms of \cite{elladic}. By part (6), any non-maximal $\ell$-adic image that occurs, must occur for one of the twists $E^\alpha$.
    \item Moreover, Lemma \ref{lem-method} also implies that if $\alpha$ is not equivalent to an element of $A_\alpha$ up to squares, then the image $G_{E^\alpha,\ell^\infty}$ does contain $-\operatorname{Id}$, and therefore it is the index-$1$ maximal image. Hence, part (8) does determine all the possible $\ell$-adic images that occur for elliptic curves with CM by $\Of$.
\end{enumerate}

We illustrate the method below in Examples \ref{ex-method1} and \ref{ex-method2}.

\begin{remark}\label{rem-classnumber3}
    This method does not work in general for orders of class number greater than $2$. For example, let $\Of$ be the maximal order of imaginary quadratic field $K = \Q(\sqrt{-283})$. The field $K$ is class number $3$ in this case, and the associated $j$-invariant $j_{K,f}$ is an algebraic integer of degree $3$ where $\Q(j_{K,f}) \cong \Q[x]/(x^3 + 4x - 1)$ is class number $2$. Thus, the decomposition in Lemma \ref{lem-whattwists}(5) cannot be found in general for an elliptic curve $E/\Q(j_{K,f})$ with CM by $\Of$.
\end{remark}

\begin{example}\label{ex-method1}
    Let $K=\Q(\sqrt{-3})$, let $f=7$, and let $\Of$ be the order with discriminant $\Delta(\Of)=-147=-3\cdot 7^2$. By Table \ref{tab-modelsclassnumber2}, the field $\Q(j_{K,f})$ is $F=\Q(\sqrt{21})$. Let $a$ be a root of $x^2-x-5=0$. In the same table, we have given a model of an elliptic curve $E/\Q(\sqrt{21})$   
    $$y^2 + y = x^3 + (-a - 1)x^2 + (4131a - 11618)x + 221331a - 618025$$
    with CM by the order $\Of$, and conductor of norm $49$. The $j$-invariant of $E$ is given by
    $$j_{K,f} = 7604567359488000a - 21226536456192000.$$
    We note that, in practice, one first finds the $j$-invariant $j_{K,f}$, then an elliptic curve $E'/\Q(\sqrt{21})$ with $j(E')=j_{K,f}$, and finally a minimal quadratic twist $E$ of $E'$ (minimal in the sense of Section \ref{sec-choiceofmodels}). 

By Theorem \ref{thm-goodredn}, any elliptic curve over $F$ with CM by $\Of$ will have $\ell$-adic image $\mathcal{N}_{\delta,\phi}(\ell^\infty)$ for any prime $\ell$ not dividing $2\Delta_K f = 2\cdot 3\cdot 7$. The label of the image, thus, will be either \texttt{$\ell$.0.s-1.1.1} or \texttt{$\ell$.0.ns-1.1.1} according to whether $\Delta_K=-3$ is a square mod $\ell$ or not, respectively.

Now let $\ell=2$. Since $j_{K,f}\neq 0,1728$, and $\Delta_K f^2 \not\equiv 0 \bmod 4$ and $f\not\equiv 0\bmod 8$, it follows by Theorem \ref{thm-m8and16alvaro} that the $2$-adic image of any such elliptic curve is also $\mathcal{N}_{-49,7}(2^\infty)$, which corresponds to the label \texttt{2.0.ns5-1.1.1} because $-3\cdot 7^2\equiv -3\equiv 5 \bmod 8$. It remains to see if the images in Theorem \ref{thm-oddprimedividingdisc} (a), can occur for $\ell=3$ and $\ell=7$.
    
  Let $\ell=7$. The algorithm from \cite{elladic} determines that the $7$-adic image of the elliptic curve $E/\Q(\sqrt{21})$ given above is \texttt{7.2.s-7.2.2}. Since the only prime dividing $N\cdot N_E=7\cdot 7^2$ in this case is $7$, Lemma \ref{lem-whattwists} shows that any other twist of $E$ with a $7$-adic image missing $-\operatorname{Id}$ must be of the form $E^\alpha$ with $\alpha$ in the set
  $$\{\pm (3-a)^{k_0} (-4+a)^{k_1} : k_0,k_1 \in \{0,1\} \},$$
  where $3-a$ is a fundamental unit for $\OF$ and $\wp_7 = (-4+a)$ is the unique prime ideal above $7$. Trying out all the $8$ possibilities we find that the twist by $a-3$ has $7$-adic image with label \texttt{7.2.s-7.2.1}, and any other twist $E^\alpha$ with $\alpha \neq 1, a-3$ up to squares has image \texttt{7.2.s-1.1.1}. (Note that the quotient $(a-3)/(-7) = ((4-a)/7)^2$, so $a-3$ and $-7$ are equivalent up to squares. The table records $-7$ as the appropriate twist for convenience.)

  Now we move on to $\ell=3$. The algorithm from \cite{elladic} determines that the $3$-adic image of the elliptic curve $E/\Q(\sqrt{21})$ given above is \texttt{3.1.ns-1.1.1}. In order to possibly find twists $E^\alpha$ with images \texttt{3.1.ns-3.2.1} and \texttt{3.1.ns-3.2.2}, Lemma \ref{lem-whattwists} says that $\alpha$ is in the set 
  $$\{\pm (3-a)^{k_0} (-4+a)^{k_1}(2-a)^{k_2} : k_0,k_1,k_2 \in \{0,1\} \},$$
  where $3-a$, $-4+a$ are as above, and $(2-a)=\wp_3$ is the unique prime ideal of $\OF$ above $3$. Our algorithm finds that that the twist by $2-a$ gives the image \texttt{3.1.ns-3.2.2} and the twist by $4a-11$ gives the image \texttt{3.1.ns-3.2.1}. (Note that the quotient $(4a-11)/(a+1) = (3-a)^2$, so they are equivalent up to squares. The table records $a+1$ and $2-a$ as the appropriate twists.) 
\end{example}

\begin{example}\label{ex-method2}
   Let $K=\Q(\sqrt{-10})$, let $f=1$, let $\Of$ be the order with discriminant $\Delta(\Of)=-40$, and consider the $5$-adic images attached to elliptic curves with CM by $\Of$ defined over $\Q(j_{K,f})=\Q(\sqrt{5})$.  Let $E/\Q(\sqrt{5}): y^2 = x^3 +(6a-28)x +16a - 56$ where $a$ is a root of $x^2-x-1=0$, which is a curve with CM by $\Of$. The norm of the conductor of $E$ is $N_E=4096=2^{12}$. The $5$-adic image of $E$ is \texttt{5.1.ns-1.1.1}.

   Now suppose that $E^\alpha$ is a twist that has one of the other two  possible images, namely  \texttt{5.1.ns-5.2.1} or \texttt{5.1.ns-5.2.2}. Then, by Lemma \ref{lem-whattwists}, the element $\alpha$ belongs to 
   $$\{\pm a^{k_0} 2^{k_1}(1-2a)^{k_2} : k_0,k_1,k_2 \in \{0,1\} \},$$
   where $a$ is a fundamental unit for $\OF$, $(2)$ is the unique prime ideal above $2$, and $(1-2a)=\wp_5$ is the unique prime ideal above $5$.   However, when we calculate the label of the $5$-adic image of $E^\alpha$ for all such choices, we find that all curves have image \texttt{5.1.ns-1.1.1}. Thus, that is the only possible $5$-adic image for this CM order.
\end{example}

\section{Tables}\label{sec-tables}
In this section, we record the data for each order $\mathcal{O}=\mathcal{O}_{K,f}$ of class number $1$ or $2$.
\begin{enumerate}
\item In Table  \ref{tab-modelsclassnumber2} we give a Weierstrass model for an elliptic curve $E=E_{\mathcal{O}}$ defined over $\Q(j_{K,f})$, such that $E$ has CM by $\mathcal{O}$, which is minimal in a certain sense (see Section \ref{sec-choiceofmodels}).
\item For each prime $\ell$, we give a description of all the possible $\ell$-adic images of Galois attached to an elliptic curve $E$ with CM by an order $\mathcal{O}$ as above and, for each possible image $G\subseteq \GL(2,\Z_\ell)$, we give a precise description of the twists of $E_{\mathcal{O}}$ that have $G$ as their $\ell$-adic Galois image:
\begin{enumerate}
\item[$\bullet$] Table \ref{cm2big}: $\ell =2$,
\item[$\bullet$] Table \ref{tab-ellnotdividing}: $\ell \ne 2$ and $\ell \not|\Delta(\mathcal O)$,
\item[$\bullet$] Table \ref{tab-cmodddividing}: $\ell \ne 2$ and $\ell |\Delta(\mathcal O)$.
\end{enumerate}
\end{enumerate}
{
\renewcommand{\arraystretch}{1.5}
\begin{longtable}{|c|c|c|c|}
\caption{Models for CM Elliptic Curves}
\label{tab-modelsclassnumber2}\label{tab-models}\\

\hline
$\Delta_K$ & $f$ & $\Q(j_{K,f})$ & $E_{\mathcal O}/\Q(j_{K,f})$ \\
 \hline 
\endfirsthead

\multicolumn{4}{c}%
{{\bfseries \tablename\ \thetable{} -- continued from previous page}} \\
\hline
$\Delta_K$ & $f$ & $\Q(j_{K,f})$ & $E_{\mathcal O}/\Q(j_{K,f})$ \\
 \hline 
\endhead
\hline \multicolumn{4}{|r|}{{Continued on next page}} \\ \hline
\endfoot

\endlastfoot

{$-3$} & $1$ & $\QQ$& \href{https://www.lmfdb.org/EllipticCurve/Q/27/a/4}{$y^2=x^3+16$}   \\ 
\cline{2-4}
                    & $2$ & $\QQ$  & \href{https://www.lmfdb.org/EllipticCurve/Q/36/a/2}{ $y^2=x^3-15x+22$ } \\ 
\cline{2-4}
             &  $3$ & $\QQ$  & \href{https://www.lmfdb.org/EllipticCurve/Q/27/a/2}{$y^2=x^3-480x+4048$} \\ 
\cline{2-4}
 & $4$ &   $\Q(\sqrt{3})$ & \href{https://www.lmfdb.org/EllipticCurve/2.2.12.1/16.1/a/3}{$y^2+ (a + 1)xy + (a + 1)y = x^3 + (-a - 1)x^2 + (4a - 13)x + 11a - 21$}\\
\cline{2-4}
& $5$ &   $\Q(\sqrt{5})$ & \href{https://www.lmfdb.org/EllipticCurve/2.2.5.1/2025.1/c/1}{$y^2 + y = x^3 + (6a - 48)x + 109a - 76$} \\
\cline{2-4}
& $7$ &   $\Q(\sqrt{21})$ &\href{https://www.lmfdb.org/EllipticCurve/2.2.21.1/49.1/a/1}{$y^2 + y = x^3 + (-a - 1)x^2 + (4131a - 11618)x + 221331a - 618025$} \\
\hline
 {$-4$} & $1$ &  $\QQ$ & \href{https://www.lmfdb.org/EllipticCurve/Q/64/a/4}{$y^2=x^3+x$}\\ 
 \cline{2-4}
   & $2$ &  $\QQ$  & \href{https://www.lmfdb.org/EllipticCurve/Q/32/a/2}{$y^2=x^3-11x+14$}  \\ 
 \cline{2-4}  
& $3$ &   $\Q(\sqrt{3})$ & \href{https://www.lmfdb.org/EllipticCurve/2.2.12.1/9.1/a/1}{$y^2 + (a + 1)xy + ay = x^3 + (a - 1)x^2 + (25a - 45)x + 72a - 127$}\\
\cline{2-4}
& $4$ &   $\Q(\sqrt{2})$ & \href{https://www.lmfdb.org/EllipticCurve/2.2.8.1/32.1/a/1}{$y^2 + axy = x^3 + x^2 + (15a - 22)x + 46a - 69$}\\
\cline{2-4}
& $5$ &   $\Q(\sqrt{5})$ &{$y^2 = x^3 + (117a - 556)x + 3920a - 3640$} \\
\hline
{$-7$} & $1$ & $\QQ$ & \href{https://www.lmfdb.org/EllipticCurve/Q/49/a/2}{$y^2=x^3-1715x+33614$} \\ 
 \cline{2-4}
&  $2$ & $\QQ$ & \href{https://www.lmfdb.org/EllipticCurve/Q/49/a/1}{$y^2=x^3-29155x+1915998$}\\  
\cline{2-4}
& $4$ &   $\Q(\sqrt{7})$ & \href{https://www.lmfdb.org/EllipticCurve/2.2.28.1/1.1/a/3}{$y^2 + xy + y = x^3 - x^2 + (-270a - 715)x + 3223a + 8527$}\\
\hline
{$-8$}  & $1$ & $\QQ$ & \href{https://www.lmfdb.org/EllipticCurve/Q/256/a/2}{$y^2=x^3-4320x+96768$ }\\ 
 \cline{2-4}
& $2$ &   $\Q(\sqrt{2})$ & \href{https://www.lmfdb.org/EllipticCurve/2.2.8.1/64.1/a/1}{$y^2 + axy = x^3 + (-a - 1)x^2 + (2a+2)x - 3a - 5$}\\
\cline{2-4}
& $3$ &   $\Q(\sqrt{6})$ & \href{https://www.lmfdb.org/EllipticCurve/2.2.24.1/1.1/a/3}{$y^2 + axy + (a + 1)y = x^3 + (a + 1)x^2 + (67a - 161)x + 458a - 1122$}\\
\hline
{$-11$} & $1$ & $\QQ$ & \href{https://www.lmfdb.org/EllipticCurve/Q/121/b/2}{$y^2=x^3-9504x+365904$}\\ 
  \cline{2-4}
 & $3$ &   $\Q(\sqrt{33})$ & \href{https://www.lmfdb.org/EllipticCurve/2.2.33.1/1.1/a/1}{$y^2 + y = x^3 - ax^2 + (-435a - 1030)x -7890a -18717$}\\
\hline
{$-15$} & $1$ &   $\Q(\sqrt{5})$ & \href{https://www.lmfdb.org/EllipticCurve/2.2.5.1/81.1/a/1}{$y^2 + xy + a y = x^3 - x^2 - 2 a x + a$}\\
\cline{2-4}
& $2$ &   $\Q(\sqrt{5})$ & \href{https://www.lmfdb.org/EllipticCurve/2.2.5.1/81.1/a/5}{$y^2 + axy + (a + 1)y = x^3 + (-a - 1)x^2 + (-13a - 14)x - 20a - 6$} \\
\hline
 $-19$ & $1$ & $\QQ$ & \href{https://www.lmfdb.org/EllipticCurve/Q/361/a/2}{$y^2=x^3-608x+5776$}\\ 
   \hline 
$-20$ & $1$ & $\QQ(\sqrt{5})$ & \href{https://www.lmfdb.org/EllipticCurve/2.2.5.1/4096.1/k/1}{$y^2 = x^3 - ax^2 + (-a - 9)x -6a - 15$}\\ 
\hline
$-24$ & $1$ &   $\Q(\sqrt{2})$ & \href{https://www.lmfdb.org/EllipticCurve/2.2.8.1/81.1/b/1}{$y^2 + axy + y = x^3 + x^2 + (a-3)x - a + 1$}\\
\hline
$-35$ & $1$ &   $\Q(\sqrt{5})$ & \href{https://www.lmfdb.org/EllipticCurve/2.2.5.1/2401.1/b/1}{$y^2 + y = x^3 - x^2 + (-14a + 19)x + 21a - 36$}\\
\hline
$-40$ & $1$ &   $\Q(\sqrt{5})$ &{$y^2 = x^3 + (6a - 28)x + 16a - 56$}\\
\hline
 $-43$ & $1$ & $\QQ$ & \href{https://www.lmfdb.org/EllipticCurve/Q/1849/b/2}{$y^2=x^3-13760x+621264$}\\ 
 \hline
$-51$ & $1$ &   $\Q(\sqrt{17})$ & \href{https://www.lmfdb.org/EllipticCurve/2.2.17.1/81.1/b/1}{$y^2 + y = x^3 + (-6a - 12)x + 14a + 19$}\\
\hline
$-52$ & $1$ &   $\Q(\sqrt{13})$ &{$ y^2 = x^3 + (10a - 35)x + 40a - 76$}\\
\hline
$-67$ & $1$ & $\QQ$ & \href{https://www.lmfdb.org/EllipticCurve/Q/4489/b/2}{$y^2=x^3-117920x+15585808$}\\ 
 \hline  
$-88$ & $1$ &   $\Q(\sqrt{2})$ &{$y^2 + axy + y = x^3 + x^2 + (-193a - 453)x + 2233a + 4008$}\\
\hline
$-91$ & $1$ &   $\Q(\sqrt{13})$ &{$y^2 + y = x^3 + (84a - 182)x + 539a - 1213$}\\
\hline
$-115$ & $1$ &   $\Q(\sqrt{5})$ &{$ y^2 + y = x^3 + (46a - 368)x + 2645a - 6216 $}\\
\hline
$-123$ & $1$ &   $\Q(\sqrt{41})$ & \href{https://www.lmfdb.org/EllipticCurve/2.2.41.1/81.1/c/1}{$y^2 + y = x^3 + (-60a - 210)x + 560a + 1384$}\\
\hline
$-148$ & $1$ &   $\Q(\sqrt{37})$ &{$y^2 = x^3 + (290a - 1615)x + 8120a - 23268$}\\
\hline
$-163$ & $1$ & $\QQ$ & \href{https://www.lmfdb.org/EllipticCurve/Q/26569/a/2}{$y^2=x^3-34790720x+78984748304$} \\ 
\hline
$-187$ & $1$ &   $\Q(\sqrt{17})$ &{$y^2 + y = x^3 + (1430a - 3520)x -40898a + 104090$}\\
\hline
$-232$ & $1$ &   $\Q(\sqrt{29})$ &{$y^2 = x^3 + (7280a - 36310)x + 960960a - 2492952$}\\
\hline
$-235$ & $1$ &   $\Q(\sqrt{5})$ &{$ y^2 + y = x^3 + (4136a - 17578)x + 324723a -962572$} \\
\hline
$-267$ & $1$ &   $\Q(\sqrt{89})$ & \href{https://www.lmfdb.org/EllipticCurve/2.2.89.1/81.1/a/1}{$y^2 + y = x^3 + (-1590a - 8580)x + 92750a + 359875$}\\
\hline
$-403$ & $1$ &   $\Q(\sqrt{13})$ &{$y^2 + y = x^3 + (186930a - 427490)x + 58571989a - 135261471$}\\
\hline
$-427$ & $1$ &   $\Q(\sqrt{61})$ &{$y^2 + y = x^3 + (30030a - 137060)x + 5787145a - 25355528$}\\
\hline
\end{longtable}
}

\newpage

{
\renewcommand{\arraystretch}{1.5}
\begin{longtable}{|c|c|c|c|c|l|l|}
\caption{CM $2$-adic Galois images}\label{cm2big}\label{tab-ell2}\\
 \hline 
$\Delta_K$ & $f$ & $\Q(j_{K,f})$ & \multicolumn{1}{|c|}{CM-label} & $G_{E,2^\infty}$ & twist  &  \multicolumn{1}{|c|}{conditions}\\  \hline 
\endfirsthead
\multicolumn{7}{c}%
{{\bfseries \tablename\ \thetable{} -- continued from previous page}} \\
 \hline 
$\Delta_K$ & $f$ & $\Q(j_{K,f})$ & \multicolumn{1}{|c|}{CM-label} & $G_{E,2^\infty}$ & twist  &  \multicolumn{1}{|c|}{conditions}\\  \hline 
\endhead

\hline \multicolumn{7}{|r|}{{Continued on next page}} \\ \hline
\endfoot

\endlastfoot

{$-3$} & {$1$}  & {$\QQ$} & \texttt{2.0.ns5-1.1.1} & $\mathcal N_{-1,1}(2^{\infty})$ &  $t$ & $t\in \Q^*,\,\,t\not\in 4(\Q^\ast)^3$ \\ 
 \cline{4-7}   
   &   &   & \texttt{2.0.ns5-2.3.1} & $\langle \cC_{-1,1}(2^{\infty})^3, c_{1}' \rangle$ &   $4t^3$ &  $t\in \Q^*$  \\  
   \cline{2-7}
         & $2$ &$\QQ$ & \texttt{2.2.ns5-1.1.1} & $\mathcal N_{-3, 0}(2^{\infty})$ & $t$ & $t\in \Q^*$ \\ \cline{2-7}
          & $3$ &$\QQ$ & \texttt{2.0.ns5-1.1.1} & $\mathcal N_{-9, 3}(2^{\infty})$ & $t$ & $t\in \Q^*$ \\ 
          \cline{2-7}
   &{$4$} & {$\QQ(\sqrt{3})$} & \texttt{2.4.ns5-1.1.1} & $\mathcal N_{-12,0}(2^{\infty})$ &   $t$  &  $t\in \Q(\sqrt{3})^*,\,\, t\ne \pm 1,\pm 2$ \\  \cline{4-7} 
 & & & \texttt{2.4.ns5-8.2.1} & $\langle G^{\, 2,1}_{-12,0},c_{1}\rangle$ &   $-2$  &  \ecnflabel{2.2.12.1}{256.1-c4}  \\ \cline{4-7}   
 & & & \texttt{2.4.ns5-8.2.2} & $\langle G^{\, 2,2}_{-12,0},c_{1}\rangle$ &   $-1$ &  \ecnflabel{2.2.12.1}{16.1-a4}  \\ \cline{4-7}   
 & & & \texttt{2.4.ns5-8.2.3} & $\langle G^{\, 2,2}_{-12,0},c_{-1}\rangle$ &  $1$  & \ecnflabel{2.2.12.1}{16.1-a3} \\ \cline{4-7}   
 & & & \texttt{2.4.ns5-8.2.4} & $\langle G^{\, 2,1}_{-12,0},c_{-1}\rangle$ &   $2$  &  \ecnflabel{2.2.12.1}{256.1-c3} \\ \cline{2-7} 
 & $5$ & $\QQ(\sqrt{5})$ & \texttt{2.0.ns5-1.1.1} & $\mathcal N_{-25, 5}(2^{\infty})$ & $t$ & $t\in \Q(\sqrt{5})^*$ \\ \cline{2-7}
& $7$ & $\QQ(\sqrt{21})$ &  \texttt{2.0.ns5-1.1.1} & $\mathcal N_{-49, 7}(2^{\infty})$ & $t$ & $t\in \Q(\sqrt{21})^*$ \\ \hline

{$-4$} & {$1$}  & {$\QQ$} &  \texttt{2.2.ns7-1.1.1} & $\mathcal N_{-1,0}(2^{\infty})$ &   $t$ &  $t\in \Q^*,\,\,t\not \in \pm\Q^2,\pm 2 \Q^2$\\  \cline{4-7}  
 & & & \texttt{2.2.ns7-4.2.1} & $\langle G^{\, 2,1}_{-1,0},c'_{1}\rangle$ &   $t^2$  &$t\in \Q^*, \,\,t\ne \pm 1,\pm 2$
\\ \cline{4-7} 
 & & & \texttt{2.2.ns7-4.2.2} & $\langle G^{\, 2,1}_{-1,0},c_{1}\rangle$  &  $-t^2$ & $t\in \Q^*, \,\,t\ne \pm 1,\pm 2$ \\ \cline{4-7} 
 & &  & \texttt{2.2.ns7-8.2.1} & $\langle G^{\, 2,2}_{-1,0},c'_{1}\rangle$ &  $2t^2$ &  $t\in \Q^*, \,\,t\ne \pm 1,\pm 2$\\ \cline{4-7}  
 & & & \texttt{2.2.ns7-8.2.2} & $\langle G^{\, 2,2}_{-1,0},c_{1}\rangle$ &  $-2t^2$ &  $t\in \Q^*, \,\,t\ne \pm 1,\pm 2$ \\  \cline{4-7}   
 & & & \texttt{2.2.ns7-4.4.1} & $\langle G^{\, 4,2}_{-1,0},c'_{-1}\rangle$ &   $4$ &  \eclabel{32.a4} \\  \cline{4-7}   
 & & & \texttt{2.2.ns7-4.4.2} & $\langle G^{\, 4,1}_{-1,0},c'_{-1}\rangle$ &   $1$ &  \eclabel{64.a4} \\  \cline{4-7}   
 & & & \texttt{2.2.ns7-4.4.3} & $\langle G^{\, 4,1}_{-1,0},c_{-1}\rangle$ &   $-4$ &  \eclabel{64.a3} \\  \cline{4-7}   
 & & & \texttt{2.2.ns7-4.4.4} & $\langle G^{\, 4,2}_{-1,0},c_{-1}\rangle$ &   $-1$ &  \eclabel{32.a3} \\  \cline{4-7}   
 & & & \texttt{2.2.ns7-16.4.1} & $\langle G^{\, 4,3}_{-1,0},c'_{1}\rangle$ &  $2$ &  \eclabel{256.c2}  \\  \cline{4-7}   
 & & & \texttt{2.2.ns7-16.4.2} & $\langle G^{\, 4,4}_{-1,0},c'_{1}\rangle$ &  $8$ &  \eclabel{256.b2} \\  \cline{4-7}   
 & & & \texttt{2.2.ns7-16.4.3} & $\langle G^{\, 4,3}_{-1,0},c_{-1}\rangle$ &   $-8$ &  \eclabel{256.c1} \\  \cline{4-7}   
 & & & \texttt{2.2.ns7-16.4.4} & $\langle G^{\, 4,4}_{-1,0},c_{-1}\rangle$ &   $-2$ &  \eclabel{256.b1} \\    \cline{2-7}
    &   {$2$} &  {$\QQ$} & \texttt{2.4.ns7-1.1.1} & $\mathcal N_{-4,0}(2^{\infty})$ &   $t$  &  $t\in \Q^*,\,\, t\ne \pm 1,\pm 2$ \\  \cline{4-7} 
 & & & \texttt{2.4.ns7-8.2.1} & $\langle G^{\, 2,1}_{-4,0},c_{1}\rangle$ &   $2$  &  \eclabel{64.a2} \\ \cline{4-7}   
 & & & \texttt{2.4.ns7-8.2.2} & $\langle G^{\, 2,2}_{-4,0},c_{1}\rangle$ &   $1$ &  \eclabel{32.a2}  \\ \cline{4-7}   
 & & & \texttt{2.4.ns7-8.2.3} & $\langle G^{\, 2,2}_{-4,0},c_{-1}\rangle$ &  $-1$  &  \eclabel{32.a1} \\ \cline{4-7}   
 & & & \texttt{2.4.ns7-8.2.4} & $\langle G^{\, 2,1}_{-4,0},c_{-1}\rangle$ &   $-2$  &  \eclabel{64.a1} \\ \cline{2-7} 
  & 3 & $\QQ(\sqrt{3})$& \texttt{2.2.ns7-1.1.1} & $\mathcal N_{-9, 0}(2^{\infty})$ & $t$ & $t\in \Q(\sqrt{3})^*$ \\ \cline{2-7}
    & {$4$}  &{$\QQ(\sqrt{2 })$} & \texttt{2.6.ns7-1.1.1} & $\mathcal N_{-16,0}(2^{\infty})$ &   $t$  &  $t\in \Q(\sqrt{2})^*,\,\, t\ne \pm 1,\pm a, \pm a + 1, a \pm 2$\\  \cline{4-7} 
 & & & \texttt{2.6.ns7-8.2.1} & $\langle G^{\, 2,1}_{-16,0},c_{1}\rangle$ &   $-1$  &  \ecnflabel{2.2.8.1}{32.1-a2} \\ \cline{4-7}   
 & & & \texttt{2.6.ns7-8.2.2} & $\langle G^{\, 2,2}_{-16,0},c_{1}\rangle$ &   $a$ &  \ecnflabel{2.2.8.1}{1024.1-k2}  \\ \cline{4-7}   
 & & & \texttt{2.6.ns7-8.2.3} & $\langle G^{\, 2,2}_{-16,0},c_{-1}\rangle$ &  $-a$  & \ecnflabel{2.2.8.1}{1024.1-k1} \\ \cline{4-7}   
 & & & \texttt{2.6.ns7-8.2.4} & $\langle G^{\, 2,1}_{-16,0},c_{-1}\rangle$ &   $1$  &  \ecnflabel{2.2.8.1}{32.1-a1} \\ \cline{4-7} 
 & & & \texttt{2.6.ns7-16.2.1} & $\langle G^{\, 2,3}_{-16,0},c_{1}\rangle$ &   $a-2$  &  \ecnflabel{2.2.8.1}{1024.1-f1} \\ \cline{4-7}   
 & & & \texttt{2.6.ns7-16.2.2} & $\langle G^{\, 2,4}_{-16,0},c_{1}\rangle$ &   $a+1$ &  \ecnflabel{2.2.8.1}{256.1-a1}  \\ \cline{4-7}   
 & & & \texttt{2.6.ns7-16.2.3} & $\langle G^{\, 2,3}_{-16,0},c_{-1}\rangle$ &  $a+2$ & \ecnflabel{2.2.8.1}{1024.1-f2}  \\ \cline{4-7}   
 & & & \texttt{2.6.ns7-16.2.4} & $\langle G^{\, 2,4}_{-16,0},c_{-1}\rangle$ &   $-a+1$  &  \ecnflabel{2.2.8.1}{256.1-a2} \\ \cline{2-7} 
 & $5$ & $\QQ(\sqrt{5})$& \texttt{2.2.ns7-1.1.1} & $\mathcal N_{-25, 0}(2^{\infty})$ & $t$ & $t\in \Q(\sqrt{5})^*$ \\ \hline

{$-7$} & $1$ & $\QQ$ & \texttt{2.0.s-1.1.1} & $\mathcal N_{-2, 1}(2^{\infty})$ & $t$ & $t\in \Q^*$ \\ \cline{2-7}
          & $2$ & $\QQ$& \texttt{2.2.s-1.1.1} & $\mathcal N_{-7, 0}(2^{\infty})$ & $t$ & $t\in \Q^*$ \\ \cline{2-7}
   &{$4$} & {$\QQ(\sqrt{7 })$} & \texttt{2.4.s-1.1.1} & $\mathcal N_{-28,0}(2^{\infty})$ &   $t$  &  $t\in \Q(\sqrt{7})^*,\,\, t\ne \pm 1,\pm 2$ \\  \cline{4-7} 
 & & & \texttt{2.4.s-8.2.1} & $\langle G^{\, 2,1}_{-28,0},c_{1}\rangle$ &   $-2$  &   \ecnflabel{2.2.28.1}{256.1-j4}\\ \cline{4-7}   
 & & & \texttt{2.4.s-8.2.2} & $\langle G^{\, 2,2}_{-28,0},c_{1}\rangle$ &   $-1$ &    \ecnflabel{2.2.28.1}{1.1-a4} \\ \cline{4-7}   
 & & & \texttt{2.4.s-8.2.3} & $\langle G^{\, 2,2}_{-28,0},c_{-1}\rangle$ &  $1$  &  \ecnflabel{2.2.28.1}{1.1-a3} \\ \cline{4-7}   
 & & & \texttt{2.4.s-8.2.4} & $\langle G^{\, 2,1}_{-28,0},c_{-1}\rangle$ &   $2$  &  \ecnflabel{2.2.28.1}{256.1-j3} \\ \hline

  {$-8$} &    {$1$} & {$\QQ$} &  \texttt{2.3.ns7-1.1.1} & $\mathcal N_{-2,0}(2^{\infty})$ &  $t$ &  $t\in \Q^*,\,\, t\ne \pm 1,\pm 2$ \\  \cline{4-7} 
 & & & \texttt{2.3.ns7-16.2.1} & $\langle G^{\, 2,3}_{-2,0},c_{1}\rangle$ &   $2$ &  \eclabel{256.d1} \\  \cline{4-7}   
 & & & \texttt{2.3.ns7-16.2.2} & $\langle G^{\, 2,4}_{-2,0},c_{1}\rangle$ &   $1$ &  \eclabel{256.a2} \\  \cline{4-7}   
 & & & \texttt{2.3.ns7-16.2.3} & $\langle G^{\, 2,3}_{-2,0},c_{-1}\rangle$ &  $-1$ &  \eclabel{256.d2}\\  \cline{4-7}   
 & & & \texttt{2.3.ns7-16.2.4} & $\langle G^{\, 2,4}_{-2,0},c_{-1}\rangle$ & $-2$ &  \eclabel{256.a1} \\    \cline{2-7}
  &{$2$}  &{$\QQ(\sqrt{2 })$} & \texttt{2.5.ns7-1.1.1} & $\mathcal N_{-8,0}(2^{\infty})$ &   $t$  &  $t\in \Q(\sqrt{2})^*,\,\, t\ne \pm 1,\pm a, \pm a + 1, a \pm 2$\\  \cline{4-7} 
 & & & \texttt{2.5.ns7-8.2.1} & $\langle G^{\, 2,1}_{-8,0},c_{1}\rangle$ &   $-1$  &   \ecnflabel{2.2.8.1}{64.1-a2} \\ \cline{4-7}   
 & & & \texttt{2.5.ns7-8.2.2} & $\langle G^{\, 2,2}_{-8,0},c_{1}\rangle$ &   $a+1$ &  \ecnflabel{2.2.8.1}{256.1-c2} \\ \cline{4-7}   
 & & & \texttt{2.5.ns7-8.2.3} & $\langle G^{\, 2,2}_{-8,0},c_{-1}\rangle$ &  $-a+1$  & \ecnflabel{2.2.8.1}{256.1-c1} \\ \cline{4-7}   
 & & & \texttt{2.5.ns7-8.2.4} & $\langle G^{\, 2,1}_{-8,0},c_{-1}\rangle$ &   $1$  &  \ecnflabel{2.2.8.1}{64.1-a1} \\ \cline{4-7} 
 & & & \texttt{2.5.ns7-16.2.1} & $\langle G^{\, 2,3}_{-8,0},c_{1}\rangle$ &   $a$  &  \ecnflabel{2.2.8.1}{1024.1-m1} \\ \cline{4-7}   
 & & & \texttt{2.5.ns7-16.2.2} & $\langle G^{\, 2,4}_{-8,0},c_{1}\rangle$ &   $a-2$ &  \ecnflabel{2.2.8.1}{1024.1-d2} \\ \cline{4-7}   
 & & & \texttt{2.5.ns7-16.2.3} & $\langle G^{\, 2,3}_{-8,0},c_{-1}\rangle$ &  $-a$  &  \ecnflabel{2.2.8.1}{1024.1-m2} \\ \cline{4-7}   
 & & & \texttt{2.5.ns7-16.2.4} & $\langle G^{\, 2,4}_{-8,0},c_{-1}\rangle$ &   $a+2$  &  \ecnflabel{2.2.8.1}{1024.1-d1} \\ \cline{2-7}
  & {$3$}   & {$\QQ(\sqrt{6})$} & \texttt{2.3.ns7-1.1.1} & $\mathcal N_{-18,0}(2^{\infty})$ &   $t$  &  $t\in \Q(\sqrt{6})^*,\,\, t\ne \pm a \pm 2$ \\  \cline{4-7} 
 & & & \texttt{2.3.ns7-16.2.1} & $\langle G^{\, 2,3}_{-18,0},c_{1}\rangle$ &   $a + 2$  &  N/A \\ \cline{4-7}  
 & & & \texttt{2.3.ns7-16.2.2} & $\langle G^{\, 2,4}_{-18,0},c_{1}\rangle$ &   $a - 2$ &   N/A \\ \cline{4-7}  
 & & & \texttt{2.3.ns7-16.2.3} & $\langle G^{\, 2,3}_{-18,0},c_{-1}\rangle$ &  $-a + 2$  & N/A \\ \cline{4-7}  
 & & & \texttt{2.3.ns7-16.2.4} & $\langle G^{\, 2,4}_{-18,0},c_{-1}\rangle$ &   $-a - 2$  &   N/A\\ \hline 

           {$-11$} & $1$ &$\QQ$ & \texttt{2.0.ns5-1.1.1} & $\mathcal N_{-3, 1}(2^{\infty})$ & $t$ & $t\in \Q^*$ \\ \cline{2-7}
          & $3$ & $\QQ(\sqrt{33})$ & \texttt{2.0.ns5-1.1.1} & $\mathcal N_{-27, 3}(2^{\infty})$ & $t$ & $t\in \Q(\sqrt{33})^*$ \\ \hline
{$-15$}  & $1$& $\QQ(\sqrt{5})$ &  \texttt{2.0.s-1.1.1} & $\mathcal N_{-4, 1}(2^{\infty})$ & $t$ & $t\in \Q(\sqrt{5})^*$ \\ \cline{2-7}
 & $2$ & $\QQ(\sqrt{5})$ & \texttt{2.2.s-1.1.1} & $\mathcal N_{-15, 0}(2^{\infty})$ & $t$ & $t\in \Q(\sqrt{5})^*$ \\ \hline

         $-19$ & $1$ & $\QQ$& \texttt{2.0.ns5-1.1.1} & $\mathcal N_{-5, 1}(2^{\infty})$ & $t$ & $t\in \Q^*$ \\ \hline
    
$-20$ & $1$ & $\QQ(\sqrt{5})$ & \texttt{2.2.ns3-1.1.1} & $\mathcal N_{-5, 0}(2^{\infty})$ & $t$ & $t\in \Q(\sqrt{5})^*$ \\ \hline
$-24$ & $1$ & $\QQ(\sqrt{2})$& \texttt{2.3.ns5-1.1.1} & $\mathcal N_{-6, 0}(2^{\infty})$ & $t$ & $t\in \Q(\sqrt{2})^*$ \\ \hline
$-35$ & $1$ & $\QQ(\sqrt{5})$ & \texttt{2.0.ns5-1.1.1} & $\mathcal N_{-9, 1}(2^{\infty})$ & $t$ & $t\in \Q(\sqrt{5})^*$ \\ \hline
{$-40$}   & {$1$} & {$\QQ(\sqrt{5})$} & \texttt{2.3.ns3-1.1.1} & $\mathcal N_{-10,0}(2^{\infty})$ &   $t$  &  $t\in \Q(\sqrt{5})^*,\,\, t\ne \pm a, \pm 2a$ \\  \cline{4-7} 
 & & & \texttt{2.3.ns3-16.2.1} & $\langle G^{\, 2,3}_{-10,0},c_{1}\rangle$ &   $-a$  &  N/A \\ \cline{4-7}   
 & & & \texttt{2.3.ns3-16.2.2} & $\langle G^{\, 2,4}_{-10,0},c_{1}\rangle$ &   $-2a$ &   N/A \\ \cline{4-7}   
 & & & \texttt{2.3.ns3-16.2.3} & $\langle G^{\, 2,3}_{-10,0},c_{-1}\rangle$ &  $2a$  & N/A \\ \cline{4-7}   
 & & & \texttt{2.3.ns3-16.2.4} & $\langle G^{\, 2,4}_{-10,0},c_{-1}\rangle$ &   $a$  &   N/A\\ \hline 
       $-43$ & $1$ & $\QQ$& \texttt{2.0.ns5-1.1.1} & $\mathcal N_{-11, 1}(2^{\infty})$ & $t$ & $t\in \Q^*$ \\ \hline
$-51$ & $1$ & $\QQ(\sqrt{17})$ & \texttt{2.0.ns5-1.1.1} & $\mathcal N_{-13, 1}(2^{\infty})$ & $t$ & $t\in \Q(\sqrt{17})^*$ \\ \hline
$-52$ & $1$& $\QQ(\sqrt{13})$ &  \texttt{2.2.ns3-1.1.1} & $\mathcal N_{-13, 0}(2^{\infty})$ & $t$ & $t\in \Q(\sqrt{13})^*$ \\ \hline

        $-67$ & $1$ & $\QQ$& \texttt{2.0.ns5-1.1.1} & $\mathcal N_{-17, 1}(2^{\infty})$ & $t$ & $t\in \Q^*$ \\ \hline

$-88$ & $1$ & $\QQ(\sqrt{2})$& \texttt{2.3.ns5-1.1.1} & $\mathcal N_{-22, 0}(2^{\infty})$ & $t$ & $t\in \Q(\sqrt{2})^*$ \\ \hline

$-91$ & $1$ &$\QQ(\sqrt{13})$ &  \texttt{2.0.ns5-1.1.1} & $\mathcal N_{-23, 1}(2^{\infty})$ & $t$ & $t\in \Q(\sqrt{13})^*$ \\ \hline

$-115$ & $1$ & $\QQ(\sqrt{5})$ & \texttt{2.0.ns5-1.1.1} & $\mathcal N_{-29, 1}(2^{\infty})$ & $t$ & $t\in \Q(\sqrt{5})^*$ \\ \hline

$-123$ & $1$& $\QQ(\sqrt{41})$ & \texttt{2.0.ns5-1.1.1} & $\mathcal N_{-31, 1}(2^{\infty})$ & $t$ & $t\in \Q(\sqrt{41})^*$ \\ \hline

$-148$ & $1$ & $\QQ(\sqrt{37})$ & \texttt{2.2.ns3-1.1.1} & $\mathcal N_{-37, 0}(2^{\infty})$ & $t$ & $t\in \Q(\sqrt{37})^*$ \\ \hline

         $-163$ & $1$ & $\QQ$& \texttt{2.0.ns5-1.1.1} & $\mathcal N_{-41, 1}(2^{\infty})$ & $t$ & $t\in \Q^*$ \\ \hline

$-187$ & $1$& $\QQ(\sqrt{17})$ &  \texttt{2.0.ns5-1.1.1} & $\mathcal N_{-47, 1}(2^{\infty})$ & $t$ & $t\in \Q(\sqrt{17})^*$ \\ \hline

{$-232$}   & {$1$}& {$\QQ(\sqrt{29})$} & \texttt{2.3.ns3-1.1.1} & $\mathcal N_{-58,0}(2^{\infty})$ &   $t$  &  $t\in \Q(\sqrt{29})^*,\,\, t\ne \pm 1,\pm 2$ \\  \cline{4-7} 
 & & & \texttt{2.3.ns3-16.2.1} & $\langle G^{\, 2,3}_{-58,0},c_{1}\rangle$ &   $-1$  &  N/A\\ \cline{4-7}   
 & & & \texttt{2.3.ns3-16.2.2} & $\langle G^{\, 2,4}_{-58,0},c_{1}\rangle$ &   $-2$ &   N/A \\ \cline{4-7}   
 & & & \texttt{2.3.ns3-16.2.3} & $\langle G^{\, 2,3}_{-58,0},c_{-1}\rangle$ &  $2$  & N/A \\ \cline{4-7}   
 & & & \texttt{2.3.ns3-16.2.4} & $\langle G^{\, 2,4}_{-58,0},c_{-1}\rangle$ &   $1$  &  N/A \\ \hline

$-235$ & $1$ & $\QQ(\sqrt{5})$ & \texttt{2.0.ns5-1.1.1} & $\mathcal N_{-59, 1}(2^{\infty})$ & $t$ & $t\in \Q(\sqrt{5})^*$ \\ \hline

$-267$ & $1$& $\QQ(\sqrt{89})$ &  \texttt{2.0.ns5-1.1.1} & $\mathcal N_{-67, 1}(2^{\infty})$ & $t$ & $t\in \Q(\sqrt{89})^*$ \\ \hline

$-403$ & $1$ & $\QQ(\sqrt{13})$ & \texttt{2.0.ns5-1.1.1} & $\mathcal N_{-101, 1}(2^{\infty})$ & $t$ & $t\in \Q(\sqrt{13})^*$ \\ \hline

$-427$ & $1$& $\QQ(\sqrt{61})$ &  \texttt{2.0.ns5-1.1.1} & $\mathcal N_{-107, 1}(2^{\infty})$ & $t$ & $t\in \Q(\sqrt{61})^*$ \\ \hline

\end{longtable}
}


{

\renewcommand{\arraystretch}{1.5}
\begin{longtable}{|c|c|l|c|c|l|}
\caption{CM $\ell$-adic Galois images: $\ell \ne 2$ and $\ell \not| \, \Delta(\mathcal{O})$}\label{cmoddnotdividing}\label{tab-ellnotdividing}
\\ \hline
$\Delta(\mathcal O)$  & condition on $\ell$ & \multicolumn{1}{|c|}{CM-label}  & $G_{E,\ell^\infty}$ 
& twist  &  \multicolumn{1}{|c|}{conditions} \\  
\hline 
{$-3$}   & $\ell \equiv 1\pmod 9$  & \texttt{$\ell$.0.s-1.1.1} &  $\mathcal{N}_{-3/4,0}(\ell^{\infty})$ & $t$ & $t\in\Q^*$  \\ \cline{2-6} 
& $\ell\equiv 8\pmod 9$ & \texttt{$\ell$.0.ns-1.1.1} & $\mathcal{N}_{-3/4,0}(\ell^{\infty})$
& $t$ & $t\in\Q^*$ \\  \cline{2-6} 
& {$\ell \equiv 4,7\pmod 9$}  & \texttt{$\ell$.0.s-1.1.1} & $\mathcal{N}_{-3/4,0}(\ell^{\infty})$
& $t$ & $t\in\Q^*$, $t\not\in \ell^r\cdot (\Q^\ast)^3$ \\  \cline{3-6} 
&                                                              & \texttt{$\ell$.0.s-$\ell$.3.1} & $\langle G^{\, 3,1}_{-3/4,0}, c_{\varepsilon} \rangle$
& $\ell^r t^3$ & $t\in\Q^*$, $r\equiv (\ell-1)/3\pmod 3$, $r=1,2$\\  \cline{2-6}
& {$\ell \equiv 2,5\pmod 9$}  & \texttt{$\ell$.0.ns-1.1.1} & $\mathcal{N}_{-3/4,0}(\ell^{\infty})$
& $t$ & $t\in\Q^*$, $t\not\in \ell^r\cdot (\Q^\ast)^3$ \\  \cline{3-6} 
&                                                              & \texttt{$\ell$.0.ns-$\ell$.3.1} & $\langle \cC_{-3/4,\,0}(\ell^{\infty})^3, c_\varepsilon \rangle $
& $\ell^r t^3$ & $t\in\Q^*$, $-r\equiv (\ell+1)/3 \pmod 3$, $r=1,2$\\  \hline
 {$\ne -3$}  & $ \left(\frac{\Delta(\mathcal O)}{\ell}\right)=+1$  &  \texttt{$\ell$.0.ns-1.1.1} & $\mathcal{N}_{\delta, 0}(\ell^{\infty})$
 & $t$ & $t\in\Q^*$  \\ \cline{2-6}        
& $ \left(\frac{\Delta(\mathcal O)}{\ell}\right)=-1$  &  \texttt{$\ell$.0.s-1.1.1} & $\mathcal{N}_{\delta, 0}(\ell^{\infty})$
& $t$ & $t\in\Q^*$  \\ \hline                          
\end{longtable}

}



{
\renewcommand{\arraystretch}{1.5}
\begin{longtable}{|c|c|c|c|c|l|l|}
\caption{CM $\ell$-adic Galois images: $\ell\ne 2$ and $\ell|\Delta(\mathcal O)$}\label{cmodddividing}\label{tab-cmodddividing}\\
 \hline 
$\Delta_K$ & $f$ & $\Q(j_{K,f})$ & \multicolumn{1}{|c|}{CM-label} & $G_{E,\ell^\infty}$ & twist  &  \multicolumn{1}{|c|}{conditions}\\  \hline 
\endfirsthead
\multicolumn{7}{c}%
{{\bfseries \tablename\ \thetable{} -- continued from previous page}} \\
 \hline 
$\Delta_K$ & $f$ & $\Q(j_{K,f})$ & \multicolumn{1}{|c|}{CM-label} & $G_{E,\ell^\infty}$ & twist  &  \multicolumn{1}{|c|}{conditions}\\  \hline 
\endhead

\hline \multicolumn{7}{|r|}{{Continued on next page}} \\ \hline
\endfoot

\endlastfoot

 \hline 

{$-3$} & {$1$} & {$\QQ$} 
& \texttt{3.1.ns-1.1.1} & $\mathcal N_{-3/4,\,0}(3^{\infty})$ &   $t$ &  $t\in \Q^*,\,\,t\not\in \Q^3,3\Q^3,9\Q^3,\Q^2,-3\Q^2 $ \\ \cline{4-7} 
& & &  \texttt{3.1.ns-3.3.1} & $\langle G^{\, 3,1}_{-3/4,0},c_{1}\rangle$  &  $t^3$ & $t\in \Q^*,\,\,t\ne 1,-3$ \\  \cline{4-7}   
& &  & \texttt{3.1.ns-9.2.1} & $\langle G^{\, 2,1}_{-3/4,0},c_{1}\rangle$  & $t^2$  & $t\in \Q^*,\,\,t\ne \pm 1,\pm 3,\pm 9$ \\  \cline{4-7}  
& &&\texttt{3.1.ns-9.2.2} & $\langle G^{\, 2,1}_{-3/4,0},c_{-1}\rangle$  & $-3t^2$  & $t\in \Q^*,\,\,t\ne \pm 1,\pm 3,\pm 9$ \\ \cline{4-7}  
& &&\texttt{3.1.ns-27.3.1} & $\langle G^{\, 3,2}_{-3/4,0},c_{1}\rangle$  & $3t^3$  &  $t\in \Q^*,\,\,t\ne -1,3$\\  \cline{4-7}  
& &&\texttt{3.1.ns-27.3.2} & $\langle G^{\, 3,3}_{-3/4,0},c_{1}\rangle$  & $9t^3$ &   $t\in \Q^*,\,\,t\ne 1,-3$ \\ \cline{4-7}   
& &&\texttt{3.1.ns-9.6.1} & $\langle G^{\, 6,1}_{-3/4,0},c_{1}\rangle$ & $1$  & \eclabel{27.a4}\\  \cline{4-7}   
& &&\texttt{3.1.ns-9.6.2} & $\langle G^{\, 6,1}_{-3/4,0},c_{-1}\rangle$ &  $-27$  & \eclabel{27.a3} \\  \cline{4-7}  
& &&\texttt{3.1.ns-27.6.1} & $\langle G^{\, 6,2}_{-3/4,0},c_{1}\rangle$ &  $81$  & \eclabel{243.a2}  \\  \cline{4-7}   
& &&\texttt{3.1.ns-27.6.2} & $\langle G^{\, 6,3}_{-3/4,0},c_{1}\rangle$ &  $9$  & \eclabel{243.b2} \\  \cline{4-7}   
& &&\texttt{3.1.ns-27.6.3} & $\langle G^{\, 6,2}_{-3/4,0},c_{-1}\rangle$ &  $-3$ &\eclabel{243.a1} \\ \cline{4-7}  
& &&\texttt{3.1.ns-27.6.4} & $\langle G^{\, 6,3}_{-3/4,0},c_{-1}\rangle$ & $-243$ & \eclabel{243.b1}\\ \cline{2-7} 
& {$2$}   & {$\QQ$}  
 & \texttt{3.1.ns-1.1.1} & $\mathcal{N}_{-3,\,0}(3^{\infty})$ &   $t$ &  $t\in \Q^*,\,\,t\ne 1,-3 $ \\ \cline{4-7} 
 & &  & \texttt{3.1.ns-3.2.1} & $\langle G^{2,1}_{-3,\,0},c_{1}\rangle$ &   $1$ &  \eclabel{36.a2}  \\ \cline{4-7}  
 & &  & \texttt{3.1.ns-3.2.2} & $\langle G^{2,1}_{-3,\,0},c_{-1}\rangle$ &   $-3$ & \eclabel{36.a1} \\ \cline{2-7}
& {$3$} & {$\QQ$}  
& \texttt{3.3.ns-1.1.1} & $\mathcal{N}_{-27/4,\,0}(3^{\infty})$  &   $t$ &  $t\in \Q^*,\,\,t\ne 1,-3 $ \\ \cline{4-7} 
& &  & \texttt{3.3.ns-3.2.1} & $\langle G^{2,1}_{-27/4,\,0},c_{1}\rangle$ &   $1$ &  \eclabel{27.a2} \\ \cline{4-7} 
& &  & \texttt{3.3.ns-3.2.2} &  $\langle G^{2,1}_{-27/4,\,0},c_{-1}\rangle$ &   $-3$ & \eclabel{27.a1} \\ \cline{2-7}
 & {$4$} &  {$\Q(\sqrt{3})$} 
 & \texttt{3.1.ns-1.1.1} & $\mathcal{N}_{-12,\,0}(3^{\infty})$ &   $t$ &  $t\in \Q(\sqrt{3})^*,\,\,t\ne \pm 2a $ \\ \cline{4-7} 
& &  & \texttt{3.1.ns-3.2.1} & $\langle G^{\,2,1}_{-12,\,0},c_{1}\rangle$ &   $2a$ &  \ecnflabel{2.2.12.1}{36.1-a4} \\ \cline{4-7} 
& &  & \texttt{3.1.ns-3.2.2} & $\langle G^{\,2,1}_{-12,\,0},c_{-1}\rangle$ &   $-2a$ &  \ecnflabel{2.2.12.1}{36.1-a3} \\ 
\cline{2-7}
 &  {$5$} &  {$\Q(\sqrt{5})$} 
& \texttt{3.1.ns-1.1.1} & $\mathcal{N}_{-75/4,\,0}(3^{\infty})$ &   $t$ &  $t\in \Q(\sqrt{5})^*,\,\,t\ne a - 3, 3a + 6  $ \\ \cline{4-7} 
&& &   \texttt{3.1.ns-3.2.1} & $\langle G^{\,2,1}_{-75/4,\,0},c_{1}\rangle$ &   $a - 3$ &  \ecnflabel{2.2.5.1}{2025.1-d1} \\ 
\cline{4-7} 
&&   & \texttt{3.1.ns-3.2.2} & $\langle G^{\, 2,1}_{-75/4,\,0},c_{-1}\rangle$ &   $3a + 6$ & \ecnflabel{2.2.5.1}{2025.1-d2} \\ \cline{4-7} 
& &  & \texttt{5.2.ns-1.1.1} & $\mathcal{N}_{-75/4,\,0}(5^{\infty})$ &   $t$ &  $t\in \Q(\sqrt{5})^*$ \\ \cline{2-7}
 & {$7$} &  {$\Q(\sqrt{21})$} 
 & \texttt{3.1.ns-1.1.1} & $\mathcal{N}_{-147/4,\,0}(3^{\infty})$ &   $t$ &  $t\in \Q(\sqrt{21})^*, t \ne a + 1, -a + 2$ \\ \cline{4-7} 
& &  & \texttt{3.1.ns-3.2.1} & $\langle G^{\, 2, 1}_{-147/4, 0}, c_1 \rangle$ &   $a + 1$ &  \ecnflabel{2.2.21.1}{441.1-d1} \\ \cline{4-7} 
& &  & \texttt{3.1.ns-3.2.2} & $\langle G^{\, 2, 1}_{-147/4, 0}, c_{-1} \rangle$ &   $-a + 2$ &  \ecnflabel{2.2.21.1}{441.1-d2}  \\ \cline{4-7} 
& &  & \texttt{7.2.s-1.1.1} & $\mathcal{N}_{-147/4,\,0}(7^{\infty})$ &   $t$ &  $t\in \Q(\sqrt{21})^*,\,\,t\ne 1,-7 $ \\ \cline{4-7} 
&  & & \texttt{7.2.s-7.2.1} & $\langle G^{\,2,1}_{-147/4,\,0},c_{1}\rangle$ &   $-7$ & \ecnflabel{2.2.21.1}{49.1-a2}  \\ \cline{4-7} 
& &  & \texttt{7.2.s-7.2.2} & $\langle G^{\, 2,1}_{-147/4,\,0},c_{-1}\rangle$ &   $1$ & \ecnflabel{2.2.21.1}{49.1-a1} \\ \hline
{$-4$} & {$3$} &  {$\Q(\sqrt{3})$} 
 & \texttt{3.2.ns-1.1.1} & $\mathcal{N}_{-9,\,0}(3^{\infty})$ &   $t$ &  $t\in \Q(\sqrt{3})^*,\,\,t\ne \pm 1 $ \\ \cline{4-7} 
& &  & \texttt{3.2.ns-3.2.1} & $\langle G^{2,1}_{-9,\,0},c_{1}\rangle$ &   $-1$ &  \ecnflabel{2.2.12.1}{9.1-a2}  \\ \cline{4-7} 
& &  & \texttt{3.2.ns-3.2.2} & $\langle G^{2,1}_{-9,\,0},c_{-1}\rangle$ &   $1$ & \ecnflabel{2.2.12.1}{9.1-a1}  \\ \cline{2-7}
 & $5$ &  $\Q(\sqrt{5})$ &\texttt{5.2.s-1.1.1} & $\mathcal{N}_{-25,\,0}(5^{\infty})$ &   $t$ &  $t\in \Q(\sqrt{5})^*$ \\ \hline
{$-7$}  & {$1$}  & {$\QQ$}  
& \texttt{7.1.ns-1.1.1} & $\mathcal{N}_{-7/4,\,0}(7^{\infty})$  &   $t$ &  $t\in \Q^*,\,\,t\ne 1,-7 $ \\ \cline{4-7} 
 & &  & \texttt{7.1.ns-7.2.1} & $\langle G^{2,1}_{-7/4,\,0},c_{1}\rangle$ &   $1$ & \eclabel{49.a2}  \\ \cline{4-7} 
 & &  & \texttt{7.1.ns-7.2.2} & $\langle G^{2,1}_{-7/4,\,0},c_{-1}\rangle$ &   $-7$ & \eclabel{49.a4} \\ \cline{2-7}
& {$2$}  & {$\QQ$}  
& \texttt{7.1.ns-1.1.1} & $\mathcal{N}_{-7,\,0}(7^{\infty})$ &   $t$ &  $t\in \Q^*,\,\,t\ne 1,-7 $ \\ \cline{4-7} 
 & &  & \texttt{7.1.ns-7.2.1} & $\langle G^{2,1}_{-7,\,0},c_{1}\rangle$ &   $1$ & \eclabel{49.a1}  \\ \cline{4-7} 
 & &  & \texttt{7.1.ns-7.2.2} & $\langle G^{2,1}_{-7,\,0},c_{-1}\rangle$ &   $-7$ & \eclabel{49.a3} \\ \cline{2-7}
 &  {$4$} &  {$\Q(\sqrt{7})$} 
 & \texttt{7.1.ns-1.1.1} & $\mathcal{N}_{-28,\,0}(7^{\infty})$ &   $t$ &  $t\in \Q(\sqrt{7})^*,\,\,t\ne \pm 2a $ \\ \cline{4-7} 
&  & & \texttt{7.1.ns-7.2.1} & $\langle G^{\,2,1}_{-28,\,0},c_{1}\rangle$ &   $2a$ &  \ecnflabel{2.2.28.1}{49.1-a3} \\ \cline{4-7} 
&  & & \texttt{7.1.ns-7.2.2} & $\langle G^{\,2,1}_{-28,\,0},c_{-1}\rangle$ &   $-2a$ & \ecnflabel{2.2.28.1}{49.1-a4} \\ \hline
{$-8$} &  {$3$} &  {$\Q(\sqrt{6})$} 
 & \texttt{3.2.s-1.1.1} & $\mathcal{N}_{-18,\,0}(3^{\infty})$ &   $t$ &  $t\in \Q(\sqrt{6})^*,\,\,t\ne 1, -3 $ \\ \cline{4-7} 
&  & & \texttt{3.2.s-3.2.1} & $\langle G^{\,2,1}_{-18,\,0},c_{1}\rangle$ &   $-3$ &  \ecnflabel{2.2.24.1}{1.1-a4} \\ \cline{4-7} 
& &  & \texttt{3.2.s-3.2.2} & $\langle G^{\,2,1}_{-18,\,0},c_{-1}\rangle$ &   $1$ &  \ecnflabel{2.2.24.1}{1.1-a3} \\ \hline
{$-11$} & {$1$}  & {$\QQ$}   
& \texttt{11.1.ns-1.1.1} & $\mathcal{N}_{-11/4,\,0}(11^{\infty})$  &   $t$ &  $t\in \Q^*,\,\,t\ne 1,-11 $ \\ \cline{4-7} 
 & &  & \texttt{11.1.ns-11.2.1} & $\langle G^{2,1}_{-11/4,\,0},c_{1}\rangle$ &   $1$ &  \eclabel{121.b2} \\ \cline{4-7} 
 & &  & \texttt{11.1.ns-11.2.2} & $\langle G^{2,1}_{-11/4,\,0},c_{-1}\rangle$ &   $-11$ & \eclabel{121.b1} \\ \cline{2-7}
 & {$3$} &  {$\Q(\sqrt{33})$} 
 & \texttt{3.2.s-1.1.1} & $\mathcal{N}_{-99/4,\,0}(3^{\infty})$ &   $t$ &  $t\in \Q(\sqrt{33})^*, t \neq 1, -3$ \\ \cline{4-7} 
& &  & \texttt{3.2.s-3.2.1} & $\langle G^{\,2,1}_{-99/4,\,0},c_{1}\rangle$ &   $-3$ &   \ecnflabel{2.2.33.1}{1.1-a2} \\ \cline{4-7} 
& &  & \texttt{3.2.s-3.2.2} & $\langle G^{\, 2,1}_{-99/4,\,0},c_{-1}\rangle$ &   $1$ &  \ecnflabel{2.2.33.1}{1.1-a1} \\ \cline{4-7} 
& &  & \texttt{11.1.ns-1.1.1} & $\mathcal{N}_{-99/4,\,0}(11^{\infty})$ &   $t$ &  $t\in \Q(\sqrt{33})^*,\,\,t\ne 4a+9, -4a+13 $ \\ \cline{4-7} 
& &  & \texttt{11.1.ns-11.2.1} & $\langle G^{\,2,1}_{-99/4,\,0},c_{1}\rangle$ &   $-4a+13$ & \ecnflabel{2.2.33.1}{121.1-b1}  \\ \cline{4-7} 
& &  & \texttt{11.1.ns-11.2.2} & $\langle G^{\, 2,1}_{-99/4,\,0},c_{-1}\rangle$ &   $4a+9$ &  \ecnflabel{2.2.33.1}{121.1-b2} \\ \hline
{$-15$} & {$1$} &  {$\Q(\sqrt{5})$} 
 & \texttt{3.1.s-1.1.1} & $\mathcal{N}_{-15/4,\,0}(3^{\infty})$ &   $t$ &  $t\in \Q(\sqrt{5})^*,\,\,t\ne 1,-3 $ \\ \cline{4-7} 
 & & & \texttt{3.1.s-3.2.1} & $\langle G^{\,2,1}_{-15/4,\,0},c_{1}\rangle$ &   $1$ &  \ecnflabel{2.2.5.1}{81.1-a1} \\ \cline{4-7} 
 & & & \texttt{3.1.s-3.2.2} & $\langle G^{\, 2,1}_{-15/4,\,0},c_{-1}\rangle$ &   $-3$ &  \ecnflabel{2.2.5.1}{81.1-a2} \\ \cline{4-7}  
 & & &  \texttt{5.1.ns-1.1.1} & $\mathcal{N}_{-15/4,\,0}(5^{\infty})$ &   $t$ &  $t\in \Q(\sqrt{5})^*$ \\ \cline{2-7}
 &  {$2$} &  {$\Q(\sqrt{5})$} 
&  \texttt{3.1.s-1.1.1} & $\mathcal{N}_{-15,\,0}(3^{\infty})$ &   $t$ &  $t\in \Q(\sqrt{5})^*,\,\,t\ne 1,-3 $ \\ \cline{4-7} 
 & &&  \texttt{3.1.s-3.2.1} & $\langle G^{\,2,1}_{-15,\,0},c_{1}\rangle$ &   $1$ &  \ecnflabel{2.2.5.1}{81.1-a5} \\ \cline{4-7} 
 & &&  \texttt{3.1.s-3.2.2} & $\langle G^{\, 2,1}_{-15,\,0},c_{-1}\rangle$ &   $-3$ &  \ecnflabel{2.2.5.1}{81.1-a6} \\ \cline{4-7} 
 & &&  \texttt{5.1.ns-1.1.1} & $\mathcal{N}_{-15,\,0}(5^{\infty})$ &   $t$ &  $t\in \Q(\sqrt{5})^*$ \\ \hline
{$-19$} & {$1$}  & {$\QQ$}   
& \texttt{19.1.ns-1.1.1} & $\mathcal{N}_{-19/4,\,0}(19^{\infty})$  &   $t$ &  $t\in \Q^*,\,\,t\ne 1,-19 $ \\ \cline{4-7} 
 & &  & \texttt{19.1.ns-19.2.1} & $\langle G^{2,1}_{-19/4,\,0},c_{1}\rangle$ &   $1$ & \eclabel{361.a2}  \\ \cline{4-7} 
 & &  & \texttt{19.1.ns-19.2.2} & $\langle G^{2,1}_{-19/4,\,0},c_{-1}\rangle$ &   $-19$ & \eclabel{361.a1} \\ \hline
$-20$ & $1$ & $\Q(\sqrt{5})$ & \texttt{5.1.s-1.1.1} & $\mathcal{N}_{-5,\,0}(5^{\infty})$ &   $t$ &  $t\in \Q(\sqrt{5})^*$ \\ \hline
{$-24$}  & {$1$} &  {$\Q(\sqrt{2})$} 
  & \texttt{3.1.s-1.1.1} & $\mathcal{N}_{-6,\,0}(3^{\infty})$ &   $t$ &  $t\in \Q(\sqrt{2})^*,\,\,t\ne 1,-3 $ \\ \cline{4-7} 
& & & \texttt{3.1.s-3.2.1} & $\langle G^{2,1}_{-6,\,0},c_{1}\rangle$ &   $1$ &  \ecnflabel{2.2.8.1}{81.1-b1} \\ \cline{4-7} 
& & &   \texttt{3.1.s-3.2.2} & $\langle G^{2,1}_{-6,\,0},c_{-1}\rangle$ &   $-3$ &  \ecnflabel{2.2.8.1}{81.1-b2} \\ \hline
{$-35$}  & {$1$} &  {$\Q(\sqrt{5})$} 
&  \texttt{5.1.ns-1.1.1} & $\mathcal{N}_{-35/4,\,0}(5^{\infty})$ &   $t$ &  $t\in \Q(\sqrt{5})^*$ \\ \cline{4-7} 
 & &&  \texttt{7.1.s-1.1.1} & $\mathcal{N}_{-35/4,\,0}(7^{\infty})$ &   $t$ &  $t\in \Q(\sqrt{5})^*,\,\,t\ne 1,-7 $ \\ \cline{4-7} 
 & &&  \texttt{7.1.s-7.2.1} & $\langle G^{\,2,1}_{-35/4,\,0},c_{1}\rangle$  &   $-7$ &  \ecnflabel{2.2.5.1}{2401.1-b2} \\ \cline{4-7} 
 & &&  \texttt{7.1.s-7.2.2} & $\langle G^{\, 2,1}_{-35/4,\,0},c_{-1}\rangle$ &   $1$ & \ecnflabel{2.2.5.1}{2401.1-b1}  \\ \hline
$-40$ & $1$ &  $\Q(\sqrt{5})$ & \texttt{5.1.ns-1.1.1} & $\mathcal{N}_{-10,\,0}(5^{\infty})$ &   $t$ &  $t\in \Q(\sqrt{5})^*$ \\ \hline
{$-43$} & {$1$}  & {$\QQ$}  
& \texttt{43.1.ns-1.1.1} & $\mathcal{N}_{-43/4,\,0}(43^{\infty})$ &   $t$ &  $t\in \Q^*,\,\,t\ne 1,-43 $ \\ \cline{4-7} 
 & &  & \texttt{43.1.ns-43.2.1} & $\langle G^{2,1}_{-43/4,\,0},c_{1}\rangle$ &   $1$ &  \eclabel{1849.b2} \\ \cline{4-7} 
 & &  & \texttt{43.1.ns-43.2.2} & $\langle G^{2,1}_{-43/4,\,0},c_{-1}\rangle$ &   $-43$ & \eclabel{1849.b1} \\ \hline
{$-51$} & {$1$} &  {$\Q(\sqrt{17})$} 
 & \texttt{3.1.s-1.1.1} & $\mathcal{N}_{-51/4,\,0}(3^{\infty})$ &   $t$ &  $t\in \Q(\sqrt{17})^*,\,\,t\ne 1,-3 $ \\ \cline{4-7} 
& &  & \texttt{3.1.s-3.2.1} & $\langle G^{\,2,1}_{-51/4,\,0},c_{1}\rangle$ &   $1$ &   \ecnflabel{2.2.17.1}{81.1-b1}\\ \cline{4-7} 
& &  & \texttt{3.1.s-3.2.2} & $\langle G^{\, 2,1}_{-51/4,\,0},c_{-1}\rangle$ &   $-3$ &   \ecnflabel{2.2.17.1}{81.1-b2} \\ \cline{4-7} 
& &  & \texttt{17.1.ns-1.1.1} & $\mathcal{N}_{-51/4,\,0}(17^{\infty})$ &   $t$ &  $t\in \Q(\sqrt{17})^*$ \\ \hline
$-52$ & $1$ & $\Q(\sqrt{13})$ & \texttt{13.1.s-1.1.1} & $\mathcal{N}_{-13,\,0}(13^{\infty})$ &   $t$ &  $t\in \Q(\sqrt{13})^*$ \\ \hline
{$-67$}  & {$1$}  & {$\QQ$}   
& \texttt{67.1.ns-1.1.1} & $\mathcal{N}_{-67/4,\,0}(67^{\infty})$ &   $t$ &  $t\in \Q^*,\,\,t\ne 1,-67 $ \\ \cline{4-7} 
 & &  & \texttt{67.1.ns-67.2.1} & $\langle G^{2,1}_{-67/4,\,0},c_{1}\rangle$ &   $1$ & \eclabel{4489.b2}  \\ \cline{4-7} 
 & &  & \texttt{67.1.ns-67.2.2} & $\langle G^{2,1}_{-67/4,\,0},c_{-1}\rangle$ &   $-67$ & \eclabel{4489.b1} \\ \hline
{$-88$} & {$1$} &  {$\Q(\sqrt{2})$} 
&  \texttt{11.1.s-1.1.1} & $\mathcal{N}_{-22,\,0}(11^{\infty})$ &   $t$ &  $t\in \Q(\sqrt{2})^*,\,\,t\ne 1,-11 $ \\ \cline{4-7} 
& & & \texttt{11.1.s-11.2.1} & $\langle G^{2,1}_{-22,\,0},c_{1}\rangle$ &   $1$ &  N/A \\ \cline{4-7} 
& & &  \texttt{11.1.s-11.2.2} & $\langle G^{2,1}_{-22,\,0},c_{-1}\rangle$ &   $-11$ &  N/A \\ \hline
{$-91$}&  {$1$} &  {$\Q(\sqrt{13})$} 
 & \texttt{7.1.s-1.1.1} & $\mathcal{N}_{-91/4,\,0}(7^{\infty})$ &   $t$ &  $t\in \Q(\sqrt{13})^*,\,\,t\ne 1,-7 $ \\ \cline{4-7} 
&  & & \texttt{7.1.s-7.2.1} & $\langle G^{\,2,1}_{-91/4,\,0},c_{1}\rangle$ &   $-7$ & N/A  \\ \cline{4-7} 
&  & & \texttt{7.1.s-7.2.2} & $\langle G^{\, 2,1}_{-91/4,\,0},c_{-1}\rangle$ &   $1$ &  N/A \\ \cline{4-7} 
&  & & \texttt{13.1.ns-1.1.1} & $\mathcal{N}_{-91/4,\,0}(13^{\infty})$ &   $t$ &  $t\in \Q(\sqrt{13})^*$ \\ \hline
{$-115$}  & {$1$} &  {$\Q(\sqrt{5})$} 
 & \texttt{5.1.ns-1.1.1} & $\mathcal{N}_{-115/4,\,0}(5^{\infty})$ &   $t$ &  $t\in \Q(\sqrt{5})^*$ \\ \cline{4-7} 
&  & & \texttt{23.1.s-1.1.1} & $\mathcal{N}_{-115/4,\,0}(23^{\infty})$ &   $t$ &  $t\in \Q(\sqrt{5})^*,\,\,t\ne 1,-23 $ \\ \cline{4-7} 
& & &  \texttt{23.1.s-23.2.1} & $\langle G^{\,2,1}_{-115/4,\,0},c_{1}\rangle$ &   $-23$ &  N/A \\ \cline{4-7} 
& &  & \texttt{23.1.s-23.2.2} & $\langle G^{\, 2,1}_{-115/4,\,0},c_{-1}\rangle$ &   $1$ &  N/A \\ \hline
{$-123$} & {$1$} &  {$\Q(\sqrt{41})$} 
 & \texttt{3.1.s-1.1.1} & $\mathcal{N}_{-123/4,\,0}(3^{\infty})$ &   $t$ &  $t\in \Q(\sqrt{41})^*,\,\,t\ne 1,-11 $ \\ \cline{4-7} 
& &  & \texttt{3.1.s-3.2.1} & $\langle G^{\,2,1}_{-123/4,\,0},c_{1}\rangle$ &   $1$ & \ecnflabel{2.2.41.1}{81.1-c1}   \\ \cline{4-7} 
& &  & \texttt{3.1.s-3.2.2} & $\langle G^{\, 2,1}_{-123/4,\,0},c_{-1}\rangle$ &   $-3$ & \ecnflabel{2.2.41.1}{81.1-c2}  \\ \cline{4-7} 
& &  & \texttt{41.1.ns-1.1.1} & $\mathcal{N}_{-123/4,\,0}(41^{\infty})$ &   $t$ &  $t\in \Q(\sqrt{41})^*$ \\ \hline
$-148$ & $1$ &  $\Q(\sqrt{37})$ &\texttt{37.1.s-1.1.1} & $\mathcal{N}_{-37,\,0}(37^{\infty})$ &   $t$ &  $t\in \Q(\sqrt{37})^*$ \\ \hline
{$-163$} & {$1$}  & {$\QQ$}   
& \texttt{163.1.ns-1.1.1} & $\mathcal{N}_{-163/4,\,0}(163^{\infty})$ &   $t$ &  $t\in \Q^*,\,\,t\ne 1,-163 $ \\ \cline{4-7} 
 & &  & \texttt{163.1.ns-163.2.1} & $\langle G^{2,1}_{-163/4,\,0},c_{1}\rangle$ &   $1$ &  \eclabel{26569.a2} \\ \cline{4-7} 
 & &  & \texttt{163.1.ns-163.2.2} & $\langle G^{2,1}_{-163/4,\,0},c_{-1}\rangle$ &   $-163$ & \eclabel{26569.a1}  \\ \hline
{$-187$} & {$1$} &  {$\Q(\sqrt{17})$} 
 & \texttt{11.1.s-1.1.1} & $\mathcal{N}_{-187/4,\,0}(11^{\infty})$ &   $t$ &  $t\in \Q(\sqrt{17})^*,\,\,t\ne 1,-11 $ \\ \cline{4-7} 
&  & & \texttt{11.1.s-11.2.1} & $\langle G^{\,2,1}_{-187/4,\,0},c_{1}\rangle$ &   $1$ &  N/A \\ \cline{4-7} 
&  & & \texttt{11.1.s-11.2.2} & $\langle G^{\, 2,1}_{-187/4,\,0},c_{-1}\rangle$ &   $-11$ &   N/A \\ \cline{4-7} 
&  & & \texttt{17.1.ns-1.1.1} & $\mathcal{N}_{-187/4,\,0}(17^{\infty})$ &   $t$ &  $t\in \Q(\sqrt{17})^*$ \\ \hline
$-232$ &$1$ & $\Q(\sqrt{29})$ & \texttt{29.1.ns-1.1.1} & $\mathcal{N}_{-58,\,0}(29^{\infty})$ &   $t$ &  $t\in \Q(\sqrt{29})^*$ \\ \hline
{$-235$}&   {$1$} &  {$\Q(\sqrt{5})$} 
 & \texttt{5.1.ns-1.1.1} & $\mathcal{N}_{-235/4,\,0}(5^{\infty})$ &   $t$ &  $t\in \Q(\sqrt{5})^*$ \\ \cline{4-7} 
&  & & \texttt{47.1.s-1.1.1} & $\mathcal{N}_{-235/4,\,0}(47^{\infty})$ &   $t$ &  $t\in \Q(\sqrt{5})^*,\,\,t\ne 1,-47 $ \\ \cline{4-7} 
&  & & \texttt{47.1.s-47.2.1} & $\langle G^{\,2,1}_{-235/4,\,0},c_{1}\rangle$ &   $-47$ &  N/A \\ \cline{4-7} 
&  & & \texttt{47.1.s-47.2.2} & $\langle G^{\, 2,1}_{-235/4,\,0},c_{-1}\rangle$ &   $1$ &  N/A \\ \hline
{$-267$} & {$1$} &  {$\Q(\sqrt{89})$} 
 & \texttt{3.1.s-1.1.1} & $\mathcal{N}_{-267/4,\,0}(3^{\infty})$ &   $t$ &  $t\in \Q(\sqrt{89})^*,\,\,t\ne 1,-3 $ \\ \cline{4-7} 
& &  & \texttt{3.1.s-3.2.1} & $\langle G^{\,2,1}_{-267/4,\,0},c_{1}\rangle$ &   $1$ & \ecnflabel{2.2.89.1}{81.1-a1}  \\ \cline{4-7} 
& &  & \texttt{3.1.s-3.2.2} & $\langle G^{\, 2,1}_{-267/4,\,0},c_{-1}\rangle$ &   $-3$ &  \ecnflabel{2.2.89.1}{81.1-a2} \\ \cline{4-7} 
& &  & \texttt{89.1.ns-1.1.1} & $\mathcal{N}_{-267/4,\,0}(89^{\infty})$ &   $t$ &  $t\in \Q(\sqrt{89})^*$ \\ \hline
{$-403$} &  {$1$} &  {$\Q(\sqrt{13})$} 
 & \texttt{13.1.ns-1.1.1} & $\mathcal{N}_{-403/4,\,0}(13^{\infty})$ &   $t$ &  $t\in \Q(\sqrt{13})^*$ \\ \cline{4-7} 
&  & & \texttt{31.1.s-1.1.1} & $\mathcal{N}_{-403/4,\,0}(31^{\infty})$ &   $t$ &  $t\in \Q(\sqrt{13})^*,\,\,t\ne 1,-31 $ \\ \cline{4-7} 
& &  & \texttt{31.1.s-31.2.1} & $\langle G^{\,2,1}_{-403/4,\,0},c_{1}\rangle$ &   $-31$ &  N/A \\ \cline{4-7} 
& &  & \texttt{31.1.s-31.2.2} & $\langle G^{\, 2,1}_{-403/4,\,0},c_{-1}\rangle$ &   $1$ &  N/A \\ \hline
{$-427$} & {$1$} &  {$\Q(\sqrt{61})$} 
 & \texttt{7.1.s-1.1.1} & $\mathcal{N}_{-427/4,\,0}(7^{\infty})$ &   $t$ &  $t\in \Q(\sqrt{61})^*,\,\,t\ne 1,-7 $ \\ \cline{4-7} 
& &  & \texttt{7.1.s-7.2.1} & $\langle G^{\,2,1}_{-427/4,\,0},c_{1}\rangle$ &   $-7$ &  N/A \\ \cline{4-7} 
& &  & \texttt{7.1.s-7.2.2} & $\langle G^{\, 2,1}_{-427/4,\,0},c_{-1}\rangle$ &   $1$ &  N/A \\ \cline{4-7} 
&  & & \texttt{61.1.ns-1.1.1} & $\mathcal{N}_{-427/4,\,0}(61^{\infty})$ &   $t$ &  $t\in \Q(\sqrt{61})^*$ \\ \hline

\end{longtable}
}

\

\bibliography{bibliography}
\bibliographystyle{plain}
\end{document}